\newcommand\1{{\mathds{1}}}
\renewcommand\ae{{a.\@e.\@}}
\newcommand\B{\mathrm{B}}
\newcommand\BV{\mathrm{BV}}
\newcommand\C{\mathrm{C}}
\renewcommand\c{\mathrm{c}}
\newcommand*{\coleq}{\mathrel{\vcenter{\baselineskip0.5ex\lineskiplimit0pt\hbox{\normalsize.}\hbox{\normalsize.}}}=}
\newcommand*{\colequiv}{\mathrel{\vcenter{\baselineskip0.5ex\lineskiplimit0pt\hbox{\normalsize.}\hbox{\normalsize.}}}\equiv}
\newcommand\D{\mathrm{D}}
\renewcommand\d{\mathrm{d}}
\renewcommand\div{\mathrm{div}}
\newcommand\dx{{\,\d x}}
\newcommand\ecke{\mathop{\hbox{\vrule height 7pt width .3pt depth 0pt \vrule height .3pt width 5pt depth 0pt}}\nolimits}
\newcommand\eps{\varepsilon}
\newcommand\ip{\boldsymbol{\cdot}}
\newcommand\ka{\textup{(}}
\newcommand\kz{\textup{)}}
\renewcommand\L{\mathrm{L}}
\newcommand\LN{\mathcal{L}^N}
\newcommand\loc{\mathrm{loc}}
\renewcommand\H{{\mathcal{H}}}
\newcommand\N{{\mathds{N}}}
\renewcommand\P{\mathrm{P}}
\newcommand\p{\varphi}
\newcommand\qq{\qquad}
\newcommand\R{{\mathds{R}}}
\newcommand{\restr}{\,\rule[-1ex]{.1ex}{2.2ex}\,\rule[-.9ex]{0ex}{1ex}}
\newcommand\ts{\textstyle}
\DeclareMathOperator\TV{TV}
\newcommand\W{\mathrm{W}}
\newcommand{\cupdot}{\ensuremath{\,\mathaccent\cdot\cup\,}}
\newcommand{\WPhione}{\ol{\Phi}}
\newcommand{\WPhitwo}{\widetilde\Phi}
\newcommand{\ol}{\overline}
\newcommand\dt{{\,\d t}}
\newcommand\F{\mathrm{F}}
\newcommand\I{\mathrm{I}}
\DeclareMathOperator*{\dist}{dist}
\newcommand{\inn}{\mathrm{T}^{\mathrm{int}}_{\partial \Omega}}
\newcommand{\innk}{\mathrm{T}^{\mathrm{int}}_{\partial^\ast \Omega_k}}
\newcommand{\ext}{\mathrm{T}^{\mathrm{ext}}_{\partial \Omega}} 
\newcommand\mres{\mathop{\hbox{\vrule height 7pt width .3pt depth 0pt \vrule height .3pt width 5pt depth 0pt}}\nolimits}
\renewcommand\div{\mathrm{div}}
\newcommand{\Int}{\mathrm{Int}}
\newtheorem{thm}{Theorem}[section]
\newtheorem{assum}[thm]{Assumption}
\newtheorem{defi}[thm]{Definition}
\newtheorem{examp}[thm]{Example}
\newtheorem{lem}[thm]{Lemma}
\newtheorem{prop}[thm]{Proposition}
\newtheorem{rem}[thm]{Remark}
\newtheorem{cor}[thm]{Corollary}
\numberwithin{equation}{section}
\begin{document}

    \title{\vspace{-5ex}Lower semicontinuity and existence results\\for anisotropic $\TV$
		functionals with signed measure data}
	
	\author{
		Eleonora Ficola\footnote{Fachbereich Mathematik,
			Universit\"at Hamburg, Bundesstr. 55, 20146 Hamburg, Germany.\newline
			Email address: \href{mailto:eleonora.ficola@uni-hamburg.de}
			{\tt eleonora.ficola@uni-hamburg.de}.
		}
		\qq\qq
		Thomas Schmidt\footnote{Fachbereich Mathematik,
			Universit\"at Hamburg, Bundesstr. 55, 20146 Hamburg, Germany.\newline
			Email address: \href{mailto:thomas.schmidt.math@uni-hamburg.de}
			{\tt thomas.schmidt.math@uni-hamburg.de}.
			URL: \href{http://www.math.uni-hamburg.de/home/schmidt/}
			{\tt http:/\!/www.math.uni-hamburg.de/home/schmidt/}.
		}
	}

        \date{January 27, 2026}
        
	\maketitle

    \vspace{-4ex}

	\begin{abstract}
        \noindent We study the minimization of anisotropic total variation functionals with additional measure terms among functions of bounded variation subject to a Dirichlet boundary condition. More specifically, we identify and characterize certain isoperimetric conditions, which prove to be sharp assumptions on the signed measure data in connection with semicontinuity, existence, and relaxation results. Furthermore, we present a variety of examples which elucidate our assumptions and results.
	\end{abstract}

    \medskip

    \noindent\textbf{Mathematics Subject Classification:} 49J45, 35R06, 35J20, 26B30.

    \vspace{-1ex}

	\tableofcontents

    \pagebreak
	
	\section{Introduction}
	
	Throughout this paper, we consider $N\in\N$ and a bounded open set
	$\Omega\subseteq\R^N$ with Lipschitz boundary. Our main interest is in developing
	an existence theory for the minimization of functionals of type
	\begin{equation}\label{eq:P-pre-functional}
		\Phi[w]\coleq\int_\Omega\p(\,\cdot\,,\nabla w)\,\dx
		+\int_\Omega w^\ast\,\d\mu
		\qq\text{for }w\in\W^{1,1}_{u_0}(\Omega)\,,
	\end{equation}
	where the class $\W^{1,1}_{u_0}(\Omega)\coleq u_0\restr_\Omega{+}\W^{1,1}_0(\Omega)$
	with given $u_0\in\W^{1,1}(\R^N)$ specifies a Dirichlet boundary
	condition. The first term in \eqref{eq:P-pre-functional} is an anisotropic total
	variation with given integrand
	$\p\colon\overline\Omega\times\R^N\to{[0,\infty)}$ positively homogeneous of
	degree $1$ in its second argument $\xi\in\R^N$, while the second term in
	\eqref{eq:P-pre-functional} involves a given finite signed Radon measure $\mu$
	on $\Omega$ and the precise representative $w^\ast$ of $w$. On a formal level,
	where the non-differentiability of $\xi\mapsto\p(x,\xi)$ at $0$ is disregarded
	for an instant, the Euler equation for this minimization problem is the
	anisotropic $1$-Laplace equation
	\begin{equation}\label{eq:Euler}
		\div\big[\nabla_\xi\p(\,\cdot\,,\nabla u)\big]=\mu
		\qq\text{on }\Omega\,.
	\end{equation}
	
	In previous literature, minimization problems and equations of this type have
	been studied mostly in case of a weighted Lebesgue measure $\mu=H\LN$, in which
	the function $H\in\L^1(\Omega)$ can replace $\mu$ on the right-hand side of
	\eqref{eq:Euler}. In this function case, the existence theory in the natural
	setting of the space $\BV(\Omega)$ is nowadays mostly understood: Minimizers of
	$\Phi$ can be obtained from the direct method, while solutions to
	\eqref{eq:Euler} are typically defined via Anzellotti's pairing
	\cite{Anzellotti83a} and can be produced via convex duality or $p$-Laplacian
	approximation; among numerous contributions to the topic compare, for instance,
	\cite{CicTro03,Demengel04,MerSegTro08,MerSegTro09} for $\p(x,\xi)=|\xi|$ and
	either $H\in\L^N(\Omega)$, certain $H\in\W^{-1,\infty}(\Omega)$, or
	$H\in\L^1(\Omega)$, and \cite{AmaBel94,Moll05,BecSch15,Mazon16,JerMorNac18}
        for general $\p$ and $H\equiv0$. Closely related is the case with the first
        term in \eqref{eq:P-pre-functional} replaced by the non-parametric area integral
	$\int_\Omega\sqrt{1{+}|\nabla w|^2}\dx$ and correspondingly \eqref{eq:Euler}
	replaced by the non-parametric prescribed-mean-curvature (PMC) equation
	$\div\frac{\nabla u}{\sqrt{1{+}|\nabla u|^2}}=\mu$ on $\Omega$. This case
	with $\mu=H\LN$, $H\in\L^N(\Omega)$ is in fact even more classical and has been
	studied already in
	\cite{Miranda74,Giaquinta74a,Giaquinta74b,Gerhardt74,Giusti78a}. Moreover, in both
	the (anisotropic) total variation case and the area case, also right-hand side
	functions with more general $u$-dependent structure have been considered in some
	of the references already mentioned.
	
	In the case of the area, measures $\mu$ on the right-hand side have been
	called mean curvature measures and have been considered in a few instances at
	least: A variational approach to $\BV$ solutions is discussed already by Ziemer
	\cite{Ziemer95} (but under restrictive assumptions on $\mu$, which rule out
	interaction between the different terms of the functional), while
	Dai--Trudinger--Wang \cite{DaiTruWan12} and Dai--Wang--Zhou \cite{DaiWanZho15}
	develop an existence theory for the PMC equation on basis of a priori estimates.
	Recently, a general variational existence theory in the different setting of the
	\emph{parametric} area functional with measures $\mu$ has been developed by the
	second author \cite{Schmidt25}, and around the same time Leonardi--Comi
	\cite{LeoCom24} have obtained a variety of existence, duality, and comparison
	results for the \emph{non-parametric} area functional and the PMC equation with
	measures $\mu$. Here, some technical devices for both the last-mentioned papers
	(choices of representatives, refined Anzellotti pairing, duality-based notions
	of solutions) have been coined previously in \cite{CarLeaPas86,CarLeaPas87,CarDalLeaPas88,SchSch15,SchSch16,SchSch18,Treinov21}, mostly in connection with
	obstacle problems. Anyway, while the area case does
	\emph{not} fall into the framework we consider here (due the
	non-homogeneity of the area integrand $\xi\mapsto\sqrt{1{+}|\xi|^2}$), in our
	forthcoming paper \cite{FicSch25} we actually develop a similar existence
	theory even for general non-homogeneous integrands of linear growth (which then
	include the area as a special case) and for comparably general measures $\mu$.
	
	In this paper, however, we stick to anisotropic total variations with
	homogeneous $\p$. Moreover, we do \emph{not} directly investigate the equation
	\eqref{eq:Euler}, but rather attempt to study the minimization problem on the level of semicontinuity of functionals and existence of minimizers. In
	fact, due to the lack of compactness in $\W^{1,1}(\Omega)$ we work in the
	natural setting of the space $\BV(\Omega)$, where in view of the failure of
	weak-$\ast$ closedness of Dirichlet classes the boundary condition cannot be
	kept in its original sense. These effects are coped with in well-known
	manner by extending the anisotropic total variation term in
	\eqref{eq:P-pre-functional} to a functional $\TV_\p^{u_0}$ on $\BV(\Omega)$
	which includes boundary penalization (see \eqref{eq:TV-u0} for the precise
	formula). The full functional \eqref{eq:P-pre-functional} is then extended by
	setting
	\begin{equation}\label{eq:P-functional-intro}
		\widehat\Phi[w]
		\coleq\TV_\p^{u_0}[w]+\int_\Omega w^-\,\d\mu_+-\int_\Omega w^+\,\d\mu_-
		\qq\text{for }w\in\BV(\Omega)\,,
	\end{equation}
	where $\mu=\mu_+{-}\mu_-$ is the Jordan decomposition of $\mu$, while $w^+$ and
	$w^-$ denote $\H^{N-1}$-\ae{} defined approximate upper and
	lower limits of $w$ on $\Omega$ (see Section \ref{sec:prelim} for the precise
	definitions). We remark that in general it is only $w_1^-{+}w_2^-
	\le(w_1{+}w_2)^-\le(w_1{+}w_2)^+\leq w_1^+{+}w_2^+$, so that
	the terms $\int_\Omega w^-\,\d\mu_+{-}\int_\Omega w^+\,\d\mu_-$ in
	\eqref{eq:P-functional-intro} are no longer linear, but merely positively
	homogeneous of degree $1$ and concave in $w\in\BV(\Omega)$.
	
	We now aim at  a semicontinuity and existence theory for
	$\widehat\Phi$ under standard convexity and continuity assumptions on $\p$ and
	for $\mu$ \emph{not} necessarily absolutely continuous with respect to $\LN$.
	A central case of interest, which will also be supported by several examples,
	is when (a part of) the measure $\mu$ is $(N{-}1)$-dimensional, and then the
	decomposition $\mu=\mu_+{-}\mu_-$ and the choice of the representatives $w^\pm$
	in \eqref{eq:P-functional-intro} are essentially inevitable in obtaining
	reasonable lower semicontinuity results for $\widehat\Phi$ on $\BV(\Omega)$.
	Indeed, this becomes apparent when considering smooth approximations $w_k$,
	which converge to $w\in\BV(\Omega)\setminus\W^{1,1}(\Omega)$ strictly in
	$\BV(\Omega)$ and come either from above or below. Then on a pointwise
	$\H^{N-1}$-\ae{} level one can hope at best for $w_k\to w^+$ (when coming from
	above) or $w_k\to w^-$ (when coming from below), and in view of
	$\mu(\{w^+\neq w^-\})>0$ one can expect lower semicontinuity only when using as
	above $\min\{w^-,w^+\}=w^-$ in the $\mu_+$-term and (due to the negative sign in
	front) $\max\{w^-,w^+\}=w^+$ in the $\mu_-$-term. However, for obtaining
	existence results one needs lower semicontinuity of $\widehat\Phi$ on
	$\BV(\Omega)$ \emph{not} w.\@r.\@t.\@ strict convergence but rather w.\@r.\@t.\@
	weaker convergence such as $\L^1(\Omega)$-convergence, and indeed right this
	$\L^1(\Omega)$ lower semicontinuity is obtained in our first main result
	(Theorem \ref{thm:lsc}) under a certain isoperimetric condition (IC) to be
	discussed in more detail below. We stress, in any case, that our
	$\L^1(\Omega)$ lower semicontinuity --- in contrast to the much simpler strict
	lower semicontinuity mentioned before --- does not apply separately to the
	different terms in \eqref{eq:P-functional-intro}, but rather holds only for the
	full functional in \eqref{eq:P-functional-intro} and allows for some controlled
	interaction, governed by the IC, between the total variation and measure terms. Coping
	with such interaction effects, which are partially present already in \cite{CarLeaPas85,CarLeaPas86,CarLeaPas87,Pallara91} and have been recorded more explicitly for the area case in \cite{Schmidt25,LeoCom24}, is one of the main points in our semicontinuity proofs. 
    Despite the above-mentioned concavity of the $\mu_\pm$-part of the functional, one may clearly wonder whether the full functional $\widehat\Phi$ remains convex and whether this is a decisive background reason for the validity of semicontinuity. However, while we believe that our IC (at least in the signed-measure case of this introduction) implies convexity of $\widehat\Phi$, it seems to us that this is \emph{not} a point for semicontinuity. In fact, in the later Example \ref{exp:failure-lsc} we exhibit a basic case, in which one can still check convexity of $\widehat\Phi$, but $\widehat\Phi$ is \emph{not} anymore $\L^1(\Omega)$ lower semicontinuous.

	Our second main result (Theorem \ref{thm:exist}) is an existence result for
	minimizers of $\widehat\Phi$ in $\BV(\Omega)$ and is obtained by the standard
	direct method in the calculus of variations. Indeed, also this result builds on
	the IC already mentioned, which is needed not solely for lower semicontinuity of
	$\widehat\Phi$, but is essentially inevitable for coercivity of $\widehat\Phi$
	as well. Indeed, the connection with coercivity is valid already in the function
	case $\mu=H\LN$ and is presumably the most classical reason for requiring ICs in
	the literature. As an interesting subtlety, for boundary data
	$u_0\in\L^\infty(\R^N)$, we can extend our existence result to a certain
	limit case of the ICs, while, for $u_0\notin\L^\infty(\R^N)$, we demonstrate with
	Example \ref{exp:non-exist} that existence may indeed fail in this case.
	
	Our third main result (Theorem \ref{thm:recovery}) consistently completes the picture by identifying the functional $\widehat\Phi$ in \eqref{eq:P-functional-intro} as a natural extension of the initial functional $\Phi$ in \eqref{eq:P-pre-functional}. More precisely, the result provides, for every $u\in\BV(\Omega)$, a suitable recovery sequence from $\W^{1,1}_{u_0}(\Omega)$.
	
	Finally, we wish to shed some more light on the IC, which is our central
	hypothesis in the tradition of similar assumptions in \cite{BomGiu73,Giaquinta74a,Giaquinta74b,Gerhardt74,DaiTruWan12,Schmidt25,LeoCom24},
	for instance (see also the discussion after Definition \ref{defi:IC} for a more
	extensive list of references). Indeed, our formulation essentially takes the
	form
	\begin{equation}\label{eq:IC-intro}
		|\mu(A)|\leq\P_\p(A)
		\qq\qq\text{for all measurable }A\Subset\Omega
	\end{equation}
	with the $\p$-anisotropic perimeter $\P_\p(A)$ of $A$, where for the purposes of
	this introduction we have slightly simplified by assuming that $\p(x,\xi)$ is
	even in $\xi$ and by disregarding precise choices of representative for $A$. Let
	us point out two features of \eqref{eq:IC-intro}: On one hand we include the
	general anisotropy $\p$, while previously only the case $\p(x,\xi)=|\xi|$ with the standard perimeter
	$\P(A)$ on the right-hand side of \eqref{eq:IC-intro} was considered. On the
	other hand our condition is the most general one on the signed measure $\mu$ in
	the sense that we can deal with merely $|\mu(A)|$ on the left-hand side, while
	in previous literature ICs were often imposed on functions or non-negative
	measures and also the recent related results in \cite{Schmidt25,LeoCom24}
	(partially) require separate control on $\mu_+(A)$ and $\mu_-(A)$. Though the precise
	form of the left-hand side of \eqref{eq:IC-intro} is a technical
	detail, we believe that this brings an improvement of some interest even in the
	standard isotropic case $\p(x,\xi)=|\xi|$, and we will underline the
	corresponding gain in generality by the latter Example \ref{exp:signed-IC}.
	
	The role of our IC as a reasonable assumption is further supported, beside its
	necessity for lower semicontinuity and coercivity (cf. Proposition \ref{prop:anis_CN_coercivity}), by the fact that
	under suitable smoothness assumptions it is a basic necessary criterion for the
	existence of a solution $u$ to the Euler equation \eqref{eq:Euler}. In the
	isotropic total variation and area cases this observation essentially goes back
	to \cite{Giusti78a}, and here we take the occasion to briefly point out the
	analogous necessity of our IC \eqref{eq:IC-intro} in general. Indeed, for a
	$\C^2$ solution $u$ of \eqref{eq:Euler} with $\nabla u\neq0$ on $\Omega$, a
	$\C^1$ domain $A\Subset\Omega$ with inward unit normal $\nu_A$, and for $\p(x,\xi)$ convex in $\xi$ and
	differentiable in $\xi\neq0$, the divergence theorem yields
	\[
	{\pm}\mu(A)
	=\int_{\partial A}\nabla_\xi\p(\,\cdot\,,\nabla u)\ip({\mp}\nu_A)\,\d\H^{N-1}
	\leq\int_{\partial A}\p(\,\cdot\,,{\mp}\nu_A)\,\d\H^{N-1}
	=\P_\p(A)\,,
	\]
	where we exploited that
$\nabla_\xi\p(x,\xi)\ip\nu=\p(x,\xi)+\nabla_\xi\p(x,\xi)\ip(\nu{-}\xi)\leq\p(x,\nu)$
	for all $x\in\overline\Omega$, $\xi,\nu\in\R^N{\setminus}\{0\}$ by homogeneity
	and convexity of $\p(x,\xi)$ in $\xi$. Thus, \eqref{eq:IC-intro} is indeed valid
	in this case. A slightly wider perspective on this reasoning will in fact be
	opened up with the alternative characterizations of measures $\mu$ with IC in the
	later Theorems \ref{thm:anis_equivalence_measure_TOGETHER} and  \ref{thm:anis_equivalence_gen_IC_singular}. Indeed, one equivalent characterization of \eqref{eq:IC-intro} will
	be the representation $\mu=\div\,\sigma$ for some
	$\sigma\in\L^\infty(\Omega,\R^N)$ such that $\p^\circ(\,\cdot\,,\sigma)\le1$ \ae{} on $\Omega$ with the polar $\p^\circ$ of $\p$, and this entails the necessity
	of \eqref{eq:IC-intro} in larger generality. Here, the last-mentioned inequality
	may also be read as the sub-unit condition
	$\|\sigma\|_{\p^\circ;\L^\infty(\Omega)}\le1$ for the $\p^\circ$-anisotropic
	$\L^\infty$ norm
	$\|\sigma\|_{\p^\circ;\L^\infty(\Omega)}\coleq\|\p^\circ(\,\cdot\,,\sigma)\|_{\L^\infty(\Omega)}$
	and thus this gives an interpretation of the IC \eqref{eq:IC-intro} as the
	sub-unit condition $\|\mu\|_{\p^\circ;\W^{-1,\infty}(\Omega)}\le1$ for the corresponding
	$\p^\circ$-anisotropic $\W^{-1,\infty}$ norm in the version homogeneous of
	degree ${-}1$.
	
	To complete this introduction let us add brief comments on the reasons for
	actually restricting ourselves to measures $\mu$ with IC \eqref{eq:IC-intro}
	rather than allowing ideally all sub-unit (in the sense just mentioned)
	distributions $\kappa\in\W^{-1,\infty}(\Omega)\cong(\W^{1,1}_0(\Omega))^\ast$, which
	form the natural class of right-hand sides for $\BV$ solutions of
	\eqref{eq:Euler}. Indeed, it seems to us that, in order to generally access the
	case of such distributions by the direct method, one would need to explain
	evaluations $\llfloor\kappa\,;w{-}u_0\rrfloor$ of $\kappa$ for $w\in\BV(\Omega)$, and could then
	potentially proceed towards lower semicontinuity and existence results for
	\begin{equation}\label{eq:TV-T-intro}
		\TV_\p^{u_0}[w]+\llfloor \kappa\,;w{-}u_0\rrfloor
		\qq\qq\text{on }w\in\BV(\Omega)\,.
	\end{equation}
	However, an apparent obstacle is that one cannot extend from
	$\langle\kappa\,;w{-}u_0\rangle$ for $w\in\W^{1,1}_{u_0}(\Omega)$ to
	$\llfloor\kappa\,;w{-}u_0\rrfloor$ for $w\in\BV(\Omega)$ by continuity, since, for strict approximations
	$w_k\in\W^{1,1}_{u_0}(\Omega)$ of $w\in\BV(\Omega)\setminus\W^{1,1}(\Omega)$,
	the limit value $\lim_{k\to\infty}\langle\kappa\,;w_k{-}u_0\rangle$ may depend on
	the choice of $w_k$. This problem persists already in the case of measures,
	where it was discussed above and overcome essentially by employing the
	representatives $w^\pm$. As an abstraction of this proceeding we consider using
	a strict-convergence lower semicontinuous relaxation rather than an extension by continuity. Therefore,
	mimicking our approach abstractly, for $\kappa\in\W^{-1,\infty}(\Omega)$ and
	$w\in\BV(\Omega)$, one could let
	\begin{equation}\label{eq:dist-on-BV}
		\llfloor\kappa\,;w{-}u_0\rrfloor\coleq\inf\Big\{\liminf_{k\to\infty}\,\langle\kappa\,;w_k{-}u_0\rangle\,:\,
		\W^{1,1}_{u_0}(\Omega)\ni w_k\to w\text{ in }\L^1(\Omega)\,,\,\TV^{u_0}[w_k]\to\TV^{u_0}[w]\Big\}\,,
	\end{equation}
	where the isotropic version $\TV^{u_0}$ of $\TV_\p^{u_0}$ was used in expressing an $u_0$-extended strict convergence. But then one would still face the principal difficulty of
	proving $\L^1(\Omega)$ lower semicontinuity of the full functional
	\eqref{eq:TV-T-intro} with possible interaction between its two terms. It is
	this decisive property which we do not know how to approach for general sub-unit
	$\kappa\in\W^{-1,\infty}(\Omega)$, since presently our arguments rely on the
	possibility of decomposing $\mu=\mu_+{-}\mu_-$ and exploiting the concrete form
	of the measure terms through additivity and continuity properties at several
	points. All in all, at this stage we leave it as an issue for further study to
	explore if one can possibly turn \eqref{eq:dist-on-BV} into more concrete
	representation formulas and can make progress in the variational existence
	theory also for other reasonable classes of distributions
	$\kappa\in\W^{-1,\infty}(\Omega)$ than measures.
	
	\medskip
	
	The plan of this paper is now as follows. We first collect preliminaries in
	Section \ref{sec:prelim} before stating our main results along with several
	illustrative examples in Section \ref{sec:statements}. In Section
	\ref{sec:charact} we provide various equivalent reformulations of our ICs (which
	so far were only touched upon rather briefly, but may also be of some interest),
	and in Section \ref{sec:exp} we complete the discussion of various examples
	stated before. In Section \ref{sec:par-lsc} we extend the parametric semicontinuity results of \cite{Schmidt25} to general signed-measure cases. Finally, in Sections \ref{sec:exist} and
	\ref{sec:recovery} we provide a few additional statements, but mostly focus on
	the proofs of the three main results.

    \medskip

    \noindent\textbf{Acknowledgements.} The authors are grateful to G.E.~Comi and G.P.~Leonardi for a discussion on the relevance of signed ICs of type \eqref{eq:IC-intro}. The figures in this article have been created in the vector graphics language `Asymptote'.

	\section{Preliminaries} \label{sec:prelim}
	
	\subsection{Generalities}
	
	Throughout the paper we work in the Euclidean space $\R^N$ with arbitrary $N \in \N$, unless differently specified. 
	With $\LN$ we denote the $N$-dimensional Lebesgue measure, and we say that a property holds \ae{} if it holds except for a set of zero $\LN$ measure. In accordance we call a set measurable or negligible if it is measurable or negligible with respect to $\LN$, and we write $|A| \coleq \LN(A)$ for the volume of any measurable $A \subseteq \R^N$. The $M$-dimensional Hausdorff measure on $\R^N$ for $M \in [0,N]$ is denoted by $\H^M$, though in the following we will primarily work in the codimension 1 case $M=N-1$. We write $\B_r(x)$ for the open ball centered at $x \in \R^N$ and of radius $r >0$, and we abbreviate $\B_r\coleq\B_r(0)$ whenever the ball is centered at the origin. The volume of the unit ball in $\R^N$ is denoted by $\omega_N$. The Euclidean distances between a point $a$ and set $A$, $B$ in $\R^N$ are
	$\dist(a,B) \coleq \inf_{b \in B} |a-b|$ and $\dist(A,B) \coleq \inf_{a \in A} \dist(a,B)$. 
	With $\ol{A}$, $\Int(A)$ we refer to the closure and the interior of a set $A$, respectively, and $\1_A$ denotes 
    the indicator function of $A$. The symmetric difference of two sets $A$, $B$ is $A \triangle B$,
	whereas we use $A \Subset B$ whenever $A$ is compactly contained in $B$.

    \medskip
    
    We recall that a Lipschitz function $u\colon U \to \R$ on open $U \subseteq \R^N$ is \ae{} classically differentiable by Rademacher's theorem (see e.\@g.\@ \cite[Theorem 7.8]{Maggi12}), and we then denote the \ae{} defined gradient by $\nabla u$. Moreover, we recall (see e.\@g.\@ \cite[Theorem 18.1]{Maggi12}):
    
	\begin{thm}[{coarea formula for Lipschitz functions}]
		\label{thm:coarea_Lip}
		Given a Lipschitz function $u\colon U \to \R$ on open $U \subseteq \R^N$, it holds
		\[
          \int_{A}|\nabla u|\,\dx 
          = \int_{- \infty}^{\infty} \H^{N-1} \left( A \cap \left\{ u = t \right\} \right) \dt
        \]
		for every Borel set $A\subseteq U$. 
	\end{thm}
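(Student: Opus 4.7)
The plan is to split the proof into three stages: reducing to a measure identity, establishing the equality for smooth $u$, and then extending to Lipschitz $u$ by approximation.

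First, I would observe that both sides, as functionals of $A$, define Borel measures on $U$. The left-hand side is $|\nabla u|\LN$ and is manifestly a Radon measure. For the right-hand side, one first checks (by a monotone-class argument starting from open cylinders) that for every Borel $A\subseteq U$ the map $t\mapsto\H^{N-1}(A\cap\{u=t\})$ is Borel measurable, so Fubini renders $A\mapsto\int_{-\infty}^\infty\H^{N-1}(A\cap\{u=t\})\dt$ a Borel measure. By the uniqueness theorem for Radon measures, it then suffices to prove equality when $A$ varies over a generating family, e.g.\ open subsets of $U$.

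Second, for smooth $u\in\C^\infty$ I would decompose $U$ into the critical set $C\coleq\{\nabla u=0\}$ and its complement. On $C$ the LHS vanishes pointwise; Sard's theorem gives $\LN(u(C))=0$, so the $t$-integral essentially sees only regular values, for which $A\cap C\cap\{u=t\}$ is $\H^{N-1}$-negligible by the preimage theorem. On $U\setminus C$ the implicit function theorem supplies local coordinates $(y',y_N)$ straightening $u$ to $y_N$, converting $|\nabla u|\dx$ into $\d y'\dt$ after the Jacobian computation, and Fubini combined with a partition-of-unity argument yields the identity for smooth $u$.

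Third, for a general Lipschitz $u$ I would mollify to obtain smooth $u_\eps\coleq u\ast\rho_\eps$ with $\nabla u_\eps\to\nabla u$ pointwise $\LN$-\ae{} on $U$ and $|\nabla u_\eps|\le\mathrm{Lip}(u)$ uniformly on any $V\Subset U$. On the LHS, dominated convergence delivers the limit. On the RHS, Fatou's lemma combined with the $\L^1$-lower semicontinuity of perimeters (applied to $\{u_\eps<t\}\to\{u<t\}$ in $\L^1$ for almost every $t$, via Chebyshev) gives one inequality; the reverse inequality follows from an upper bound for $\H^{N-1}(\{u=t\}\cap V)$ produced either by the area formula for the Lipschitz graph map $x\mapsto(x,u(x))$ or by a Vitali-type covering argument combined with Rademacher's theorem to compare $u$ locally with affine functions (for which the coarea identity is elementary).

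The main obstacle is the limit passage in the third stage on the level-set side: $\L^1$-convergence of the sub-level sets does not by itself preserve $(N{-}1)$-dimensional measure, so the decisive step is matching the lower semicontinuity inequality with a careful upper bound, which is where the Lipschitz structure and an area-formula argument for the graph of $u$ (or, equivalently, an approximation-by-affine-pieces via Rademacher) come into play.
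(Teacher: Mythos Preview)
The paper does not prove this theorem; it is recorded as a known preliminary fact with a reference to \cite[Theorem~18.1]{Maggi12}, so there is no argument in the paper to compare your proposal against.

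On the proposal itself: Stages~1 and~2 are standard and sound in outline. Stage~3 is where the genuine difficulty sits, and your sketch is somewhat tangled there. Lower semicontinuity of perimeter together with Fatou yields $\int\P(\{u>t\})\,\d t\le\int|\nabla u|\,\dx$, but the claim concerns $\H^{N-1}(\{u=t\})$, not $\P(\{u>t\})$; for Lipschitz $u$ one only has the easy inclusion $\partial^\ast\{u>t\}\subseteq\{u=t\}$ and hence $\P(\{u>t\})\le\H^{N-1}(\{u=t\})$, so identifying the two for a.e.\ $t$ is itself part of what the coarea formula delivers and cannot be assumed. Moreover, you then say the \emph{reverse} inequality comes from an \emph{upper} bound on $\H^{N-1}(\{u=t\})$, which is the wrong direction: at that point you still need $\int|\nabla u|\,\dx\le\int\H^{N-1}(\{u=t\})\,\d t$, i.e.\ a lower bound on the level-set side. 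Your area-formula idea for the graph $x\mapsto(x,u(x))$ can indeed be made to yield exactly this missing inequality, but it should be deployed directly (project the graph onto $\R^{N-1}\times\R$ and apply the area formula in the $t$-variable) rather than as an ``upper bound''. The textbook routes (Evans--Gariepy, Maggi) typically bypass smooth approximation altogether and argue both inequalities directly for Lipschitz $u$ via Rademacher-based linearization; your strategy is workable but needs the direction of the final estimate corrected and the $\H^{N-1}$-vs-perimeter identification handled with care.
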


    Our conventions for general measures mostly follow \cite[Sections 1.1, 1.3]{AFP00}, and specifically we use non-negative, signed, and $\R^M$-valued Radon measures on locally compact $U \subseteq \R^N$ in the sense specified there. In particular, $\mu \mres S$ is the restriction of a measure $\mu$ to a $\mu$-measurable set $S \subseteq \R^N$, defined by $\mu \mres S (A) \coleq \mu(S \cap A)$ for all $\mu$-measurable $A$, and $f\mu$ denotes the weighting of a measure $\mu$ with a $\mu$-integrable density $f$, defined by $f\mu(A)\coleq\int_Af\,\d\mu$ for all $\mu$-measurable $A$. According to the Jordan decomposition theorem, for any signed Radon measure $\mu$ on $U$, there exists a unique pair of non-negative, mutually singular measures $\mu_+$, $\mu_-$ on $U$ (the positive and negative parts of $\mu$) such that $\mu=\mu_+ - \mu_-$ and $|\mu|=\mu_+ + \mu_-$ for the total variation measure $|\mu|$ of $\mu$. Finally, if, for a non-negative Radon measure $\mu$ on $U$ and an $\R^M$-valued Radon measure $\nu$ on $U$, all $\mu$-negligible sets are also $|\nu|$-negligible, the Radon-Nikod\'ym theorem asserts the existence of a $\mu$-integrable $\R^M$-valued density $f$ such that $\nu=f\mu$. This density is known as Radon-Nikod\'ym density, is $\mu$-\ae{} uniquely determined, and is sometimes denoted by $\frac{\d\nu}{\d\mu}$.    

    \medskip

    For measurable $A \subseteq \R^N$ and $\theta \in {[0,1]}$, we introduce the sets
	\[
      A^{\theta}\coleq\left\lbrace x \in \R^N : \  \lim_{r \to 0} \frac{|\B_r(x) \cap A|}{|\B_r|} = \theta \right\rbrace \qq \text{ and } \qq 
      A^+\coleq\left\lbrace x \in \R^N : \ \limsup_{r \to 0} \frac{|\B_r(x) \cap A|}{|\B_r|} > 0 \right\rbrace\,,
    \]
	respectively, of density-$\theta$ points and positive-upper-density points of $A$. We refer to $A^1$ as measure-theoretic interior and to $A^+$ as measure-theoretic closure of $A$. 
 
	\begin{rem}\label{rem:inclusions_positive_density}
		If $A \subseteq \R^N$ is measurable and $U \subseteq \R^N$ is open, then we have $A^\theta \cap U \subseteq (A \cap U)^\theta$ for any $\theta \in {[0,1]}$ and $A^+ \cap U \subseteq (A \cap U)^+$. 
	\end{rem}	

    \subsection[\texorpdfstring{Basic $\BV$ theory}{}]{\boldmath Basic $\BV$ theory}
 
    We employ standard notation (see again \cite{AFP00}) for the function spaces $\L^p$, $\W^{k,p}_{(0)}$, $\C^{k}$, $\C^{k}_{\c}$, $\BV$ and their localized versions. Specifically, the space $\BV_\loc(U)$ over open $U\subseteq\R^N$ is the collection of all $u\in\L^1_\loc(U)$ such that the distributional gradient of $u$ exists as an $\R^N$-valued Radon measure $\D u$. For the total variation of a measurable function $u\colon U\to\R$ in a Borel set $B\subseteq U$ we sometimes use the notation $\TV(u,B)\coleq|\D u|(B)$ if $u$ is $\BV_\loc$ in some open neighborhood of $B$, otherwise we understand $\TV(u,B)\coleq\infty$. The space $\BV(U)$ of functions of bounded variation contains all $u\in\BV_\loc(U)$, for which the norm $\|u\|_{\BV(U)} \coleq \|u\|_{\L^1(U)} + |\D u|(U)$ is finite, and is a Banach space with this norm. Since norm-convergence in $\BV(U)$ is too strong for many purposes, we often deal with strict convergence of a sequence $(u_k)_k$ in $\BV(U)$ to a limit $u\in\BV(U)$, which means both $\lim_{k\to\infty}\|u_k-u\|_{\L^1(U)}=0$ and $\lim_{k\to\infty}|\D u_k|(U)=|\D u|(U)$. 
    We follow the convention of indicating positive and negative parts $u_\pm\coleq\max\{\pm u,0\}$ of a function $u$ by \emph{lower} indices ${}_\pm$, which need to be distinguished from \emph{upper} indices ${}^\pm$. In fact, given $U \subseteq \R^N$ and a measurable $u \colon U \to \R^N$, we denote by $u^+(x)$ and $u^-(x)$ the approximate upper and lower limit of $u$ at $x \in U$, respectively, that is,
    \[
      u^+(x)\coleq\sup\big\{t\in\R:x\in\{u>t\}^+\big\}\,,
      \qq
      u^-(x)\coleq\sup\big\{t\in\R:x\in\{u>t\}^1\big\}\,,
    \]
    and additionally we set  
    $u^\ast(x)\coleq(u^+(x)+u^-(x))/2$\,. In particular, we make wide use of the resulting equalities $(\1_A)^+ = \1_{A^+}$ and $(\1_A)^- = \1_{A^1}$ for any measurable $A\subseteq\R^N$. For functions $u \in \L^{1}_{\loc}(U)$, we observe $u^+=u^-=\tilde{u}$ in Lebesgue points, while $u^+ > u^-$ are the two jump values in jump points, compare \cite[Section 3.6]{AFP00}. 
    Moreover, by the Federer-Volpert theorem \cite[Theorem 3.78]{AFP00}, for $u \in \BV_\loc(U)$, the representatives 
    $u^+,u^-,u^\ast$ coincide $\H^{N-1}$-\ae{} outside the jump set of $u$. Specifically, for $u\in\W^{1,1}_\loc(U)$, the coincidence of the representatives holds $\H^{N-1}$-\ae{} on all of $U$. We also put on record that generally the estimates
    \begin{equation}\label{eq:sub-super-add-representatives}
      u^-+v^-\leq(u+v)^-\leq(u+v)^+\leq u^++v^+
      \qq\text{hold in }U\,,
    \end{equation}
    while, for $u,v\in\BV_{\loc}(U)$, we have the $\H^{N-1}$-\ae \ equality $(u+v)^\ast=u^\ast + v^\ast$. Some more properties of the representatives follow.

	\begin{rem}\label{rem:equivalence_sets_wrt_mu_ext}
		If $\mu$ is a non-negative Radon measure on $\R^N$ such that $\mu$ vanishes on $\H^{N-1}$-negligible sets, then, for every $u \in \BV(\R^N)$, there hold
		\begin{equation} \label{FFF}
			\left\{ u^+ > t \right\} 
			= \left\{ u > t \right\}^+ 
			\quad \text{and} \quad \left\{ u^- > t \right\} 
			= \left\{ u > t \right\}^1\quad\text{up to } \mu\text{-negligible sets,}\quad\text{for }\mathcal{L}^1 \mbox{-\ae{} } t \in \R\,.
		\end{equation}
		\begin{proof}
			Given $u \in \BV(\R^N)$, for every $t\in\R$, we have 
			$\left\{ u^+ > t \right\} \subseteq \left\{ u > t \right\} ^+ \subseteq \left\{ u^+ \geq t \right\}$ and 
			$\left\{ u^- > t \right\} \subseteq \left\{ u > t \right\} ^1 \subseteq \left\{ u^- \geq t \right\}$ up to $\H^{N-1}$-negligible sets and thus also up to $\mu$-negligible sets. Moreover, a standard argument ensures
			\[
			\mu \left( \{ u^\pm  \geq t \} \setminus \{  u^\pm > t \} \right) 
			=\mu \left( \{ u^\pm = t \} \right) =0
			\]
			for all but countably many $t\in\R$, in particular for $\mathcal{L}^1$-\ae{} $t\in\R$. In conclusion we arrive at \eqref{FFF}. 
		\end{proof}
	\end{rem}

    From the $\H^{N-1}$-\ae{} characterization of $u^+$ and $u^-$ as jump values, one straightforwardly reads off:

	\begin{lem} \label{lem:dec_upper_limit}
		For each $u \in \BV(U)$ on open $U \subseteq\R^N$, we have a precise decomposition into positive and negative parts in the sense that
		\begin{enumerate}[{\rm(i)}]
			\item $u^+ = (u_+)^+ - (u_-)^-
              \quad\text{holds }\H^{N-1}$-\ae{} in $U$\,,
              \label{item:dec_upper_limit_i}
			\item $u^- = (u_+)^- - (u_-)^+
              \quad\text{holds }\H^{N-1}$-\ae{} in $U$\,.
              \label{item:dec_upper_limit_ii}
		\end{enumerate}
	\end{lem}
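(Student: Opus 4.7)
The plan is to establish, for every $x\in U$, the stronger pointwise identities $(u_+)^+(x)=(u^+(x))_+$ and $(u_+)^-(x)=(u^-(x))_+$, together with their symmetric counterparts for $u_-$, and then to combine them via the elementary decomposition $a=a_+-a_-$ applied to $a=u^\pm(x)$.

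For the $u_+$-identities, I observe the trivial level-set equalities $\{u_+>t\}=\{u>t\}$ for $t\geq0$ and $\{u_+>t\}=U$ for $t<0$. Plugging these into
\[
(u_+)^+(x)=\sup\{t\in\R:x\in\{u_+>t\}^+\}\,,\qq (u_+)^-(x)=\sup\{t\in\R:x\in\{u_+>t\}^1\}\,,
\]
and using that every $x\in U$ lies in $U^1\subseteq U^+$ by openness of $U$, the range $t<0$ merely guarantees that each sup is $\geq0$, while the range $t\geq0$ reproduces the sups defining $u^+(x)$ and $u^-(x)$, clipped at $0$. This yields $(u_+)^+(x)=(u^+(x))_+$ and $(u_+)^-(x)=(u^-(x))_+$.

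For the $u_-$-part, I use $u_-=(-u)_+$ together with the auxiliary duality $(-u)^+(x)=-u^-(x)$ and $(-u)^-(x)=-u^+(x)$, which follows from $\{-u>t\}=\{u<-t\}$ and analogous level-set bookkeeping: $\{u<s\}$ has density $0$ at $x$ iff $s\leq u^-(x)$, so $\inf\{s:x\in\{u<s\}^+\}=u^-(x)$, and similarly for $\{u<s\}^1$. Applying the $u_+$-identities to $-u$ in place of $u$ then gives $(u_-)^+(x)=(u^-(x))_-$ and $(u_-)^-(x)=(u^+(x))_-$. Combining, $(u_+)^+(x)-(u_-)^-(x)=(u^+(x))_+-(u^+(x))_-=u^+(x)$ proves (i), and (ii) follows from the analogous $(u_+)^-(x)-(u_-)^+(x)=(u^-(x))_+-(u^-(x))_-=u^-(x)$. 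The only technical subtlety lies in the boundary cases $s=u^\pm(x)$ of the level-set sups, but these do not affect a supremum; in fact, the stated $\H^{N-1}$-\ae{} equality holds pointwise for every $x\in U$.
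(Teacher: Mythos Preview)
Your argument is correct and in fact proves more than the lemma asserts: the identities hold pointwise for every $x\in U$ and for any measurable $u$, with no use of the $\BV$ hypothesis. The only slightly loose spot is the literal ``iff'' in ``$\{u<s\}$ has density $0$ at $x$ iff $s\le u^-(x)$'': at the borderline $s=u^-(x)$ the density of $\{u<s\}$ need not vanish in general. However, your argument only needs that $x\notin\{u<s\}^+$ for $s<u^-(x)$ and $x\in\{u<s\}^+$ for $s>u^-(x)$, both of which you establish correctly, and you explicitly flag that the endpoint case is irrelevant for the supremum/infimum. So the deduction of $(-u)^+(x)=-u^-(x)$ and $(-u)^-(x)=-u^+(x)$ goes through.

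This is a genuinely different route from the paper. The paper does not argue from the level-set definitions at all; it simply invokes the Federer--Vol'pert theorem (recalled just above the lemma), according to which $\H^{N-1}$-a.e.\ point of $U$ is either an approximate continuity point of $u$ (where $u^+=u^-=u^\ast$, and the same for $u_\pm$) or a jump point with two well-defined one-sided values, and then reads off the identities by a trivial case distinction on the sign of these values. Your approach is more elementary --- it uses nothing beyond the definitions of $A^+$, $A^1$ and the positive/negative part --- and it delivers a stronger, pointwise statement that does not depend on $u\in\BV(U)$. The paper's approach, by contrast, is quicker once the structure theorem is taken for granted, and stays within the $\BV$ framework that the rest of the paper uses anyway.
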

	
	The next result deals with $\H^{N-1}$-\ae{} convergence properties of strictly convergent sequences in $\BV$. The general case has been established in \cite[Theorem 3.2]{Lahti17}, while particular cases such as the one of strongly convergent sequences in $\W^{1,1}$ have been treated already in \cite[Sections 4 and 10]{FedZie72}.
	
	\begin{thm}[{pointwise convergence of $\BV$ functions}] 
		\label{strict_Hausdorff_repr}
		Let $U \subseteq \R^N$ be an open set and consider a sequence $(u_k)_k$ in $\BV(U)$ and $u\in\BV(U)$ such that $u_k \to u$ strictly in $\BV(U)$. Then, there exists a subsequence $(u_{k_\ell})_\ell$ such that there holds
		\[
		u^-(x) \leq \liminf_{\ell \to \infty} u_{k_\ell}^-(x) \leq  \limsup_{\ell \to \infty} u_{k_\ell}^+(x) \leq u^+(x)
		\qq\text{for }\H^{N-1}\text{-\ae{} }x \in U\,.
		\]
		In particular, in case of\/ $u\in\W^{1,1}(U)$ this implies $\lim_{\ell\to\infty}u_{k_\ell}^\ast(x)=u^\ast(x)$ for $\H^{N-1}$-\ae{} $x\in U$.
	\end{thm}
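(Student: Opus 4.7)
My strategy is to reduce the pointwise statement for functions to an analogous statement for superlevel sets via the coarea formula, where the strict convergence can be transferred to level sets for almost every level. First I would rewrite the desired inequalities in terms of the characterization
\[
\{u^+>t\}=\{u>t\}^+
\qq\text{and}\qq
\{u^->t\}=\{u>t\}^1
\qq(\H^{N-1}\text{-\ae{}}),
\]
so that, after picking a countable dense set $D\subset\R$, the conclusion is equivalent to showing
\[
\{u>t\}^1\subseteq\liminf_\ell\{u_{k_\ell}>t\}^1
\qq\text{and}\qq
\limsup_\ell\{u_{k_\ell}>t\}^+\subseteq\{u>t\}^+
\]
up to $\H^{N-1}$-negligible sets, for every $t\in D$ and along a common subsequence.

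\textbf{Strict level-set convergence.} Fubini together with $u_k\to u$ in $\L^1(U)$ gives $\int_\R|\{u_k>t\}\triangle\{u>t\}|\dt\to 0$, so along a first subsequence one has $|\{u_{k_\ell}>t\}\triangle\{u>t\}|\to0$ for $\L^1$-\ae{} $t\in\R$. The $\BV$ coarea formula $|\D u_k|(U)=\int_\R\P(\{u_k>t\},U)\dt$ and $|\D u|(U)=\int_\R\P(\{u>t\},U)\dt$, combined with lower semicontinuity of the perimeter under $\L^1$-convergence and the assumption $|\D u_k|(U)\to|\D u|(U)$, forces via the reverse Fatou lemma the equality $\P(\{u_{k_\ell}>t\},U)\to\P(\{u>t\},U)$ for $\L^1$-\ae{} $t$, upon a further subsequence. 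A diagonal extraction then produces one subsequence (still denoted $(u_{k_\ell})_\ell$) and a co-countable set of $t$'s for which $\1_{\{u_{k_\ell}>t\}}\to\1_{\{u>t\}}$ strictly in $\BV(U)$. Thus the theorem is reduced to the following set-theoretic claim.

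\textbf{Claim.} \emph{If $E_k\to E$ strictly in $\BV(U)$ (as sets of finite perimeter), then, up to a subsequence, $\H^{N-1}(E^1\setminus\liminf_\ell E_{k_\ell}^1)=0$ and $\H^{N-1}(\limsup_\ell E_{k_\ell}^+\setminus E^+)=0$.} For this I would use the structure theorem to partition $U$, up to an $\H^{N-1}$-null set, into the three pieces $E^1$, $U\setminus E^+$, and $\partial^\ast E$, and similarly for each $E_k$. At density-one points of $E$ (resp.\@ zero-density points of $E$), a Vitali covering argument on small balls, combined with weak-$\ast$ convergence $|\D\1_{E_k}|\rightharpoonup^\ast|\D\1_E|$ (a consequence of strict convergence) and the fact that $|\D\1_E|$ puts no mass on these regions, allows to show that eventually the same balls witness density one (resp.\@ density zero) for $E_{k_\ell}$ outside an $\H^{N-1}$-negligible set. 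The points on $\partial^\ast E$ form a $\sigma$-finite piece on which no assertion is needed, since they can be absorbed in the negligible sets.

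\textbf{Main obstacle and conclusion.} The delicate step is the last one: the passage from integral strict convergence to pointwise convergence of the measure-theoretic representatives requires controlling the essential boundaries $\partial^e E_k$ in an $\H^{N-1}$-quantitative way. This is essentially the content of Lahti's result cited in the statement, and rests on a careful Fatou-type inequality for $\H^{N-1}\mres\partial^e E_k$ together with Egorov-type extraction. Granted this set-theoretic fact, one applies it for each $t$ in the countable set from the reduction step, extracts a further diagonal subsequence, and reads off $u^-(x)\leq\liminf_\ell u_{k_\ell}^-(x)\leq\limsup_\ell u_{k_\ell}^+(x)\leq u^+(x)$ at $\H^{N-1}$-\ae{} $x\in U$. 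The final assertion for $u\in\W^{1,1}(U)$ follows immediately, since then $u^+=u^-=u^\ast$ holds $\H^{N-1}$-\ae{} on $U$ and the sandwich forces $u_{k_\ell}^\ast\to u^\ast$ $\H^{N-1}$-\ae.
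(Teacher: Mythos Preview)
The paper does not give its own proof of this theorem; it simply records the statement and cites \cite[Theorem~3.2]{Lahti17} for the general case (and Federer--Ziemer for the $\W^{1,1}$ case). So there is no in-paper argument to compare with, only the external reference.

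Your reduction from functions to level sets is correct and is a standard route to results of this type. The coarea/Fatou step is fine once spelled out: from $\liminf_k\P(\{u_k>t\},U)\ge\P(\{u>t\},U)$ for a.e.\ $t$ and $\int\P(\{u_k>t\},U)\,\d t\to\int\P(\{u>t\},U)\,\d t$ one gets $\P(\{u_k>t\},U)\to\P(\{u>t\},U)$ in $\L^1(\R)$ (dominated convergence on the negative parts, then subtract), hence a.e.\ along a subsequence. The countable-level diagonal extraction and the recovery of the pointwise inequalities from the set inclusions via a dense $D$ are also fine.

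The genuine gap is your ``Claim'' for sets. This set statement is not a lemma on the way to the theorem---it \emph{is} the theorem, specialized to characteristic functions, and by your own reduction it is equivalent to the full statement. So invoking Lahti's result at that point is circular. Your Vitali/weak-$\ast$ sketch does not address the actual obstruction: membership in $E_{k_\ell}^1$ or $E_{k_\ell}^+$ is defined through a limit $r\to0$, while you simultaneously send $\ell\to\infty$, and weak-$\ast$ convergence of $|\D\1_{E_k}|$ gives information only at fixed scales, not uniformly as $r\to0$. Turning this into an $\H^{N-1}$-a.e.\ statement requires genuinely quantitative control (e.g.\ capacity or Hausdorff-content estimates on the exceptional sets, with summability to run Borel--Cantelli), which is precisely the substance of Lahti's argument and is absent from your sketch. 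In short: the outline is sound, but the proof of the central Claim is missing, and what you wrote there is a description of the difficulty rather than a resolution of it.
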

   
    \bigskip

    \subsection{Sets of finite perimeter and trace theory}

    We say that a measurable set $E \subseteq \R^N$ has locally finite perimeter in open $U \subseteq \R^N$ if $\1_E \in \BV_{\loc}(U)$ holds. The perimeter of measurable $E \subseteq \R^N$ in a Borel set $B \subseteq \R^N$ is given by $\P(E,B) \coleq |\D \1_E|(B)$ whenever $E$ has locally finite perimeter in some open neighborhood of $B$, otherwise we understand $\P(E,B) \coleq \infty$. As usual we abbreviate $\P(E)\coleq\P(E,\R^N)$. A set $E \subseteq \R^N$ has finite perimeter in open $U \subseteq \R^N$ if we have $\1_E \in \BV_{\loc}(U)$ and $\P(E,U) < \infty$. Whenever no domain $U$ is specified, a set of (locally) finite perimeter is intended to be of (locally) finite perimeter in $\R^N$.

    If $E\subseteq\R^N$ has locally finite perimeter in an open $U\subseteq\R^N$, the Radon-Nikod\'ym theorem yields
    \[
      \D\1_E = \nu_E |\D\1_E|
      \qq\text{as measures on }U\,,
    \]
    where the Radon-Nikod\'ym density $\nu_E\coleq\frac{\d \D\1_E}{\d |\D\1_E|}\in\L^\infty(U;|\D\1_E|)$ is characterized by the Lebesgue-Besicovitch differentiation theorem (see \cite[Theorem 2.22]{AFP00} or \cite[Theorem 5.8]{Maggi12}) as
	\[
      \nu_E(x)
      =\lim_{r \to 0} \frac{\D\1_E(\B_r(x))}{|\D\1_E|(\B_r(x))}
      \qq\text{for }|\D\1_E|\text{-a.e. } x \in U\,.
    \]
    The reduced boundary $U\cap\partial^\ast\!E$ of $E$ in $U$ is defined as the set of points $x\in U$ such that $|\D\1_E|(\B_r(x))>0$ holds for all $r>0$ and such that $\nu_E(x)$ exists in the sense of the preceding limit with $|\nu_E(x)|=1$ (compare e.\@g.\@ \cite[Definition 3.54]{AFP00}, \cite[Definition 5.4]{EvGar15}, or \cite[Section 15]{Maggi12}). The unit vector $\nu_E(x)$ is then called the generalized inward unit normal to $E$ at $x\in\partial^\ast\!E$. A fundamental geometric characterization of $|\D\1_E|$ and $\D\1_E$ in terms of $\partial^\ast\!E$ and $\nu_E$ can be retrieved from \cite[Theorem 3.59]{AFP00} or \cite[Theorem 15.9]{Maggi12}, for instance, and is now partially restated as follows.
 
	\begin{thm}[{De Giorgi's structure theorem; partial statement}]
		\label{thm:DG}
		If $E$ is a set of locally finite perimeter in open $U\subseteq\R^N$, then	one has
		\[
          |\D \1_E|=\H^{N-1} \mres (U\cap\partial^\ast E)
          \qq\text{as non-negative measures on }U\,.
        \]
	\end{thm}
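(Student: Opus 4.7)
The plan is to follow De Giorgi's original strategy, combining a blow-up analysis at reduced-boundary points with the standard density-based characterization of Hausdorff measures. Since both sides of the claimed identity are Radon measures on $U$, I may localize and reduce to the case $U=\R^N$ with $E$ of finite perimeter; the open-set restriction in the statement then comes back for free because $\partial^\ast E$ is intersected with $U$ on both sides.

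The central step is a blow-up at every point of the reduced boundary. Fix $x\in\partial^\ast E$ with generalized inward normal $\nu\coleq\nu_E(x)$, and set $E_{x,r}\coleq(E{-}x)/r$ for $r>0$. I would show that $\1_{E_{x,r}}\to\1_{H^+_\nu}$ in $\L^1_{\loc}(\R^N)$ as $r\to0^+$, where $H^+_\nu\coleq\{y\in\R^N:y\ip\nu\ge0\}$. The argument starts from uniform perimeter bounds $|\D\1_{E_{x,r}}|(\B_1)\le C$, which follow from the relative isoperimetric inequality and the very definition of $\partial^\ast E$. Then $\BV_\loc$-compactness extracts subsequential limits $F$ whose generalized inward normal is the constant vector $\nu$ (constancy being forced by rescaling $\D\1_E$ and reapplying the Radon-Nikod\'ym identity defining $\nu_E(x)$). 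A standard classification of sets of finite perimeter with constant normal then shows $F=H^+_\nu$, and uniqueness of the limit upgrades subsequential to full convergence.

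From the blow-up I would extract two consequences valid at every $x\in\partial^\ast E$. First, the volume densities $|\B_r(x)\cap E|/|\B_r|$ and $|\B_r(x)\setminus E|/|\B_r|$ both tend to $1/2$, so that $\partial^\ast E\cap(E^0\cup E^1)=\emptyset$. Second, the scaling identity $|\D\1_{E_{x,r}}|(\B_1)=r^{-(N-1)}|\D\1_E|(\B_r(x))$ combined with the strict convergence provided by the blow-up yields the crucial density equality
\[
\lim_{r\to0^+}\frac{|\D\1_E|(\B_r(x))}{\omega_{N-1}\,r^{N-1}}=1\,.
\]
The density $1$ on $\partial^\ast E$, together with concentration of $|\D\1_E|$ on $\partial^\ast E$ (points with density $0$ or $1$ have a trivial blow-up and therefore carry no perimeter mass, as follows from the relative isoperimetric inequality), already gives $\sigma$-finiteness of $|\D\1_E|$ with respect to $\H^{N-1}$.

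To close the argument I would prove that $\partial^\ast E$ is countably $(N{-}1)$-rectifiable and then invoke the standard Radon-to-Hausdorff comparison (see e.g.\ \cite[Theorem 2.83]{AFP00}): a Radon measure that is concentrated on a rectifiable set and has $(N{-}1)$-density exactly $1$ with respect to $\H^{N-1}$ everywhere on that set must equal $\H^{N-1}$ restricted to that set. Rectifiability itself is obtained from a refinement of the blow-up showing that, near each $x\in\partial^\ast E$, the reduced boundary is trapped in a thin slab around the hyperplane $\nu^\perp$, which permits a countable covering by Lipschitz graphs over translates of $\nu^\perp$. The main obstacle is the blow-up step, specifically identifying every subsequential limit with the unique half-space $H^+_\nu$; once the rigidity of blow-ups is secured, both the density computation and the passage to the measure identity are comparatively routine.
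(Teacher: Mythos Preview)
Your sketch is a correct outline of the classical De Giorgi argument and essentially follows the treatment in the references the paper cites (\cite[Theorem 3.59]{AFP00}, \cite[Theorem 15.9]{Maggi12}). Note, however, that the paper does not supply its own proof of this theorem: it is stated as a preliminary result imported from those standard sources, so there is no paper-proof to compare against beyond observing that your approach matches the cited literature.
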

	
	As a consequence of Theorem \ref{thm:DG}, one further has $\D \1_E =  \nu_E |\D\1_E| = \nu_E \H^{N-1} \mres \partial^\ast E$ as $\R^N$-valued measures on $U$ and $\P(E,B)=|\D \1_E|(B)
	=\H^{N-1}(B\cap\partial^\ast\!E)$ 
	whenever $B$ is a Borel subset of $U$.

    \medskip
	
	For the detailed definition of traces of $\BV$ functions, we refer to \cite[Section 3]{AFP00}. We briefly recall that $u\in\BV(U)$ has an interior trace and an exterior trace on each oriented countably $\H^{N-1}$-rectifiable set $\Gamma\subset U$, see \cite[Theorem 3.77]{AFP00}. The traces are defined and are finite $\H^{N-1}$-\ae{} on $\Gamma$, depend linearly on $u$, and are here denoted by $\mathrm{T}^{\mathrm{int}}_{\Gamma} u$ and $\mathrm{T}^{\mathrm{ext}}_{\Gamma} u$, respectively. If the context allows it, we sometimes omit the trace symbol and simply write $u$ instead of its (interior or exterior) trace. The decisive  property of the traces is recorded in the following restatement of \cite[Theorem 3.84]{AFP00}.
 	
	\begin{thm}[{pasting $\BV$ functions across reduced boundaries}] \label{thm:dec_BV}
		Consider an open set $U \subseteq \R^N$ and $u, v \in \BV(U)$, a set $E$ of finite perimeter in $U$, with reduced boundary $U\cap\partial^\ast\!E$
		oriented by $\nu_E$. Then, for $w\coleq u \1_E + v \1_{U \setminus E}$, it holds\textup{:} 
		\[
          w \in \BV(U) \iff 
		   \int_{U\cap\partial^\ast\!E}  \left| \mathrm{T}^{\mathrm{int}}_{U \cap \partial^\ast\!E} u - \mathrm{T}^{\mathrm{ext}}_{U \cap\partial^\ast\!E} 
		   v 
		   \right|\,\d\H^{N-1} < \infty\,.
        \]
		Moreover, if $w \in \BV(U)$, it is
		\begin{equation} \label{eq:dec_BV}
			\D w = \D u \mres E^1 + \left(  \mathrm{T}^{\mathrm{int}}_{U \cap \partial^\ast\!E} u - \mathrm{T}^{\mathrm{ext}}_{U \cap \partial^\ast\!E} v 
			\right) \otimes \nu_E \H^{N-1} \mres (U \cap \partial^\ast E) + \D v \mres E^0\,. 
		\end{equation}
	\end{thm}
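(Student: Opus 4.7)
The plan is to first derive the decomposition formula \eqref{eq:dec_BV} under the standing assumption $w\in\BV(U)$, and then to extract both directions of the equivalence from it. The starting observation is that the three sets $E^1$, $U\cap\partial^\ast\!E$, and $E^0$ partition $\R^N$ up to an $\H^{N-1}$-negligible set (by Federer's theorem), and since $\BV$-gradient measures do not charge $\H^{N-1}$-negligible Borel sets, one has the splitting $\D w=\D w\mres E^1+\D w\mres(U\cap\partial^\ast\!E)+\D w\mres E^0$, in which the three pieces may be identified separately.

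For the piece on $E^1$, I would use that $w$ coincides with $u$ $\LN$-\ae{} on $E$ (and, since $|E\triangle E^1|=0$, also on $E^1$) and that every $x\in E^1$ has density $1$ for $E$; a standard localization via test fields $\p\in\C^1_\c(U;\R^N)$ supported in small neighborhoods of compact subsets of $E^1$ combined with Radon measure density arguments yields $\D w\mres E^1=\D u\mres E^1$, and symmetrically $\D w\mres E^0=\D v\mres E^0$. On the countably $\H^{N-1}$-rectifiable set $U\cap\partial^\ast\!E$ oriented by $\nu_E$, the trace theorem (\cite[Theorem 3.77]{AFP00}) provides interior and exterior traces of $w$; since on the side into which $\nu_E$ points $w$ coincides with $u$ and on the opposite side with $v$, linearity of the traces together with their one-sided approximate-limit characterization give $\mathrm{T}^{\mathrm{int}}_{U\cap\partial^\ast\!E}w=\mathrm{T}^{\mathrm{int}}_{U\cap\partial^\ast\!E}u$ and $\mathrm{T}^{\mathrm{ext}}_{U\cap\partial^\ast\!E}w=\mathrm{T}^{\mathrm{ext}}_{U\cap\partial^\ast\!E}v$ $\H^{N-1}$-\ae{} on $U\cap\partial^\ast\!E$. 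The standard jump representation of a $\BV$-gradient on a rectifiable set then produces $\D w\mres(U\cap\partial^\ast\!E)=(\mathrm{T}^{\mathrm{int}}_{U\cap\partial^\ast\!E}u-\mathrm{T}^{\mathrm{ext}}_{U\cap\partial^\ast\!E}v)\otimes\nu_E\,\H^{N-1}\mres(U\cap\partial^\ast\!E)$, and summing the three contributions gives \eqref{eq:dec_BV}; taking total variations in that identity immediately delivers the ``only if'' direction of the equivalence.

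For the ``if'' direction I would approximate. Choose strict $\C^\infty(U)$-approximants $u_n,v_n\in\W^{1,1}(U)$ of $u,v$, and mollify $\1_E$ in order to select generic superlevel sets via the coarea formula, yielding smooth open $E_n$ with $\1_{E_n}\to\1_E$ strictly in $\BV_\loc(U)$ and such that $u_n,v_n$ admit classical traces on $\partial E_n$ converging in $\L^1$ to $\mathrm{T}^{\mathrm{int}}_{U\cap\partial^\ast\!E}u$ and $\mathrm{T}^{\mathrm{ext}}_{U\cap\partial^\ast\!E}v$, respectively. Setting $w_n\coleq u_n\1_{E_n}+v_n\1_{U\setminus E_n}$, the classical divergence theorem gives $|\D w_n|(U)\le\int_U|\nabla u_n|\dx+\int_U|\nabla v_n|\dx+\int_{\partial E_n}|u_n-v_n|\,\d\H^{N-1}$, and passing to the limit bounds $\liminf_{n\to\infty}|\D w_n|(U)$ by the finite quantity $|\D u|(U)+|\D v|(U)+\int_{U\cap\partial^\ast\!E}|\mathrm{T}^{\mathrm{int}}_{U\cap\partial^\ast\!E}u-\mathrm{T}^{\mathrm{ext}}_{U\cap\partial^\ast\!E}v|\,\d\H^{N-1}$. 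Combined with $w_n\to w$ in $\L^1(U)$, lower semicontinuity of the total variation then forces $w\in\BV(U)$. The main obstacle here is the delicate construction of the approximating $E_n$ securing simultaneous trace convergence on $\partial E_n$ for both $u$ and $v$; a more compact alternative would be to first establish a Leibniz-type product rule $\D(\phi\1_E)=\D\phi\mres E^1+(\mathrm{T}^{\mathrm{int}}_{U\cap\partial^\ast\!E}\phi)\,\nu_E\,\H^{N-1}\mres(U\cap\partial^\ast\!E)$ for $\phi=u-v\in\BV(U)$ and to combine it with the identity $\D w=\D v+\D((u-v)\1_E)$, but this merely relocates rather than removes the work.
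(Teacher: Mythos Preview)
The paper does not supply its own proof of this theorem: it is stated as a direct restatement of \cite[Theorem~3.84]{AFP00}, with no argument given. So there is no in-paper proof to compare against; the relevant benchmark is the AFP proof.

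Your proposal is essentially correct and follows the standard route taken in \cite{AFP00}. The derivation of \eqref{eq:dec_BV} under the hypothesis $w\in\BV(U)$ via the Federer partition $E^1\cupdot(U\cap\partial^\ast\!E)\cupdot E^0$ (up to $\H^{N-1}$-negligible sets), together with the identification of one-sided traces of $w$ with those of $u$ and $v$, is precisely the structure-theoretic argument used there, and the ``only if'' direction drops out by taking total variations as you say. For the ``if'' direction, your primary approximation scheme with moving smooth sets $E_n$ and simultaneous $\L^1$ trace convergence on $\partial E_n$ is genuinely delicate (as you acknowledge); the cleaner alternative you mention at the end, namely establishing the Leibniz-type formula for $\D((u{-}v)\1_E)$ and combining it with $w=v+(u{-}v)\1_E$, is in fact closer to how \cite{AFP00} organizes the argument (via the trace theory of \cite[Theorem~3.77]{AFP00}) and avoids the moving-boundary issue entirely. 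If you were to write this out, I would recommend taking that second route rather than the approximation by smooth $E_n$.
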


    If $\Omega \subseteq \R^N$ is open and bounded with Lipschitz boundary, each $u\in\BV(\Omega)$ extends to a function in $\BV(\R^N)$ with value $0$ outside $\Omega$, compare \cite[Theorem 3.87]{AFP00}.
    In this case, the trace $\mathrm{T}_{\partial\Omega}u$ of $u$ on the boundary $\partial\Omega$ is the interior trace of the extension when $\partial\Omega$ is oriented by the inward normal. For this type of trace, we recall (see e.\@g.\@ \cite[Theorem 3.88]{AFP00}):

    \begin{thm}[continuity of the boundary trace operator] \label{thm:trace_cont}
    Consider an open and bounded $\Omega \subseteq \mathbb{R}^N$ with Lipschitz boundary. Then we have 
    $\mathrm{T}_{\partial\Omega}u \in \L^1(\partial \Omega; \H^{N-1})$ for each 
    $u \in \BV(\Omega)$. Moreover, the trace operator $u \mapsto \mathrm{T}_{\partial\Omega}u$ is continuous from $\BV(\Omega)$ with strict convergence to $\L^1(\partial \Omega;\H^{N-1})$ with norm convergence.
    \end{thm}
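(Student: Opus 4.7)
The plan is to reduce the continuity to the classical trace inequality together with the characterization of $|\D\widetilde u|$ on $\R^N$ via Theorem \ref{thm:dec_BV}, where $\widetilde u$ denotes the zero-extension of $u\in\BV(\Omega)$ to $\R^N$. Since $\Omega$ has Lipschitz boundary, the measure-theoretic interior of $\Omega$ coincides $\H^{N-1}$-\ae{} with $\Omega$, while $\partial^\ast\Omega$ coincides $\H^{N-1}$-\ae{} with $\partial\Omega$ and is oriented by the inward normal $\nu_\Omega$. Applying Theorem \ref{thm:dec_BV} with $v\equiv0$ and $E=\Omega$ therefore gives
\[
  \D\widetilde u=\D u\mres\Omega+(\T_{\partial\Omega}u)\,\nu_\Omega\,\H^{N-1}\mres\partial\Omega\qq\text{on }\R^N\,,
\]
so that $\T_{\partial\Omega}u\in\L^1(\partial\Omega;\H^{N-1})$ with $\|\T_{\partial\Omega}u\|_{\L^1(\partial\Omega)}=|\D\widetilde u|(\partial\Omega)\leq|\D\widetilde u|(\R^N)<\infty$.

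The first step is the standard trace inequality: there exists $C>0$ and, for every $\delta>0$, a constant $C_\delta>0$ such that
\[
  \|\T_{\partial\Omega}u\|_{\L^1(\partial\Omega;\H^{N-1})}
  \le C\,|\D u|(\Omega_\delta)+C_\delta\|u\|_{\L^1(\Omega_\delta)}
  \qq\text{for all }u\in\BV(\Omega)\,,
\]
where $\Omega_\delta\coleq\{x\in\Omega:\dist(x,\partial\Omega)<\delta\}$. This is proved by a finite partition of unity subordinate to a cover of $\partial\Omega$ by Lipschitz charts: on each chart one straightens the boundary, applies Fubini to write the trace as a vertical boundary value, and estimates it by the $\BV$ norm on a one-sided tubular neighborhood via the fundamental theorem for one-dimensional $\BV$ functions. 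Linearity of the trace operator follows from linearity of interior traces in Theorem \ref{thm:dec_BV}.

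For the continuity statement, let $u_k\to u$ strictly in $\BV(\Omega)$ and apply the above inequality to $u_k-u$:
\[
  \|\T_{\partial\Omega}u_k-\T_{\partial\Omega}u\|_{\L^1(\partial\Omega;\H^{N-1})}
  \le C\,|\D(u_k-u)|(\Omega_\delta)+C_\delta\|u_k-u\|_{\L^1(\Omega_\delta)}\,.
\]
The second term vanishes as $k\to\infty$ for each fixed $\delta$ by $\L^1(\Omega)$-convergence. For the first term one observes that strict convergence in $\BV(\Omega)$ forces $|\D u_k|\stackrel{\ast}{\rightharpoonup}|\D u|$ as finite Radon measures on $\Omega$ (the weak-$\ast$ limit dominates $|\D u|$ by lower semicontinuity and has total mass $|\D u|(\Omega)$). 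Choosing $\delta$ from the cocountable set for which $|\D u|(\{\dist(\,\cdot\,,\partial\Omega)=\delta\})=0$, one obtains $|\D u_k|(\Omega_\delta)\to|\D u|(\Omega_\delta)$, hence $\limsup_k|\D(u_k-u)|(\Omega_\delta)\le2|\D u|(\Omega_\delta)$ by the triangle inequality. Letting first $k\to\infty$ and then $\delta\to0$ (with $|\D u|(\Omega_\delta)\to0$ since $|\D u|$ is a finite measure on $\Omega$ and $\bigcap_\delta\Omega_\delta=\emptyset$) yields the claim.

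The main obstacle is the trace inequality itself, which requires the Lipschitz structure of $\partial\Omega$ in an essential way via the flattening argument; everything else is a soft measure-theoretic consequence of strict convergence. Once that inequality is at hand, the argument is a direct interplay between the strictly-convergent localization of $|\D u_k|$ near $\partial\Omega$ and $\L^1$-smallness on tubular neighborhoods.
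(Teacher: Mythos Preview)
The paper does not prove this theorem; it is quoted from \cite[Theorem~3.88]{AFP00} without argument. Your proof is correct and follows essentially the same standard route as that reference: the localized trace inequality $\|\T_{\partial\Omega}u\|_{\L^1(\partial\Omega)}\le C\,|\D u|(\Omega_\delta)+C_\delta\|u\|_{\L^1(\Omega_\delta)}$ with the constant $C$ on the variation term \emph{independent of $\delta$}, combined with the observation that under strict convergence $\limsup_k|\D u_k|(\Omega_\delta)\le|\D u|(\Omega_\delta)$ for all but countably many $\delta$, so that $\limsup_k|\D(u_k{-}u)|(\Omega_\delta)\le 2|\D u|(\Omega_\delta)\to0$ as $\delta\to0$.

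One small point worth tightening: the Portmanteau-type conclusion $|\D u_k|(\Omega_\delta)\to|\D u|(\Omega_\delta)$ does not follow from weak-$\ast$ convergence $|\D u_k|\stackrel{\ast}{\rightharpoonup}|\D u|$ alone, since $\Omega_\delta$ is not relatively compact in $\Omega$. It does follow once you combine weak-$\ast$ convergence with the total-mass convergence $|\D u_k|(\Omega)\to|\D u|(\Omega)$ from strictness (this is \cite[Proposition~1.80]{AFP00}); equivalently, apply lower semicontinuity of the total variation directly on the open set $\{x\in\Omega:\dist(x,\partial\Omega)>\delta\}$ and subtract from $|\D u_k|(\Omega)\to|\D u|(\Omega)$ to obtain $\limsup_k|\D u_k|(\overline{\Omega_\delta}\cap\Omega)\le|\D u|(\overline{\Omega_\delta}\cap\Omega)$, which is all your estimate needs.
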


    Another useful result for our purposes is the isoperimetric inequality in the framework of sets of finite perimeter, as treated for instance in \cite[Theorem 14.1]{Maggi12}:
	
	\begin{thm}[{isoperimetric inequality}] \label{thm:isop_ineq}
        Given $E$ a measurable set in $\R^N$ with $0<|E|<\infty$, one has
		\begin{equation} \label{eq:isop_ineq}
			N \omega_N^\frac1N |E|^\frac{N-1}N\leq \P(E)\,,
		\end{equation}
		where equality holds if and only if\/ $|E \triangle \B_r(x)|=0$ for some $x \in \R^N$ and $r>0$. 
	\end{thm}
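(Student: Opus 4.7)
The plan is to reduce the inequality \eqref{eq:isop_ineq} to the sharp Sobolev--Gagliardo--Nirenberg inequality in $\BV$, applied to $u=\1_E$. As a first step I would establish, for any $u\in\C^1_\c(\R^N)$, the non-sharp inequality $\|u\|_{\L^{N/(N-1)}(\R^N)}\leq C\|\nabla u\|_{\L^1(\R^N)}$ via the classical Gagliardo pointwise bound $|u(x)|^{N/(N-1)}\leq\prod_{i=1}^N\bigl(\int_\R|\partial_i u|\,\d y_i\bigr)^{1/(N-1)}$ combined with the generalized H\"older inequality. The inequality extends to $u\in\BV(\R^N)$ by mollification, and applying it to $\1_E$ and recalling $\int_{\R^N}|\nabla(\1_E*\rho_\eps)|\dx\to\P(E)$ as $\eps\to0$ gives $|E|^{(N-1)/N}\leq C\P(E)$.

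To pin down the sharp constant $C=1/(N\omega_N^{1/N})$, I would invoke the P\'olya--Szeg\"o rearrangement inequality $\|\nabla u^\ast\|_{\L^1}\leq\|\nabla u\|_{\L^1}$, $\|u^\ast\|_{\L^p}=\|u\|_{\L^p}$ for the symmetric decreasing rearrangement $u^\ast$ of $u$. This reduces the Sobolev inequality to radially decreasing functions, and evaluating on $u=\1_{\B_r}$ produces equality in \eqref{eq:isop_ineq} via $|\B_r|=\omega_N r^N$ and $\P(\B_r)=N\omega_N r^{N-1}$. The P\'olya--Szeg\"o step itself is proved by combining the coarea formula (Theorem \ref{thm:coarea_Lip}) slice by slice with the classical isoperimetric inequality for smooth sets.

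The main obstacle is the rigidity statement in the equality case. If \eqref{eq:isop_ineq} holds with equality for $E$, tracing through the proof forces equality in the P\'olya--Szeg\"o step applied to mollifications of $\1_E$, and the conclusion $|E\triangle\B_r(x)|=0$ requires ruling out nontrivial symmetry breakers. A more robust approach I would prefer avoids this rigidity by working instead with iterated Steiner symmetrizations $E\mapsto E^{S_1}\mapsto E^{S_1 S_2}\mapsto\ldots$ (one for each coordinate direction), each of which preserves volume and does not increase perimeter, and by invoking the classical convergence of iterated Steiner symmetrizations to the ball $\B_r(0)$ with $|\B_r|=|E|$ in the $\L^1$ sense. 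If $\P(E)=\P(\B_r)$, then each step must preserve perimeter, which via a slice-by-slice analysis on the reduced boundary (using Theorem \ref{thm:DG}) forces $E$ to coincide with a translate of $\B_r$ up to a negligible set. This rigidity analysis of Steiner symmetrization is the technically most delicate portion and the place where I would expect most of the work to lie.
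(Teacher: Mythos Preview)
The paper does not prove this theorem at all: it is recorded as a preliminary fact with a reference to \cite[Theorem 14.1]{Maggi12}, and no argument is given. So there is nothing to compare your approach against on the paper's side.

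That said, a brief comment on your sketch. The route via the sharp $\BV$ Sobolev inequality is standard, but be careful with the logical dependencies: you invoke P\'olya--Szeg\H{o} for $\|\nabla u\|_{\L^1}$ and then say that P\'olya--Szeg\H{o} itself rests on ``the classical isoperimetric inequality for smooth sets'' via the coarea formula. This is a legitimate reduction (general finite-perimeter $E$ $\rightarrow$ smooth sets), but you should make explicit that the smooth case is being taken as known from an independent source (Brunn--Minkowski, Schwarz symmetrization for smooth domains, etc.), otherwise the argument is circular. The reference the paper cites, Maggi's book, proceeds more directly by Steiner symmetrization at the level of sets of finite perimeter rather than detouring through Sobolev functions, and handles the rigidity statement by a careful analysis of when Steiner symmetrization preserves perimeter --- essentially the second route you outline. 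Your identification of the equality case as the delicate part is accurate; Maggi devotes an entire chapter to it.
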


    The next result is a standard version of Poincar\'e's inequality. However, we cannot resist providing below the one-line estimation, which identifies the sharp constant at least in case the domain is a ball.

	\begin{thm}[Poincar\'{e} inequality]
		\label{thm:Poinc_BV_Lip}
      Let $\Omega \subseteq \mathbb{R}^N$ be open and bounded with Lipschitz boundary. Then, for any $u \in \BV(\Omega)$, it holds
		\begin{equation} \label{Poinc_BV}
			\|u\|_{\L^1(\Omega)} 
			\leq\frac{r}N\left( |\D u|(\Omega) + \int_{\partial \Omega} |u|\,\d\H^{N-1} \right)\,,
		\end{equation}
        where $r$ is the smallest possible radius of a ball that contains $\Omega$.
	\end{thm}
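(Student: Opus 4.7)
The plan is to combine the zero extension of $u$ across $\partial\Omega$ with a one-line Gauss--Green computation against the affine vector field $(x-x_0)/N$, where $x_0\in\R^N$ is chosen so that $\Omega\subseteq\B_r(x_0)$ with $r$ minimal.

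First I would extend $u$ to $\bar u\in\BV(\R^N)$ by setting $\bar u\equiv 0$ outside $\Omega$. Since $\Omega$ has Lipschitz boundary, the trace $\T_{\partial\Omega}u$ lies in $\L^1(\partial\Omega;\H^{N-1})$ by Theorem \ref{thm:trace_cont}, so Theorem~\ref{thm:dec_BV} applied with $E=\Omega$ (pasting $u$ against the zero function) yields the decomposition
\[
|\D\bar u|(\R^N)=|\D u|(\Omega)+\int_{\partial\Omega}|\T_{\partial\Omega}u|\,\d\H^{N-1}\,,
\]
because the two nontrivial summands in \eqref{eq:dec_BV} concentrate on the mutually singular sets $\Omega^1$ and $\partial^\ast\Omega$. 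The same zero extension applied to $|u|$ produces $|\bar u|\in\BV(\R^N)$ with compact support contained in $\ol{\B_r(x_0)}$, and the pointwise comparison $|\D|\bar u||\leq|\D\bar u|$ as non-negative measures on $\R^N$ follows from standard sub-additivity.

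With these ingredients the statement collapses into a single chain of (in)equalities. Since $\div\!\bigl(\frac{x-x_0}{N}\bigr)\equiv 1$, the distributional Gauss--Green identity applied to the compactly supported $\BV$ function $|\bar u|$ gives
\[
\int_\Omega|u|\dx=\int_{\R^N}|\bar u|\,\div\!\Bigl(\tfrac{x-x_0}{N}\Bigr)\dx=-\frac{1}{N}\int_{\R^N}(x-x_0)\ip\d\D|\bar u|\leq\frac{r}{N}\,|\D\bar u|(\R^N)\,,
\]
where the final inequality exploits $|x-x_0|\leq r$ on $\mathrm{supp}\,|\bar u|$ together with $|\D|\bar u||\leq|\D\bar u|$; inserting the decomposition of $|\D\bar u|(\R^N)$ from the previous step yields exactly \eqref{Poinc_BV}. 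The only slightly nonroutine point is the middle equality in the display, but since $|\bar u|$ has compact support and the vector field is affine this follows at once from the distributional definition of $\D|\bar u|$, or equivalently from a one-line approximation by $\C^\infty_\c$ test fields. In particular, the constant $r/N$ is evidently sharp in the stated sense, as equality is realized already by $u\equiv 1$ on $\Omega=\B_r(x_0)$.
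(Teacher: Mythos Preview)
Your argument is correct, but it is genuinely different from the paper's. The paper reduces to $0\le u\in\C^\infty_\c(\Omega)$ and then runs the chain
\[
\int_\Omega u\dx=\int_0^\infty|\{u>t\}|\,\d t\le(\omega_Nr^N)^{1/N}\int_0^\infty|\{u>t\}|^{\frac{N-1}{N}}\,\d t\le\frac{r}{N}\int_0^\infty\H^{N-1}(\{u=t\})\,\d t=\frac{r}{N}\int_\Omega|\nabla u|\dx\,,
\]
that is, layer-cake plus the isoperimetric inequality \eqref{eq:isop_ineq} plus the Lipschitz coarea formula. Your route instead avoids the isoperimetric inequality entirely: the zero extension packages the boundary term into $|\D\bar u|(\R^N)$, and then a single integration by parts against the affine field $(x-x_0)/N$ does all the work. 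This is arguably more elementary, since the only ingredient beyond the basic $\BV$ calculus is the trivial fact $\div\big((x-x_0)/N\big)=1$, whereas the paper's proof invokes the sharp isoperimetric constant. On the other hand, the paper's approach makes transparent \emph{why} the constant is sharp (equality in the isoperimetric step forces each superlevel set to be a ball), and the same layer-cake/coarea technique recurs later in the paper (e.g.\ in the treatment of Example~\ref{exp:non-exist}); your Gauss--Green argument is self-contained but does not connect as naturally to those later computations.
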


    \begin{proof}
      One may reduce to $0\le u\in\C^\infty_\c(\Omega)$, for which $\P(\{u>t\})=\H^{N-1}(\{u=t\})$ holds for $\mathcal{L}^1$-\ae{} $t>0$. Then one finds (with understanding $|E|^\frac{N-1}N\coleq0$ for negligible $E$ in case $N=1$)
      \[
        \int_\Omega u\dx
        =\int_0^\infty|\{u>t\}|\,\d t
        \le\big(\omega_Nr^N\big)^\frac1N\int_0^\infty|\{u>t\}|^\frac{N-1}N\,\d t
        \le\frac rN\int_0^\infty\H^{N-1}(\{u=t\})\,\d t
        =\frac rN\int_\Omega|\nabla u|\dx
      \]
      by the layer-cake formula, the isoperimetric inequality \eqref{eq:isop_ineq}, and the coarea formula of Theorem \ref{thm:coarea_Lip}.
    \end{proof}

    Specifically, Theorem \ref{thm:Poinc_BV_Lip} implies
		\begin{equation} \label{eq:Poinc_BV3}
			\|u\|_{\BV(\Omega)} 
			\leq C\left( |\D u|(\Omega) + \int_{\partial \Omega} |u|\,\d\H^{N-1} \right)
		\end{equation}
	for all $u\in\BV(\Omega)$ with $C\coleq\frac rN+1$ (for $r$ as in the theorem).
	
	\subsection{Anisotropic total variations and anisotropic perimeters}
	
	We now collect typical assumptions on the integrand $\p$ of an anisotropic
	total variation. Here, for convenience in working with auxiliary functionals
	over arbitrary open $U\subseteq\R^N$ or enlarged domains
	$\overline\Omega\subseteq\Omega'\subseteq\R^N$, we directly consider $\p$ as defined on all of $\R^N\times\R^N$ rather than
	on $\overline\Omega\times\R^N$. We remark, however, that it is indeed possible to
	extend a given $\p$ from $\overline\Omega\times\R^N$ to $\R^N\times\R^N$ such
	that the relevant properties are preserved.
	
	\begin{assum}[admissible integrands]\label{assum:phi}
		For a Borel function $\p\colon\R^N\times\R^N\to{[0,\infty)}$, we generally assume that $\xi\mapsto\p(x,\xi)$ is positively homogeneous of degree $1$, i.\@e.\@
		\[
		\p(x,t\xi)=t\p(x,\xi)\qq\text{for all }t\in{[0,\infty)}\text{ and }x,\xi\in\R^N\,.
		\]
		Moreover, we require that $\p$ is comparable to the Euclidean norm of\/ $\R^N$ \textup{(}in other words, comparable to the standard isotropic total variation integrand\textup{)} in the sense of
		\begin{equation}\label{eq:comp-p0}
			\alpha|\xi|\leq\p(x,\xi)\leq\beta|\xi|
			\qq\text{for all }x,\xi\in\R^N
		\end{equation}
		with fixed constants $0<\alpha\leq\beta<\infty$. In addition, we often
		require\textup{:}
		\begin{enumerate}[{\rm(a)}]
			\item that $\xi\mapsto\p(x,\xi)$ is convex for a.\@e.\@ $x\in\R^N$ and\/
			$(x,\xi)\mapsto\p(x,\xi)$ is lower semicontinuous,\label{case:conv-lsc}
			\item that $(x,\xi)\mapsto\p(x,\xi)$ is continuous.\label{case:cont}
		\end{enumerate}
	\end{assum}

    In the sequel, by Assumption \ref{assum:phi} we refer to the plain version     
    without any of \eqref{case:conv-lsc}, \eqref{case:cont}, while the addition of 
    \eqref{case:conv-lsc} or \eqref{case:cont} or both will be specified whenever needed.
 
	As a consequence of positive homogeneity and convexity, an admissible integrand $\p$ satisfies the triangle inequalities
	\begin{equation} \label{anis_eq:subadd_phi}
		\p(x,\xi+\tau) \leq \p(x,\xi) +\p(x,\tau)
		\quad\text{and}\quad
		\p(x,\xi-\tau) \geq \p(x,\xi) - \p(x,\tau)
        \qq\text{for all } x,\xi,\tau \in\R^N\,.
	\end{equation}
    For later usage we also put on record that in the special case of linearly dependent $\xi$ and $\tau$ the inequalities in \eqref{anis_eq:subadd_phi} remain valid when merely positive homogeneity and non-negativity (but not necessarily convexity) of $\xi\mapsto\p(x,\xi)$ are assumed. The reason is that \emph{on each $1$-dimensional subspace} of $\R^N$ each function $\xi\mapsto\p(x,\xi)$ is then nothing but a maximum of two linear functions and as such is automatically convex.

    \smallskip
	
	We also fix the following piece of notation.
	
	\begin{defi}[mirrored integrand]
		For an integrand $\p\colon\R^N\times\R^N\to\R$, we define the mirrored
		integrand $\widetilde\p\colon\R^N\times\R^N\to\R$ by setting
		\[
		\widetilde\p(x,\xi)\coleq\p(x,{-}\xi)
		\qq\text{for all }x,\xi\in\R^N\,.
		\]
	\end{defi}
	
	Clearly, in many basic cases the integrand  $\p$ is even in $\xi$ in the sense of $\widetilde\p=\p$, and then we need not distinguish between $\widetilde\p$ and $\p$ at all. However, our results can in fact be stated without an evenness assumption.

    \medskip
	
	Next we recall the definition of the polar function. Given $f\colon \R^N \to{[0,\infty)}$ with $f(\xi)>0=f(0)$ for all $\xi\in\R^N\setminus\{0\}$, the polar $f^\circ \colon \R^N \to{[0,\infty]}$ of $f$ is defined by 
    \[
      f^\circ(\xi^\ast)
	   = \sup_{\xi \in \R^N\setminus\{0\}} \frac{ \xi^* \ip \xi }{f(\xi)}\,.
    \]
	We record that the polar $f^\circ$ is positively homogeneous of degree $1$ and convex. In the following we are interested in the polar function of the anisotropic integrand $\p\colon \R^N \times \R^N \to {[0,\infty)}$ with respect to the second variable only. In fact, we impose Assumption \ref{assum:phi} and, for any fixed $x \in \R^N$, write $\p^\circ(x,\,\cdot\,)$ for the polar of the mapping $\xi \mapsto \p(x,\xi)$. Specifically, the definition of polar directly implies 
	$\p^\circ(x,\xi^*)\,\p(x,\xi) \geq \xi^* \ip \xi$ for all $\xi,\xi^\ast\in\R^N$. Moreover, the same bound returns 
	\begin{equation} \label{anis_eq:polar}
		\p^\circ(x,\xi^*) 
		= \sup_{ \substack{\xi \in \R^N \setminus \left\{ 0 \right\} } }
		\frac{\xi^\ast \ip \xi}{\p(x,\xi)}
		= \sup_{ \substack{\xi \in \R^N \\ \p(x,\xi) \leq 1 }  } \xi^\ast \ip \xi \qq \text{ for } x, \xi^\ast \in \R^N\,,
	\end{equation}
	where the last equality results from the homogeneity of $\p$ in $\xi$.

    \smallskip
	
    At this stage we define the anisotropic total variation and the anisotropic perimeter.
	
	\begin{defi}[anisotropic total variation and anisotropic perimeter]
		We consider a Borel function $\p\colon\R^N\times\R^N\to{[0,\infty]}$ such that $\xi\mapsto\p(x,\xi)$ is positively homogeneous of degree $1$ and a Borel set\/ $B\subseteq\R^N$. Whenever we have $w\in\BV_\loc(U)$ for some open $U\subseteq\R^N$ with $B\subseteq U$, the $\p$-anisotropic total variation of\/ $w$ on $B$ is defined as
		\[
		\TV_\p(w,B)\coleq|\D w|_\p(B)\coleq\int_B\p(\,\cdot\,,\nu_w)\,\d|\D w|
		\]
		with the Radon-Nikod\'ym derivative $\nu_w\coleq\frac{\d\D w}{\d|\D w|}$.
		Specifically, whenever a measurable $E\subseteq\R^N$ has locally finite perimeter in some open $U\subseteq\R^N$ with $B\subseteq U$, the $\p$-anisotropic perimeter of\/ $E$ in $B$ is
		\[
		\P_\p(E,B)\coleq|\D\1_E|_\p(B)=\int_B\p(\,\cdot\,,\nu_E)\,\d|\D\1_E|
		\]
		with the generalized inward normal $\nu_E=\frac{\d\D\1_E}{\d|\D\1_E|}$. If $\p$ fulfills the lower bound in \eqref{eq:comp-p0}, this is reasonably complemented with the convention $\P_\p(E,B)\coleq\infty$ in case of $\P(E,B)=\infty$.
		Finally, we abbreviate occasionally $\P_\p(E)\coleq\P_\p(E,\R^N)$.
	\end{defi}

    Clearly, in case of $\p(x,\xi)=|\xi|$ the definitions reduce to the standard isotropic ones mentioned earlier. Moreover, recalling $|\D\1_E|=\H^{N-1}\ecke(U\cap\partial^\ast\!E)$, we may recast the definition of anisotropic perimeter as $\P_\p(E,B)=\int_{B\cap\partial^\ast\!E}\p(\,\cdot\,,\nu_E)\,\d\H^{N-1}$. 
 
	\begin{lem}
		Suppose that $\p$ satisfies Assumption \ref{assum:phi}. Then, for any $w \in \BV(U)$ on open $U \subseteq \R^N$, we have 
		\begin{equation} \label{anis_eq:bd_TV}
			\alpha |\D w| \leq  |\D w|_{\p} \leq \beta |\D w|
            \qq\text{as measures on }U\,.
		\end{equation}
    \end{lem}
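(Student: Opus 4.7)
The plan is to reduce the claimed inequality of measures to a pointwise $|\D w|$-a.e.\ inequality involving only the bounds in \eqref{eq:comp-p0}, and then integrate. The decisive fact is that the Radon--Nikod\'ym density $\nu_w=\frac{\d\D w}{\d|\D w|}$ has $|\nu_w(x)|=1$ for $|\D w|$-a.e.\ $x\in U$, which is the standard Lebesgue--Besicovitch differentiation property of the total variation measure (and is used implicitly already in the proof of Theorem \ref{thm:DG}).

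First I would apply \eqref{eq:comp-p0} with $\xi=\nu_w(x)$ at those points $x\in U$ where $|\nu_w(x)|=1$, obtaining
\[
  \alpha\leq\p(x,\nu_w(x))\leq\beta
  \qq\text{for }|\D w|\text{-a.e. }x\in U\,.
\]
Then for any Borel set $B\subseteq U$ the definition
\[
  |\D w|_\p(B)=\int_B\p(\,\cdot\,,\nu_w)\,\d|\D w|
\]
together with monotonicity of the integral immediately yields
\[
  \alpha\,|\D w|(B)\le|\D w|_\p(B)\le\beta\,|\D w|(B)\,,
\]
which is the claimed inequality of measures on $U$.

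There is essentially no obstacle here: the only conceptual point is the Borel-measurability of $x\mapsto\p(x,\nu_w(x))$, which is clear since $\p$ is assumed Borel and $\nu_w$ is Borel measurable as a Radon--Nikod\'ym density. Convexity and lower semicontinuity (cases \eqref{case:conv-lsc}, \eqref{case:cont} of Assumption \ref{assum:phi}) are not needed for this estimate; only positive homogeneity of degree $1$ and the two-sided bound \eqref{eq:comp-p0} enter.
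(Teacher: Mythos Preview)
Your proof is correct and essentially matches the paper's own argument: both apply the two-sided bound \eqref{eq:comp-p0} to $\p(\,\cdot\,,\nu_w)$ and integrate against $|\D w|$ over an arbitrary Borel set, using that $|\nu_w|=1$ holds $|\D w|$-a.e. The only cosmetic difference is that the paper writes $\p(\,\cdot\,,\nu_w)\le\beta|\nu_w|$ before invoking $|\nu_w|=1$, whereas you invoke it first; the content is identical.
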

    
	\begin{proof}
		With the help of \eqref{eq:comp-p0}, for any Borel set $B\subseteq U$, we directly compute
		\begin{equation*}
			|\D w|_{\p}(B)
			=\int_B \p(\,\cdot\,,\nu_w)\ \d|\D w|
			\leq \beta \int_B |\nu_w| \ \d|\D w|
			= \beta |\D w|(B)\,.
		\end{equation*}
		Similarly, one checks $|\D w|_{\p}(B) \geq \alpha |\D w|(B)$. 
	\end{proof}
	
	We point out that (simply by evenness of the Euclidean norm) also the mirrored integrand $\widetilde\p$ satisfies the bound in \eqref{eq:comp-p0}. Therefore, \eqref{anis_eq:bd_TV} holds with $\widetilde\p$ in place of $\p$ as well.   Specifically, \eqref{anis_eq:bd_TV} implies
	\begin{equation} \label{anis_eq:bd_PER}
		\alpha \, \P(E,B) 
		\leq  \P_\p(E,B) 
		\leq \beta \, \P(E,B) \qq \text{ for all measurable } E \subseteq \R^N \text{ and Borel } B \subseteq \R^N\,. 
	\end{equation}

    Next follows a decomposition result for anisotropic total variations.
	
	\begin{lem} \label{anis_lem:add_parts}
		Suppose that $\p$ satisfies Assumption \ref{assum:phi}, and consider an open $U \subseteq \R^N$ and $u \in \BV(U)$. Then the positive part\/ $u_+$ and the negative part\/ $u_-$ of\/ $u$ are in $\BV(U)$, and the $\p$-variation of\/ $u$ decomposes into the $\p$-variation of\/ $u_+$ and the $\widetilde\p$-variation of\/ $u_-$ in the sense that
		\begin{equation} \label{anis_eq:add_parts}
			|\D u|_{\p}
			=|\D u_+|_{\p} + |\D u_-|_{\widetilde{\p}}
			\qq\text{as measures on } U \,. 
		\end{equation}
	\end{lem}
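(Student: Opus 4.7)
The containment $u_\pm \in \BV(U)$ is standard, since $s \mapsto s_\pm$ is $1$-Lipschitz and vanishes at $0$, so the $\BV$ chain rule yields $u_\pm \in \BV(U)$ with $|\D u_\pm|(U) \leq |\D u|(U)$. Since \eqref{anis_eq:add_parts} is an equality of non-negative Borel measures on $U$, by regularity it suffices to verify
\[
  |\D u|_\p(B) \;=\; |\D u_+|_\p(B) + |\D u_-|_{\widetilde\p}(B)
\]
for every Borel set $B \subseteq U$.

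The principal tool I would invoke is the anisotropic $\BV$ coarea formula
\[
  |\D w|_\p(B) \;=\; \int_{\R} \P_\p(\{w>t\},B)\,\d t
  \qquad\text{for }w \in \BV(U)\text{ and Borel }B\subseteq U\,,
\]
which is the $\BV$ analogue of Theorem \ref{thm:coarea_Lip} and follows from the distributional identity $\D w = \int_\R \D\1_{\{w>t\}}\,\d t$ together with the coincidence of $\nu_w$ with $\nu_{\{w>t\}}$ at $|\D\1_{\{w>t\}}|$-\ae{} point, for $\mathcal{L}^1$-\ae{} $t$. Applied to $w = u_+$, one notes $\{u_+ > t\} = \{u > t\}$ for $t > 0$ while $\{u_+ > t\} = U$ has zero perimeter in $U$ for $t < 0$, and obtains $|\D u_+|_\p(B) = \int_0^\infty \P_\p(\{u > t\},B)\,\d t$. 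Applied to $w = u_-$, one uses $\{u_- > t\} = \{u < -t\}$ for $t > 0$ to get $|\D u_-|_{\widetilde\p}(B) = \int_0^\infty \P_{\widetilde\p}(\{u < -t\},B)\,\d t$.

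The mirrored integrand is next converted back to $\p$ through the orientation reversal $\nu_{E^c} = -\nu_E$ on the common reduced boundary $\partial^\ast E = \partial^\ast E^c$ of a set of finite perimeter and its complement, which together with the definition of $\widetilde\p$ yields $\P_{\widetilde\p}(E,B) = \P_\p(E^c,B)$. Taking $E = \{u < -t\}$ and using that $\{u \geq -t\}$ and $\{u > -t\}$ differ only by the set $\{u = -t\}$, which is $\mathcal{L}^N$-negligible for $\mathcal{L}^1$-\ae{} $t$ by Fubini, gives $\P_{\widetilde\p}(\{u<-t\},B) = \P_\p(\{u>-t\},B)$ for \ae{} $t > 0$. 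After the substitution $s = -t$, the $u_-$ contribution reads $\int_{-\infty}^0 \P_\p(\{u > s\},B)\,\d s$, and adding the $u_+$ contribution collapses to $\int_\R \P_\p(\{u>t\},B)\,\d t = |\D u|_\p(B)$ by a final application of coarea to $u$ itself, proving the desired identity.

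The main obstacle I anticipate is access to the anisotropic $\BV$ coarea formula, which is classical but not explicitly listed in the preliminaries (only the Lipschitz version, Theorem \ref{thm:coarea_Lip}, is quoted). If one prefers to bypass it, the proof can instead run through the three-part decomposition $\D u = \D^a u + \D^c u + \D^j u$: the absolutely continuous part uses $\nabla u_\pm = \pm\nabla u\,\1_{\{\pm u > 0\}}$ \ae{} (with $\nabla u = 0$ \ae{} on $\{u=0\}$); the Cantor part concentrates on the portion where the precise representative $\tilde u$ is strictly signed; and the jump part requires a case analysis at jumps straddling zero ($u^-(x) < 0 < u^+(x)$), where $u_-$ inherits the opposite orientation of $u_+$, which is exactly what accounts for the appearance of $\widetilde\p$ on the right-hand side.
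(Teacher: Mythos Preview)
Your coarea argument is correct. A small correction to your worry: the anisotropic $\BV$ coarea formula \emph{is} in the paper as Theorem~\ref{anis_thm:coarea}, though it is stated only for the open set $U$ and placed after Lemma~\ref{anis_lem:add_parts}. The extension to Borel $B\subseteq U$ is immediate from the proof sketch there (use $g=\1_B\,\p(\cdot,\nu_u)$ in \eqref{is_eq:coarea} and then \eqref{eq:nu=nu}), and there is no circularity since that proof does not invoke Lemma~\ref{anis_lem:add_parts}.

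The paper takes a genuinely different route: it splits $U$ into the jump set $J=\{u^+>u^-\}$ and its complement $J^\c$, and argues separately. On $J^\c$ the chain rule gives $\D u_+\ecke J^\c=\D u\ecke(J^\c\cap\{u^\ast>0\})$ and $\D u_-\ecke J^\c=-\D u\ecke(J^\c\cap\{u^\ast<0\})$, from which the identity is immediate by homogeneity. On $J$ the paper establishes $\nu_{u_+}=\nu_u$ on $J_+$ and $\nu_{u_-}=-\nu_u$ on $J_-$ by a vector-length argument (comparing $|V_++V_-|$ with $|V_+|+|V_-|$), then computes directly with the jump representation $\D u\ecke J=(u^+-u^-)\nu_u\H^{N-1}\ecke J$. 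Your alternative sketch via the three-part decomposition $\D^a+\D^c+\D^j$ is essentially this argument, with the paper grouping the absolutely continuous and Cantor parts together as the diffusive part on $J^\c$.

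Your coarea approach is shorter and more transparent --- the orientation reversal $\P_{\widetilde\p}(E,B)=\P_\p(E^\c,B)$ captures in one line exactly why $\widetilde\p$ appears --- but it requires having the anisotropic coarea formula in hand first. The paper's direct approach is self-contained at this point in the exposition and makes explicit the geometric reason (the sign flip of the normal to $u_-$ relative to $u$). Both approaches work under Assumption~\ref{assum:phi} alone, without convexity or continuity.
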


    When additionally requiring \eqref{case:conv-lsc} and \eqref{case:cont} of Assumption \ref{assum:phi}, Lemma \ref{anis_lem:add_parts} can be proved by strict approximation. Here, however, we prefer the subsequent proof which avoids both these extra requirements.
	
    \begin{proof}
        We initially record $u_\pm\in\BV(U)$; compare the first part of the proof of \cite[Theorem 3.96]{AFP00} or directly \cite[Theorem 3.99]{AFP00} for this conclusion. Then we decompose
        \[
          U=J\cupdot J^\mathrm{c}
          \qq\text{with }J\coleq\{u^+>u^-\}\text{ and }J^\c=\{u^+=u^-\}
        \]
        and consider the jump part $\D u\ecke J$ and the diffusive part $\D u\ecke J^\c$ of $\D u$ separately.

        In treating the diffusive part, with $u^+=u^-=u^\ast$ on $J^\c$ in mind we employ solely $u^\ast$. In fact, from \cite[Proposition 3.92]{AFP00} and the chain rule of \cite[Theorem 3.99]{AFP00}, we have
        \[
          \D u\ecke\big(J^\c\cap\{u^\ast=0\}\big)\equiv0\,,\qq
          \D u_+\ecke J^\c=\D u\ecke\big(J^\c\cap\{u^\ast>0\}\big)\,,\qq
          \D u_-\ecke J^\c={-}\D u\ecke\big(J^\c\cap\{u^\ast<0\}\big)
        \]
        in $U$. By the definition of the $\p$-variation, the preceding formulas, and the homogeneity of $\p$, for any Borel set $B\subseteq J^\c\subseteq U$, we deduce
        \[\begin{aligned}
          |\D u|_\p(B)
          &=\int_{B\cap\{u^\ast>0\}}\p\bigg(\,\cdot\,,\frac{\d{\D u}}{\d|\D u|}\bigg)\,\d|\D u|
          +\int_{B\cap\{u^\ast<0\}}\p\bigg(\,\cdot\,,\frac{\d{\D u}}{\d|\D u|}\bigg)\,\d|\D u|\\
          &=\int_B\p\bigg(\,\cdot\,,\frac{\d{\D u_+}}{\d|\D u|}\bigg)\,\d|\D u|
          +\int_B\p\bigg(\,\cdot\,,{-}\frac{\d{\D u_-}}{\d|\D u|}\bigg)\,\d|\D u|
          =|\D u_+|_\p(B)+|\D u_-|_{\widetilde\p}(B)\,.
        \end{aligned}\]

        In treating the jump part, we exploit the fact that
        \begin{equation}\label{eq:decomp-u-pm}
          u^+-u^-=\big[(u_+)^+-(u_+)^-\big]+\big[(u_-)^+-(u_-)^-\big]
          \qq\text{in }U
        \end{equation}
        by Lemma \ref{lem:dec_upper_limit}, and in addition we now verify the auxiliary $\H^{N-1}$-\ae{} equalities
        \begin{equation}\label{eq:nu-u-pm}
          \nu_{u_+}=\nu_u\text{ on }J_+\coleq\{(u_+)^+>(u_+)^-\}\,,\qq\qq
          \nu_{u_-}=-\nu_u\text{ on }J_-\coleq\{(u_-)^+>(u_-)^-\}\,.
        \end{equation}
        Indeed, \cite[Theorem 3.78]{AFP00} gives $|\D u|\ecke J=(u^+-u^-)\H^{N-1}\ecke J$, which implies $\D u\ecke J=(u^+-u^-)\nu_u\H^{N-1}\ecke J$ with the Radon-Nikod\'ym density $\nu_u=\frac{\d\D u}{\d|\D u|}$. Similarly, but also taking into account that $J_\pm\subseteq J$ and $\D u_\pm\ecke\big(J\setminus J_\pm\big)\equiv0$ by \cite[Lemma 3.76]{AFP00}, we find $\D u_\pm\ecke J=\big[(u_\pm)^+-(u_\pm)^-\big]\nu_{u_\pm}\H^{N-1}\ecke J$. We exploit these observations to rewrite both sides of $\D u\ecke J=\D u_+\ecke J-\D u_-\ecke J$ and subsequently compare the $\H^{N-1}$-densities to conclude that
        \[
          \big(u^+-u^-\big)\nu_u
          =\big[(u_+)^+-(u_+)^-\big]\nu_{u_+}-\big[(u_-)^+-(u_-)^-\big]\nu_{u_-}
          \qq\text{holds }\H^{N-1}\text{-\ae{} on }J\,.
        \]
        Then, comparing with \eqref{eq:decomp-u-pm} and taking into account $|\nu_u|=1=|\nu_{u_\pm}|$, for $V_+\coleq\big[(u_+)^+-(u_+)^-\big]\nu_{u_+}$ and $V_-\coleq{-}\big[(u_-)^+-(u_-)^-\big]\nu_{u_-}$, we arrive at $|V_++V_-|=|V_+|+|V_-|$. The last equality can occur with $V_+\neq0$ only if $V_++V_-$ and $V_+$ point in the same direction, in other words we have shown $\nu_{u_+}=\nu_u$ on $J_+$. In the same way we deduce ${-}\nu_{u_-}=\nu_u$ on $J_-$ and thus have finally deduced the validity of \eqref{eq:nu-u-pm}. At this stage, via the definition of $\p$-variation, the preceding representations of $\D u\ecke J$ and $\D u_\pm\ecke J$, the homogeneity of $\p$, \eqref{eq:decomp-u-pm}, and \eqref{eq:nu-u-pm}, for any Borel set $B\subseteq J\subseteq U$, we arrive at
        \[\begin{aligned}
          |\D u|_\p(B)
          &=\int_B(u^+-u^-)\p(\,\cdot\,,\nu_u)\,\d\H^{N-1}\\
          &=\int_B((u_+)^+-(u_+)^-)\p(\,\cdot\,,\nu_u)\,\d\H^{N-1}
          +\int_B((u_-)^+-(u_-)^-)\p(\,\cdot\,,\nu_u)\,\d\H^{N-1}\\
          &=\int_B((u_+)^+-(u_+)^-)\p(\,\cdot\,,\nu_{u_+})\,\d\H^{N-1}
          +\int_B((u_-)^+-(u_-)^-)\p(\,\cdot\,,{-}\nu_{u_-})\,\d\H^{N-1}\\
          &=|\D u_+|_\p(B)+|\D u_-|_{\widetilde\p}(B)\,.
        \end{aligned}\]
        
        All in all, with the equalities obtained for both $B\subseteq J^\c$ and $B\subseteq J$, the proof is complete.
    \end{proof}
    
    A fundamental property of the total variation measure is its lower semicontinuity. As a corollary of \cite[Theorem 2.38]{AFP00} we obtain that this property extends from the isotropic to the anisotropic case as follows.
	
	\begin{thm}[Reshetnyak lower semicontinuity theorem]
		\label{anis_thm:LSC_TV_1}
		Let $U$ be an open set in $\R^N$. Consider $(u_k)_k$ and $u$ in $\BV(U)$ 
		such that $u_k \to u$ in $\L^1(U)$.
		Suppose $\p\colon \R^N \times \R^N \to {[0,\infty]}$ satisfies $\p(x,\xi)\ge\alpha|\xi|$ for all $x,\xi\in\R^N$ and some $\alpha>0$, that $\xi\mapsto\p(x,\xi)$ is positively $1$-homogeneous of degree $1$ and convex for all $\xi\in\R^N$, and that\/  $(x,\xi)\mapsto\p(x,\xi)$ is lower semicontinuous. Then it holds
		\[
          \liminf_{k \to \infty}|\D u_k|_{\p}(U)
          \geq|\D u|_{\p}(U)\,.
        \]
	\end{thm}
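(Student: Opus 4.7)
The plan is to deduce this as a direct corollary of the general Reshetnyak lower semicontinuity theorem for vector-valued Radon measures \cite[Theorem 2.38]{AFP00}. Recall that the latter asserts, for $\R^m$-valued Radon measures $\nu_k\rightharpoonup^\ast\nu$ on $U$ and for $f\colon U\times\R^m\to{[0,\infty]}$ Borel, convex and positively $1$-homogeneous in the second variable, and lower semicontinuous jointly, the lower semicontinuity
\[
  \liminf_{k\to\infty}\int_U f\Big(\,\cdot\,,\tfrac{\d\nu_k}{\d|\nu_k|}\Big)\,\d|\nu_k|
  \ \geq\ \int_U f\Big(\,\cdot\,,\tfrac{\d\nu}{\d|\nu|}\Big)\,\d|\nu|\,.
\]
The idea is to apply this with $f=\p$ and with $\nu_k=\D u_k$, $\nu=\D u$ as $\R^N$-valued Radon measures on $U$.

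The first step is a standard reduction: if the left-hand liminf equals $+\infty$ there is nothing to prove, so I may pass to a subsequence (not relabeled) realizing a finite liminf. Combined with the coercivity bound $\alpha|\D u_k|(U)\leq|\D u_k|_\p(U)$ (which holds by $\p(x,\xi)\geq\alpha|\xi|$ exactly as in the proof of \eqref{anis_eq:bd_TV}), this gives uniform boundedness of $|\D u_k|(U)$.

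The second step is to upgrade the $\L^1(U)$-convergence $u_k\to u$ to weak-$\ast$ convergence of the gradient measures $\D u_k\rightharpoonup^\ast\D u$ on $U$. This is standard in $\BV$: $\L^1$-convergence guarantees distributional convergence of the gradients, and uniform boundedness of $|\D u_k|(U)$ together with the Banach--Alaoglu theorem extracts a weak-$\ast$ convergent subsequence; its limit must coincide with $\D u$ by the uniqueness of the distributional limit, so the full (sub)sequence converges weakly-$\ast$ to $\D u$.

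The third and final step is just to invoke \cite[Theorem 2.38]{AFP00} with the integrand $\p$. By hypothesis $\p$ is Borel, lower semicontinuous jointly in $(x,\xi)$, non-negative, and convex and positively $1$-homogeneous in $\xi$, which are exactly the structural assumptions of the Reshetnyak theorem. Writing $\nu_{u_k}=\frac{\d\D u_k}{\d|\D u_k|}$ and $\nu_u=\frac{\d\D u}{\d|\D u|}$, the conclusion of the Reshetnyak theorem reads
\[
  \liminf_{k\to\infty}\int_U\p(\,\cdot\,,\nu_{u_k})\,\d|\D u_k|
  \ \geq\ \int_U\p(\,\cdot\,,\nu_u)\,\d|\D u|\,,
\]
which by the definition of the $\p$-anisotropic total variation is precisely the claim. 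No step is truly hard here; the only small subtlety is the subsequence trick used to guarantee equi-boundedness of $|\D u_k|(U)$ so that Banach--Alaoglu yields the weak-$\ast$ convergence required to feed Reshetnyak's theorem.
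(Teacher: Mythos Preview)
Your proposal is correct and matches the paper's approach exactly: the paper does not spell out a proof but simply records the result ``as a corollary of \cite[Theorem 2.38]{AFP00}'', and your argument is precisely the standard deduction from that theorem via the subsequence/coercivity trick to secure weak-$\ast$ convergence of the gradient measures.
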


    By \cite[Theorem 2.39]{AFP00} anisotropic total variations also inherit continuity from the standard total variation:
	
	\begin{thm}[{Reshetnyak continuity theorem}]
		\label{anis_thm:Reshetnyak_cont}
		Let $U$ be an open set in $\R^N$. Consider $(u_k)_k$ and $u$ in $\BV(U)$ such that $u_k\to u$ strictly in $\BV(U)$. Suppose that $\p\colon \R^N \times \R^N \to {[0,\infty)}$ satisfies $\p(x,\xi)\leq\beta|\xi|$ for all $x,\xi\in\R^N$ and some $0\leq\beta<\infty$ and that $(x,\xi)\mapsto\p(x,\xi)$ is continuous. Then it holds
		\[
          \lim_{k \to \infty}|\D u_k|_{\p}(U)
          = |\D u|_{\p}(U)\,.
        \]
	\end{thm}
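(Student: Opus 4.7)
The plan is to deduce this anisotropic continuity statement from the classical Reshetnyak continuity theorem for vector-valued Radon measures, e.g.~\cite[Theorem 2.39]{AFP00}, applied to the gradient measures $\D u_k$ and $\D u$. I would proceed in three short steps.

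First, I would check that the hypotheses on the measure side are met. Strict convergence of $u_k$ to $u$ in $\BV(U)$ yields $u_k\to u$ in $\L^1(U)$, hence $\D u_k\to\D u$ in the distributional sense on $U$; combined with the uniform bound $\sup_k|\D u_k|(U)<\infty$ coming from the strict convergence, a standard argument promotes this to weak-$\ast$ convergence of the $\R^N$-valued Radon measures $\D u_k$ to $\D u$ on $U$. The second piece of the strict convergence provides precisely $|\D u_k|(U)\to|\D u|(U)$, so that both assumptions of the Reshetnyak continuity theorem are in place.

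Second, I would fit $\p$ into the functional framework of that theorem. By positive $1$-homogeneity, $\p(x,\xi)=|\xi|\,\p\bigl(x,\tfrac\xi{|\xi|}\bigr)$ for $\xi\neq0$, so the relevant integrand is the restriction $f\colon U\times S^{N-1}\to\R$, $f(x,\nu)\coleq\p(x,\nu)$, which is continuous and bounded by $\beta$ in view of $\p(x,\xi)\le\beta|\xi|$ and $|\nu|=1$. Since $\nu_{u_k}=\frac{\d\D u_k}{\d|\D u_k|}$ has unit modulus $|\D u_k|$-a.e.\ (and analogously for $u$), we may rewrite
\[
|\D u_k|_\p(U)=\int_U f(\,\cdot\,,\nu_{u_k})\,\d|\D u_k|\,,\qq
|\D u|_\p(U)=\int_U f(\,\cdot\,,\nu_u)\,\d|\D u|\,.
\]
Then \cite[Theorem 2.39]{AFP00} directly yields the desired convergence of these integrals.

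The only point requiring a little care is the first step, namely that strict $\BV$-convergence genuinely implies the setting in which the measure-theoretic Reshetnyak theorem is stated; this is standard, but I would spell out the passage from $\L^1$-convergence of the functions plus total-variation convergence of the gradients to weak-$\ast$ plus strict convergence of the measures $\D u_k$. No separate convexity is needed, since Theorem 2.39 of \cite{AFP00} requires only continuity and boundedness of $f$ on $U\times S^{N-1}$, which are precisely what Assumption \ref{case:cont} and the upper bound $\p\le\beta|\xi|$ supply.
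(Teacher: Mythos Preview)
Your proposal is correct and matches the paper's approach exactly: the paper does not give a proof but simply cites \cite[Theorem 2.39]{AFP00}, and your three steps spell out precisely why that citation suffices. One small remark: the appeal to positive $1$-homogeneity in your second step is not strictly needed, since by definition $|\D u_k|_\p(U)=\int_U\p(\,\cdot\,,\nu_{u_k})\,\d|\D u_k|$ already evaluates $\p$ only on unit vectors, so continuity and the bound $\p(x,\nu)\le\beta$ on $U\times S^{N-1}$ are all that enter.
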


    In fact, we may summarize Theorem \ref{anis_thm:Reshetnyak_cont} by saying that (under the specified assumptions on $\p$) strict convergence implies $\p$-strict convergence in the sense of the next definition.

 	\begin{defi}[$\p$-strict convergence] Suppose that $\p$ satisfies Assumption \ref{assum:phi}
		and consider a sequence of functions $(u_k)_k$ in $\BV(U)$, where $U$ is open in $\R^N$. 
		We say that $(u_k)_k$ converges $\p$-strictly in $\BV(U)$ to $u \in \BV(U)$ if we have $\lim_{k\to\infty}\|u_k-u\|_{\L^1(U)}=0$ and\/ $\lim_{k\to\infty}|\D u_k|_\p(U)=|\D u|_\p(U)$.
	\end{defi}

    Further we recall:
	
	\begin{thm}[anisotropic Fleming-Rishel coarea formula] \label{anis_thm:coarea} 
		Consider $\p\colon \R^N \times \R^N \to {[0,\infty)}$ such that Assumption \ref{assum:phi} holds, and let\/ $U \subseteq \R^N$ be open. Then, for each $u \in \BV(U)$, the superlevel sets $E_t \coleq \left\{x \in U  \, : \, u(x) > t \right\}$ satisfy $\P(E_t,U)<\infty$ for \ae{} $t \in \R$, and the following coarea formula holds\textup{:}
		\begin{equation} \label{anis_eq:coarea}
			|\D u|_{\p}(U) 
			= \int_{-\infty}^{\infty}\P_{\p}(E_t,U)\dt\,.
		\end{equation}
	\end{thm}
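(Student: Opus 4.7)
The plan is to derive the anisotropic coarea formula from the classical isotropic Fleming--Rishel formula by exploiting a pointwise compatibility between the generalized normals $\nu_u$ and $\nu_{E_t}$. In its full strength (compare e.\@g.\@ \cite[Theorem 3.40]{AFP00}), the isotropic coarea yields both the scalar equality
\[
|\D u|(B)=\int_{-\infty}^\infty|\D\1_{E_t}|(B)\dt
\qq\text{for every Borel }B\subseteq U
\]
and the corresponding $\R^N$-valued equality $\D u(B)=\int_{-\infty}^\infty\D\1_{E_t}(B)\dt$. The scalar version together with the upper bound in \eqref{anis_eq:bd_PER} immediately gives $\P_\p(E_t,U)\le\beta\,\P(E_t,U)<\infty$ for $\mathcal{L}^1$-\ae{} $t\in\R$, which settles the asserted a.\@e.\@ finiteness claim of the theorem.

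Next I would verify that, for $\mathcal{L}^1$-\ae{} $t\in\R$, the generalized inward normals satisfy $\nu_u(x)=\nu_{E_t}(x)$ for $|\D\1_{E_t}|$-\ae{} $x\in U$. For this, fix any bounded Borel $\psi\colon U\to\R^N$. Applying the scalar coarea to the Borel integrand $\psi\ip\nu_u$ provides
\[
\int_U\psi\ip\nu_u\,\d|\D u|=\int_{-\infty}^\infty\int_U\psi\ip\nu_u\,\d|\D\1_{E_t}|\dt\,,
\]
whereas rewriting the same left-hand side as $\int_U\psi\,\d\D u$ and invoking the vector coarea yields the analogous identity with $\nu_{E_t}$ in place of $\nu_u$ on the right. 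Subtracting and letting $\psi$ run through a countable family separating $\R^N$-valued Radon measures on $U$ identifies the two normals as asserted.

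Finally I would conclude \eqref{anis_eq:coarea}. Since $\p$ is Borel by Assumption \ref{assum:phi} and $\nu_u$ is Borel, the map $g(x)\coleq\p(x,\nu_u(x))$ is Borel and, by \eqref{eq:comp-p0}, bounded by $\beta|\nu_u|=\beta$. Applying the scalar coarea to $g$ and then replacing $\nu_u$ by $\nu_{E_t}$ pointwise $|\D\1_{E_t}|$-\ae{} via the preceding step produces
\[
|\D u|_\p(U)=\int_Ug\,\d|\D u|=\int_{-\infty}^\infty\int_U\p(\,\cdot\,,\nu_{E_t})\,\d|\D\1_{E_t}|\dt=\int_{-\infty}^\infty\P_\p(E_t,U)\dt\,.
\]
The principal subtlety is the identification of normals in the second step; once this is secured, the remainder is a routine chaining of Borel changes of variable, and notably no convexity or continuity of $\p$ beyond the plain Assumption \ref{assum:phi} is needed.
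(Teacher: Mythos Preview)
Your overall strategy matches the paper's exactly: reduce to the isotropic coarea from \cite[Theorem 3.40]{AFP00}, identify $\nu_u$ with $\nu_{E_t}$ on almost every level set, and then apply the scalar coarea to the bounded Borel function $\p(\,\cdot\,,\nu_u)$. The first and third steps are fine.

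There is, however, a genuine gap in your second step. From the scalar and vector coarea formulas you correctly obtain, for each bounded Borel $\psi\colon U\to\R^N$,
\[
\int_{-\infty}^\infty\int_U\psi\ip(\nu_u-\nu_{E_t})\,\d|\D\1_{E_t}|\,\dt=0\,.
\]
Letting $\psi$ range over a countable separating family only shows that the $t$-\emph{integrated} $\R^N$-valued measure $\int_{-\infty}^\infty(\nu_u-\nu_{E_t})\,|\D\1_{E_t}|\,\dt$ on $U$ vanishes; it does \emph{not} force the inner integral to vanish for $\mathcal{L}^1$-a.e.\ $t$, which is what you need. Your test fields $\psi$ depend only on $x$, so no $t$-localization has taken place.

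The paper closes this gap by taking test fields of the form $V=\eta(u)\,\Psi$ with $\eta\in\C^0_\c(\R)$ and $\Psi$ from a countable dense subset of $\C^0_\c(U,\R^N)$, and then exploiting that $u(x)=t$ for $\H^{N-1}$-a.e.\ $x\in U\cap\partial^\ast E_t$ and a.e.\ $t$, so that $\eta(u)$ becomes $\eta(t)$ and can be pulled outside the inner integral; varying $\eta$ then gives the $t$-a.e.\ conclusion via the fundamental lemma. An alternative one-line fix you might prefer is to specialize directly to $\psi=\nu_u$: combining the vector and scalar coarea identities then yields
\[
\int_{-\infty}^\infty\int_U\big(1-\nu_u\ip\nu_{E_t}\big)\,\d|\D\1_{E_t}|\,\dt=0\,,
\]
and since $|\nu_u|=|\nu_{E_t}|=1$ the integrand is nonnegative, forcing $\nu_u=\nu_{E_t}$ at $|\D\1_{E_t}|$-a.e.\ point for a.e.\ $t$.
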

	
	Though \eqref{anis_eq:coarea} is basically known (compare \cite[Remark 4.4]{AmaBel94}, for instance), previous recordings seem to be based on slightly different definitions of $\p$-perimeter. Therefore, we briefly indicate how the formula can be deduced from the standard isotropic case in \cite[Theorem 3.40]{AFP00}.
	
	\begin{proof}[Sketch of proof for Theorem \ref{anis_thm:coarea}]
		Since \cite[Theorem 3.40]{AFP00} directly confirms $\P(E_t,U)<\infty$ for \ae{} $t \in \R$, it remains to establish \eqref{anis_eq:coarea}. To this end, we rewrite the formulas of \cite[Theorem 3.40]{AFP00} with the help of $\D u=\nu_u|\D u|$ and $\D\1_{E_t}=\nu_{E_t}\H^{N-1}\ecke\partial^\ast\!E_t$ as
		\begin{equation}\label{is_eq:coarea}
			\int_U g\,\d|\D u|
			=\int_{-\infty}^\infty\int_{U\cap\partial^\ast\!E_t}g\,\d\H^{N-1}\dt\,,
			\qq
			\int_U V\cdot\nu_u\,\d|\D u|
			=\int_{-\infty}^\infty\int_{U\cap\partial^\ast\!E_t}V\cdot\nu_{E_t}\,\d\H^{N-1}\dt\,,
		\end{equation}
		where we have generalized in a standard way from the sets $B$ of \cite[Theorem 3.40]{AFP00} to arbitrary bounded Borel functions $g\colon U\to\R$ and bounded Borel vector fields $V\colon U\to\R^N$, respectively. Combining both formulas in \eqref{is_eq:coarea}, we may also assert
		\begin{equation}\label{eq:int-nu=nu}
			\int_{-\infty}^\infty\int_{U\cap\partial^\ast\!E_t}V\cdot\nu_u\,\d\H^{N-1}\dt
			=\int_{-\infty}^\infty\int_{U\cap\partial^\ast\!E_t}V\cdot\nu_{E_t}\,\d\H^{N-1}\dt\,.
		\end{equation}
		Next we fix a countable dense subset $\mathscr{C}$ of $\C^0_\c(U,\R^N)$ and let $V\coleq\eta(u)\,\Psi$ with arbitrary $\eta\in\C^0_\c(\R)$ and $\Psi\in\mathscr{C}$. We plug (a Borel representative of) this $V$ into \eqref{eq:int-nu=nu}, exploit $u(x)=t$ for $\H^{N-1}$-\ae{} $x\in U\cap\partial^\ast\!E_t$ for \ae{} $t\in\R$, and then use the fundamental lemma of the calculus of variations to infer
		\begin{equation}\label{eq:nu=nu-integral}
          \int_{U\cap\partial^\ast\!E_t}\Psi\cdot\nu_u\,\d\H^{N-1}
		   =\int_{U\cap\partial^\ast\!E_t}\Psi\cdot\nu_{E_t}\,\d\H^{N-1}
          \qq\text{for all }\Psi\in\mathscr{C}\text{ and }t\in\R\setminus Z\,,
        \end{equation}
        where $Z\subseteq\R$ is negligible and due to countability of $\mathscr{C}$ can indeed be taken independent of $\Psi\in\mathscr{C}$. By density of $\mathscr{C}$ we generalize \eqref{eq:nu=nu-integral} from $\Psi\in\mathscr{C}$ to arbitrary $\Psi\in\C^0_\c(U,\R^N)$ with unchanged negligible $Z$. Then, we apply the fundamental lemma of the calculus of variations once more to deduce that
		\begin{equation}\label{eq:nu=nu}
			\nu_u=\nu_{E_t}\text{ holds }\H^{N-1}\text{-\ae{} on }U\cap\partial^\ast\!E_t\text{ for \ae{} }t\in\R\,.
		\end{equation}
		Via the definition of the $\p$-variation, the first formula in \eqref{is_eq:coarea}, the observation \eqref{eq:nu=nu}, and the definition of the $\p$-perimeter, we then conclude
		\[\begin{aligned}
			|\D u|_\p(U)&=\int_U\p(\,\cdot\,,\nu_u)\,\d|\D u|
			=\int_{-\infty}^\infty\int_{U\cap\partial^\ast\!E_t}\p(\,\cdot\,,\nu_u)\,\d\H^{N-1}\dt\\
			&=\int_{-\infty}^\infty\int_{U\cap\partial^\ast\!E_t}\p(\,\cdot\,,\nu_{E_t})\,\d\H^{N-1}\dt
			=\int_{-\infty}^\infty\P_\p(E_t,U)\dt
		\end{aligned}\]
		and arrive at the claim \eqref{anis_eq:coarea}.
	\end{proof}

    The next lemma adapts \cite[Lemma 2.9]{Schmidt25} to the anisotropic case and will be useful in dealing with parametric semicontinuity.
 
	\begin{lem}\label{lem:P(AcapB),P(A-S)}
		We consider an open $U\subseteq\R^N$ and a Borel function $\p\colon\R^N\times\R^N\to{[0,\infty)}$ such that
        $\p(x,\xi)\le\beta|\xi|$ for all $x,\xi\in\R^N$ and some $0\leq\beta<\infty$. Then, for sets $A,R,S\subseteq\R^N$ with $\P(A,U){+}\P(R,U){+}\P(S,U)<\infty$, there hold
		\[
		   \P_\p(A\cap R,U)\leq\P_\p(A,R^1\cap U)+\P_\p(R,A^+\cap U)
          \quad\text{and}\quad
		   \P_\p(A\setminus S,U)\leq\P_\p(A,S^0\cap U)+\P_{\widetilde\p}(S,A^+\cap U)\,,
		\]
		where the conditions $\H^{N-1}(\partial^\ast\!A\cap\partial^\ast\!R\cap U)=0$ and
		$\H^{N-1}(\partial^\ast\!A\cap\partial^\ast\!S\cap U)=0$, respectively, are sufficient
		for equality.
	\end{lem}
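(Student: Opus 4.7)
The plan is to apply Theorem \ref{thm:dec_BV} to $w\coleq\1_{A\cap R}$ with $u\coleq\1_A$, $v\coleq0$, and $E\coleq R$. Since the interior trace $\mathrm{T}^{\mathrm{int}}_{U\cap\partial^\ast\!R}\1_A$ takes values in $[0,1]$ and $\P(R,U)<\infty$, the integrability condition of Theorem \ref{thm:dec_BV} is satisfied, and \eqref{eq:dec_BV} gives
\[
  \D\1_{A\cap R}=\D\1_A\mres R^1+\big(\mathrm{T}^{\mathrm{int}}_{U\cap\partial^\ast\!R}\1_A\big)\,\nu_R\,\H^{N-1}\mres(U\cap\partial^\ast\!R)\,.
\]
Combining with Theorem \ref{thm:DG} (which recasts the first summand as $\nu_A\,\H^{N-1}\mres(\partial^\ast\!A\cap R^1\cap U)$) and with Federer's $\H^{N-1}$-\ae{} partition $\R^N=A^0\cupdot A^1\cupdot\partial^\ast\!A$ (which identifies the trace as $1$ on $A^1$, $0$ on $A^0$, and as an unspecified value in $[0,1]$ on $\partial^\ast\!A$), I would rewrite $\D\1_{A\cap R}$ as a sum of three pieces supported on the pairwise $\H^{N-1}$-disjoint sets $\partial^\ast\!A\cap R^1$, $\partial^\ast\!R\cap A^1$, and $\partial^\ast\!A\cap\partial^\ast\!R$, disjointness being immediate from $\partial^\ast\!A\cap A^1\stackrel{\H^{N-1}}{=}\emptyset=\partial^\ast\!R\cap R^1$.

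Taking the $\p$-variation of each piece separately (noting $\mathrm{T}^{\mathrm{int}}_{\partial^\ast\!R}\1_A\ge0$, so signs do not interfere) then yields
\[
  \P_\p(A\cap R,U)=\P_\p(A,R^1\cap U)+\P_\p(R,A^1\cap U)+\int_{\partial^\ast\!A\cap\partial^\ast\!R\cap U}\mathrm{T}^{\mathrm{int}}_{\partial^\ast\!R}\1_A\cdot\p(\,\cdot\,,\nu_R)\,\d\H^{N-1}\,.
\]
Bounding the trace by $1$ in the third integral, merging it with the second via the $\H^{N-1}$-disjointness of $\partial^\ast\!A$ and $A^1$, and using $A^1\cup\partial^\ast\!A\subseteq A^+$ (since $\partial^\ast\!A\subseteq A^{1/2}\subseteq A^+$) would deliver the first claim. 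For the equality criterion, the assumption $\H^{N-1}(\partial^\ast\!A\cap\partial^\ast\!R\cap U)=0$ simultaneously erases the third integral and, through Federer's inclusion $A^+\setminus A^1\stackrel{\H^{N-1}}{\subseteq}\partial^\ast\!A$, forces $\P_\p(R,A^+\cap U)=\P_\p(R,A^1\cap U)$, so equality propagates.

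The second inequality I would deduce by applying the first inequality to $R\coleq\R^N\setminus S$, which has $\P(R,U)=\P(S,U)<\infty$. Since $\partial^\ast\!R=\partial^\ast\!S$ with $\nu_R=-\nu_S$, it holds $\p(\,\cdot\,,\nu_R)=\widetilde\p(\,\cdot\,,\nu_S)$; moreover $R^1=S^0$, whence the claim. The main obstacle I anticipate is the bookkeeping of the interior-trace values $\mathrm{T}^{\mathrm{int}}_{\partial^\ast\!R}\1_A$ on $\partial^\ast\!R$ by Federer's tripartition, as it is precisely this identification which enables the common-boundary piece on $\partial^\ast\!A\cap\partial^\ast\!R$ to be cleanly absorbed into the $\P_\p(R,\cdot)$-term via $\partial^\ast\!A\subseteq A^+$.
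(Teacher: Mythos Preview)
Your proof is correct and complete. It differs from the paper's argument in the main tool: you invoke the $\BV$ pasting formula of Theorem~\ref{thm:dec_BV} to obtain an explicit decomposition of $\D\1_{A\cap R}$ into three mutually singular pieces, then read off the $\p$-variation of each. The paper instead works at the level of reduced boundaries: it uses Federer's structure theorem to locate $\partial^\ast(A\cap R)$ inside $(\partial^\ast\!A\cap R^1)\cupdot(\partial^\ast\!R\cap A^+)$, and then identifies $\nu_{A\cap R}$ on each piece via the blow-up fact (from De Giorgi's theorem) that $\nu_E=\nu_F$ on $\partial^\ast\!E\cap\partial^\ast\!F$ whenever $E\subseteq F$. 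Your route avoids this normal-matching argument entirely, replacing it with the trace identification on $\partial^\ast\!R$; the price is the intermediate handling of the common-boundary piece $\partial^\ast\!A\cap\partial^\ast\!R$ with its unspecified trace in $[0,1]$, which the paper's decomposition absorbs directly into $\partial^\ast\!R\cap A^+$ without singling it out. Both arguments deduce the second inequality from the first by the substitution $R=S^\c$ in the same way.
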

	
	\begin{proof}
		From \cite[eq.\@ (3.10)]{AFP00} we get
		$\P(A\cap R,U)=|\D(\1_A\1_R)|(U)<\infty$, and from Federer's structure theorem \cite[Theorem 3.61]{AFP00} we read off  $\partial^\ast(A\cap R)\cap U\subseteq(\partial^\ast\!A\cap R^1\cap U)\cupdot(\partial^\ast\!R\cap A^+\cap U)$ up to $\H^{N-1}$-negligible sets. In addition, the full statement of De Giorgi's structure theorem as provided in \cite[Theorem 3.59]{AFP00} implies, for sets $E\subseteq F$ of finite perimeter and $x\in\partial^\ast\!E\cap\partial^\ast\!F$, that necessarily $\nu_E(x)=\nu_F(x)$ holds. Therefore, we may even assert that, for $\H^{N-1}$-\ae{} $x\in\partial^\ast(A\cap R)\cap U$, there hold
		either $x\in\partial^\ast\!A\cap R^1\cap U$, $\nu_{A\cap R}(x)=\nu_A(x)$ or
		$x\in\partial^\ast\!R\cap A^+\cap U$, $\nu_{A\cap R}(x)=\nu_R(x)$. As a
		consequence, we obtain
		\[\begin{aligned}
			\P_\p(A\cap R,U)
			&=\int_{\partial^\ast(A\cap R)\cap U}\p(\,\cdot\,,\nu_{A\cap R})\,\d\H^{N-1}\\
			&\leq\int_{\partial^\ast\!A\cap R^1\cap U}\p(\,\cdot\,,\nu_A)\,\d\H^{N-1}
			+\int_{\partial^\ast\!R\cap A^+\cap U}\p(\,\cdot\,,\nu_R)\,\d\H^{N-1}\\
			&=\P_\p(A,R^1\cap U)+\P_\p(R,A^+\cap U)\,.
		\end{aligned}\]
		In case of $\H^{N-1}(\partial^\ast\!A\cap\partial^\ast\!R\cap U)=0$, the reasoning with Federer's structure theorem even yields the partition $\partial^\ast(A\cap R)\cap U=(\partial^\ast\!A\cap R^1\cap U)\cupdot(\partial^\ast\!R\cap A^+\cap U)$ up to
		$\H^{N-1}$-negligible sets, and thus the preceding inequality becomes an
		equality. This proves the claims for $A\cap R$. The claims for $A\setminus S$
		follow by plugging in $R=S^\c$ and exploiting
		$\P_\p(S^\c,A^+\cap U)=\P_{\widetilde\p}(S,A^+\cap U)$.
	\end{proof}
		
	\subsection{Approximation and continuity results}

    The next result on strict approximation of $\BV$ functions with arbitrarily prescribed boundary datum can be retrieved from \cite[Lemma B.2]{Bildhauer00} or \cite[Theorem 1.2]{Schmidt15}. In fact, the sources assert even area-strict convergence, which is stronger than the strict convergence recorded here, but the latter suffices for our purposes.
   
	\begin{thm}[strict approximation with prescribed boundary datum]
        \label{strict_int_appr_BV}
		Suppose that $\Omega$ is an open, bounded set in $\R^N$ with Lipschitz boundary. 
		Then, for every $u_0 \in \W^{1,1}(\Omega)$ and every $u \in \BV(\Omega)$, there exists a sequence $(u_k)_k$ in $u_0 + \C^{\infty}_\c(\Omega)$ such that, if we set $\ol{u_k}\coleq u_k\1_\Omega+u_0\1_{\R^N\setminus\ol{\Omega}}$ and $\ol{u}\coleq u\1_\Omega+u_0\1_{\R^N\setminus\ol{\Omega}}$, then $(\ol{u_k})_k$ converges to $\ol{u}$ strictly in $\BV(\R^N)$.
	\end{thm}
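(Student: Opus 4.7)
Since $\ol{u_k}$ and $\ol u$ agree on $\R^N\setminus\Omega$, the $\L^1(\R^N)$-part of the strict convergence reduces to $u_k\to u$ in $\L^1(\Omega)$. Moreover, since $u_k-u_0\in\C^\infty_\c(\Omega)$ forces $\ol{u_k}$ to be continuous across $\partial\Omega$, one has $|\D\ol{u_k}|(\R^N)=\int_\Omega|\nabla u_k|\dx+\int_{\R^N\setminus\Omega}|\nabla u_0|\dx$, whereas Theorem \ref{thm:dec_BV} applied with $E=\Omega$ gives
\[
|\D\ol u|(\R^N)=|\D u|(\Omega)+\int_{\partial\Omega}\bigl|\mathrm{T}_{\partial\Omega}u-\mathrm{T}_{\partial\Omega}u_0\bigr|\,\d\H^{N-1}+\int_{\R^N\setminus\Omega}|\nabla u_0|\dx\,.
\]
So the task reduces to producing $u_k\in u_0+\C^\infty_\c(\Omega)$ with $u_k\to u$ in $\L^1(\Omega)$ and $\int_\Omega|\nabla u_k|\dx\to|\D u|(\Omega)+\int_{\partial\Omega}|\mathrm{T}_{\partial\Omega}u-\mathrm{T}_{\partial\Omega}u_0|\,\d\H^{N-1}$: the missing boundary-trace jump of $\ol u$ has to resurface as bulk gradient energy of $u_k$ concentrated in a shrinking inner collar of $\partial\Omega$.

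\textbf{Boundary-layer construction.} Setting $\Omega_\epsilon\coleq\{x\in\Omega:\dist(x,\partial\Omega)>\epsilon\}$ and $\chi_\epsilon(x)\coleq\rho(\dist(x,\partial\Omega)/\epsilon)$ for a smooth $\rho\colon[0,\infty)\to[0,1]$ with $\rho\equiv0$ on $[0,1/2]$ and $\rho\equiv1$ on $[1,\infty)$, I would form
$v_\epsilon\coleq u_0+\chi_\epsilon(u-u_0)\1_\Omega\in\BV(\R^N)$,
which coincides with $u_0$ on a neighborhood of $\R^N\setminus\Omega$. The $\BV$ Leibniz rule together with Theorem \ref{thm:dec_BV} (used to identify $\D((u-u_0)\1_\Omega)$ and to observe that its $\H^{N-1}\mres\partial\Omega$-part is annihilated by $\chi_\epsilon\equiv0$ on $\partial\Omega$) yields
\[
\D v_\epsilon=(1-\chi_\epsilon)\nabla u_0\dx+\chi_\epsilon\,\D u+(u-u_0)\,\nabla\chi_\epsilon\dx
\qq\text{on }\Omega\,,
\]
with $\D v_\epsilon=\nabla u_0\dx$ on $\R^N\setminus\ol\Omega$ and no singular part across $\partial\Omega$. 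The triangle inequality for measures bounds $|\D v_\epsilon|(\R^N)$ by the sum of the total-variation masses of the three summands plus the $\R^N\setminus\Omega$-contribution. As $\epsilon\to0$, the first summand tends to $0$ by dominated convergence, the second increases to $|\D u|(\Omega)$ by monotone convergence, and the third, the key term, is handled by the bi-Lipschitz tubular parametrization $(y,t)\mapsto y+t\nu_\Omega(y)$ of a uniform inner neighborhood of $\partial\Omega$: a Fubini computation combined with the substitution $t=\epsilon s$ and $|\nabla\chi_\epsilon|=\rho'(\dist/\epsilon)/\epsilon$ gives
\[
\int_\Omega|u-u_0|\,|\nabla\chi_\epsilon|\dx
=\int_0^1\rho'(s)\int_{\{\dist(\cdot,\partial\Omega)=\epsilon s\}}|u-u_0|\,\d\H^{N-1}\,\d s+o(1)\,,
\]
and the inner slice integrals tend to $\int_{\partial\Omega}|\mathrm{T}_{\partial\Omega}u-\mathrm{T}_{\partial\Omega}u_0|\,\d\H^{N-1}$ by standard boundary-trace theory on Lipschitz domains, which provides $\L^1(\partial\Omega;\H^{N-1})$-convergence of the restrictions of $u-u_0$ along parallel surfaces to the trace. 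Since $\int_0^1\rho'\,\d s=1$, summing the three limits yields $\limsup_{\epsilon\to0}|\D v_\epsilon|(\R^N)\leq|\D\ol u|(\R^N)$, and Reshetnyak lower semicontinuity (Theorem \ref{anis_thm:LSC_TV_1} with $\p(x,\xi)=|\xi|$) together with the $\L^1(\R^N)$-convergence $v_\epsilon\to\ol u$ from dominated convergence upgrades this to the strict convergence $v_\epsilon\to\ol u$ in $\BV(\R^N)$.

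\textbf{Smoothing and diagonal.} Since $v_\epsilon-u_0$ is supported in $\ol{\Omega_{\epsilon/2}}\Subset\Omega$, a Friedrichs mollification $u_{\epsilon,\delta}\coleq u_0+\rho_\delta\ast(v_\epsilon-u_0)$ with $0<\delta<\epsilon/4$ lies in $u_0+\C^\infty_\c(\Omega)$, and for fixed $\epsilon$ converges strictly to $v_\epsilon$ in $\BV(\R^N)$ as $\delta\to0$ (via $\int_{\R^N}|\nabla(\rho_\delta\ast f)|\dx\leq|\D f|(\R^N)$ and lower semicontinuity). A diagonal selection $u_k\coleq u_{\epsilon_k,\delta_k}$ with $\epsilon_k,\delta_k\to0$ then provides the required sequence. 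The main technical obstacle I anticipate is the collar limit in the previous paragraph: the precise convergence of the $\H^{N-1}$-slice integrals $\int_{\{\dist(\cdot,\partial\Omega)=t\}}|u-u_0|\,\d\H^{N-1}$ to the boundary-trace integral as $t\to0^+$ rests on the Lipschitz regularity of $\partial\Omega$, which provides a uniform tubular neighborhood in which the parallel level sets are bi-Lipschitz copies of $\partial\Omega$, together with the $\L^1(\partial\Omega;\H^{N-1})$-inner-approximation of the trace that is standard in this setting.
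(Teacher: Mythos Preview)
The paper does not prove this theorem; it simply records it as a known result and cites \cite[Lemma B.2]{Bildhauer00} and \cite[Theorem 1.2]{Schmidt15}, noting that those sources even give the stronger area-strict convergence. Your outline is precisely the standard scheme behind those references: interpolate between $u$ and $u_0$ in a shrinking inner collar so that the trace jump resurfaces as bulk gradient, mollify the compactly supported remainder, and diagonalize. The reduction step and the mollification/diagonal step are correct as written.

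The one point that is not yet rigorous is the collar limit, which you yourself flag as the main obstacle. Your proposed resolution---that Lipschitz regularity of $\partial\Omega$ yields a tubular neighborhood in which the level sets $\{\dist(\cdot,\partial\Omega)=t\}$ are bi-Lipschitz copies of $\partial\Omega$---is an over-claim for merely Lipschitz boundary: the distance function is only Lipschitz, and its level sets need not be graphs over $\partial\Omega$ (already at a convex corner of a polygon the level set bends, and at a re-entrant corner it is worse). The way the cited sources handle this is to localize by a partition of unity subordinate to a finite cover by boundary charts in which $\partial\Omega$ is a Lipschitz graph $\{x_N=\gamma(x')\}$, and to build the cutoff in the \emph{graph-vertical} coordinate $x_N-\gamma(x')$ rather than in $\dist(\cdot,\partial\Omega)$. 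In those coordinates the coarea step becomes Fubini in $x_N$, the $\H^{N-1}$-density $\sqrt{1+|\nabla\gamma|^2}\,\d x'$ on the shifted graphs is independent of the shift, and the convergence of the slice integrals to the trace integral is exactly the $\L^1(\d x')$ characterization of the boundary trace on Lipschitz domains. With this modification your argument becomes the proof in the cited literature.
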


    We will also make use of one-sided strict approximations, essentially obtained in \cite[Theorem 3.3]{CarDalLeaPas88}. The only marginal extra features recorded in our subsequent restatement are that we allow for unbounded $U$ and add the inequalities $u_1\ge u_k$ and $v_1\le v_k$. However, at least these extra inequalities are not really new, since it has been shown in \cite[Theorem 4.5, Corollary 4.7]{Treinov21} by a slightly involved truncation argument that even the whole sequences $(u_k)_k$ and $(v_k)_k$ can be taken monotone. Furthermore, we remark that the sources provide even area-strict convergence, and we could do the same, but for simplicity we restrict ourselves once more to the usual strict convergence, which is enough for our purposes. In any case, below we include a brief deduction of precisely our statement on the basis of solely \cite[Theorem 3.3]{CarDalLeaPas88}.

	\begin{prop}[one-sided strict approximation]\label{prop:mon_area_strict_appr}
		Consider an open $U \subseteq \R^N$ and $u \in \BV(U)$. Then there exists a sequence $(u_k)_k$ in $\W^{1,1}(U)$ such that $(u_k)_k$ converges to $u$ strictly in $\BV(U)$ and such that $u_1\ge u_k\ge u$ \ae{} in $U$ for all $k$. Analogously, there exists a sequence $(v_k)_k$ in $\W^{1,1}(U)$ such that $(v_k)_k$ converges to $u$ strictly in $\BV(U)$ and such that $v_1\le v_k\le u$ \ae{} in $U$ for all $k$. If $u$ has compact support in $U$, we may even take $u_k\in\W^{1,1}_0(U)$ and $v_k\in\W^{1,1}_0(U)$, respectively.
	\end{prop}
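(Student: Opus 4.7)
The plan is to derive Proposition \ref{prop:mon_area_strict_appr} from \cite[Theorem 3.3]{CarDalLeaPas88} by a simple pointwise truncation at a fixed level. First, I would apply the quoted theorem to obtain a sequence $(\tilde u_k)_k$ in $\W^{1,1}(U)$, in $\W^{1,1}_0(U)$ in the compact-support case, with $\tilde u_k\ge u$ \ae{} in $U$ and $\tilde u_k\to u$ strictly in $\BV(U)$. Then I would define $u_k\coleq\tilde u_k\wedge\tilde u_1$ for each $k\in\N$, so that $u_1=\tilde u_1$ and $u_k\in\W^{1,1}(U)$ (in $\W^{1,1}_0(U)$ in the compact-support case, since that class is closed under pointwise minima of its elements). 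The chain of inequalities $u\le u_k\le\tilde u_1=u_1$ \ae{} is immediate from $u\le\tilde u_k$ and $u\le\tilde u_1$, while the $\L^1(U)$-convergence $u_k\to u$ follows from the sandwich $u\le u_k\le\tilde u_k$ together with $\tilde u_k\to u$ in $\L^1(U)$.

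The decisive step is the total variation convergence $|\D u_k|(U)\to|\D u|(U)$, for which my intention is to exploit the pointwise Sobolev identity
\[
  |\nabla(f\wedge g)|+|\nabla(f\vee g)|=|\nabla f|+|\nabla g|
  \qq\ae\text{ in }U\text{ for }f,g\in\W^{1,1}(U)\,,
\]
easily verified by distinguishing the cases $\{f<g\}$, $\{f>g\}$, $\{f=g\}$ (on the last of which $\nabla f=\nabla g$ \ae{} by standard Sobolev calculus). Inserting $f=\tilde u_k$, $g=\tilde u_1$ and integrating over $U$ yields
\begin{equation*}
  |\D u_k|(U)+|\D(\tilde u_k\vee\tilde u_1)|(U)=|\D\tilde u_k|(U)+|\D\tilde u_1|(U)\,.
\end{equation*}
Since $\tilde u_k\ge u$ and $u\le\tilde u_1$, one has $(\tilde u_k-\tilde u_1)_+\le\tilde u_k-u$ pointwise \ae, hence $\|\tilde u_k\vee\tilde u_1-\tilde u_1\|_{\L^1(U)}\le\|\tilde u_k-u\|_{\L^1(U)}\to0$. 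The $\L^1$-lower semicontinuity of the total variation, applied to both $u_k\to u$ and $\tilde u_k\vee\tilde u_1\to\tilde u_1$, then gives $\liminf_k|\D u_k|(U)\ge|\D u|(U)$ and $\liminf_k|\D(\tilde u_k\vee\tilde u_1)|(U)\ge|\D\tilde u_1|(U)$, while the right-hand side of the displayed identity converges to $|\D u|(U)+|\D\tilde u_1|(U)$ by the strict convergence of $\tilde u_k$. This forces both $\liminf$'s on the left to be actual limits equal to their respective lower bounds; in particular $|\D u_k|(U)\to|\D u|(U)$, which is the strict convergence of $u_k$ to $u$.

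The analogous approximation $(v_k)_k$ from below is obtained by running the same argument for $-u$ in place of $u$ (or equivalently by the mirror version of \cite[Theorem 3.3]{CarDalLeaPas88} followed by the dual truncation $\tilde v_k\vee\tilde v_1$). I expect the main point requiring care to be precisely the coupling of the pointwise Sobolev identity with BV lower semicontinuity in the last paragraph: each ingredient (the gradient formula for minima and maxima of Sobolev functions, the $\L^1$-convergence $\tilde u_k\vee\tilde u_1\to\tilde u_1$, and the standard lower semicontinuity) is classical in itself, but they must be arranged so that the single equality of total variations simultaneously squeezes out the desired strict-convergence limit for $|\D u_k|(U)$ without losing the strict-convergence information $|\D\tilde u_k|(U)\to|\D u|(U)$ carried by the starting sequence.
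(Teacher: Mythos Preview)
Your truncation argument via the Sobolev lattice identity $|\nabla(f\wedge g)|+|\nabla(f\vee g)|=|\nabla f|+|\nabla g|$ is correct and elegant: once one has a one-sided strict approximation $(\tilde u_k)_k$, capping by $\tilde u_1$ and combining the identity with lower semicontinuity does force $|\D u_k|(U)\to|\D u|(U)$ exactly as you describe. For \emph{bounded} $U$ this is a clean alternative to the paper's argument, which also truncates by $\min\{\,\cdot\,,u_1\}$ but controls the total variation differently, namely by showing that the Lebesgue measure of $\{w_{k,\ell}\ge u_1\}$ tends to zero and then using absolute continuity of $|\D u_1|$ with respect to $\LN$. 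Your route avoids that set-measure step entirely.

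There is, however, a genuine gap in your first sentence. The result you invoke, \cite[Theorem 3.3]{CarDalLeaPas88}, is stated for \emph{bounded} open sets, and the proposition explicitly allows arbitrary open $U\subseteq\R^N$ (this extension to unbounded $U$ is singled out in the paper as one of the two extra features beyond the cited source). So you cannot simply ``apply the quoted theorem'' on $U$; you need a localization device. The paper handles this by fixing $u_1\in\W^{1,1}(U)$ with $u_1\ge u$ and $u_1-u$ locally bounded away from zero, exhausting $U$ by large balls $\B_{R_k}$ with small boundary contributions, applying \cite[Theorem 3.3]{CarDalLeaPas88} on the bounded sets $U\cap\B_{R_k+1}$ to approximate the pasted function $u\1_{U\cap\B_{R_k}}+u_1\1_{U\setminus\B_{R_k}}$, and only then truncating by $u_1$. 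Your lattice-identity trick could certainly be grafted onto this localization (replacing the paper's absolute-continuity step), but the localization itself is not optional for unbounded $U$.

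A secondary point: in the compact-support case you assert that \cite[Theorem 3.3]{CarDalLeaPas88} delivers approximations directly in $\W^{1,1}_0(U)$. This is not what the paper extracts from that source; instead one multiplies the approximations by a fixed cut-off function equal to $1$ on $\mathrm{supp}\,u$ at the very end. You should either verify that the cited theorem gives compactly supported approximations or include the cut-off step explicitly and check that it preserves both the pointwise bounds and the strict convergence.
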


    \begin{proof}
      We fix $u_1\in\W^{1,1}(U)$ such that $u_1\ge u$ \ae{} in $U$ and such that $u_1-u$ is bounded away from zero \ae{} in $U\cap\B_r$ for each $r>0$. Moreover, for each $k$, we choose $R_k>0$ such that $\|u_1\|_{\L^1(U\setminus\B_{R_k})}+\|u\|_{\L^1(U\setminus\B_{R_k})}<\frac1k$, $|\D u_1|(U\setminus\B_{R_k})<\frac1k$, and $\|\mathrm{T}^\mathrm{ext}_{\partial\B_{R_k}}\!\!u_1\|_{\L^1(U\cap\partial\B_{R_k};\H^{N-1})}+\|\mathrm{T}^\mathrm{int}_{\partial\B_{R_k}}\!\!u\|_{\L^1(U\cap\partial\B_{R_k};\H^{N-1})}<\frac1k$, and we fix $\eps_k>0$ such that $u_1-u\ge\eps_k$ on $U\cap\B_{R_k}$. Then we apply \cite[Theorem 3.3]{CarDalLeaPas88} to $\widetilde u_k\coleq u\1_{U\cap\B_{R_k}}+u_1\1_{U\setminus\B_{R_k}}\in\BV(U)$ on the bounded open set $U\cap\B_{R_k+1}$ to obtain a sequence $(w_{k,\ell})_\ell$ in $\W^{1,1}(U\cap\B_{R_k+1})$ such that $(w_{k,\ell})_\ell$ converges to $\widetilde u_k$ strictly in $\BV(U\cap\B_{R_k+1})$ and such that $w_{k,\ell}\ge\widetilde u_k\ge u$ \ae{} in $U\cap\B_{R_k+1}$ for all $\ell$. Now we consider $\min\{w_{k,\ell},u_1\}\in\W^{1,1}(U)$, which coincides with $u_1$ on $U\cap(\B_{R_k+1}\setminus\B_{R_k})$ and is extended by the values of $u_1$ also outside $\B_{R_k+1}$. We record
      \[
        \lim_{\ell\to\infty}\|{\min}\{w_{k,\ell},u_1\}-u\|_{\L^1(U)}
        =\|\widetilde u_k-u\|_{\L^1(U)}
        =\|u_1-u\|_{\L^1(U\setminus\B_{R_k})}
        <{\ts\frac1k}\,,
      \]
      and we observe $|\{w_{k,\ell}\ge u_1\}\cap\B_{R_k}|\le|\{w_{k,\ell}-\widetilde u_k\ge\eps_k\}|\le\frac1{\eps_k}\|w_{k,\ell}-\widetilde u_k\|_{\L^1(U)}\to0$ and, as a consequence, also $|\D u_1|(\{w_{k,\ell}\ge u_1\}\cap\B_{R_k})\to0$ for $\ell\to\infty$. All in all, we infer
      \[\begin{aligned} 
        &\limsup_{\ell\to\infty}\big|\D\min\{w_{k,\ell},u_1\}\big|(U)\\
          &\le\limsup_{\ell\to\infty}\big[|\D w_{k,\ell}|(U\cap\B_{R_k})
          +|\D u_1|(\{w_{k,\ell}\ge u_1\}\cap\B_{R_k})\big]
          +|\D u_1|(U\setminus\B_{R_k})\\
          &\le|\D\widetilde u_k|(U)
          +|\D u_1|(U\setminus\B_{R_k})\\
          &=|\D u|(U\cap\B_{R_k})
          +\|\mathrm{T}^\mathrm{ext}_{\partial\B_{R_k}}\!\!u_1-\mathrm{T}^\mathrm{int}_{\partial\B_{R_k}}\!\!u\|_{\L^1(U\cap\partial\B_{R_k};\H^{N-1})}
          +2|\D u_1|(U\setminus\B_{R_k})\\
          &<|\D u|(U)+\ts{\frac3k}\,.
      \end{aligned}\]
      If now, for $k\ge2$, we choose
      $u_k\coleq\min\{w_{k,\ell_k},u_1\}\in\W^{1,1}(U)$ with suitably large $\ell_k$, then $(u_k)_k$ still converges to $u$ strictly in $\BV(U)$, and we have ensured $u_1\ge u_k\ge u$ \ae{} in $U$. This completes the proof for the main $(u_k)_k$ case. (As a side remark we record that the preceding argument reduces quite a bit in case of bounded $U$, since then one can take $u_1\coleq u+1$, need not introduce $R_k$, $\eps_k$, $\widetilde u_k$, and can apply \cite[Theorem 3.3]{CarDalLeaPas88} directly to $u$ itself to get just one sequence $(w_k)_k$.)

      Now the $(v_k)_k$ case follows from the $(u_k)_k$ one by changing signs.      

      Finally, if $u$ has compact support in $U$, we additionally multiply all $u_k$ and $v_k$ with a fixed cut-off function in order to ensure even $u_k\in\W^{1,1}_0(U)$ and $v_k\in\W^{1,1}_0(U)$, respectively; compare e.\@g.\@ with \cite[Lemma 2.22]{Schmidt25}.
    \end{proof}

    For our purposes, the decisive feature of Proposition \ref{prop:mon_area_strict_appr} is that the approximations obtained are good enough for applying both the Reshetnyak continuity theorem and the following basic continuity lemma.
    
    \begin{lem}[fine one-sided continuity properties of $\mu$-integrals]\label{lem:mon_area_strict_appr}
        Consider a non-negative Radon measure $\mu$ on an open set $U\subseteq\R^N$ such that $\mu(Z)=0$ for every $\H^{N-1}$-negligible Borel set $Z\subseteq U$ and $\int_U w^+\,\d\mu<\infty$ for every non-negative $w\in\BV(U)$. If a sequence $(u_k)_k$ in $\W^{1,1}(U)$ converges to $u\in\BV(U)$ strictly in $\BV(U)$ and satisfies $u_1\ge u_k\ge u$ \ae{} in $U$ for all $k$, then there exists a subsequence $(u_{k_\ell})_\ell$ such that
        \begin{equation}\label{eq:mu-conv-plus}
          \lim_{\ell \to \infty} \int_{U} u_{k_\ell}^\ast\,\d\mu
          = \int_{U} u^+ \,\d\mu\,.
        \end{equation}
        Analogously, if a sequence $(v_k)_k$ in $\W^{1,1}(U)$ converges to $u\in\BV(U)$ strictly in $\BV(U)$ and now satisfies $v_1\le v_k\le u$ \ae{} in $U$ for all $k$, then there exists a subsequence $(v_{k_\ell})_\ell$ such that
        \begin{equation}\label{eq:mu-conv-minus}
          \lim_{\ell \to \infty} \int_{U} v_{k_\ell}^\ast\,\d\mu
          = \int_{U} u^- \,\d\mu\,.
        \end{equation}
    \end{lem}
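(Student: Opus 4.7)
The plan is to reduce the convergence of the integrals to a $\mu$-\ae{} (and hence dominated) pointwise convergence of the representatives $u_{k_\ell}^\ast$ to $u^+$. The starting point is Theorem \ref{strict_Hausdorff_repr} applied to the strictly convergent sequence $(u_k)_k$: it yields a subsequence $(u_{k_\ell})_\ell$ with
\[
  u^-(x)\leq\liminf_{\ell\to\infty}u_{k_\ell}^-(x)\leq\limsup_{\ell\to\infty}u_{k_\ell}^+(x)\leq u^+(x)
  \qq\text{for }\H^{N-1}\text{-\ae{} }x\in U\,.
\]
Since $u_{k_\ell}\in\W^{1,1}(U)$, we have $u_{k_\ell}^+=u_{k_\ell}^-=u_{k_\ell}^\ast$ $\H^{N-1}$-\ae{}, so the inner $\liminf$ and $\limsup$ collapse to a full limit equal to $u^\ast_{k_\ell}$.

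Next I will exploit the monotonicity $u_k\ge u$ \ae{} in $U$ to squeeze this limit from below by $u^+$. In fact, since the density sets $\{w>t\}^1$ and $\{w>t\}^+$ depend only on the $\L^N$-equivalence class of $w$, the inequality $u_k\ge u$ \ae{} transfers to $\{u_k>t\}^1\supseteq\{u>t\}^1$ and $\{u_k>t\}^+\supseteq\{u>t\}^+$, which by the very definitions give $u_k^-\ge u^-$ and $u_k^+\ge u^+$ \emph{everywhere} in $U$. In particular, $u_{k_\ell}^\ast=u_{k_\ell}^+\ge u^+$ $\H^{N-1}$-\ae. Combining this with the upper bound from Theorem \ref{strict_Hausdorff_repr}, I will conclude
\[
   \lim_{\ell\to\infty}u_{k_\ell}^\ast(x)=u^+(x)
   \qq\text{for }\H^{N-1}\text{-\ae{} }x\in U\,,
\]
which, thanks to the assumption that $\mu$ vanishes on $\H^{N-1}$-negligible sets, holds also $\mu$-\ae{} in $U$.

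To pass from pointwise to integral convergence I will invoke Lebesgue's dominated convergence theorem. For the dominating functions I will use the decomposition of Lemma \ref{lem:dec_upper_limit} together with the integrability hypothesis on $\mu$: the functions $(u_1)_+,(u_1)_-,u_+,u_-$ all belong to $\BV(U)$ and are non-negative, so by hypothesis $(u_1)_+^+,(u_1)_-^+,(u_+)^+,(u_-)^+$ all lie in $\L^1(U;\mu)$. From $u_{k_\ell}\le u_1$ \ae{} and the argument above, I obtain $u_{k_\ell}^\ast\le u_1^+=(u_1)_+^+-(u_1)_-^-\le(u_1)_+^+$ $\H^{N-1}$-\ae{}, while $u_{k_\ell}^\ast\ge u^+\ge u^-=(u_+)^--(u_-)^+\ge-(u_-)^+$ $\H^{N-1}$-\ae{}. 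Both bounds are $\mu$-integrable, hence
\[
  \lim_{\ell\to\infty}\int_U u_{k_\ell}^\ast\,\d\mu=\int_U u^+\,\d\mu\,,
\]
which is \eqref{eq:mu-conv-plus}. The companion statement \eqref{eq:mu-conv-minus} follows by applying the preceding reasoning to the sequence $({-}v_k)_k$ and the function ${-}u$, noting that $({-}w)^+={-}w^-$.

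The only real subtlety is the shift from the $\L^N$-\ae{} ordering $u_k\ge u$ to the everywhere-ordering of the precise representatives $u_k^\pm\ge u^\pm$; this is handled via the $\L^N$-invariance of the density sets. Beyond that, the proof is a bookkeeping of representatives combined with Theorem \ref{strict_Hausdorff_repr} and dominated convergence, and I do not expect any further obstacle.
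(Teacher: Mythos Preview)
Your proof is correct and follows essentially the same approach as the paper: both combine the pointwise bounds from Theorem~\ref{strict_Hausdorff_repr} with the $\H^{N-1}$-\ae{} inequality $u_{k_\ell}^\ast\ge u^+$ (coming from $u_k\ge u$ \ae{}) to obtain $\mu$-\ae{} convergence $u_{k_\ell}^\ast\to u^+$, and then apply dominated convergence with integrable bounds supplied by the hypothesis on $\mu$. Your version is slightly more explicit in justifying the passage from the \ae{} ordering to the ordering of representatives and in building the dominating functions via Lemma~\ref{lem:dec_upper_limit}, whereas the paper invokes $u_1^\ast,u^+\in\L^1(U;\mu)$ more directly; these are cosmetic differences only.
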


    \begin{proof}
        In case of the sequence $(u_k)_k$, the \ae{} inequality $u_1\ge u_k\ge u$ induces the $\H^{N-1}$-\ae{} inequality $u_1^\ast\ge u_k^\ast\ge u^+$ in $U$, and hence Theorem \ref{strict_Hausdorff_repr} gives the subsequence such that $\lim_{\ell\to\infty}u_{k_\ell}^\ast=u^+$ holds $\H^{N-1}$-\ae{} in $U$. By assumption on $\mu$, the latter convergence remains valid $\mu$-\ae{} in $U$. Since the assumptions on $\mu$ ensure also $u_1^\ast\in\L^1(U;\mu)$ and $u^+\in\L^1(U;\mu)$, we can apply the dominated convergence theorem to deduce \eqref{eq:mu-conv-plus}.

        In case of the sequence $(v_k)_k$, we apply \eqref{eq:mu-conv-plus} with ${-}v_k$ in place of $u_k$ and ${-}u$ in place of $u$. Then we deduce \eqref{eq:mu-conv-minus} by reversing the sign on both sides and observing $-({-}u)^+=u^-$.
    \end{proof}
	
	\begin{lem}[basic properties of truncations] \label{lem:cut_M}
		Given an open set $U \subseteq \R^N$ and a positive constant $M$, for every $u \in \BV(U)$ we introduce the truncation $u^M$ of $u$ at level $M$ defined through
		\begin{equation} \label{eq:cutoff_M}
			u^M\coleq\max\left\{\min\{u,M\},{-}M\right\}\,.
		\end{equation}
		We record\textup{:}
		\begin{enumerate}[{\rm(i)}]
			\item It is $u^M \in \BV(U) \cap \L^{\infty}(U)$.\label{item:cut_M_i}
			\item\label{item:cut_M_ii}
			If $\p$ satisfies Assumption \ref{assum:phi}, we have additivity of anisotropic $\TV$ on truncations in the sense that
			\[
			|\D u|_{\p} = |\D(u-u^M)|_{\p}+|\D u^M|_{\p} \qq \text{ as measures on } U.
			\]
			\item There hold $u^M \to u$ and $(u^M)_\pm \to u_\pm$ strongly in $\BV(U)$ as $M \to \infty$. \label{item:cut_M_iii}
            \item If $\p$ satisfies Assumption \ref{assum:phi}, we have $\p$-strict convergence $u^M \to u$ in $\BV(U)$ for $M \to \infty$. \label{item:cut_M_iv}         
			\item We have the $\H^{N-1}$-\ae{} convergences $(u^M)^\pm\to u^\pm$ on $U$ for $M\to\infty$. \label{item:cut_M_v}
		\end{enumerate}
  
		\begin{proof}
			\eqref{item:cut_M_i} We exploit that minimum and maximum of $\BV$ functions are still $\BV$ and that evidently $|u^M|\leq M$.
			
			\smallskip
			
			\noindent\eqref{item:cut_M_ii} Suppose first $u \in \BV(U)$ is non-negative almost everywhere. Then one checks that, for any $M>0$, there hold $u-u^M=(u-M)_+$ and $u^M=M-(u-M)_-$ \ae{} in $U$. Thus, applying \eqref{anis_eq:add_parts} we find
			\begin{align*}
				|\D(u-u^M)|_{\p}+|\D u^M|_{\p}
				&=|\D(u-M)_+|_{\p} + |\D(M-(u-M)_-)|_{\p}\\
				&=|\D(u-M)_+|_{\p} + |\D(u-M)_-|_{\widetilde\p}
				=|\D(u-M)|_{\p}=|\D u|_{\p},
			\end{align*}
			as required. For $u \in \BV(U)$ of arbitrary sign, we apply the previous result to $u_+$ and $u_-$ and employ $(u-u^M)_\pm = u_\pm - (u_\pm)^M$ and $(u_\pm)^M=(u^M)_\pm$ \ae{} on $U$. Using \eqref{anis_eq:add_parts} again, we get
			\begin{align*}
				|\D u|_{\p}
				=|\D u_+|_{\p} + |\D u_-|_{\widetilde{\p}}
				&= |\D(u_+-(u_+)^M)|_{\p}+|\D (u_+)^M|_{\p} + |\D(u_--(u_-)^M)|_{\widetilde\p}+|\D (u_-)^M|_{\widetilde\p} \\
				&=|\D((u-u^M)_+))|_{\p}+|\D(u^M)_+|_{\p} + |\D((u-u^M)_-))|_{\widetilde\p}+|\D(u^M)_-|_{\widetilde\p} \\
				&=|\D(u-u^M)|_{\p}+|\D u^M|_{\p}
			\end{align*}
			as required. 
			
			\smallskip
			
			\noindent\eqref{item:cut_M_iii}
            Since the function $t\mapsto\max\{\min\{t,M\},{-}M\}$ is a contraction, a standard argument (see e.\@g.\@ the first part of the proof of \cite[Theorem 3.96]{AFP00}) yields
			\[
			|\D u^M|(U) \leq |\D u|(U) \qq \text{ for all } M > 0\,.
			\]
			In addition, as one easily checks $u^M \to u$ in $\L^1(U)$ as $M\to\infty$, the lower semicontinuity of the total variation yields
			\[
			|\D u|(U) 
			\leq \liminf_{M \to \infty} |\D u^M|(U)\,.
			\]
            Combining the previous observations, strict convergence is achieved. Moreover, by \eqref{item:cut_M_ii} for the case $\p(x,\xi)=|\xi|$, strict convergence of the truncated sequence implies its strong convergence. The same works for $(u_\pm)^M$.
			
			\smallskip

            \noindent\eqref{item:cut_M_iv} From \eqref{item:cut_M_iii} and \eqref{eq:comp-p0}, we have $|\D(u-u^M)|_{\p}(U) \to 0$ for $M \to \infty$, and thus the claim follows by \eqref{item:cut_M_ii}. (If even Assumption \ref{assum:phi}\eqref{case:cont} applies, the claim also follows from \eqref{item:cut_M_iii} and Theorem \ref{anis_thm:Reshetnyak_cont}.)

            \smallskip
   
			\noindent\eqref{item:cut_M_v} For each fixed $x\in U$ such that $u^\pm(x)$ is finite, the convergence is immediate from the observation that $(u^M)^\pm(x)=u^\pm(x)$ for all $M>|u^\pm(x)|$.
		\end{proof}
	\end{lem}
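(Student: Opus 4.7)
The plan is to prove the five items in order, with Lemma \ref{anis_lem:add_parts} (additivity of anisotropic $\TV$ on positive/negative parts) serving as the principal technical input. Item (ii) is the key identity from which items (iii) and (iv) follow, while (i) and (v) are essentially formal. For (i), the function $u^M=\max\{\min\{u,M\},{-}M\}$ arises from $u\in\BV(U)$ by two lattice operations, each preserving $\BV(U)$, so $u^M\in\BV(U)$; the bound $|u^M|\le M$ is immediate from the definition, so $u^M\in\L^\infty(U)$.

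For (ii), which I expect to be the main obstacle, I would first reduce to the case $u\ge 0$ \ae{} on $U$. Then the \ae{} identities $u-u^M=(u-M)_+$ and $u^M=M-(u-M)_-$ hold, and since adding a constant does not affect the distributional gradient, Lemma \ref{anis_lem:add_parts} applied to $u-M\in\BV(U)$ yields
\[
   |\D u|_\p=|\D(u-M)|_\p=|\D(u-M)_+|_\p+|\D(u-M)_-|_{\widetilde\p}=|\D(u-u^M)|_\p+|\D u^M|_\p\,.
\]
For general $u\in\BV(U)$, I would decompose $u=u_+-u_-$, apply the previous case to $u_+$ and $u_-$ separately, and then combine the resulting identities with the \ae{} compatibility relations $(u^M)_\pm=(u_\pm)^M$ and $(u-u^M)_\pm=u_\pm-(u_\pm)^M$ together with one more application of Lemma \ref{anis_lem:add_parts} to $u$ itself (which provides $|\D u|_\p=|\D u_+|_\p+|\D u_-|_{\widetilde\p}$). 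The switch between $\p$ and $\widetilde\p$ for the negative parts is exactly what makes this reduction consistent.

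For (iii), since $t\mapsto\max\{\min\{t,M\},{-}M\}$ is a $1$-Lipschitz contraction fixing $0$, a standard estimate (see for instance the first part of the proof of \cite[Theorem 3.96]{AFP00}) gives $|\D u^M|(U)\le|\D u|(U)$; combined with $u^M\to u$ in $\L^1(U)$ and the lower semicontinuity of the total variation, this yields strict convergence of $u^M$ to $u$ in $\BV(U)$. Item (ii), specialized to the isotropic integrand $\p(x,\xi)=|\xi|$, then forces $|\D(u-u^M)|(U)\to 0$, hence norm convergence in $\BV(U)$; the same argument applied to $u_\pm$ in place of $u$ delivers the convergence $(u^M)_\pm\to u_\pm$. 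Item (iv) follows immediately from (iii), the comparison bound $\p(x,\xi)\le\beta|\xi|$ (which upgrades $|\D(u-u^M)|(U)\to 0$ to $|\D(u-u^M)|_\p(U)\to 0$), and item (ii). Finally, (v) reduces to the observation that at any $x\in U$ where $u^+(x)$ and $u^-(x)$ are finite (so $\H^{N-1}$-\ae{} on $U$), every $M>\max\{|u^+(x)|,|u^-(x)|\}$ leaves the approximate upper and lower limits unchanged, i.e.\ $(u^M)^\pm(x)=u^\pm(x)$. The principal difficulty is thus the sign and $\p$-vs.-$\widetilde\p$ bookkeeping in (ii); once this is settled, (iii)–(v) are routine combinations of lower semicontinuity, contraction, and the comparison \eqref{eq:comp-p0}.
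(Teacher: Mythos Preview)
Your proposal is correct and follows essentially the same approach as the paper: the reduction of (ii) to the non-negative case via Lemma \ref{anis_lem:add_parts}, the subsequent recombination through $(u^M)_\pm=(u_\pm)^M$ and $(u-u^M)_\pm=u_\pm-(u_\pm)^M$, and the derivation of (iii)--(v) from (ii) together with the contraction property, lower semicontinuity, and the comparison bound \eqref{eq:comp-p0} all mirror the paper's argument.
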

	
    \begin{lem}[fine continuity property of $\mu$-integrals along sequences of truncations]\label{lem:int-uM-to-int-u}
        Consider a non-negative Radon measure $\mu$ on open $U\subseteq\R^N$ such that $\mu(Z)=0$ for every $\H^{N-1}$-negligible Borel set $Z\subseteq U$. Then, for every $u\in\BV(U)$ with $\int_U|u|^+\d\mu<\infty$ and its truncations $u^M$ from Lemma \ref{lem:cut_M}, there hold
        \begin{equation}\label{eq:int-uM-to-int-u}
          \lim_{M \to \infty} \int_U \big(u^M\big)^+ \,\d\mu
          = \int_U u^+\ \d \mu
          \qq\text{and}\qq
          \lim_{M \to \infty} \int_U \big(u^M\big)^- \,\d\mu
          = \int_U u^-\ \d \mu\,.
        \end{equation}
    \end{lem}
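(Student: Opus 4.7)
The plan is to deduce \eqref{eq:int-uM-to-int-u} directly from the dominated convergence theorem. The $\H^{N-1}$-\ae{} convergence $(u^M)^\pm\to u^\pm$ in $U$ is already recorded in Lemma \ref{lem:cut_M}\eqref{item:cut_M_v}, and by the hypothesis on $\mu$ \textup{(}vanishing on $\H^{N-1}$-negligible Borel sets\textup{)} this convergence transfers to $\mu$-\ae{} convergence on $U$. The only remaining task is to exhibit an $\L^1(U;\mu)$ dominating function, for which the natural candidate is $|u|^+$, which is $\mu$-integrable by hypothesis.

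First I would verify the general pointwise bound $|v^\pm|\le(|v|)^+$ for any measurable $v\colon U\to\R$. Indeed, since $v\le|v|$ the monotonicity of the approximate upper limit yields $v^+\le(|v|)^+$, and similarly applied to ${-}v$ gives $({-}v)^+\le(|v|)^+$, i.\@e.\@ $v^-\ge{-}(|v|)^+$. Combining with $v^+\ge v^-$, this gives $|v^+|\le(|v|)^+$ and likewise $|v^-|\le(|v|)^+$. Next I would apply this to $v=u^M$ together with the pointwise identity $|u^M|=\min\{|u|,M\}\le|u|$ on $U$, which in turn forces $(|u^M|)^+\le(|u|)^+=|u|^+$ by the monotonicity of the approximate upper limit. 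Combining yields $|(u^M)^\pm|\le|u|^+$ pointwise on $U$, and in particular $\mu$-\ae\ on $U$.

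With this uniform domination at hand and the $\mu$-integrability of $|u|^+$ from the hypothesis, the dominated convergence theorem now yields both equalities in \eqref{eq:int-uM-to-int-u}.

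The only mildly delicate step is the pointwise bound $|(u^M)^\pm|\le|u|^+$; everything else is a direct application of convergence results already established. No strict-approximation or variational arguments are needed here, since Lemma \ref{lem:cut_M}\eqref{item:cut_M_v} already supplies the pointwise convergence needed for dominated convergence.
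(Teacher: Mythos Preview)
Your argument is correct. The paper takes a somewhat different route: it first treats non-negative $u$, where $u^M=\min\{u,M\}$ is increasing in $M$ and hence so is $(u^M)^\pm$, so monotone convergence (rather than dominated convergence) gives \eqref{eq:int-uM-to-int-u}; it then reduces the general signed case to the non-negative one via the $\H^{N-1}$-\ae{} decomposition $u^\pm=(u_+)^\pm-(u_-)^\mp$ of Lemma~\ref{lem:dec_upper_limit} together with $(u^M)_\pm=(u_\pm)^M$. Your approach instead produces the uniform dominating function $|u|^+$ directly via the pointwise bound $|(u^M)^\pm|\le(|u^M|)^+\le|u|^+$ and invokes dominated convergence in one step. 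The trade-off is that you avoid any call to Lemma~\ref{lem:dec_upper_limit} at the cost of the small monotonicity estimate $|v^\pm|\le(|v|)^+$, while the paper reuses its structural decomposition lemma; both arguments are short and essentially equivalent in effort.
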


    \begin{proof}
         Suppose first that $u\in\BV(U)$ is non-negative. Then we have $u^{M_1}\leq u^{M_2}$ for $M_1\leq M_2$, and from Lemma \ref{lem:cut_M}\eqref{item:cut_M_v} we infer $(u^M)^\pm\to u^\pm$ for $M\to\infty$ first $\H^{N-1}$-\ae{} and by assumption on $\mu$ also $\mu$-\ae{} on $U$. Hence, the monotone convergence theorem confirms the claim \eqref{eq:int-uM-to-int-u} in this case. 

         For arbitrary $u\in\BV(U)$, Lemmas \ref{lem:cut_M}\eqref{item:cut_M_i} and \ref{lem:dec_upper_limit} yield $\H^{N-1}$-\ae{} decompositions $u^\pm=(u_+)^\pm-(u_-)^\mp$ and
         $(u^M)^\pm=((u^M)_+)^\pm-((u^M)_-)^\mp=((u_+)^M)^\pm-((u_-)^M)^\mp$, and these remain valid $\mu$-\ae{} as well. We can thus apply the claim already established to the non-negative functions $u_+\in\BV(U)$ and $u_-\in\BV(U)$ and combine the resulting integral convergences to reach \eqref{eq:int-uM-to-int-u} in the general case. Here, in combining we exploit that thanks to the assumption $\int_U|u|^+\d\mu<\infty$ all relevant integrals remain well-defined and finite.
    \end{proof}

	\begin{rem} \label{rem:additivity_upper_lower_limit}
		For a \textbf{non-negative} measurable function $u\colon U\to{[0,\infty)}$ on open $U \subseteq \R^N$ and a positive constant $M$, there hold
		\begin{equation} \label{eq:additivity_upper_lower_limit}
			u^+ = (u-u^M)^+ + (u^M)^+ \ \text{ and } \ 	u^- = (u-u^M)^- + (u^M)^-
            \qq\text{in }U\,.  
		\end{equation}
    \end{rem}

    \begin{proof}
        From the definition of the approximate upper and lower limits, one can check by case distinction between $u^\pm(x)>M$ and $u^\pm(x)\leq M$ that $(u-u^M)^\pm=(u^\pm-M)_+$ and $(u^M)^\pm=\min\{u^\pm,M\}$ hold. The claims then follow when taking into account $u^\pm=(u^\pm-M)_++\min\{u^\pm,M\}$.
    \end{proof}

	\subsection[\texorpdfstring{$1$-capacity}{}]{\boldmath$1$-capacity}
	
	In the following we use $1$-capacity in the sense of \cite{CarDalLeaPas88}. Indeed, one checks easily that the following definition is equivalent to the one introduced in \cite[Definition 2.1]{CarDalLeaPas88}.
	
	\begin{defi}[$1$-capacity]
		The $1$-capacity \ka or $\BV$-capacity\kz{} of an arbitrary set $E \subseteq \R^N$ is defined as
		\[
          \mathrm{Cap}_1(E) \coleq \inf \left\{ \int_{\R^N} |\nabla u|\,\dx \, : \, u \in \W^{1,1}(\R^N), \ u \geq 1\text{ a.\@e.\@ on } U,\, \ U\text{ open, }\, E \subseteq U \right\}
        \]
		\ka with the convention $\mathrm{Cap}_1(E) \coleq \infty$ in case that the preceding infimum runs over the empty set\kz.
	\end{defi}

    It is not difficult to check that $1$-capacity is an outer measure in the sense that for $E_k,E\subseteq\R^N$ there holds
	\begin{equation} \label{eq:subadd_cap}
		E \subseteq \bigcup_{k =1}^{\infty} E_k
        \implies
		\mathrm{Cap}_1(E) \leq \sum_{k =1}^{\infty} \mathrm{Cap}_1(E_k)\,.
	\end{equation}
	
    A consequence of \cite[Theorem 2.1]{CarDalLeaPas88} is the following alternative characterization of $1$-capacity. 
	
	\begin{prop}[perimeter characterization of $1$-capacity] \label{prop:per_cap}
		For every $E \subseteq \R^N$, it is 
		\begin{equation*}
			\mathrm{Cap}_1(E) 
			= \inf \left\{\P(H) \, : \, H \subseteq \R^N \text{ measurable}, \, |H|<\infty, \, E \subseteq H^+ \right\}\,.
		\end{equation*}
	\end{prop}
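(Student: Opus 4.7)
\medskip
\noindent\textbf{Proof plan.}

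I aim to prove the two inequalities separately. Write $\mathscr{P}(E)\coleq\inf\{\P(H)\,:\,H\subseteq\R^N\text{ measurable},\,|H|<\infty,\,E\subseteq H^+\}$ for the right-hand side.

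For $\mathscr{P}(E)\leq\mathrm{Cap}_1(E)$ I plan a single application of the coarea formula. Let $u\in\W^{1,1}(\R^N)$ be admissible in the definition of $\mathrm{Cap}_1(E)$, so that $u\geq1$ \ae{} on an open neighborhood $U$ of $E$. The isotropic case of Theorem \ref{anis_thm:coarea} (choose $\p(x,\xi)=|\xi|$) yields
\[
  \int_0^1\P(\{u>t\})\dt\leq\int_{-\infty}^\infty\P(\{u>t\})\dt=\int_{\R^N}|\nabla u|\dx\,,
\]
so there exists $t\in(0,1)$ with $\P(H)\leq\int|\nabla u|\dx$ for $H\coleq\{u>t\}$. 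Chebyshev's inequality gives $|H|\leq\|u\|_{\L^1(\R^N)}/t<\infty$, and $u\geq1>t$ \ae{} on $U$ forces $|U\setminus H|=0$. Hence, for every $x\in E\subseteq U$ and every $r>0$ small enough that $\B_r(x)\subseteq U$, we obtain $|\B_r(x)\cap H|=|\B_r(x)|$, which shows $x\in H^1\subseteq H^+$. Therefore $\mathscr{P}(E)\leq\P(H)\leq\int|\nabla u|\dx$, and taking the infimum over admissible $u$ completes this direction.

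For the converse $\mathrm{Cap}_1(E)\leq\mathscr{P}(E)$, I fix measurable $H$ with $|H|<\infty$, $\P(H)<\infty$, and $E\subseteq H^+$ (otherwise there is nothing to prove), and I rely on \cite[Theorem 2.1]{CarDalLeaPas88}. That result replaces the original $\W^{1,1}$-definition of $\mathrm{Cap}_1$ by an infimum over $\BV(\R^N)$ functions $u$ with the pointwise condition $u^+\geq1$ on $E$ (rather than the ``$u\geq1$ \ae{} on an open neighborhood of $E$'' requirement). The candidate $u=\1_H$ is admissible in this $\BV$ variant: it lies in $\BV(\R^N)$ with $|\D\1_H|(\R^N)=\P(H)$, and the preliminary identity $(\1_A)^+=\1_{A^+}$ recorded in Subsection 2.2 gives $(\1_H)^+=\1_{H^+}\geq\1_E$. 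Consequently $\mathrm{Cap}_1(E)\leq|\D\1_H|(\R^N)=\P(H)$, and passing to the infimum over $H$ yields the claim.

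The main obstacle lies entirely in the second direction, where a direct Sobolev-based construction fails: naive mollifications of $\1_H$, or the strict approximation of Theorem \ref{strict_int_appr_BV}, produce $\W^{1,1}$ functions that need not satisfy $\geq1$ on any \emph{open} neighborhood of $H^+$, since points of $H^+$ may have arbitrarily small lower density in $H$. The bridge via \cite[Theorem 2.1]{CarDalLeaPas88}, which legitimizes testing with $\BV$ functions under the weaker pointwise condition on $u^+$, is thus the essential ingredient and reduces the proof to a one-line verification with $u=\1_H$.
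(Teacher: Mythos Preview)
Your proposal is correct and proceeds exactly along the lines the paper indicates: the paper does not give an independent proof of this proposition but simply records it as ``a consequence of \cite[Theorem 2.1]{CarDalLeaPas88}'', and your argument spells out precisely how that consequence is obtained---a coarea/level-set selection for the easy inequality $\mathscr{P}(E)\le\mathrm{Cap}_1(E)$, and the $\BV$-reformulation of capacity from \cite[Theorem 2.1]{CarDalLeaPas88} tested with $u=\1_H$ for the converse. Your remark on why a direct mollification argument would not suffice for the second direction is accurate and well observed.
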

	
	Finally, the subsequent result of \cite[Section 4]{FedZie72} (compare also \cite[Theorem 5.12]{EvGar15}, for instance) shows that $\mathrm{Cap}_1$ has the same negligible sets of $\H^{N-1}$. 
	
	\begin{prop}[negligible sets] \label{prop:negl_cap}
		For every Borel set $E \subseteq \R^N$, we have
		\begin{equation*}
			\mathrm{Cap}_1(E)=0
            \iff \H^{N-1}(E)=0\,.
		\end{equation*}
	\end{prop}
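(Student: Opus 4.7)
The plan is to establish the two implications by covering arguments, with Proposition \ref{prop:per_cap} providing the bridge in the nontrivial direction. Throughout I would work with the $(N{-}1)$-dimensional Hausdorff content $\H^{N-1}_\infty$, relying on the elementary fact that for Borel sets $\H^{N-1}_\infty(E)=0$ is equivalent to $\H^{N-1}(E)=0$ (any ball cover with $\sum_ir_i^{N-1}<\eps$ automatically has $r_i<\eps^{1/(N-1)}$, so diameters must tend to $0$ with $\eps$).

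For the easy direction $\H^{N-1}(E)=0\Rightarrow\mathrm{Cap}_1(E)=0$, I would fix $\eps\in(0,1)$ and choose a countable ball cover $(B_{r_i}(x_i))_i$ of $E$ with $r_i\le 1$ and $\sum_ir_i^{N-1}<\eps$. A standard Lipschitz cutoff $\eta_i\colon\R^N\to[0,1]$ on $B_{2r_i}(x_i)$ with $\eta_i\equiv 1$ on $B_{r_i}(x_i)$ and $|\nabla\eta_i|\le C/r_i$ satisfies $\int_{\R^N}|\nabla\eta_i|\dx\le Cr_i^{N-1}$. Then $u\coleq\min\{\sum_i\eta_i,1\}$ lies in $\W^{1,1}(\R^N)$, equals $1$ on the open neighborhood $\bigcup_iB_{r_i}(x_i)$ of $E$, and satisfies $\int_{\R^N}|\nabla u|\dx\le\sum_i\int_{\R^N}|\nabla\eta_i|\dx\le C\eps$, yielding $\mathrm{Cap}_1(E)\le C\eps$. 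Letting $\eps\to 0$ concludes.

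For the main direction $\mathrm{Cap}_1(E)=0\Rightarrow\H^{N-1}(E)=0$, I would prove the boxing-type bound $\H^{N-1}_\infty(H^+)\le C_N\P(H)$ for every measurable $H\subseteq\R^N$ with $|H|<\infty$. Combined with Proposition \ref{prop:per_cap}, which produces such $H$ with $E\subseteq H^+$ and $\P(H)$ arbitrarily small, this gives $\H^{N-1}_\infty(E)=0$ and hence $\H^{N-1}(E)=0$. For the boxing inequality itself, I would show that each $x\in H^+$ admits a radius $r_x>0$ at which the density $|H\cap B_{r_x}(x)|/|B_{r_x}|$ is bounded away from both $0$ and $1$, using continuity in $r$ together with $|H|<\infty$ (to force decay as $r\to\infty$) and the very definition of $H^+$ (for a lower bound at small $r$); at this scale the relative isoperimetric inequality gives $r_x^{N-1}\le C_N\P(H,B_{r_x}(x))$. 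Vitali's $5r$-covering then extracts a disjoint subfamily $(B_{r_{x_i}}(x_i))_i$ whose $5$-fold enlargements still cover $H^+$, and disjointness of the $B_{r_{x_i}}(x_i)$ forces $\sum_i(5r_{x_i})^{N-1}\le 5^{N-1}C_N\P(H)$, exactly the desired bound.

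The main obstacle lies in this boxing inequality: translating the abstract perimeter bound on $H$ into a geometric ball cover controlling a $(N{-}1)$-dimensional content of $H^+$. Uniformly selecting a \emph{balanced} scale $r_x$ for every $x\in H^+$ --- including density-$1$ points and intermediate-density points --- requires some case distinction, but the three ingredients (continuity of the ball-density in $r$, the relative isoperimetric inequality, and Vitali's covering lemma) combine in the classical Federer--Ziemer manner recorded in \cite[Section 4]{FedZie72} and \cite[Theorem 5.12]{EvGar15}, which I would follow.
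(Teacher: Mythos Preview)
Your proposal is correct. Note that the paper itself does not give a proof of this proposition but merely cites \cite[Section 4]{FedZie72} and \cite[Theorem 5.12]{EvGar15}; your sketch follows precisely the classical boxing-inequality route of those references, with the only mild addition being the clean use of Proposition~\ref{prop:per_cap} to reduce the hard direction to bounding $\H^{N-1}_\infty(H^+)$ by $\P(H)$, and your acknowledged case distinction between $H^1$ and $H^+\setminus H^1$ (the latter handled via Federer's structure theorem, which gives $\H^{N-1}(H^+\setminus H^1)\le\P(H)$ for finite-perimeter $H$) is exactly what is needed to make the balanced-scale argument go through for all of $H^+$.
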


	\section{Statement and contextualization of the main results}\label{sec:statements}
	
	At this stage we precisely introduce the isoperimetric conditions which will be our main hypotheses.
	
	\begin{defi}[$\p$-anisotropic IC]\label{defi:IC}
		We say that a pair $(\mu,\nu)$ of finite non-negative Radon measures $\mu$ and
		$\nu$ on open $U\subseteq\R^N$ satisfies the $\p$-anisotropic isoperimetric
		condition \textup{(}in brief\textup{:} the $\p$-IC\textup{)} in $U$ with
		constant $C\in{[0,\infty)}$ if it holds
		\begin{equation}\label{eq:signed-IC}
			\mu(A^+)-\nu(A^1)\leq C\P_\p(A)
			\qq\text{for all measurable }A\Subset U\,.
		\end{equation}
		By the $\p$-IC for $\mu$ alone we mean the $\p$-IC for $(\mu,0)$, that is, the
		validity of the preceding with $\nu\equiv0$. In case of the standard isotropic
		integrand $\p(x,\xi)=|\xi|$ we speak of the isotropic IC or standard IC.
	\end{defi}
	
	In the isotropic case, this type of the condition for $\mu$ alone (or
	also for $\mu=H\LN$) has already occurred in several papers such as \cite{BomGiu73,Giaquinta74a,Giaquinta74b,Gerhardt74,Steffen76a,Steffen76b,MeyZie77,Giusti78a,DuzSte96,PhuTor08,DaiTruWan12,BoeDuzSch13,DuzSch15,DaiWanZho15,PhuTor17,Schmidt25,LeoCom24},
	for instance. Equivalent reformulations have been recorded in \cite{MeyZie77,DuzSte96,PhuTor08,PhuTor17,Schmidt25} and may be taken in the
	spirit of expressing the IC for $\mu$ as $\|\mu\|_{\BV(U)^\ast}\leq C$ or
	$\|\mu\|_{\W^{-1,\infty}(U)}\leq C$ or $\mu\leq C\mathrm{Cap}_1$ for certain
	natural norms (homogeneous of degree ${-}1$) on the dual space $\BV(U)^\ast$ of
	$\BV(U)$ and the negative Sobolev space $\W^{-1,\infty}(U)\cong(\W_0^{1,1}(U))^\ast$
	and for a relative $1$-capacity $\mathrm{Cap}_1$ on $U$. Extensions of the
	characterization results to our anisotropic setting with pairs $(\mu,\nu)$ are
	addressed in the later Theorems \ref{thm:anis_equivalence_measure_TOGETHER} and \ref{thm:anis_equivalence_gen_IC_singular}, are very convenient in the sequel, and may be of independent interest.
	
	For what concerns examples of measures with IC, it has been recorded in
	\cite[Proposition 8.1]{Schmidt25} that, for each convex or
	pseudoconvex/outward-minimizing
	$K\subseteq\R^N$ with $|K|{+}\P(K,\R^N)<\infty$, the perimeter measure
	$\P(K,\,\cdot\,)=\H^{N-1}\ecke\partial^\ast\!K$ satisfies the
	isotropic IC in $\R^N$ with constant $1$ --- where now we have normalized to
	$C=1$, the case most important for the purposes of this paper. As a
	straightforward consequence, at least $(\alpha/\beta)\P_\p(K,\cdot\,)$ with
	$\alpha$ and $\beta$ from \eqref{eq:comp-p0} satisfies also the $\p$-IC with
	constant $1$. However, for $x$-independent, convex, smooth $\p$, every bounded,
	convex, smooth $K\subseteq\R^N$ is in fact $\P_\p$-outward-minimizing, as
	discussed in \cite[Section 2.1]{ChaNov22}, and thus at least in this case we
	expect the validity of the $\p$-IC with constant $1$ also for $\P_\p(K,\cdot\,)$
	itself.
	
	Since any IC for $\mu$ alone implies by definition the same IC for $(\mu,\nu)$,
	we obtain examples of pairs $(\mu,\nu)$ with IC by choosing $\mu$ as in the
	preceding discussion and $\nu$ arbitrary. However, this basic ansatz without
	interaction between $\mu$ and $\nu$ does not yet clarify why we actually
	incorporate the additional measure $\nu$ with negative sign in the
	``signed IC'' \eqref{eq:signed-IC}. Rather the reason emerges only with the
	observation that an IC for $(\mu,\nu)$ can be based on cancellation effects and
	can thus be valid even though the corresponding IC for $\mu$ alone fails. This
        phenomenon has been observed in \cite[Proposition 5.1]{PhuTor17} and
        \cite[Remark 4.12]{ComLeo25}, for instance, and is here underpinned with the
        following pivotal example, whose proof is postponed to Section
        \ref{subsec:signed-IC}.

	\begin{examp}[a non-trivial signed IC]\label{exp:signed-IC}
		Fix $\theta\in{(0,1]}$ and the Radon measures
		$\mu\coleq(1{+}\theta/2)\H^1\ecke\partial\B_2$ and
		$\nu\coleq\theta\H^1\ecke\partial\B_1$ on $\R^2$. Then $(\mu,\nu)$
		satisfies the isotropic IC in $\R^2$ with constant $1$, while $\mu$ alone does
		not.
	\end{examp}
	
	There is, however, a price for the additional generality of the ``signed
        IC'' of type \eqref{eq:signed-IC}: While an IC for $\mu$ alone on $U$
        does imply $\int_Uv^+\,\d\mu<\infty$ for all non-negative
        $v\in\BV(U)$ (compare \cite[Theorem 4.7]{MeyZie77} or our later
        Proposition \ref{prop:admissible}, for instance), somewhat surprisingly
        an IC for $(\mu,\nu)$ on $U$ does not anymore allow this conclusion and
        does not even yield finiteness of $\int_Uv^\ast\,\d\mu$ for all
        non-negative $v\in\W^{1,1}_0(U)$. In fact, we have the following
        example, whose proof is partially deferred to Section \ref{subsec:non-finite}.
	
	\begin{examp}[the signed IC does not enforce finiteness on $\W^{1,1}$]
		\label{exp:non-finite}
		For $i\in\N$, consider the circles $S_i\coleq\partial\B_{1/i^2}$ in
		$\R^2$, and take $\mu\coleq\H^1\ecke\bigcup_{k=1}^\infty S_{2k-1}$ and
		$\nu\coleq\H^1\ecke\bigcup_{k=1}^\infty S_{2k}$. Then both $(\mu,\nu)$ and $(\nu,\mu)$ satisfy
		the isotropic IC in $\R^2$ with constant $1$, but for the compactly supported\/
		$v\in\W^{1,1}(\R^2)$ obtained by setting
		$v(x)\coleq\big(\frac1{|x|^{1/2}}{-}1\big)_+$ for $x\in\R^2$, it occurs
		$\int_{\R^2}v^\ast\,\d\mu=2\pi\sum_{k=2}^\infty\frac{2k{-}2}{(2k{-}1)^2}=\infty$ and
		$\int_{\R^2}v^\ast\,\d\nu=2\pi\sum_{k=1}^\infty\frac{2k{-}1}{(2k)^2}=\infty$.
	\end{examp}

	From Example \ref{exp:non-finite} it is clear that, when working in the sequel with the ``signed IC'' \eqref{eq:signed-IC}, the finiteness of integrals of the preceding type has to be added as an extra requirement. Therefore, we impose:
	
	\begin{assum}[admissible measures]\label{assum:mu}
		We consider non-negative Radon measures $\mu_+$ and $\mu_-$ on the bounded open set $\Omega\subseteq\R^N$ with Lipschitz boundary, and we assume
		\begin{equation}\label{eq:negligible}
			\mu_\pm(Z)=0
			\qq\text{for every }\H^{N-1}\text{-negligible Borel set }Z\subseteq\Omega
		\end{equation}
		and
		\begin{equation}\label{eq:finite-integral}
			\int_\Omega v^+\,\d\mu_\pm<\infty
			\qq\text{for every non-negative }v\in\BV(\Omega)\,.
		\end{equation}
	\end{assum}

	As the choice $v\equiv1$ is possible, Assumption \ref{assum:mu} includes the requirement that $\mu_+$ and $\mu_-$ are finite.
	
	Assumption \ref{assum:mu} seems inevitable and imposes essentially the minimal conditions to keep the signed term $\int_\Omega w^-\,\d\mu_+-\int_\Omega w^+\,\d\mu_-$ in \eqref{eq:P-functional} below well-defined for all $w\in\BV(\Omega)$. As explained in the introduction, our main interest is still in the case that the non-negative measures $\mu_+$ and $\mu_-$ result from the Jordan decomposition of a signed measure $\mu$ or, equivalently, satisfy $\mu_+\perp\mu_-$, i.\@e.\@ are singular to each other. However, since a good portion of our results (essentially semicontinuity, existence, and a part of the characterization results in Section \ref{sec:charact}) remains valid even in case $\mu_+\not\perp\mu_-$, we have 
    formulated our overarching Assumption \ref{assum:mu} for a general pair of measures $\mu_+$ and $\mu_-$ and have not included the requirement $\mu_+\perp\mu_-$ there.
	
	We also record that the combination of \eqref{eq:negligible} and
	\eqref{eq:finite-integral} is in fact equivalent with having the isotropic IC
	(or alternatively any $\p$-IC with \eqref{eq:comp-p0} valid) for both $\mu_+$ and $\mu_-$ with
	some finite constant $C<\infty$; see Proposition \ref{prop:admissible}. This means
	in particular that, when specializing our following results to the case of just one non-negative measure or when imposing separate ICs on $\mu_+$ and $\mu_-$, then
	\eqref{eq:negligible} and \eqref{eq:finite-integral} will be direct consequences
	of the more decisive ICs with normalized constant $1$, which are inevitable in
	the statements anyway. In this sense, only the ``fully signed case''
	illustrated by Example \ref{exp:non-finite} truly needs \eqref{eq:negligible}
	and \eqref{eq:finite-integral} as a weak complement to the main IC assumptions
	imposed below.
	
	Before stating our main results in Theorems \ref{thm:lsc}, \ref{thm:exist},
	\ref{thm:recovery} below, we recall that the $\BV$ version of our functional
	takes the form
	\begin{equation}\label{eq:P-functional}
		\widehat\Phi[w]
		\coleq\TV_\p^{u_0}[w]
        +\int_\Omega w^-\,\d\mu_+
        -\int_\Omega w^+\,\d\mu_-
	\end{equation}
	for $w\in\BV(\Omega)$. Here, the $\p$-anisotropic total variation
	\begin{equation}\label{eq:TV-u0}
		\TV_\p^{u_0}[w]
		\coleq|\D\overline w|_\p\big(\overline\Omega\big)
		=|\D w|_\p(\Omega)+\int_{\partial\Omega}\p(\,\cdot\,,(w{-}u_0)\nu_\Omega)\,\d\H^{N-1}
	\end{equation}
	of the extended function
	$\ol{w}\coleq w\1_\Omega+u_0\1_{\R^N\setminus\ol{\Omega}}$ on $\ol{\Omega}$
	incorporates the boundary values $u_0$ through the latter penalization term, in
	which $w$ and $u_0$ are evaluated in the sense of traces and $\nu_\Omega$
	denotes the $\H^{N-1}$-\ae{} defined inward unit normal on $\partial\Omega$.
	This said, we turn to our main semicontinuity result, which --- as we recall --- allows for possibly non-even $\p$ with mirrored integrand $\widetilde\p$, but with regard to the precise assumptions on $\mu_\pm$ is new even in the standard isotropic case $\p(x,\xi)=|\xi|$. In fact, the result reads:
	
	\begin{thm}[semicontinuity]\label{thm:lsc}
		We consider $u_0\in\W^{1,1}(\R^N)$ and impose Assumptions \ref{assum:phi}\eqref{case:conv-lsc},\eqref{case:cont} and \ref{assum:mu}. If $(\mu_-,\mu_+)$ satisfies the $\p$-IC in $\Omega$ with constant $1$ and $(\mu_+,\mu_-)$ satisfies the $\widetilde\p$-IC in $\Omega$ with constant $1$, then the functional $\widehat\Phi$ in \eqref{eq:P-functional} is lower semicontinuous on $\BV(\Omega)$ with respect to $\L^1(\Omega)$-convergence.
	\end{thm}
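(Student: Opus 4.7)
Given $u_k\to u$ in $\L^1(\Omega)$, I may assume $\liminf_k\widehat\Phi[u_k]<\infty$, pass to a subsequence attaining this $\liminf$, and via a further subsequence arrange pointwise $\LN$-a.e.\@ convergence $u_k\to u$ on $\Omega$. Extending each $u_k$ by $u_0$ outside $\ol\Omega$ converts the penalized variation $\TV_\p^{u_0}[u_k]$ into the ordinary $\p$-variation $|\D\ol{u_k}|_\p(\ol\Omega)$ of the glued function $\ol{u_k}$, so the boundary term is absorbed into a single interior problem on $\ol\Omega$. The overall plan is then to reduce to the parametric signed-measure lower semicontinuity presumably established in Section \ref{sec:par-lsc} via the coarea formula.

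The first step is a sign decomposition $\widehat\Phi[u_k]=\Phi^+[(u_k)_+]+\Phi^-[(u_k)_-]$. Lemma \ref{anis_lem:add_parts} yields $|\D\ol{u_k}|_\p=|\D(\ol{u_k})_+|_\p+|\D(\ol{u_k})_-|_{\widetilde\p}$, and Lemma \ref{lem:dec_upper_limit} gives the $\H^{N-1}$-a.e.\@ identities $u_k^\pm=((u_k)_+)^\pm-((u_k)_-)^\mp$, which remain valid $\mu_\pm$-a.e.\@ by Assumption \ref{assum:mu}. Grouping the $(u_k)_+$- and $(u_k)_-$-contributions yields an exact additive splitting, in which $\Phi^+$ acts on non-negative $w$ via the $\p$-variation together with $\int_\Omega w^-\,\d\mu_+-\int_\Omega w^+\,\d\mu_-$ (with boundary extension by $(u_0)_+$), while $\Phi^-$ acts analogously with $\widetilde\p$ and the roles of $\mu_+$ and $\mu_-$ interchanged (with boundary extension by $(u_0)_-$).

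For each non-negative $w$ I then rewrite $\Phi^\pm[w]$ as a slicing integral. The anisotropic coarea formula (Theorem \ref{anis_thm:coarea}), the layer-cake principle, and Remark \ref{rem:equivalence_sets_wrt_mu_ext} — identifying $\{w^+>t\}=\{w>t\}^+$ and $\{w^->t\}=\{w>t\}^1$ up to $\mu_\pm$-negligible sets for $\mathcal{L}^1$-a.e.\@ $t>0$ — give
\[
\Phi^+[w]=\int_0^\infty\Psi^+(\{w>t\})\dt
\qquad\text{with}\qquad
\Psi^+(E)\coleq\P_\p(E,\ol\Omega)+\mu_+(E^1\cap\Omega)-\mu_-(E^+\cap\Omega)\,,
\]
where $E$ is tacitly extended outside $\ol\Omega$ by the corresponding superlevel set of $(u_0)_+$ and, crucially, $\Psi^+\ge0$ by the $\p$-IC of $(\mu_-,\mu_+)$; the analogous formula holds for $\Phi^-$. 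From the pointwise a.e.\@ convergence $(u_k)_\pm\to u_\pm$, Fubini yields $\1_{\{(u_k)_\pm>t\}}\to\1_{\{u_\pm>t\}}$ pointwise a.e.\@ in $\Omega$ for $\mathcal{L}^1$-a.e.\@ $t>0$, and dominated convergence promotes this to $\L^1(\Omega)$-convergence. Invoking the parametric signed-measure lower semicontinuity of Section \ref{sec:par-lsc} — whose hypotheses are precisely the $\p$-IC of $(\mu_-,\mu_+)$ for $\Psi^+$ and the $\widetilde\p$-IC of $(\mu_+,\mu_-)$ for $\Psi^-$ — gives slice-by-slice lsc, so that Fatou in $t$ (available because both $\Psi^+$ and $\Psi^-$ are non-negative) combined with the additive decomposition yields $\widehat\Phi[u]\le\liminf_k\widehat\Phi[u_k]$.

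The hard part is precisely the parametric step. Under $\L^1$-convergence $\1_{E_k}\to\1_E$ alone, neither the measure-theoretic interior $E^1$ nor the measure-theoretic closure $E^+$ is preserved in any pointwise sense, so neither $\mu_+(E^1\cap\Omega)$ nor $\mu_-(E^+\cap\Omega)$ is individually lower semicontinuous. The only genuinely lsc ingredient is the $\p$-perimeter via Reshetnyak's Theorem \ref{anis_thm:LSC_TV_1}, and the signed ICs must be exploited to offset the pathological fluctuations of the two measure terms against each other and against the perimeter — the cancellation working precisely because both IC constants equal $1$. A subsidiary technical point, already implicit in the extension-by-$u_0$ reduction, is that the jump of $\ol{w}$ across $\partial\Omega$ (governed by Theorem \ref{thm:dec_BV}) must split consistently into $\p$- and $\widetilde\p$-contributions in harmony with the sign decomposition; this is routine but needs to be verified.
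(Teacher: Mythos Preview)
Your overall architecture --- sign-splitting via Lemmas~\ref{anis_lem:add_parts} and~\ref{lem:dec_upper_limit}, coarea/layer-cake to rewrite each piece as $\int_0^\infty\Psi^\pm(\{w>t\})\,\dt$, Fatou in $t$, and then the parametric lower semicontinuity of Section~\ref{sec:par-lsc} on each slice --- is exactly the paper's route (Lemmas~\ref{case3_lem:anis_G_tilde_LSC_nonneg}--\ref{lem:decomp-Phi}). The genuine gap is the assertion ``$\Psi^+\ge0$ by the $\p$-IC''. This is false in general: the IC, even in the strengthened form of Theorem~\ref{thm:anis_equivalence_measure_TOGETHER}\eqref{item:1b}, bounds $\mu_-(A^+\cap\Omega)-\mu_+(A^1\cap\Omega)$ by $\P_\p(A)$ for $A\subseteq\Omega$ extended by $\emptyset$ outside, not for sets extended by a nontrivial superlevel of $(u_0)_+$. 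Concretely, if $(u_0)_+>c>0$ on an exterior neighbourhood of $\partial\Omega$ and $w\equiv c$ on $\Omega$, then for $0<t<c$ the extended set $E$ contains a neighbourhood of $\ol\Omega$, so $\P_\p(E,\ol\Omega)=0$ and $\Psi^+(E)=-\mu_-(\Omega)<0$ whenever $\mu_-\not\equiv0$. Your Fatou step therefore has no justification as written.

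The paper repairs this with a truncation device (Lemma~\ref{lem:WPhione-cut-off}): applying the IC in the equivalent functional form of Theorem~\ref{thm:anis_equivalence_measure_TOGETHER}\eqref{item:2a} to $w-w^M\ge0$ yields $\WPhione[u_k]\ge\WPhione[u_k^M]+c_M$ with $c_M$ depending only on $u_0$ and $c_M\to0$. After truncation the slice integrand vanishes for $t>M$ and is trivially $\ge-\mu_-(\Omega)$ for $t\in(0,M)$, hence is bounded below by $-\1_{(0,M)}(t)\,\mu_-(\Omega)\in\L^1(\R^+)$, so Fatou applies; one sends $M\to\infty$ at the end. An alternative direct repair would be to observe from the IC that $\Psi^+(E)\ge-\int_{\partial\Omega}\1_{\{(u_0)_+>t\}}\,\p(\,\cdot\,,\nu_\Omega)\,\d\H^{N-1}$, which is a $k$-independent $\L^1(\dt)$ minorant; but some such argument is required, and the non-negativity of $\Psi^\pm$ you rely on is simply unavailable once $u_0\not\equiv0$. (A secondary point: the parametric result of Theorem~\ref{thm:anis_semicont_funct_per_A+A1_gen} needs small-volume ICs for the extended measures on all of $\R^N$, which is supplied by Lemma~\ref{anis_lem:equiv_small_IC} rather than directly by the $\p$-IC on $\Omega$.)
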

	
	Theorem \ref{thm:lsc} crucially relies on considerations of Section \ref{sec:par-lsc}, where we first adapt the parametric semicontinuity results of \cite{Schmidt25} to the $\p$-anisotropic framework and to more general conditions on the measures. On a rough level, the proof of Theorem \ref{thm:lsc} in Section \ref{subsec:npar-lsc} then uses the coarea and layer-cake formulas and exploits the ICs in truncation arguments in order to finally deduce the conclusion from the parametric semicontinuity.
	
	Though the ICs imposed in Theorem \ref{thm:lsc} seem just right for our purposes, for comparison and later auxiliary usage let us next define also some more general ICs of a type introduced in \cite{Schmidt25}.
	
	\begin{defi}[small-volume $\p$-IC] \label{anis_defi:SVIC}
		We say that a pair $(\mu,\nu)$ of finite non-negative Radon measures $\mu$ and
		$\nu$ on open $U\subseteq\R^N$ satisfies the small-volume $\p$-anisotropic isoperimetric
		condition \textup{(}in brief\textup{:} the small-volume $\p$-IC\textup{)} in $U$ with
		constant $C\in{[0,\infty)}$ if, for every $\eps>0$, there exists $\delta>0$ such that 
		\begin{equation}\label{eq:signed-SVIC}
			\mu(A^+)-\nu(A^1)\leq C\P_\p(A)+\eps 
			\qq\text{for all measurable }A\Subset U\, \text{ with } \ |A| < \delta. 
		\end{equation}
		As before, by the small-volume $\p$-IC for $\mu$ we mean the small-volume $\p$-IC for $(\mu,0)$, and in case of the standard integrand $\p(x,\xi)=|\xi|$ we speak of the small-volume isotropic IC.
	\end{defi}
	
	We emphasize, however, that the ICs imposed in Theorem \ref{thm:lsc} cannot be weakened to small-volume ICs.
	This contrasts with the parametric situation of \cite{Schmidt25} and is in fact demonstrated by the following example.
	
	\begin{examp}[failure of lower semicontinuity without IC]\label{exp:failure-lsc}
		For $N\ge2$, consider any measurable $A_k\Subset\Omega$, $k\in\N$, with disjoint closures and with $p_k\coleq\P(A_k)>0$ such that $\sum_{k=1}^\infty p_k<\infty$ \ka for instance, balls $A_k$ with $\P(A_k)=\delta/k^2$, where $\delta>0$ is
		chosen suitably small that they fit into $\Omega$\kz. Then, $\mu_-\coleq2\H^{N-1}\ecke\bigcup_{k=1}^\infty\partial^\ast\!A_k$ does not satisfy the isotropic IC in $\Omega$ with constant $1$ \ka since $\mu_-(A_k^+)=2\H^{N-1}(\partial^\ast\!A_k)=2\P(A_k)>\P(A_k)$ for all\/ $k\in\N$\kz, but does by \textup{\cite[Theorem 8.2]{Schmidt25}} still satisfy the small-volume isotropic IC in $\R^N$ with constant $1$. Moreover, $p_k^{-1}\1_{A_k}\in\BV(\Omega)$ converge in $\L^1(\Omega)$ to $0$
		\ka as $\|p_k^{-1}\1_{A_k}\|_{\L^1(\Omega)}=\P(A_k)^{-1}|A_k|
		\leq\mathrm{const}(N)\P(A_k)^\frac1{N-1}$ by the isoperimetric inequality \eqref{eq:isop_ineq}\kz, but for $\mu_+\colequiv0$, $u_0\colequiv0$, and the isotropic integrand $\p(x,\xi)=|\xi|$ lower semicontinuity of\/ $\widehat\Phi$ fails along this sequence \ka as $\widehat\Phi[p_k^{-1}\1_{A_k}]=p_k^{-1}\big[\P(A_k){-}2\H^{N-1}(\partial^\ast\!A_k)\big]={-}1$ for $k\in\N$ and\/ $\widehat\Phi[0]=0$\kz.
	\end{examp}
	
	It is interesting to compare Theorem \ref{thm:lsc} and Example \ref{exp:failure-lsc} in some detail with the results of Carriero--Leaci--Pascali \cite{CarLeaPas85,CarLeaPas86,CarLeaPas87}
	on functionals which are the sum of a standard first-order integral on $\W^{1,p}(\Omega)$, $p\in{[1,\infty)}$, and a term $\int_\Omega g(\,\cdot\,,w^\ast)\,\d\mu$ with the normal integrand $g$ and the Radon measure $\mu$ both non-negative. Specifically, \cite[Theorem 5.2]{CarLeaPas87} guarantees lower semicontinuity of such sum functionals on $\W^{1,p}(\Omega)$ under assumptions, which in case $p=1$, $g(\,\cdot\,,w^\ast)=w_\pm^\ast$ resemble \emph{not} the $\p$-IC of Theorem \ref{thm:lsc}, but rather the \emph{small-volume} $\p$-IC for $\mu$ with constant $1$. At first, this seems to be in remarkable contrast to Example \ref{exp:failure-lsc} (which can be adapted to the $\W^{1,1}$ case), but in fact the result depends strongly on the coincidence of the signs of $g$ and $\mu$ and can be applied to our present framework only in cases where the effect of Example \ref{exp:failure-lsc} cannot occur, namely for $\mu_-\equiv0$ and sequences $w_k$ uniformly bounded from below, or alternatively for $\mu_+\equiv0$ and sequences $w_k$ uniformly bounded from above. While it has already been observed in the parametric setting of \cite{Schmidt25} that the sign of the measure term indeed matters for the precise IC needed, we can now put on record that the same phenomenon manifests in the present non-parametric case as well. In any case, we conclude the comparison by emphasizing again that all results of \cite{CarLeaPas85,CarLeaPas86,CarLeaPas87} remain restricted to $\W^{1,p}(\Omega)$, while both the natural extension to $\BV(\Omega)$ and the treatment of non-parametric measure terms with arbitrary signs seem to be a novelty of the present work.

	In fact, the ICs imposed in Theorem \ref{thm:lsc} also prove to be sharp hypotheses for
	deducing coercivity of $\widehat\Phi$ and existence minimizers of
	$\widehat\Phi$. The main result in this direction follows.
	
	\begin{thm}[existence]\label{thm:exist}
          We consider $u_0\in\W^{1,1}(\R^N)$ and impose Assumptions
          \ref{assum:phi}\eqref{case:conv-lsc},\eqref{case:cont} and
          \ref{assum:mu}. If $(\mu_-,\mu_+)$ satisfies the $\p$-IC in
          $\Omega$ with constant $C<1$ and $(\mu_+,\mu_-)$ satisfies the
          $\widetilde\p$-IC in $\Omega$ with constant $C<1$, then the
          functional $\widehat\Phi$ in \eqref{eq:P-functional} has at
          least one minimizer in $\BV(\Omega)$. Moreover, if we additionally
          assume $u_0\in\L^\infty(\R^N)$, even in the extreme case with constant
          $C=1$ there exists at least one minimizer $u$ in
          $\BV(\Omega)\cap\L^\infty(\Omega)$ such that
          $\|u\|_{\L^\infty(\Omega)}\le\|u_0\|_{\L^\infty(\R^n)}$.
	\end{thm}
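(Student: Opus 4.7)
The plan is the direct method of the calculus of variations. Given a minimizing sequence $(w_k)_k$ for $\widehat\Phi$ in $\BV(\Omega)$, I would first establish a priori $\BV(\Omega)$-boundedness of $(w_k)_k$ (coercivity), then extract an $\L^1(\Omega)$-convergent subsequence $w_{k_\ell}\to u$ via the standard $\BV$ compactness, and finally invoke Theorem \ref{thm:lsc} to conclude $\widehat\Phi[u]\le\liminf_\ell\widehat\Phi[w_{k_\ell}]=\inf_{\BV(\Omega)}\widehat\Phi$, so that $u$ is a minimizer. Via Poincaré's inequality in the form \eqref{eq:Poinc_BV3} together with the lower bound $\alpha|\D w|\le|\D w|_\p$ of \eqref{anis_eq:bd_TV} (applied both interior-wise and to the boundary penalization, using the positive homogeneity of $\p$ in the second argument), the $\BV$-boundedness step reduces to uniform bounds on $\TV_\p^{u_0}[w_k]$.

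For coercivity in the main case $C<1$, I would aim at a lower bound of the form $\widehat\Phi[w]\ge(1-C)\TV_\p^{u_0}[w]-\Lambda$ with $\Lambda<\infty$ depending on $u_0$, $\mu_\pm$, $\Omega$, and $\p$. To derive it, I would split $w=w_+-w_-$, use Lemma \ref{lem:dec_upper_limit} to express $w^\pm$ through $(w_\pm)^\pm$, and apply the layer-cake formula together with Remark \ref{rem:equivalence_sets_wrt_mu_ext} (admissible thanks to \eqref{eq:negligible}) to rewrite the measure integrals as integrals in $t$ of $\mu_+(\{w_\pm>t\}^1)$ and $\mu_-(\{w_\pm>t\}^+)$. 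The two hypotheses, rearranged as $\mu_+(A^1)-\mu_-(A^+)\ge-C\P_\p(A)$ respectively $\mu_-(A^1)-\mu_+(A^+)\ge-C\P_{\widetilde\p}(A)$, applied level-by-level and combined via the anisotropic coarea formula (Theorem \ref{anis_thm:coarea}) and the decomposition Lemma \ref{anis_lem:add_parts}, yield the desired global bound $\int_\Omega w^-\d\mu_+-\int_\Omega w^+\d\mu_-\ge-C|\D w|_\p(\Omega)+(\text{boundary correction})$. A technical subtlety is that the superlevel sets $\{w_\pm>t\}$ need not be $\Subset\Omega$ as required by Definition \ref{defi:IC}; I would circumvent this by passing to the extension $\overline w=w\1_\Omega+u_0\1_{\R^N\setminus\overline\Omega}$ and invoking the equivalent functional characterizations of the IC from Theorems \ref{thm:anis_equivalence_measure_TOGETHER} and \ref{thm:anis_equivalence_gen_IC_singular} in Section \ref{sec:charact}, which precisely absorb the missing boundary contributions into $\TV_\p^{u_0}[w]=|\D\overline w|_\p(\overline\Omega)$.

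For the extreme case $C=1$ with $u_0\in\L^\infty(\R^N)$, the coefficient $1{-}C$ vanishes and the preceding estimate only yields $\widehat\Phi[w]\ge-\Lambda$. I would address this by a truncation argument with $M\coleq\|u_0\|_{\L^\infty(\R^N)}$: the claim is that replacing each $w_k$ in the minimizing sequence by its truncation $w_k^M$ from Lemma \ref{lem:cut_M} does not increase $\widehat\Phi$. Indeed, $\TV_\p^{u_0}[w_k^M]\le\TV_\p^{u_0}[w_k]$ follows from Lemma \ref{lem:cut_M}\eqref{item:cut_M_ii} for the interior part and from the pointwise trace inequality $|w_k^M-u_0|\le|w_k-u_0|$ on $\partial\Omega$ (thanks to $|u_0|\le M$ and positive homogeneity of $\p$), while the measure-term excess is handled by expressing it, via Lemma \ref{lem:dec_upper_limit} and Remark \ref{rem:additivity_upper_lower_limit} applied to $(w_k)_\pm$, in terms of the tails $(w_k)_\pm-(w_k)_\pm^M$ and then applying the two ICs with $C=1$ to these non-negative tails. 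The truncated minimizing sequence is then uniformly bounded in $\L^\infty(\Omega)$ by $M$, its measure terms are dominated by $M(\mu_++\mu_-)(\Omega)<\infty$, and the trivial estimate $\TV_\p^{u_0}[w_k^M]\le\widehat\Phi[w_k^M]+M(\mu_++\mu_-)(\Omega)$ delivers $\BV$-coercivity without any $(1{-}C)$-factor. The main obstacle throughout is the coercivity estimate, specifically the rigorous treatment of superlevel sets reaching $\partial\Omega$, which is where the functional characterizations of Section \ref{sec:charact} enter decisively.
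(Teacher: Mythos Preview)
Your plan is correct and coincides with the paper's approach. For $C<1$, the paper encapsulates your layer-cake/coarea/splitting argument in the functional characterization \eqref{2A_global} of Remark \ref{rem:IC_global} and simply cites it in Proposition \ref{case3_prop:anis_CS_coercivity} to get $\widehat\Phi[w]\ge(1{-}C)\big(|\D w|_\p(\Omega)+\int_{\partial\Omega}\p(\cdot,w\nu_\Omega)\,\d\H^{N-1}\big)-\beta\int_{\partial\Omega}|u_0|\,\d\H^{N-1}$, then applies \eqref{eq:Poinc_BV3}; for $C=1$, the paper proves your truncation inequality $\widehat\Phi[w^M]\le\widehat\Phi[w]$ as Lemma \ref{lem:anis_cut_off_min_sign}, packaging it via the decomposition $\widehat\Phi[w]=\WPhione[\ol{w}_+]+\WPhitwo[\ol{w}_-]-\text{const}$ (Lemma \ref{lem:decomp-Phi}) and the tail estimate Lemma \ref{lem:WPhione-cut-off}, which is precisely the IC in form \eqref{2A} applied to the non-negative tails you describe.
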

	
	The proof of Theorem \ref{thm:exist} is given in Section \ref{subsec:exist}.

    As a side remark we record that the continuity hypothesis of Assumption \ref{assum:phi}\eqref{case:cont} is not essential for Theorems \ref{thm:lsc} and \ref{thm:exist}. Indeed, this requirement is truly needed only in passing from $A\Subset\Omega$ as in \eqref{eq:signed-IC} to general $A\subseteq\Omega$; compare the later Theorem \ref{thm:anis_equivalence_measure_TOGETHER}. Accordingly,
    if we would directly allow all measurable subsets $A$ in \eqref{eq:signed-IC} instead of just the relatively compact ones, then all occurrences of Assumption \ref{assum:phi}\eqref{case:cont} in Section \ref{sec:exist} would drop out and Theorems \ref{thm:lsc}, \ref{thm:exist} could be obtained under Assumption \ref{assum:phi}\eqref{case:conv-lsc} only.
	
	We observe that the extreme case $C=1$ is also included in the results of
	\cite{MerSegTro08,MerSegTro09} for the isotropic case with zero boundary datum
	$u_0\equiv0$ (where $C=1$ is actually the most interesting case) and in
	comparison is included in Theorem \ref{thm:exist} in the wider generality of
	arbitrary anisotropies and boundary data
	$u_0\in\L^\infty(\R^N)$. Interestingly, the following example demonstrates that
	the inclusion of the case $C=1$ is no longer possible and one truly falls back to
	$C<1$ when the $\L^\infty$ assumption on $u_0$ is dropped.
	
	\begin{examp}[non-existence in extreme case with unbounded boundary datum]
		\label{exp:non-exist}
		Consider specifically the\linebreak two"~""di\-men\-sio\-nal Lipschitz domain $\Omega\coleq\{x\in\B_2:x_2>-1\}$, a boundary datum
		$u_0\in\W^{1,1}(\R^2)$ which extends $u_0(x)\coleq(|x|{-}1)^{-\alpha}$ for
		$x\in\B_3\setminus\overline\Omega$ with fixed $\alpha\in{(0,1/2)}$, and the
		measures $\mu_+\equiv0$ and $\mu_-=H\mathcal{L}^2$ on $\Omega$ with
		$H\in\bigcap_{p\in{[1,2)}}\L^p(\Omega)$ defined by $H(x)\coleq|x|^{-1}$ for
		$x\in\Omega\setminus\{(0,0)\}$. Then $\mu_-$ satisfies the isotropic IC in
		$\R^2$ \ka exactly\kz{} with constant $1$, but\/ $\widehat\Phi$
		with isotropic integrand $\p(x,\xi)=|\xi|$ has no minimizer in $\BV(\Omega)$.
		
		Furthermore, the preceding claims stay valid if in order to achieve even
		$H\in\L^\infty(\Omega)$ one keeps the choice $H(x)\coleq|x|^{-1}$ only for
		$x\in\Omega\setminus\overline{\B_1}$, but uses constant $H\colequiv2$ on $\B_1$.
	\end{examp}
	
	\begin{figure}[ht]
		\centering
		\includegraphics{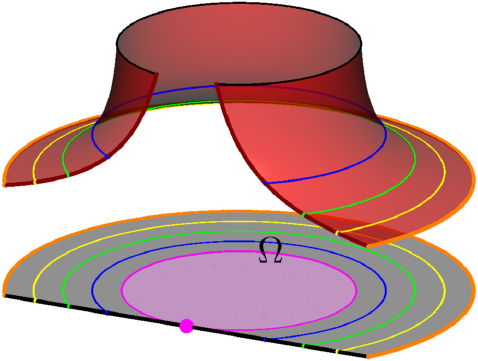}
		\vspace{-1.5ex}
		\parbox{10.3cm}{\caption{\centering The domain $\Omega$ of Example \ref{exp:non-exist} and the graph of the potential minimizer of $\widehat\Phi$ (red surface) with some level sets (colored arcs and pink disc) and with the position of the pole (pink dot).}\label{fig:non-ex}}
	\end{figure}
	
	We postpone the detailed treatment of Example \ref{exp:non-exist} to Section \ref{subsec:non-exist}, 
        but provide a rough explanation on the background idea already at this stage. 
        Indeed, the boundary values specified in
	Example \ref{exp:non-exist} are constant on the circular-arc portion of
	$\partial\Omega$, while on the flat portion of $\partial\Omega$ they are
	symmetric in $x_1$, but are unbounded from above and strictly increase towards a
	pole at $(0,-1)\in\partial\Omega$; cf.\@ Figure \ref{fig:non-ex}. Since one expects
	level curves of a (potentially existing) minimizer $u$ of $\widehat\Phi$ to have
	curvature $H$, from the rotational symmetry of $H$ around $(0,0)$ it is
	plausible that such $u$ should be constant on $\Omega\cap\partial\B_r$ for every
	$r\in{(1,2)}$, with the constant value prescribed at the two endpoints in
	$\partial\Omega$ of the arc $\Omega\cap\partial\B_r$; see Figure \ref{fig:non-ex}
	again. But then, since $u_0$ gets unbounded at $(0,-1)$ and $u$ should increase
	towards $(0,0)$, one is lead to having $u\equiv\infty$ on all of $\B_1$, and
	this is impossible for $u\in\BV(\Omega)$. We believe that this contradiction on
	the basis of heuristic expectations provides a fairly good background idea why
	existence fails in the situation of the example, but clearly we eventually turn
	this idea into a precise proof and then also verify suitable integrability of
	the boundary values $u_0$ (the latter in fact the reason for the restriction
	$\alpha<1/2$ in the above statement). \\

	Finally, our last main result shows that the values of $\widehat\Phi$ on $\BV(\Omega)$ can be recovered from those of the functional $\Phi$, defined in the introduction, on the Dirichlet class $\W^{1,1}_{u_0}(\Omega)$.

    \begin{thm}[existence of recovery sequences]\label{thm:recovery}
        We consider $u_0\in\W^{1,1}(\R^N)$ and impose Assumptions \ref{assum:phi}\eqref{case:conv-lsc},\eqref{case:cont} and \ref{assum:mu} with additionally $\mu_+\perp\mu_-$. Then, for every $u\in\BV(\Omega)$, there exists a recovery sequence $(u_k)_k$ in $\W^{1,1}_{u_0}(\Omega)$ such that $(\ol{u_k})_k$ converges to $\ol{u}$ strictly in $\BV(\R^N)$ and such that we have
        \[
          \lim_{k\to\infty}\Phi[u_k]=\widehat\Phi[u]\,,
        \]
		where $\Phi$ denotes the functional defined in \eqref{eq:P-pre-functional} with $\mu\coleq\mu_+-\mu_-$.
    \end{thm}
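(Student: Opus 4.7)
The goal is to build a sequence $(u_k)_k\subseteq\W^{1,1}_{u_0}(\Omega)$ that strictly approximates $u$ while driving the pointwise $\mu_+$-a.e.\ limit to $u^-$ and the pointwise $\mu_-$-a.e.\ limit to $u^+$. The plan is to glue three strict approximations of $u$: a from-below monotone approximation $w_k$ concentrated near the support of $\mu_+$, a from-above monotone approximation $v_k$ concentrated near the support of $\mu_-$, and a boundary-matching strict approximation $f_k\in u_0+\C^\infty_\c(\Omega)$ used on the remainder. The hypothesis $\mu_+\perp\mu_-$ allows the three regions to be essentially disjoint, so the gluing can be done with smooth cutoffs without excessive cross-contamination.

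\textbf{Step 1 (Reduction to bounded~$u$).} First I truncate. By Lemma~\ref{lem:cut_M}, the truncations $u^M$ converge $\p$-strictly to $u$ in $\BV(\Omega)$; since the boundary traces $\T_{\partial\Omega}u^M=(\T_{\partial\Omega}u)^M\to\T_{\partial\Omega}u$ in $\L^1(\partial\Omega;\H^{N-1})$ by dominated convergence, the extensions $\overline{u^M}\to\overline u$ strictly in $\BV(\R^N)$. Reshetnyak continuity (Theorem~\ref{anis_thm:Reshetnyak_cont}) then yields $\TV_\p^{u_0}[u^M]\to\TV_\p^{u_0}[u]$, and Lemma~\ref{lem:int-uM-to-int-u} handles the $\mu_\pm$-terms, so $\widehat\Phi[u^M]\to\widehat\Phi[u]$. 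A standard diagonal argument reduces the problem to $u\in\BV(\Omega)\cap\L^\infty(\Omega)$.

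\textbf{Step 2 (Construction).} I fix disjoint Borel sets $B_\pm\subseteq\Omega$ with $\mu_\pm$ concentrated on~$B_\pm$. For each $\eta>0$, by Radon regularity of $\mu_\pm$ I pick compacts $K^\pm_\eta\Subset B_\pm\cap\Omega$ with $\mu_\pm(\Omega\setminus K^\pm_\eta)<\eta$, and then (since $\mu_\mp(K^\pm_\eta)=0$) disjoint open $U^\pm_\eta\Subset\Omega$ containing $K^\pm_\eta$ with $\mu_\mp(U^\pm_\eta)<\eta$. I choose cutoffs $\phi_\pm\in\C^\infty_\c(U^\pm_\eta;[0,1])$ with $\phi_\pm\equiv1$ on $K^\pm_\eta$. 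By Proposition~\ref{prop:mon_area_strict_appr} I pick uniformly bounded $w_k\le u\le v_k$ in $\W^{1,1}(\Omega)$ converging strictly to~$u$, and by Theorem~\ref{strict_int_appr_BV} I pick $f_k\in u_0+\C^\infty_\c(\Omega)$ with $\overline{f_k}\to\overline u$ strictly in $\BV(\R^N)$. Using Lemma~\ref{lem:mon_area_strict_appr} applied with $\mu\coleq\mu_++\mu_-$, Theorem~\ref{strict_Hausdorff_repr}, and Assumption~\ref{assum:mu}\eqref{eq:negligible}, I pass to subsequences so that $w_k^\ast\to u^-$ and $v_k^\ast\to u^+$ pointwise $(\mu_++\mu_-)$-a.e., and $u^-\le\liminf_k f_k\le\limsup_k f_k\le u^+$ pointwise $(\mu_++\mu_-)$-a.e. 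The candidate recovery sequence is
\[
u_k^\eta\coleq\phi_+w_k+\phi_-v_k+(1-\phi_+-\phi_-)f_k,
\]
which lies in $\W^{1,1}_{u_0}(\Omega)$ since $\phi_\pm\in\C^\infty_\c(\Omega)$ and $f_k=u_0$ on $\partial\Omega$.

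\textbf{Step 3 (Passage to the limit and main obstacle).} For fixed $\eta$ and $k\to\infty$, I combine the triangle inequality \eqref{anis_eq:subadd_phi}, positive $1$-homogeneity of~$\p$, and the product rule to obtain the pointwise bound $\p(\cdot,\nabla u_k^\eta)\le\phi_+\p(\cdot,\nabla w_k)+\phi_-\p(\cdot,\nabla v_k)+(1{-}\phi_+{-}\phi_-)\p(\cdot,\nabla f_k)+\beta(|w_k-f_k||\nabla\phi_+|+|v_k-f_k||\nabla\phi_-|)$. Reshetnyak continuity (Theorem~\ref{anis_thm:Reshetnyak_cont}) applied to the continuous integrands $\phi_\pm\p$ and $(1{-}\phi_+{-}\phi_-)\p$ along the strictly convergent $w_k,v_k,f_k$, together with $\|w_k-f_k\|_{\L^1}+\|v_k-f_k\|_{\L^1}\to0$, yields $\limsup_k\int_\Omega\p(\cdot,\nabla u_k^\eta)\dx\le\TV_\p^{u_0}[u]$, while Reshetnyak lower semicontinuity (Theorem~\ref{anis_thm:LSC_TV_1}) applied to $\overline{u_k^\eta}\to\overline u$ on an open neighborhood of $\overline\Omega$ supplies the reverse inequality. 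For the $\mu_+$-integral, the $K^+_\eta$-part reduces to $\int_{K^+_\eta}w_k\,\d\mu_+\to\int_{K^+_\eta}u^-\,\d\mu_+$ by dominated convergence, and the remainder is supported on $\Omega\setminus K^+_\eta$ where $\mu_+$-mass is below~$\eta$; analogously for~$\mu_-$. Sending $\eta\to0$ (so that $K^\pm_\eta$ exhaust the $\mu_\pm$-mass) and diagonalising produces $u_{k_j}^{\eta_j}\in\W^{1,1}_{u_0}(\Omega)$ with $\Phi[u_{k_j}^{\eta_j}]\to\widehat\Phi[u]$, and the same TV upper bound applied with the isotropic integrand in place of $\p$ gives the strict convergence $\overline{u_{k_j}^{\eta_j}}\to\overline u$ in $\BV(\R^N)$. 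The principal difficulty is to control the off-concentration contribution $\int_{\Omega\setminus K^+_\eta}(\phi_+w_k+\phi_-v_k+(1{-}\phi_+{-}\phi_-)f_k)\,\d\mu_+$ uniformly in~$k$: while $w_k,v_k$ are uniformly bounded, $f_k$ may be unbounded near~$\partial\Omega$ when $u_0\notin\L^\infty$. This is tamed by combining the pointwise squeeze $u^-\le\lim f_k\le u^+$ with the integrability $u^\pm\in\L^1(\mu_+)$ afforded by~\eqref{eq:finite-integral}, and by placing $K^\pm_\eta$ so as to cover the $\mu_\pm$-mass of a thin collar near~$\partial\Omega$ up to order~$\eta$.
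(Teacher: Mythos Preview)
Your core interpolation idea---glue a from-below approximation near the $\mu_+$-support and a from-above approximation near the $\mu_-$-support, using $\mu_+\perp\mu_-$ to separate the regions---is exactly the mechanism behind Proposition~\ref{prop:conv-wk} in the paper. The TV analysis in Step~3 is correct: applying Reshetnyak continuity on $\R^N$ to $\overline{f_k}\to\overline u$ with the weight $(1-\phi_+-\phi_-)\p$ indeed picks up the boundary term, and the three weighted limits sum to $\TV_\p^{u_0}[u]$.

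The organizational difference from the paper is that you try to incorporate boundary matching in one shot via the third piece $f_k\in u_0+\C^\infty_\c(\Omega)$, whereas the paper decouples: first Proposition~\ref{prop:conv-wk} produces approximations in $\W^{1,1}(\Omega)$ with \emph{free} boundary values and the correct $\mu_\pm$-limits, then Lemma~\ref{lem:recovery-W-Wu0} matches the boundary for $u_0\in\L^\infty$ (where truncation is available), and finally Lemma~\ref{lem:approx-u0} passes to general $u_0$ by approximating the datum itself. Your one-shot route buys brevity when it works, but creates a genuine gap for unbounded $u_0$.

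The gap is precisely the point you flag as ``the principal difficulty''. On $\Omega\setminus K_\eta^+$ you need $\limsup_k\big|\int_{\Omega\setminus K_\eta^+}(1-\phi_+-\phi_-)f_k^\ast\,\d\mu_+\big|$ to be $o(1)$ as $\eta\to0$. Your proposed fix does not deliver this: the squeeze $u^-\le\liminf_k f_k^\ast\le\limsup_k f_k^\ast\le u^+$ controls only pointwise subsequential limits, not the integrals, and reverse Fatou would require a $k$-uniform $\mu_+$-integrable majorant for $f_k^\ast$ that you do not have. The ``collar'' remark helps only on the set where $f_k=u_0$, but between that collar and $K_\eta^+$ the functions $f_k-u_0\in\C^\infty_\c(\Omega)$ are bounded for each $k$ yet not uniformly in $k$ (strict convergence does not give $\W^{1,1}$-Cauchy, so the IC-based estimate $\int_\Omega|f_k^\ast-f_j^\ast|\,\d\mu_+\le C\int_\Omega|\nabla f_k-\nabla f_j|\dx$ does not yield $\L^1(\mu_+)$-convergence either). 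For $u_0\in\L^\infty$ your argument works, since you may replace $f_k$ by its truncation $f_k^M$ with $M\ge\max\{\|u\|_\infty,\|u_0\|_\infty\}$ without disturbing the boundary values; the missing step is a reduction from general $u_0$ to bounded $u_0$, which is exactly the content of Lemma~\ref{lem:approx-u0}. Adding that reduction (set $u_k\coleq z_k-u_{0,k}+u_0$ for truncated data $u_{0,k}$ and use Proposition~\ref{prop:admissible} to control $\int_\Omega(u_k^\ast-z_k^\ast)\,\d\mu_\pm$) closes the gap and essentially recovers the paper's scheme.
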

    
	The proof of Theorem \ref{thm:recovery} will be carried out in Section \ref{sec:recovery}.

    A direct consequence of Theorem \ref{thm:recovery} is the following coincidence of infimum values.

	\begin{cor}[consistency]\label{cor:consist}
		We consider $u_0\in\W^{1,1}(\R^N)$ and impose Assumptions \ref{assum:phi}\eqref{case:conv-lsc},\eqref{case:cont} and \ref{assum:mu} with additionally $\mu_+\perp\mu_-$. Then we have
		\[
          \inf_{\BV(\Omega)}\widehat\Phi
          =\inf_{\W^{1,1}_{u_0}(\Omega)}\Phi\,.
		\]
	\end{cor}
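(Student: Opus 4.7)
The corollary splits into two inequalities that I would establish separately.

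\textbf{The inequality ``$\le$'':} First I would observe that $\widehat\Phi$ and $\Phi$ actually coincide on the Dirichlet class $\W^{1,1}_{u_0}(\Omega)$. Indeed, for any $w\in\W^{1,1}_{u_0}(\Omega)$, the traces satisfy $\mathrm{T}_{\partial\Omega}w=\mathrm{T}_{\partial\Omega}u_0$ by Theorem \ref{thm:trace_cont} applied to the difference $w-u_0\in\W^{1,1}_0(\Omega)$, so the boundary penalization term in \eqref{eq:TV-u0} vanishes by positive homogeneity of $\p(\,\cdot\,,\xi)$ at $\xi=0$, which gives $\TV_\p^{u_0}[w]=\int_\Omega\p(\,\cdot\,,\nabla w)\dx$. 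Moreover, since $w\in\W^{1,1}(\Omega)$ implies $w^+=w^-=w^\ast$ $\H^{N-1}$-a.e.\ in $\Omega$ and thus (by \eqref{eq:negligible}) also $\mu_\pm$-a.e.\ in $\Omega$, the measure terms collapse to $\int_\Omega w^\ast\,\d\mu_+-\int_\Omega w^\ast\,\d\mu_-=\int_\Omega w^\ast\,\d\mu$ with $\mu=\mu_+-\mu_-$. Hence $\widehat\Phi[w]=\Phi[w]$ on $\W^{1,1}_{u_0}(\Omega)$, which together with the inclusion $\W^{1,1}_{u_0}(\Omega)\subseteq\BV(\Omega)$ yields the bound $\inf_{\BV(\Omega)}\widehat\Phi\le\inf_{\W^{1,1}_{u_0}(\Omega)}\Phi$.

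\textbf{The inequality ``$\ge$'':} For this direction I would invoke Theorem \ref{thm:recovery} directly. Given an arbitrary $u\in\BV(\Omega)$, that theorem yields a sequence $(u_k)_k$ in $\W^{1,1}_{u_0}(\Omega)$ with $\lim_{k\to\infty}\Phi[u_k]=\widehat\Phi[u]$. Consequently
\[
  \inf_{\W^{1,1}_{u_0}(\Omega)}\Phi\le\lim_{k\to\infty}\Phi[u_k]=\widehat\Phi[u]\,,
\]
and taking the infimum over $u\in\BV(\Omega)$ gives $\inf_{\W^{1,1}_{u_0}(\Omega)}\Phi\le\inf_{\BV(\Omega)}\widehat\Phi$.

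Combining both inequalities proves the asserted equality. There is essentially no obstacle here: the entire substance has been absorbed into Theorem \ref{thm:recovery}, and the only care needed is the routine verification that $\widehat\Phi=\Phi$ on the Dirichlet class, for which the Jordan decomposition $\mu=\mu_+-\mu_-$ and the coincidence of the representatives $w^\pm$ on Sobolev functions are the decisive inputs.
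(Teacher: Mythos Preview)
Your proof is correct and follows exactly the approach the paper takes: the paper simply records that $\widehat\Phi$ extends $\Phi$ on $\W^{1,1}_{u_0}(\Omega)$ (stated explicitly at the beginning of Section~\ref{sec:recovery}) and then declares the corollary a ``direct consequence'' of Theorem~\ref{thm:recovery}, which is precisely your two inequalities. One minor quibble: the vanishing of the trace of $w-u_0$ is the defining property of $\W^{1,1}_0(\Omega)$ rather than a consequence of Theorem~\ref{thm:trace_cont}, but this does not affect the argument.
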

 
    In particular, Corollary \ref{cor:consist} shows that a minimizer of $\Phi$ in $\W^{1,1}_{u_0}(\Omega)$, if it should happen to exist, also minimizes $\widehat\Phi$ in $\BV(\Omega)$ --- a conclusion which in our opinion very consistently completes the existence theory.

    Another standard conclusion from Theorems \ref{thm:lsc} and \ref{thm:recovery} concerns the $\L^1(\Omega)$-relaxation $\Phi_{\mathrm{rel}}$ of $\Phi$, defined by
    \[
      \Phi_{\mathrm{rel}}[w]
      \coleq\inf\Big\{\liminf_{k\to\infty}\Phi[w_k]\,:\,\W^{1,1}_{u_0}(\Omega)\ni w_k\to w\text{ in }\L^1(\Omega)\Big\}
      \qq\text{for }w\in\BV(\Omega)\,,
    \]
    and in fact reads as follows.
    
    \begin{cor}[relaxation]\label{cor:relax}
      We consider $u_0\in\W^{1,1}(\R^N)$ and impose Assumptions \ref{assum:phi}\eqref{case:conv-lsc},\eqref{case:cont} and \ref{assum:mu} with additionally $\mu_+\perp\mu_-$. If $(\mu_-,\mu_+)$ satisfies the $\p$-IC in $\Omega$ with constant\/ $1$ and $(\mu_+,\mu_-)$ satisfies the $\widetilde\p$-IC in $\Omega$ with constant\/ $1$, then $\Phi_\mathrm{rel}$ coincides with $\widehat\Phi$ on $\BV(\Omega)$.
    \end{cor}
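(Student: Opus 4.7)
\bigskip

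\noindent\textbf{Proof proposal for Corollary \ref{cor:relax}.} The plan is to establish the two inequalities $\Phi_{\mathrm{rel}}\geq\widehat\Phi$ and $\Phi_{\mathrm{rel}}\leq\widehat\Phi$ on $\BV(\Omega)$ separately, each being essentially a reformulation of one of the two preceding main theorems, once a preliminary observation is made.

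\medskip

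First I would verify that $\widehat\Phi$ and $\Phi$ coincide on $\W^{1,1}_{u_0}(\Omega)$. For $w\in\W^{1,1}_{u_0}(\Omega)$, the trace of $w-u_0$ on $\partial\Omega$ vanishes $\H^{N-1}$-\ae{}, so the boundary penalization in \eqref{eq:TV-u0} drops out and $\TV_\p^{u_0}[w]=\int_\Omega\p(\,\cdot\,,\nabla w)\dx$. Furthermore, for $w\in\W^{1,1}(\Omega)$, the representatives satisfy $w^+=w^-=w^\ast$ $\H^{N-1}$-\ae{} in $\Omega$ (as recorded after Theorem \ref{thm:DG}), and hence, using \eqref{eq:negligible} to pass from $\H^{N-1}$-negligibility to $\mu_\pm$-negligibility, I obtain $\int_\Omega w^-\,\d\mu_+-\int_\Omega w^+\,\d\mu_-=\int_\Omega w^\ast\,\d\mu$ with $\mu=\mu_+-\mu_-$. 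Combining these two identities gives $\widehat\Phi[w]=\Phi[w]$ on $\W^{1,1}_{u_0}(\Omega)$.

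\medskip

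For the inequality $\Phi_{\mathrm{rel}}\geq\widehat\Phi$, I would fix $w\in\BV(\Omega)$ and take any admissible sequence $(w_k)_k$ in $\W^{1,1}_{u_0}(\Omega)$ with $w_k\to w$ in $\L^1(\Omega)$. Since $\W^{1,1}(\Omega)\subseteq\BV(\Omega)$, Theorem \ref{thm:lsc} applies under the present hypotheses and yields $\widehat\Phi[w]\leq\liminf_{k\to\infty}\widehat\Phi[w_k]=\liminf_{k\to\infty}\Phi[w_k]$, where the last equality uses the preliminary coincidence established above. Taking the infimum over all admissible $(w_k)_k$ gives $\widehat\Phi[w]\leq\Phi_{\mathrm{rel}}[w]$.

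\medskip

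For the reverse inequality $\Phi_{\mathrm{rel}}\leq\widehat\Phi$, I invoke Theorem \ref{thm:recovery}: given $u\in\BV(\Omega)$, there exists a sequence $(u_k)_k$ in $\W^{1,1}_{u_0}(\Omega)$ such that $(\ol{u_k})_k$ converges to $\ol u$ strictly in $\BV(\R^N)$ and $\lim_{k\to\infty}\Phi[u_k]=\widehat\Phi[u]$. Strict convergence of the extensions on $\R^N$ in particular implies $u_k\to u$ in $\L^1(\Omega)$, so the sequence is admissible in the infimum defining $\Phi_{\mathrm{rel}}[u]$. Therefore $\Phi_{\mathrm{rel}}[u]\leq\liminf_{k\to\infty}\Phi[u_k]=\widehat\Phi[u]$, completing the proof. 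The argument is essentially routine once Theorems \ref{thm:lsc} and \ref{thm:recovery} are in hand; there is no genuine obstacle beyond the small bookkeeping needed for the identity $\widehat\Phi=\Phi$ on the Dirichlet class, which in turn rests on the singularity assumption $\mu_+\perp\mu_-$ being inherited through \eqref{eq:negligible} for the representative identification.
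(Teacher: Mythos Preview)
Your proof is correct and follows exactly the approach the paper indicates: the corollary is described there as a ``standard conclusion from Theorems \ref{thm:lsc} and \ref{thm:recovery}'', and you carry out precisely those two steps. One small slip in your closing remark: the identity $\widehat\Phi=\Phi$ on $\W^{1,1}_{u_0}(\Omega)$ does not rely on $\mu_+\perp\mu_-$ but only on \eqref{eq:negligible} from Assumption \ref{assum:mu}; the singularity hypothesis enters solely through Theorem \ref{thm:recovery}.
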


    We remark that the coincidence with $\widehat\Phi$ stays true for the relaxation of $\Phi$ with regard to any convergence ``in between'' the $u_0$-extended strict convergence of Theorem \ref{thm:recovery} and the $\L^1$-convergence of Theorem \ref{thm:lsc}. In particular, one may also use weak-$\ast$ convergence in $\BV(\Omega)$.
		
	The following example, whose details are addressed in Section \ref{subsec:non-consist}, shows that the assumption $\mu_+\perp\mu_-$ in Theorem \ref{thm:recovery} and Corollaries \ref{cor:consist}, \ref{cor:relax} cannot be dropped. The main reason behind is that, in the case considered, $\widehat\Phi$ ``sees'' the single measures $\mu_+$ and $\mu_-$, while $\Phi$ involves their difference $\mu=\mu_+-\mu_-$ only.
		
	\begin{examp}[failure of consistency for $\mu_+$ and $\mu_-$ not singular to each other]\label{exp:non-consist}
	   We consider the unit disc $\Omega\coleq\B_1\subseteq\R^2$, a boundary datum $u_0\in\W^{1,1}(\R^2)$ with trace $u_0(x)=\mathrm{sgn}(x_1)$ for $x\in\partial\B_1$, and the measures $\mu_+\coleq\mu_-\coleq\H^1\ecke(\{0\}{\times}{(-1,1)})$. Then, for the isotropic integrand $\p(x,\xi)=|\xi|$, we have $\inf_{\W^{1,1}_{u_0}(\B_1)}\Phi=4>\min_{\BV(\B_1)}\widehat\Phi=0$ and\/ $\Phi_{\mathrm{rel}}[w]=\TV^{u_0}[w]=\widehat\Phi[w]+|\D w|(\{0\}{\times}{({-}1,1)})$ for $w\in\BV(\Omega)$. Additionally, we record that $\mu\coleq\mu_+=\mu_-$ considered here satisfies the isotropic IC in $\Omega$ with constant $\frac12$ \ka and this in particular implies the same IC for the pair $(\mu,\mu)$ and the validity of Assumption \ref{assum:mu}\kz.
	\end{examp}

    However, the case $\mu_+\not\perp\mu_-$ may also come with cancellation effects, which are captured neither by $\Phi$ nor by $\widehat\Phi$ and which thus do not conflict with validity of the above results. Though after all our main interest is still in the opposite case $\mu_+\perp\mu_-$ of a signed measure, at least an extreme case with $\mu_+=\mu_-$ and complete cancellation still seems worth a brief recording:

    \begin{examp}[cancellation of purely unrectifiable measures]\label{examp:unrectifiable}
        Consider measures $\mu_+\coleq\mu_-\coleq\theta\H^{N-1}\ecke P$ with any purely $\H^{n-1}$-unrectifiable $P\subset\Omega$ and any density $\theta\in\L^1(P;\H^{N-1})$.
        Then, for each measurable $A\Subset\Omega$ with $\P(A)<\infty$, countable $\H^{N-1}$-rectifiability of\/ $A^+\!\setminus\!A^1$ \textup{(}see e.\@g.\@ \textup{\cite[Theorems 3.59, 3.61]{AFP00})} enforces $\H^{N-1}(P\cap(A^+\!\setminus\!A^1))=0$ and $\mu_\mp(A^+)-\mu_\pm(A^1)=\int_{P\cap(A^+\setminus A^1)}\theta\,\d\H^{N-1}=0$. Analogously, for $w\in\BV(\Omega)$, countable $\H^{N-1}$-rectifiability of\/ $\{w^+\!\neq\!w^-\}$ \textup{(}see e.\@g.\@ \textup{\cite[Theorem 3.78]{AFP00})} enforces $\H^{N-1}(P\cap\{w^+\!\neq\!w^-\})=0$ and $\int_\Omega w^-\,\d\mu_+-\int_\Omega w^+\,\d\mu_-=\int_{P\cap\{w^+\neq w^-\}}(w^-{-}w^+)\theta\,\d\H^{N-1}=0$. Thus, in such cases, the ICs for $(\mu_-,\mu_+)$ and $(\mu_+,\mu_-)$ are trivially satisfied, and the $\mu_\pm$-terms in $\widehat\Phi$ entirely cancel out. Moreover, we point out that there exist non-zero unrectifiable measures $\theta\H^{N-1}\ecke P$ which also meet the requirements of Assumption \ref{assum:mu}, for instance the measure of Example \ref{exp:til2} below.
    \end{examp}
	
	Finally, we return to the discussion of the $\p$-ICs used above and address
    a technical subtlety of our framework with possibly non-even $\p$. Indeed, we
	demonstrate by the following example that there is a point in distinguishing
	between the $\p$-IC with constant $1$ and the $\widetilde\p$-IC with constant
	$1$ in the above results, since the one does not necessarily imply the other.
	
	\begin{examp}[the $\p$-IC does not imply the $\widetilde\p$-IC]\label{exp:til1}
		Consider $\p\colon\R^2\times\R^2\to{[0,\infty)}$ defined by setting
		\[
		\p(x,\xi)\coleq\begin{cases}|\xi|&\text{if }\xi_2\ge0\\|\xi_1|{+}|\xi_2|&\text{if }\xi_2\le0\end{cases}
		\]
		and the unit triangle $\Delta\coleq\{x\in{[0,\infty)}^2:x_1{+}x_2\le1\}$ with
		catheti $C_1$, $C_2$ and hypotenuse $H$, for which we have
		$\p(\,\cdot\,,\nu_\Delta)=1$ on $C_1\cup C_2$,
		$\p(\,\cdot\,,\nu_\Delta)=\sqrt2$ on $H$,
		$\widetilde\p(\,\cdot\,,\nu_\Delta)=1$ on $\partial\Delta=C_1\cup C_2\cup H$.
		Then the $\p$-perimeter measure
		$\P_\p(\Delta,\,\cdot\,)=\H^1\ecke(C_1\cup C_2)+\sqrt2\,\H^1\ecke H$
		satisfies the $\p$-IC in $\R^2$ with constant $1$ \ka compare the earlier discussion and see Section \ref{subsec:til1}\kz, but does \emph{not} satisfy the $\widetilde\p$-IC in $\R^2$ with constant $1$ \ka as shown by
		$\P_\p(\Delta,\Delta^+)=\P_\p(\Delta)=4>2{+}\sqrt2=\P_{\widetilde\p}(\Delta)$\kz.
	\end{examp}
	
	Beyond that, it turns out that the $\p$-IC with constant $1$ does not even imply the small-volume $\widetilde\p$-IC with constant $C$. In fact, we have the following refined example, whose full details will be taken up in Section \ref{subsec:til2}.
	
	\begin{examp}[the $\p$-IC does not even imply the small-volume
		$\widetilde\p$-IC]\label{exp:til2}
		Consider $\p$ and $\Delta_0\coleq\Delta$ from Example \ref{exp:til1}.
		The three similarities $T_i\colon\R^2\to\R^2$ given by
		$T_1(x)\coleq(x{+}(0,2))/3$, $T_2(x)\coleq x/3$, $T_3(x)\coleq(x{+}(2,0))/3$
		generate the iterates $\Delta_k\coleq\bigcup_{i=1}^3T_i(\Delta_{k-1})$,
		$k\in\N$, and the fractal $\Delta_\infty\coleq\bigcap_{k=0}^\infty\Delta_k$
		with $\H^1(\Delta_\infty)=\sqrt2$\textup{;} cf.\@ Figure \ref{fig:til2}. Then
		the Radon measure $\mu\coleq 2\sqrt2\,\H^1\ecke{\Delta_\infty}$ satisfies the
		$\p$-IC in $\R^2$ with constant $1$, but in view of
		$\mu(\Delta_k^+)=4>2{+}\sqrt2=\P_{\widetilde\p}(\Delta_k)$ for all\/
		$k\in\N_0$ the $\widetilde\p$-IC with constant\/ $1$ falls short uniformly for
		the sets $\Delta_k$, which can realize arbitrarily small volumes
		$|\Delta_k|=\frac124^{-k}$.
	\end{examp}
	
	\begin{figure}[H]
		\centering
		\includegraphics{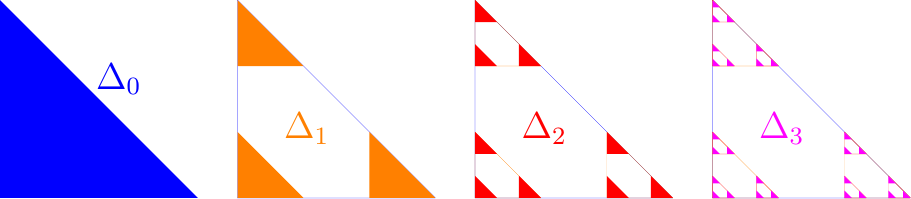}
		\caption{The iterates \textcolor{blue}{$\Delta_0$}, \textcolor{orange}{$\Delta_1$},
			\textcolor{red}{$\Delta_2$}, and \textcolor{magenta}{$\Delta_3$} of Example
			\ref{exp:til2}\label{fig:til2}}
	\end{figure}
	
	We believe that the effect found in Example \ref{exp:til2} indeed occurs
	solely on fractals in the sense that it can be excluded for measures $\mu_\pm=\theta_\pm\H^{N-1}\ecke S$ with countably $\H^{N-1}$-rectifiable $S$ and $\theta_\pm\in\L^1(S;\H^{N-1})$. This is supported by the expectation that in
	analogy with \cite[Section 8]{Schmidt25} the small-volume $\p$-IC with constant $1$ for such $(\mu_+,\mu_-)$ and suitably good $\p$ should be satisfied precisely if
	$\theta_+\leq\p(\,\cdot\,,\nu_S){+}\widetilde\p(\,\cdot\,,\nu_S)$ holds
	$\H^{N-1}$-\ae{} on $S$, whereas the passage from $\p$ to $\widetilde\p$
	plays no role. However, we have not explored such issues in full detail.

	\section{Reformulations of the ICs}\label{sec:charact}
	
	We recall that $\Omega$ generally denotes a bounded open set with Lipschitz boundary in $\R^N$, $N\in\N$. In this section we collect various characterizations of ICs in $\Omega$.
	
	\subsection{IC characterization of admissible non-negative measures}
	
	We first verify the claim made in Section \ref{sec:statements} that the requirements of Assumption \ref{assum:mu} can be read as ICs for $\mu_\pm$ with arbitrarily large constant.
	
	\begin{prop}[characterization of admissible measures]
		\label{prop:admissible}
		A non-negative Radon measure $\mu$ on $\Omega$ is admissible in the sense of \eqref{eq:negligible} and \eqref{eq:finite-integral} from Assumption \ref{assum:mu} \ka clearly with $\mu$ in place of $\mu_\pm$\kz{} if and only if $\mu$ satisfies the isotropic IC \ka or equivalently any $\p$-IC with \eqref{eq:comp-p0} in force\kz{} with some constant $C\in{[0,\infty)}$ in $\Omega$. 
	\end{prop}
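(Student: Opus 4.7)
Both directions are what I need, and in either case the equivalence between the isotropic IC and a general $\p$-IC satisfying \eqref{eq:comp-p0} reduces to changing the constant by a factor in $[\alpha,\beta]$ via the pinching \eqref{anis_eq:bd_PER}. So I would work with the isotropic IC throughout.

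For ``IC implies admissibility'', the plan for \eqref{eq:negligible} is a covering argument: inner regularity of the Radon measure $\mu$ reduces the problem to compact $K\subseteq\Omega$ with $\H^{N-1}(K)=0$, which can be covered by balls $B_{r_i}(x_i)\Subset\Omega$ with $\sum_i r_i^{N-1}$ arbitrarily small. Feeding each such ball into the IC yields $\mu(K)\le\sum_i\mu(B_{r_i}(x_i)^+)\le CN\omega_N\sum_i r_i^{N-1}$. For \eqref{eq:finite-integral} with non-negative $v\in\BV(\Omega)$, I would pick an inner exhaustion $\Omega_j\coleq\{d(\,\cdot\,,\partial\Omega)>s_j\}\Subset\Omega$ along a sequence $s_j\downarrow0$ of good levels, chosen so that $\Omega_j$ has Lipschitz boundary (Sard) and additionally $\|\mathrm{T}^{\mathrm{int}}_{\partial\Omega_j}v\|_{\L^1(\partial\Omega_j;\H^{N-1})}\le M$ uniformly in $j$. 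The latter selection is possible because the trace-mass function $s\mapsto\|\mathrm{T}^{\mathrm{int}}_{\partial\Omega_s}v\|_{\L^1}$ is $\L^1$-integrable on a small interval $(0,s_0)$ by a slicing/coarea identity for $d(\,\cdot\,,\partial\Omega)$. Then $\widetilde v_j\coleq v\1_{\Omega_j}$ extends by zero to a function in $\BV(\R^N)$ with $|\D\widetilde v_j|(\R^N)\le|\D v|(\Omega)+M$ and support in $\overline{\Omega_j}\Subset\Omega$, so that $\{\widetilde v_j>t\}\Subset\Omega$ for every $t>0$. Chaining the layer-cake formula, Remark \ref{rem:equivalence_sets_wrt_mu_ext} (applicable by the \eqref{eq:negligible} just established), the IC, and Theorem \ref{anis_thm:coarea} gives
\[
  \int_\Omega\widetilde v_j^+\,\d\mu=\int_0^\infty\mu(\{\widetilde v_j>t\}^+)\,\d t\le C\int_0^\infty\P(\{\widetilde v_j>t\})\,\d t=C|\D\widetilde v_j|(\R^N)\le C(|\D v|(\Omega)+M)
\]
uniformly in $j$, and monotone convergence in $j$ completes \eqref{eq:finite-integral}.

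For ``admissibility implies IC'', the plan is a Banach--Steinhaus type argument on the sublinear functional $F(v)\coleq\int_\Omega v^+\,\d\mu$ defined for non-negative $v\in\BV(\Omega)$; $F$ is positively $1$-homogeneous and subadditive by \eqref{eq:sub-super-add-representatives} and everywhere finite by \eqref{eq:finite-integral}. If $F$ were not bounded on the $\BV$-unit ball, I would pick $v_n\ge0$ with $\|v_n\|_{\BV(\Omega)}\le1$ and $F(v_n)\ge 4^n$, and then use completeness of $\BV(\Omega)$ to form $w\coleq\sum_{n=1}^\infty 2^{-n}v_n\in\BV(\Omega)$. The comparison $w\ge 2^{-n}v_n$ a.e., together with the superlevel-set monotonicity $\{w>t\}^+\supseteq\{v_n>2^nt\}^+$ up to $\mu$-null sets (via \eqref{eq:negligible} and Remark \ref{rem:equivalence_sets_wrt_mu_ext}) and the layer-cake formula, forces $F(w)\ge 2^{-n}F(v_n)\ge 2^n$ for every $n$, contradicting $F(w)<\infty$. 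Hence $F(v)\le C_0\|v\|_{\BV(\Omega)}$, and specializing to $v=\1_A$ for measurable $A\Subset\Omega$ of finite perimeter yields $\mu(A^+)\le C_0(|A|+\P(A))$. To convert this into a genuine IC, I would split cases: for $\P(A)\ge1$ the finiteness $\mu(\Omega)<\infty$ (obtained from \eqref{eq:finite-integral} at $v\equiv1$) gives $\mu(A^+)\le\mu(\Omega)\le\mu(\Omega)\P(A)$, whereas for $\P(A)\le1$ the isoperimetric inequality \eqref{eq:isop_ineq} gives $|A|\le c_N\P(A)^{N/(N-1)}\le c_N\P(A)$; taking the maximum of the resulting constants produces the isotropic IC.

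I expect the backward direction to be the main obstacle, both because the Banach--Steinhaus step must be run on the genuinely non-linear (only subadditive and positively homogeneous) functional $F$ and requires the representative-level monotonicity that is not automatic without \eqref{eq:negligible} and Remark \ref{rem:equivalence_sets_wrt_mu_ext}, and because the spurious $|A|$-term in the intermediate bound $\mu(A^+)\le C_0(|A|+\P(A))$ must finally be absorbed into $C\P(A)$, which forces one to combine the finiteness of $\mu$ on $\Omega$ with the only Sobolev-type (not linear) isoperimetric control of $|A|$ by $\P(A)$.
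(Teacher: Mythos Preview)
Your backward direction (admissibility $\Rightarrow$ IC) is correct and close in spirit to the paper's argument: both build a non-negative $\BV$ function as a weighted series to reach a contradiction. The paper works directly with characteristic functions $u=\sum_k \alpha_k\1_{A_k}$, $\alpha_k=1/(k^2\P(A_k))$, chosen from sets witnessing failure of the IC with constant $k$; this yields the IC immediately, without your intermediate bound $\mu(A^+)\le C_0(|A|+\P(A))$ and the subsequent absorption of $|A|$. Your abstract Banach--Steinhaus route works too, but note that the monotonicity $w^+\ge 2^{-n}v_n^+$ holds \emph{everywhere} (it is a density statement), so you do not actually need \eqref{eq:negligible} or Remark~\ref{rem:equivalence_sets_wrt_mu_ext} at that step.

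There is, however, a genuine gap in your forward direction (IC $\Rightarrow$ admissibility). You claim that because $s\mapsto g(s)\coleq\|\mathrm{T}^{\mathrm{int}}_{\partial\Omega_s}v\|_{\L^1(\partial\Omega_s;\H^{N-1})}$ is $\L^1$-integrable near $0$, one can pick $s_j\downarrow 0$ with $g(s_j)\le M$ uniformly. Integrability does \emph{not} give this: think of $g(s)=s^{-1/2}$. Without the uniform bound, your estimate $\int_\Omega\widetilde v_j^+\,\d\mu\le C(|\D v|(\Omega)+g(s_j))$ may blow up along every admissible sequence $s_j\to 0$, and the monotone-limit step breaks down. (Your appeal to ``Lipschitz boundary via Sard'' is also loose---$d$ is only Lipschitz---but what you actually need, finite perimeter of $\Omega_s$ for a.e.\ $s$, follows from coarea.)

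The paper sidesteps this entirely by a different approximation chain: first cite the IC characterization \cite[Theorem~7.5]{Schmidt23} to get \eqref{eq:negligible} and $\int_\Omega|v^\ast|\,\d\mu\le C\int_\Omega|\nabla v|\,\dx$ for $v\in\W^{1,1}_0(\Omega)$; then pass to general $v\in\W^{1,1}(\Omega)$ by strict approximation from $\W^{1,1}_0(\Omega)$ (Theorem~\ref{strict_int_appr_BV}), $\H^{N-1}$-a.e.\ pointwise convergence (Theorem~\ref{strict_Hausdorff_repr}), and Fatou; finally dominate an arbitrary $0\le v\in\BV(\Omega)$ by some $\widetilde v\in\W^{1,1}(\Omega)$. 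A fix in the spirit of \emph{your} argument is to first upgrade the IC from $A\Subset\Omega$ to all measurable $A\subseteq\Omega$ (exactly the passage \eqref{item:1a}$\Rightarrow$\eqref{item:1b} in Theorem~\ref{thm:anis_equivalence_measure_TOGETHER}, trivially available for the isotropic integrand); then work with the zero extension $\bar v\in\BV(\R^N)$ and apply the IC directly to the superlevel sets $\{\bar v>t\}\subseteq\Omega$, avoiding the inner exhaustion altogether.
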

	
	\begin{proof}
		We first suppose that $\mu$ fulfills \eqref{eq:negligible} and \eqref{eq:finite-integral}, and we assume for contradiction that the isotropic IC does \emph{not} hold for $\mu$ with any finite constant in $\Omega$. Since we know from \cite[Theorem 7.5]{Schmidt25} that the IC can be equivalently phrased with $A^1$ instead of $A^+$, we can then select a sequence of sets $(A_k)_k$ with $A_k\Subset\Omega$, $\P(A_k) > 0$, and
		\begin{equation} \label{eq:ass_contr}
			\mu(A_k^1) > k \P(A_k) \qq \text{ for all } k \in \N\,.
		\end{equation}
		Specifically, by \eqref{eq:ass_contr} and the finiteness of $\mu$, we find some $\ol{k} \in \N$ such that $\P(A_k) < 1$ for all $k \geq \ol{k}$. In the case $N=1$, this directly contradicts the standard observation that sets $A\Subset\R$ of positive perimeter have even $\P(A)\ge2$. In the main case $N\ge2$, instead, we reason as follows. For any $k$, we fix $\alpha_k \coleq 1/(k^2 \P(A_k))$ and consider $u \coleq \sum_{k =1}^{\infty} \alpha_k \1_{A_k}$. Then, from the isoperimetric inequality \eqref{eq:isop_ineq} we infer
		\begin{align*}
			\int_{\Omega} \alpha_k \1_{A_k}\,\dx 
			= \frac{|A_k|}{k^2 \P(A_k)}
			\leq c_N \frac{\P(A_k)^{1/(N-1)}}{k^2} 
			\leq c_N \frac{1}{k^2} \qq \text{for all } k \geq \ol{k}
		\end{align*}
        with $c_N \coleq (N^N \omega_N)^{-1/(N-1)}$.
		Thus, $u$ is well-defined and $\|u\|_{\L^1(\Omega)}=\sum_{k =1}^{\infty} \int_{\Omega} \alpha_k \1_{A_k}\,\dx < \infty$. Furthermore, we compute
		\begin{align*}
			|\D u|(\Omega)
			\leq \sum_{k =1}^{\infty} \alpha_k |\D \1_{A_k}|(\Omega)
			= \sum_{k =1}^{\infty} \alpha_k \P(A_k) 
			= \sum_{k =1}^{\infty} \frac{1}{k^2} <\infty\,,
		\end{align*}
		and hence we have $u \in \BV(\Omega)$. Finally, taking into account that $u^+\ge u^-\ge\sum_{k=1}^\infty\alpha_k(\1_{A_k})^-$ by \eqref{eq:sub-super-add-representatives} and applying \eqref{eq:ass_contr} we get
		\begin{align*}
			\int_{\Omega} u^+\,\d\mu 
			\ge \sum_{k =1}^{\infty} \alpha_k \int_{\Omega} (\1_{A_k})^- \,\d\mu 
			= \sum_{k =1}^{\infty} \alpha_k \mu(A_k^1) 
			\geq \sum_{k =1}^{\infty} \frac{k \P(A_k)}{k^2 \P(A_k)} 
			= \sum_{k=1}^\infty\frac1k
			= \infty\,,
		\end{align*}
		and thus we reach a contradiction to the validity of \eqref{eq:finite-integral} for non-negative $\BV$ functions.
		
		\smallskip
		
		For the proof of the opposite implication, suppose that $\mu$ satisfies the isotropic IC with some constant $C$ on $\Omega$. Then, from \cite[Theorem 7.5]{Schmidt25} we obtain \eqref{eq:negligible} and $\int_\Omega|v^\ast|\,\d\mu\leq C\int_\Omega|\nabla v|\dx$ for all $v\in\W^{1,1}_0(\Omega)$. Next, even for arbitrary $v\in\W^{1,1}(\Omega)$, we conclude $\int_\Omega|v^\ast|\,\d\mu<\infty$ by using strict approximations $v_k$ in $\W^{1,1}_0(\Omega)$ of $v$ of the type from Theorem \ref{strict_int_appr_BV}, reading off $\H^{N-1}$-\ae{} convergence $v_{k_\ell}^\ast\to v^\ast$ on $\Omega$ of a subsequence from Theorem \ref{strict_Hausdorff_repr}, and then deducing with Fatou's lemma that $\int_\Omega|v^\ast|\,\d\mu\leq\liminf_{\ell\to\infty}\int_\Omega|v_{k_\ell}^\ast|\,\d\mu \leq C\limsup_{k\to\infty}\int_\Omega|\nabla v_k|\dx<\infty$. Finally,
		since every $v\in\BV(\Omega)$ satisfies $v\leq\widetilde v$ for some $\widetilde v\in \W^{1,1}(\Omega)$, we conclude $\int_\Omega v^+\,\d\mu<\infty$ for all non-negative $v\in\BV(\Omega)$ and thus arrive at \eqref{eq:finite-integral}.
	\end{proof}
	
	\subsection{Anisotropic ICs for general pairs of measures}\label{subsec:charact_non_sing}
	
	Next we deal with several formulations of ICs which are basically well-known and express these conditions by either testing with sets (type 1), testing with functions (type 2), or in a distributional sense (type 3). However, in the characterization results of \cite{MeyZie77,PhuTor08,PhuTor17} the constant $C$ involved may very from one condition to another, while here with our actual focus on $C=1$ we wish to fix one precise constant $C$. In other words, when viewing optimal constants $C$ as a kind of dual norms, the previous results yield equivalence, while we will establish actual equality. In fact, this is also the reason for not considering yet another type of characterization via bounds for the density ratio here, since in this regard the preservation of the constant seems difficult. While a closely related result for the isotropic perimeter and a non-negative measure has already been recorded in \cite[Theorem 7.5]{Schmidt25}, here we will treat general anisotropic perimeters and ``signed pairs'' $(\mu_-,\mu_+)$. We emphasize that in Theorem \ref{thm:anis_equivalence_measure_TOGETHER} directly below we do \emph{not} assume that $\mu_+$ and $\mu_-$ are mutually singular, but rather we postpone specifics of the case $\mu_+\perp\mu_-$ to the subsequent Theorem \ref{thm:anis_equivalence_gen_IC_singular}.
	
	\begin{thm}[characterizations of $\p$-anisotropic ICs, first version] \label{thm:anis_equivalence_measure_TOGETHER}
		We impose Assumptions \ref{assum:phi}\eqref{case:cont} and \ref{assum:mu}, and fix $C >0$. Then the following are \textbf{equivalent}\textup{:}%
		\newcounter{enum0}\refstepcounter{enum0}
		\begin{enumerate}[{\phantom{1}\rm(a)}]
			\renewcommand\theenumi{\arabic{enum0}\alph{enumi}}
			\item\label{item:1a}The pair $(\mu_-,\mu_+)$ satisfies the $\p$-IC with constant $C$ and $(\mu_+,\mu_-)$ satisfies the $\widetilde\p$-IC with constant $C$, that is,  
			\[
			-C\P_{\widetilde\p}(A)
			\leq  \mu_-(A^1)-\mu_+(A^+) 
			\leq \mu_-(A^+)-\mu_+(A^1) 
			\leq  C\P_{\p}(A)
			\]
			for all measurable $A\Subset \Omega$ \ka with emphasis on $A\Subset \Omega$\kz.
			\item\label{item:1b}The pair $(\mu_-,\mu_+)$ satisfies 
			\[
			-C\P_{\widetilde\p}(A) 
			\leq  \mu_-(A^1 \cap \Omega)-\mu_+(A^+ \cap \Omega) 
			\leq  \mu_-(A^+ \cap \Omega)-\mu_+(A^1 \cap \Omega) 
			\leq  C\P_{\p}(A)
			\]
			for all measurable $A \subseteq \Omega$ \ka now with emphasis on $A\subseteq \Omega$\kz.
		\end{enumerate}
		\refstepcounter{enum0}
		\begin{enumerate}[{\phantom{2}\rm(a)}]
			\renewcommand\theenumi{\arabic{enum0}\alph{enumi}}
			\item\label{item:2a}We have \ka where the first and the last inequality are the non-trivial ones\kz
			\begin{multline} \label{2A}
				-C \left( |\D v|_{\widetilde{\p}}(\Omega)+\int_{\partial \Omega} \widetilde{\p}(\,\cdot\,,  v \nu_\Omega) \, \d\H^{N-1}\right) 
				\leq \int_{\Omega} v^- \ \d\mu_- - \int_{\Omega} v^+ \ \d\mu_+  \\
				\leq \int_{\Omega} v^+ \ \d\mu_- - \int_{\Omega} v^- \ \d\mu_+  
				\leq C \left(  |\D v|_{\p}(\Omega) + \int_{\partial \Omega} \p(\,\cdot\,,  v \nu_\Omega) \, \d\H^{N-1}\right)
			\end{multline}
			for all \textbf{non-negative} $v \in\BV(\Omega)$. \label{eq:cond_BV}
		\end{enumerate}
		In addition, each of the properties above \textbf{implies} the subsequent \textbf{equivalent} conditions\textup{:}
		\begin{enumerate}[{\phantom{2}\rm(a)}]	\renewcommand\theenumi{\arabic{enum0}\alph{enumi}}
			\refstepcounter{enumi}
			\item\label{item:2b}It holds 
			\begin{equation} \label{2B'} 
				- C \int_{\Omega} \widetilde\p(\,\cdot\,,\nabla v) \dx
				\leq \int_{\Omega} v \ \d\mu_-  - \int_{\Omega} v \ \d\mu_+ 
				\leq  C\int_{\Omega} \p(\,\cdot\,,\nabla v) \dx
			\end{equation}
			for all \textbf{non-negative} $v \in \C^{\infty}_\c(\Omega)$.
			\item\label{item:2c}It holds
			\begin{equation} \label{2B''} 
				- C \int_{\Omega} \widetilde\p(\,\cdot\,,\nabla v) \dx
				\leq \int_{\Omega} v^\ast \ \d\mu_-  - \int_{\Omega} v^\ast \ \d\mu_+ 
				\leq  C\int_{\Omega} \p(\,\cdot\,,\nabla v) \dx
			\end{equation}
			for all \textbf{non-negative} $v \in \W^{1,1}_0(\Omega)$.
		\end{enumerate}
        Furthermore, if we additionally require convexity in the sense that also \eqref{case:conv-lsc} from Assumption \ref{assum:phi} is in force, then \eqref{item:2b} and \eqref{item:2c} are equivalent with the following property\textup{:}
		\begin{enumerate}[{\rm(1)}]
			\refstepcounter{enumi}\refstepcounter{enumi}
			\item\label{item:3}There exists a vector field $\sigma \in \L^{\infty}(\Omega,\R^N)$ such that $\p^\circ(x,\sigma(x)) \leq C$ holds for \ae{} $x \in \Omega$ and 
			\[
			\div(\sigma) = \mu_+-\mu_-
			\qq\text{holds in the sense of distributions on }\Omega\,. 
			\]
		\end{enumerate}
	\end{thm}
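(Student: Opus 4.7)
The plan is to establish the loop (1a) $\Leftrightarrow$ (1b) $\Leftrightarrow$ (2a), then (2a) $\Rightarrow$ (2c) $\Leftrightarrow$ (2b), and finally, under the added convexity, (2b) $\Leftrightarrow$ (3).

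The direction (1b) $\Rightarrow$ (1a) is immediate, while for (1a) $\Rightarrow$ (1b) I would fix a smooth exhaustion $\Omega_k\Subset\Omega_{k+1}\Subset\Omega$ with $\bigcup_k\Omega_k=\Omega$, apply (1a) to each compactly contained $A\cap\Omega_k$, and use Lemma \ref{lem:P(AcapB),P(A-S)} to bound $\P_\p(A\cap\Omega_k)\leq\P_\p(A,\Omega_k^1)+\P_\p(\Omega_k,A^+)$. A Sard-type choice of levels ensuring $\H^{N-1}(\partial^\ast\Omega_k\cap A^+)\to 0$, together with continuity of $\p$ from Assumption \ref{assum:phi}\eqref{case:cont}, then allows passage to the limit; this is where the continuity hypothesis truly enters. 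For the equivalence (1b) $\Leftrightarrow$ (2a), I would combine the anisotropic coarea formula of Theorem \ref{anis_thm:coarea}, applied to $\ol{v}\coleq v\1_\Omega\in\BV(\R^N)$ for non-negative $v\in\BV(\Omega)$, with the layer-cake formula and Remark \ref{rem:equivalence_sets_wrt_mu_ext}. Coarea gives $|\D v|_\p(\Omega)+\int_{\partial\Omega}\p(\,\cdot\,,v\nu_\Omega)\,\d\H^{N-1}=\int_0^\infty\P_\p(\{v>t\})\dt$, while layer-cake combined with the remark gives $\int_\Omega v^+\,\d\mu_\pm=\int_0^\infty\mu_\pm(\{v>t\}^+)\dt$ and the analogous formula with $\{v>t\}^1$ for $v^-$. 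Integrating the pointwise inequality (1b) in $t$ then yields (2a); conversely, testing (2a) with $v=\1_A$ (noting $(\1_A)^+=\1_{A^+}$ and $(\1_A)^-=\1_{A^1}$, and that the boundary trace of $\1_A$ precisely reproduces the exterior contribution in $\P_\p(A)$) recovers (1b).

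The implication (2a) $\Rightarrow$ (2c) is obtained by specializing to $v\in\W^{1,1}_0(\Omega)$: then $v^+=v^-=v^\ast$ holds $\H^{N-1}$-\ae{} in $\Omega$ and the boundary trace vanishes, so the boundary penalization term drops out of (2a). The equivalence (2b) $\Leftrightarrow$ (2c) is a routine density argument based on $\C^\infty_\c$-approximation combined with Lemma \ref{lem:mon_area_strict_appr}, applied to the positive and negative parts of $v$ separately.

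The hard part is the equivalence (2b) $\Leftrightarrow$ (3) under the added convexity. The direction (3) $\Rightarrow$ (2b) will be a short integration-by-parts combined with the pointwise Fenchel-type inequality $\sigma\ip\xi\leq\p^\circ(\,\cdot\,,\sigma)\,\p(\,\cdot\,,\xi)$ coming from \eqref{anis_eq:polar}. For the converse (2b) $\Rightarrow$ (3), the plan is a Hahn-Banach duality argument: first I would upgrade (2b) from non-negative to arbitrary $v\in\C^\infty_\c(\Omega)$ by writing $v=v_+-v_-$ and exploiting $\p(\,\cdot\,,\nabla v_+)+\widetilde\p(\,\cdot\,,\nabla v_-)=\p(\,\cdot\,,\nabla v)$ \ae{} (valid since $\nabla v_+$ and $\nabla v_-$ have disjoint supports). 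Then I would define the linear functional $\Lambda(\nabla v)\coleq\int_\Omega v\,\d\mu_--\int_\Omega v\,\d\mu_+$ on the subspace $V\coleq\{\nabla v:v\in\C^\infty_\c(\Omega)\}\subset\L^1(\Omega,\R^N)$ (well-defined since a compactly supported $v$ with $\nabla v\equiv0$ vanishes), and bound it above by the sublinear functional $p(W)\coleq C\int_\Omega\p(\,\cdot\,,W)\dx$, whose sublinearity crucially relies on convexity of $\p(x,\,\cdot\,)$. Hahn-Banach then extends $\Lambda$ to all of $\L^1(\Omega,\R^N)$ under the same bound, and $\L^\infty$-$\L^1$ duality yields the desired $\sigma\in\L^\infty(\Omega,\R^N)$ with $\p^\circ(\,\cdot\,,\sigma)\leq C$ \ae{} and $\div\sigma=\mu_+-\mu_-$ distributionally. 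The main obstacle is this duality step (in particular verifying sublinearity and well-definedness cleanly), with the exhaustion argument in (1a) $\Rightarrow$ (1b) being the second most delicate point.
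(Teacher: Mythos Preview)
Your proposal is correct and follows essentially the same architecture as the paper's proof: the same loop (1a)$\Leftrightarrow$(1b)$\Leftrightarrow$(2a), the same specialization (2a)$\Rightarrow$(2c)$\Rightarrow$(2b), and the same Hahn--Banach/duality argument for (2b)$\Leftrightarrow$(3). Two technical points differ slightly. For (1a)$\Rightarrow$(1b), the paper does not use Lemma~\ref{lem:P(AcapB),P(A-S)} with a Sard-type choice; instead it invokes a trace-controlled exhaustion $\Omega_k\Subset\Omega$ from \cite[Proposition~4.1]{Schmidt15} to obtain \emph{strict} convergence $\1_{A\cap\Omega_k}\to\1_A$ in $\BV(\R^N)$ and then applies the Reshetnyak continuity Theorem~\ref{anis_thm:Reshetnyak_cont} to pass $\P_\p(A\cap\Omega_k)\to\P_\p(A)$. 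Your route also works, but the paper's avoids having to choose $\Omega_k$ depending on $A$. For (2b)$\Rightarrow$(2c), your reference to Lemma~\ref{lem:mon_area_strict_appr} (``applied to the positive and negative parts of $v$'') does not quite fit, since $v\ge 0$ and that lemma concerns one-sided approximations rather than $\C^\infty_\c$-density; the paper instead first truncates $v$ to $v^M\in\W^{1,1}_0(\Omega)\cap\L^\infty(\Omega)$, approximates $v^M$ by $0\le v^M_k\le M$ in $\C^\infty_\c(\Omega)$, passes to the limit in the $\mu_\pm$-integrals by dominated convergence, and finally sends $M\to\infty$ via Lemmas~\ref{lem:cut_M} and \ref{lem:int-uM-to-int-u}. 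This is the cleanest way to justify that step.
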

	
	Before proving the theorem, we briefly record that \eqref{item:1a}, \eqref{item:1b}, \eqref{item:2a} ``see'' 
    to some extent the single measures $\mu_\pm$, while \eqref{item:2b}, \eqref{item:2c}, \eqref{item:3} depend on the signed measure $\mu_+{-}\mu_-$ only. Therefore, in the general case without an additional assumption such as $\mu_+\perp\mu_-$, there is no chance of getting back from \eqref{item:2b}, \eqref{item:2c}, \eqref{item:3} to \eqref{item:1a}, \eqref{item:1b}, \eqref{item:2a}.
	
	\begin{proof}
		First we verify that \eqref{item:1a} implies \eqref{item:1b}. 
		We suppose \eqref{item:1a} holds and consider a measurable $A \subseteq \Omega$, where in view of \eqref{eq:comp-p0} we assume without loss of generality $\P(A)<\infty$ and thus $\1_A \in \BV(\R^N) \cap \L^{\infty}(\R^N)$. From \cite[Proposition 4.1]{Schmidt15}, we obtain an increasing sequence $(\Omega_k)_k$ of open sets such that $\Omega_k \Subset \Omega$, $\P(\Omega_k) < \infty$ for all $k$, and $\Omega_k \to \Omega$ in measure as $k \to \infty$ with 
		\begin{equation}  \label{d}
			\int_{\partial^\ast \Omega_k} \left| \innk(\1_A) \right|\, \d\H^{N-1}
			\leq \int_{\partial \Omega}\left| \inn(\1_A)\right|\,\d\H^{N-1} + \frac{1}{k}, \qq \text{ for all } k \in \N\,.
		\end{equation}
		In view of $\bigcup_{k \in \N} \left( {A \cap \Omega_k}\right) ^+= A^+ \cap \Omega$ and $\bigcup_{k \in \N} \left( {A \cap \Omega_k}\right) ^1= A^1 \cap \Omega$, a standard property of measures yields
		\begin{equation}\label{eq:mu-conv-exhaustion}
			\mu_\pm(\left( A \cap \Omega_k \right) ^+) \to \mu_\pm(A^+ \cap \Omega)
			\qq\text{and}\qq
			\mu_\pm(\left( A \cap \Omega_k \right) ^1) \to \mu_\pm(A^1 \cap \Omega)
			\qq\text{as }k \to \infty\,.
		\end{equation}Moreover, we can apply Theorem \ref{thm:dec_BV} and exploit $\Omega^1=\Omega$, $\H^{N-1}(\partial\Omega\triangle\partial^\ast\Omega)=0$ for the Lipschitz domain $\Omega$ to get
		\begin{align*}
			\D \1_{A \cap \Omega_k} = \D \1_A \mres {(\Omega_k)^1} +\innk(\1_A) \nu_{\Omega_k} \H^{N-1} \mres \partial^\ast \Omega_k\,,
			\qq
			\D  \1_{A} = \D \1_A \mres \Omega +\inn(\1_A)  \nu_{\Omega} \H^{N-1} \mres \partial \Omega\,.
		\end{align*}
		Hence, for every $k$, applying \eqref{d} we obtain
		\begin{align*} 
			\P(A \cap \Omega_k) 
			= |\D\1_{A \cap \Omega_k}|(\R^N) 
			&= |\D\1_A|\big((\Omega_k)^1\big) + 	\int_{\partial^\ast \Omega_k} \left| \innk \1_A  \right|\,\d \H^{N-1} \\
			& \leq  \P(A,\Omega) + \int_{\partial \Omega}\left| \inn \1_A \right|\,\d\H^{N-1} + \frac{1}{k}
			=|\D\1_A|(\R^N)+\frac1k
			=\P(A)  + \frac{1}{k}\,.
		\end{align*}
		As $A \cap \Omega_k \to A$ in measure, the previous inequality together with the lower semicontinuity of the perimeter ensures 
		$\P(A \cap \Omega_k) \to \P(A)$ and $\1_{A \cap \Omega_k} \to \1_{A}$ strictly in $\BV(\R^N)$. As a consequence, Theorem \ref{anis_thm:Reshetnyak_cont} establishes $\P_{\p}(A \cap \Omega_k) \to \P_{\p}(A)$ for $k \to \infty$ and the analogous convergence for the mirrored integrand $\widetilde{\p}$. Finally, relying on \eqref{eq:mu-conv-exhaustion}, on assumption \eqref{item:1a} for the sets $A \cap \Omega_k \Subset \Omega$, and on the convergences just observed, we conclude
		\[
		\mu_-(A^+ \cap \Omega)-\mu_+(A^1 \cap \Omega)
		= \lim_{k \to \infty} \left( \mu_- \left( (A \cap \Omega_k)^+ \right) -  \mu_+ \left( (A \cap \Omega_k)^1 \right) \right) 
		\leq C \lim_{k \to \infty} \P_{\p}(A \cap  \Omega_k)
		= C \ \P_{\p}(A)\,.
		\]
		The estimate from below involves $\P_{\widetilde\p}$, but otherwise is analogous, and thus we arrive at \eqref{item:1b}.
		
		\smallskip
		
		In order to check that \eqref{item:1a} follows from \eqref{item:2a}, it suffices to plug $v\coleq\1_A$ with measurable $A\Subset\Omega$ into \eqref{2A}, where again we may suppose $\1_A \in\BV(\Omega)$. Taking into account $(\1_A)^+ = \1_{A^+}$ and $(\1_A)^- = \1_{A^1}$ in $\Omega$,
		the conclusion is then straightforward.
		
		\smallskip
		
		Now we turn to the proof that \eqref{item:1b} implies \eqref{item:2a}.
		We extend the measures $\mu_\pm$ by zero outside $\Omega$. Moreover, we consider $0\leq v\in\BV(\Omega)$, extend it to $\ol{v} \in \BV(\R^N)$ such that $\ol{v}=0$ on $\R^N \setminus \ol{\Omega}$, and then deduce from the layer-cake formula and Remark \ref{rem:equivalence_sets_wrt_mu_ext} that we have
		\[
		\int_{\Omega} v^+ \d\mu_-  - \int_{\Omega} v^- \d\mu_+
		=   \int_{0}^{\infty} \Big( \mu_- \left( \{ \ol{v} > t \} ^+ \cap \Omega \right)- \mu_+ \left( \{ \ol{v} > t \} ^1  \cap \Omega \right) \Big) \dt\,.
		\]
		We may now apply the hypothesis \eqref{item:1b} to the superlevel sets $\{ \ol{v} > t \} \subseteq \Omega$ for \ae{} $t \geq 0$ and by the $\p$-coarea formula \eqref{anis_eq:coarea} for non-negative functions on $\Omega$ obtain
		\[
		\int_{\Omega} v^+ \d\mu_-  - \int_{\Omega} v^- \d\mu_+ 
		\leq C \int_{0}^{\infty} \P_{\p}\left( \left\{ \ol{v} \geq t \right\} \right) \dt 
		= C |\D\ol{v}|_{\p}(\R^N)\,,
		\]
		where as usual we can rewrite $|\D\ol{v}|_{\p}(\R^N)=|\D v|_{\p}(\Omega) + \int_{\partial \Omega}\p(\,\cdot\,,v \nu_\Omega) \ \d\H^{N-1}$.
		Together with an analogous lower estimate this establishes \eqref{item:2a}.
		
		\smallskip
		
		Next we record that \eqref{item:2a} implies \eqref{item:2c} and \eqref{item:2c} implies \eqref{item:2b} by straightforward specialization from $\BV(\Omega)$ to $\W^{1,1}_0(\Omega)$ and from $\W^{1,1}_0(\Omega)$ to $\C^\infty_\c(\Omega)$, respectively.
		
		\smallskip
		
		In a further step, we prove that \eqref{item:2b} implies \eqref{item:2c}. Given $0\leq v\in\W^{1,1}_0(\Omega)$, for any $M >0$ we consider the truncation $v^M \in \W^{1,1}_0(\Omega) \cap \L^{\infty}(\Omega)$. Then there is a sequence $({v}^M_k)_k$ in $\C^\infty_\c(\Omega)$ such that ${v}^M_k \to v^M$ strongly in $\W^{1,1}(\Omega)$ for $k \to \infty$.
		Since $0\leq v^M\leq M$ on $\Omega$, by truncation of $v^M_k$ at level $M$ and mollification we can assume that $0\leq v^M_k\leq M$ on $\Omega$ for all $k\in\N$.
		Moreover, strong convergence and Theorem \ref{strict_Hausdorff_repr} give that (up to relabeling a subsequence) ${v}^M_k \to (v^M)^\ast$ pointwise $\H^{N-1}$-\ae{} in $\Omega$ and hence also pointwise $\mu_\pm$-\ae{} in $\Omega$. Applying the dominated convergence theorem to the sequence $({v}^M_k)_k$ in $\L^1(\Omega,\mu_\pm)$, we get 
		\[
		\lim_{k\to\infty}\int_{\Omega} {v}^M_k\ \d \mu_\pm
		=\int_{\Omega}  (v^M)^\ast \d \mu_\pm\,.
		\]
		In addition, by Theorem \ref{anis_thm:Reshetnyak_cont} the strong convergence of $v^M_k$ in $\W^{1,1}(\Omega)$ induces their $\p$- and $\widetilde\p$-strict convergence. Then, we can apply \eqref{2B'} to $v^M_k$ and exploit the preceding convergences in passing to the limit. We achieve
		\begin{equation} \label{eq:vM1}
			- C\int_{\Omega} \widetilde\p(\,\cdot\,,\nabla v^M) \dx
			=  \int_{\Omega} (v^M)^\ast \,\d\mu_-  - \int_{\Omega} (v^M)^\ast  \,\d\mu_+ \\
			\leq C \int_{\Omega} \p(\,\cdot\,,\nabla v^M) \dx\,.
		\end{equation}
		It remains to send $M\to\infty$. From Lemmas \ref{lem:cut_M}\eqref{item:cut_M_iii} and \ref{lem:int-uM-to-int-u} we know $v^M \to v$ strongly in $\W^{1,1}(\Omega)$ and
		\[
		\lim_{M\to\infty}\int_{\Omega} (v^M)^\ast \d \mu_\pm
		= \int_{\Omega}  v^\ast \d \mu_\pm \,.
		\]
		Therefore, passing to the limit in \eqref{eq:vM1}, we infer
		\[
		- C \int_{\Omega} \widetilde\p(\,\cdot\,,\nabla v) \dx
		\leq \int_{\Omega} v^\ast \d\mu_-  - \int_{\Omega} v^\ast  \d\mu_+
		\leq C \int_{\Omega} \p(\,\cdot\,,\nabla v) \dx\,.
		\]
		Hence, \eqref{item:2c} is proved.
		
		\smallskip
		
		If \eqref{item:3} holds, we consider the vector field $\sigma \in \L^{\infty}(\Omega,\R^N)$ such that $\p^\circ(x,\sigma(x)) \leq C$ for \ae{} $x \in \Omega$ and $\div(\sigma)=\mu_+-\mu_-$ on $\Omega$. 
		From definition of polar function $\p^\circ$, we have
		\[
		\sigma(x)\ip\xi\leq\p^\circ(x,\sigma(x))\,\p(x,\xi)\leq C\,\p(x,\xi)
		\qq \text{for all } x \in \Omega\,,\, \xi \in \R^N\,,
		\]
		and by change of sign in $\xi$ we obtain in fact
		\begin{equation} \label{eq:polar_varphi_dim1}
			- C \, \widetilde\p(x,\xi)
			\leq \sigma(x) \cdot \xi 
			\leq  C \, \p(x,\xi)
			\qq \text{for all } x \in \Omega\,,\, \xi \in \R^N\,. 
		\end{equation}
		Integrating any non-negative function $v \in \C^{\infty}_\c(\Omega)$ on $\Omega$ and exploiting the equality $\div(\sigma)=\mu_+-\mu_-$, we have
		\begin{align*}
			\int_{\Omega} v \ \d\mu_-  -  \int_{\Omega} v \ \d\mu_+ 
			= - \int_{\Omega} v \ \d(\mu_+{-}\mu_-) 
			= - \int_{\Omega} v \ \d(\div(\sigma)) 
			= \int_{\Omega} \sigma \cdot \nabla v \,\dx\,,
		\end{align*}
		and then with the help of \eqref{eq:polar_varphi_dim1} we arrive at the estimate in \eqref{item:2b}.
		
		\smallskip
		
		Finally, we assume \eqref{item:2c} and will deduce \eqref{item:3}.
		To this end, we first claim that the inequalities \eqref{2B''} from \eqref{item:2c} stay valid for functions $v\in\W^{1,1}_0(\Omega)$ of arbitrary sign. To verify this, we apply \eqref{item:2c} to $v_\pm\ge0$ and read off
		\begin{align*}
			-C\int_\Omega\widetilde\p(\,\cdot\,,\nabla v_+)\dx
			&\leq\int_\Omega v_+^\ast\,\d(\mu_-{-}\mu_+)
			\leq C\int_\Omega\p(\,\cdot\,,\nabla v_+)\dx\,,\\
			C\int_\Omega\widetilde\p(\,\cdot\,,{-}\nabla v_-)\dx
			&\ge\int_\Omega v_-^\ast\,\d(\mu_-{-}\mu_+)
			\ge -C\int_\Omega\p(\,\cdot\,,{-}\nabla v_-)\dx\,.
		\end{align*}
		We subtract the second line from the first and take into account $\nabla v=\nabla v_+$, $\nabla v_-\equiv0$ \ae{} on $\{v\ge0\}$ and $\nabla v={-}\nabla v_-$, $\nabla v_+\equiv0$  \ae{} on $\{v\le0\}$. Then we indeed arrive at \eqref{2B''} for $v$ of arbitrary sign.

		Next we consider the subspace 
		\[
		   \mathcal{W} \coleq \big\{ \nabla v \, : \, v \in \W^{1,1}_0(\Omega) \big\}
		   \subseteq\L^1\big(\Omega,\R^N\big)
		\]
		and define a linear functional $\F$ on $\mathcal{W}$ such that
		\[
		   \F[W]=\F[\nabla v]
		   \coleq {-}\int_{\Omega} v^\ast \ \d(\mu_+{-}\mu_-)
		   = \int_{\Omega} v^\ast \, \d\mu_- - \int_{\Omega} v^\ast \, \d\mu_+
		   \qq \text{for all } W=\nabla v \in \mathcal{W}\,.
		\]
		Let now $\I_{\p}\colon \L^1(\Omega,\R^N) \to \ol{\R}$ be defined through
		\[
		   \I_{\p}[W]\coleq C \int_{\Omega} \p(\,\cdot\,,W) \dx
		\qq\text{for all } W \in \L^1(\Omega,\R^N)\,.
		\]
		At this point we decisively exploit the convexity from Assumption \ref{assum:phi}\eqref{case:conv-lsc} to ensure that $\I_{\p}$ is sub-linear. Moreover, by the arbitrary-sign version of \eqref{2B''} it is $\F \leq \I_{\p}$ on $\mathcal{W}$. Additionally, $\F$ is bounded on $\mathcal{W}$ in the sense of
		\[
          |\F[W]|
          =\left|  \int_{\Omega} v^\ast \ \d(\mu_+{-}\mu_-) \right|
          \leq C \max \left\{  \int_{\Omega} \p(\,\cdot\,,\nabla v) \dx \,, \int_{\Omega} \widetilde\p(\,\cdot\,,\nabla v) \dx \right\}
          \leq C\beta \|W\|_{\L^1}
		   \text{ for all }W=\nabla v\in\mathcal{W}\,.
		\]
		By the Hahn-Banach theorem we then extend $\F$ to a functional $\widetilde{\F}$ defined on the whole space $\L^1(\Omega,\R^N)$ with same dual norm and such that the inequality $\widetilde{\F} \leq \I_{\p}$ is preserved on all $\L^1(\Omega,\R^N)$. Then, applying Riesz' representation theorem to $\widetilde{\F}$, we find $\sigma \in \L^{\infty}(\Omega,\R^N)$
		with $\|\sigma\|_{\L^{\infty}} = \|\widetilde{\F}\|_{(\L^1)^\ast} =  \|{\F}\|_{\mathcal{W}^\ast} \leq C \beta$ as well as $\widetilde{\F}[W] = \int_{\Omega}\sigma \ip W\dx$ whenever $W\in\L^1(\Omega,\R^N)$. In particular, on $\mathcal{W}$ we obtain
		\[
		- \int_{\Omega} v^\ast \,\d(\mu_+{-}\mu_-)
		= \int_{\Omega} v^\ast \,\d\mu_- - \int_{\Omega} u^\ast \,\d\mu_+ 
		= \F[\nabla v] 
		= \int_{\Omega}\sigma \ip \nabla v \dx
		\qq \text{for all } v \in \W^{1,1}_0(\Omega)
		\]
		and a fortiori for all $v\in\C^\infty_\c(\Omega)$. Thus, we have verified $\div(\sigma)=\mu_+-\mu_-$ as distributions.
		
		We claim furthermore that $\p^\circ(x,\sigma(x)) \leq C$ holds for \ae{} $x\in\Omega$. Indeed, from $\widetilde{\F} \leq \I_{\p}$ we deduce
		\[
		\int_{\Omega}\sigma \ip W\,\dx 
		= \widetilde{\F}[W] 
		\leq \I_{\p}[W]
		= C \int_{\Omega}\p(\,\cdot\,,W) \,\dx
		\qq \text{for all } W \in \L^1(\Omega,\R^N)\,.
		\]
		Specifically, when we plug in $W=\psi\xi$ with arbitrary $0\leq\psi\in\C^\infty_c(\Omega)$ and a constant vector $\xi\in\R^N$, the homogeneity of $\p$ leads to
		\[
		\int_{\Omega}\big( \sigma \ip \xi -  C\, \p(\,\cdot\,,\xi)\big) \psi \dx \leq 0
		\qq \text{whenever } 0 \leq \psi \in \C^{\infty}_\c(\Omega) \,.
		\]
		Hence, the fundamental lemma of the calculus of variations gives for every $\xi\in\R^N$ a negligible set $N_\xi\subseteq\Omega$ such that
		\begin{equation} \label{anis_eq:polar_IC1}
			\sigma(x) \ip \xi 
			\leq C \, \p(x,\xi)
			\qq\text{for }x\in\Omega\setminus N_\xi\,.
		\end{equation}
		In order to estimate $\p^\circ(x,\sigma(x))$, we first deduce from \eqref{anis_eq:polar_IC1} that
		\[
		\sup_{ \xi \in \mathds{Q}^N \setminus \{0\} } \frac{\sigma(x) \ip \xi}{\p(x,\xi)} 
		\leq C
		\qq\text{for }x\in\Omega\setminus\bigcup_{\xi \in \mathds{Q}^N} N_\xi\,.
		\]
		Then, by density of $\mathds{Q}^N$ in $\R^N$ and continuity of $\p$ in the second variable we arrive at
		\[
		\p^\circ(x,\sigma(x))
		= \sup_{ \xi \in \R^N \setminus \{0\} } \frac{\sigma(x) \ip \xi}{\p(x,\xi)} 
		\leq C
		\qq\text{for }x\in\Omega\setminus\bigcup_{\xi \in \mathds{Q}^N} N_\xi\,.
		\]
		As $\bigcup_{\xi \in \mathds{Q}^N} N_\xi$ is still negligible, this finalizes the deduction of $\p^\circ(x,\sigma(x))\leq C$ for \ae{} $x\in\Omega$ and of \eqref{item:3}. 
	\end{proof}
	
	Among the characterizations of the $\p$-IC for $(\mu_-,\mu_+)$ plus the $\widetilde\p $-IC for $(\mu_+,\mu_-)$ in Theorem \ref{thm:anis_equivalence_measure_TOGETHER}, at least the conditions of types 1 and 2 have all been expressed through two separate estimates from above and below. At least for the conditions of type 2 we now record the alternative of giving up the non-negativity of the functions $v$ and expressing the conditions as just one inequality for $v$ of arbitrary sign (compare also the preceding proof, where for \eqref{item:2c} this was partially touched upon):
	
	\begin{rem}[$\p$-anisotropic ICs for functions of arbitrary sign]\label{rem:IC_global}
		Yet another equivalent reformulation of condition \eqref{item:2a} of Theorem \ref{thm:anis_equivalence_measure_TOGETHER} is
		\begin{equation} \label{2A_global}
			\int_{\Omega} v^+ \ \d\mu_- - \int_{\Omega} v^- \ \d\mu_+ 
			\leq C \left(  |\D v|_{\p}(\Omega) + \int_{\partial \Omega} \p(\,\cdot\,, v \nu_{\Omega} )\,\d\H^{N-1}\right) 
			\qq \text{for all } v \in \BV(\Omega)
		\end{equation} 
		with no restriction of the sign of $v$. In an analogous manner, also conditions \eqref{item:2b} and \eqref{item:2c} of Theorem \ref{thm:anis_equivalence_measure_TOGETHER} can be expressed equivalently with functions of arbitrary sign.
	\end{rem}
	
	\begin{proof}
		To check that \eqref{item:2a} of Theorem \ref{thm:anis_equivalence_measure_TOGETHER} implies \eqref{2A_global}, we consider $v \in \BV(\Omega)$ and apply the upper bound of \eqref{item:2a} for $v_+$, the lower bound for $v_-$. This yields
		\begin{align*}
			\int_{\Omega} (v_+)^+ \,\d\mu_- - \int_{\Omega} (v_+)^- \,\d\mu_+  
			&\leq C \left(  |\D v_+|_{\p}(\Omega) + \int_{\partial \Omega} \p(\,\cdot\,, v_+ \nu_\Omega)\,\d\H^{N-1}\right)\,,\\
			- \int_{\Omega} (v_-)^- \,\d\mu_- + \int_{\Omega} (v_-)^+ \,\d\mu_+  
			&\leq C \left(  |\D v_-|_{\widetilde\p}(\Omega) + \int_{\partial \Omega} \widetilde\p(\,\cdot\,, v_- \nu_\Omega)\,\d\H^{N-1}\right)\,.
		\end{align*}
		Summing up term by term and employing the $\H^{N-1}$-\ae{} decomposition of $v^+$ and from $v^-$ from Lemma \ref{lem:dec_upper_limit} together with the equality $|\D v|_\p=|\D v_+|_\p+|\D v_-|_{\widetilde\p}$ of Lemma \ref{anis_lem:add_parts}, the last two estimates combine to precisely \eqref{2A_global}.
		
		\smallskip
		
		Now suppose that \eqref{2A_global} is fulfilled. Rewriting this inequality for ${-}v$ with $(-v)^\pm=-v^\mp$ we obtain 
		\begin{equation} \label{2A_global_neg}
			- \int_{\Omega} v^- \ \d\mu_- + \int_{\Omega} v^+ \ \d\mu_+
			\leq C \left(  |\D v|_{\widetilde \p}(\Omega) + \int_{\partial \Omega} \widetilde\p(\,\cdot\,, v \nu_{\Omega} )\,\d\H^{N-1}\right)
			\qq \text{for all } v \in \BV(\Omega)\,.
		\end{equation} 
		Finally, specializing both \eqref{2A_global} and \eqref{2A_global_neg} to non-negative $v$, we deduce the two inequalities of \eqref{item:2a}.
		
		\smallskip
		
		For what concerns \eqref{item:2b} and \eqref{item:2c}, the reasoning is analogous (and with $v$ and $v^\ast$ in place of $v^\pm$ even slightly simpler).
	\end{proof}
	
	\subsection{Anisotropic ICs for a signed measure}
		
	Next we turn specifically to mutually singular measures $\mu_+\perp\mu_-$, or in other words to the most relevant case of a signed measure $\mu=\mu_+-\mu_-$. Before eventually coming back to the assertions of Theorem \ref{thm:anis_equivalence_measure_TOGETHER}, we show that in this situation we can actually construct strict approximations of an arbitrary $u\in\BV(\Omega)$ such that in the limit we reproduce the representative $u^+$ in the $\mu_-$-term and at the same time the representative $u^-$ in the $\mu_+$-term. This is made precise in the next technical proposition, which will also be useful in the different context of the later Section \ref{sec:recovery}. We emphasize that a similar result, which allows even for arbitrary representatives between $u^+$ and $u^-$, has been obtained in a slightly different framework in \cite[Lemma 4.17]{ComLeo25} on the basis of fine approximations from \cite[Theorem 3.2]{ComLeo25}.

    \begin{prop}\label{prop:conv-wk}
      We impose Assumption \ref{assum:mu}, even with an arbitrary open $U\subseteq\R^N$ instead of $\Omega$, and additionally assume $\mu_+\perp\mu_-$. Then, for every $u\in\BV(U)$, there exists a sequence $(w_k)_k$ in $\W^{1,1}(U)$ such that $(w_k)_k$ converges to $u$ strictly in $\BV(U)$ with
      \begin{equation}\label{eq:conv-wk}
        \lim_{k \to \infty}\int_{U} w_k^\ast\,\d\mu_- 
        = \int_{U} u^+ \,\d\mu_-
        \qq\qq\text{and}\qq\qq
        \lim_{k \to \infty}\int_{U} w_k^\ast\,\d\mu_+
        = \int_{U} u^- \,\d\mu_+\,.
      \end{equation}
      If $u$ has compact support in $U$, we may even choose $(w_k)_k$ in $\W^{1,1}_0(U)$.
    \end{prop}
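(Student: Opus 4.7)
The plan is to interpolate, by means of a smooth cutoff $\eta$, between a one-sided approximation of $u$ from above and one from below. From Proposition \ref{prop:mon_area_strict_appr} I take sequences $(u_k)_k,(v_k)_k$ in $\W^{1,1}(U)$ converging to $u$ strictly in $\BV(U)$ with $u_1\ge u_k\ge u\ge v_k\ge v_1$ \ae{}\ in $U$, so that in particular $u_k^\ast\ge u^+$ and $v_k^\ast\le u^-$ hold $\H^{N-1}$-\ae{}\ in $U$. Since $\mu_+\perp\mu_-$, I fix disjoint Borel sets $P_\pm\subseteq U$ on which $\mu_\pm$ are concentrated. Noting that $g\coleq u_1^++|v_1|^++|u|^+$ lies in $\L^1(U;\mu_+)\cap\L^1(U;\mu_-)$ by Assumption \ref{assum:mu}, the finite Borel measures $g\mu_\pm$ are inner regular, so for each $\eps>0$ I pick disjoint compacts $K_\pm\subseteq P_\pm$ with $\int_{U\setminus K_\pm}g\,\d\mu_\pm<\eps$. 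Since $\dist(K_+,K_-)>0$, a standard smoothing of a distance-ratio yields $\eta=\eta_\eps\in\C^1(U;{[0,1]})\cap\W^{1,\infty}(U)$ with $\eta\equiv 1$ on an open neighborhood of $K_-$ and $\eta\equiv 0$ on an open neighborhood of $K_+$. I then set $w_k^\eps\coleq\eta\,u_k+(1-\eta)\,v_k\in\W^{1,1}(U)$; in the compactly supported case Proposition \ref{prop:mon_area_strict_appr} delivers $u_k,v_k\in\W^{1,1}_0(U)$, whence $w_k^\eps\in\W^{1,1}_0(U)$ as well.

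For the strict $\BV(U)$-convergence $w_k^\eps\to u$ at fixed $\eps$, the $\L^1$-part is immediate from $\|w_k^\eps-u\|_{\L^1(U)}\le\|u_k-u\|_{\L^1(U)}+\|v_k-u\|_{\L^1(U)}$. For the total variation, the product rule gives $\nabla w_k^\eps=\eta\nabla u_k+(1-\eta)\nabla v_k+(u_k-v_k)\nabla\eta$, whose last summand contributes at most $\|\nabla\eta\|_{\L^\infty(U)}\|u_k-v_k\|_{\L^1(U)}\to 0$. By the Reshetnyak continuity theorem (Theorem \ref{anis_thm:Reshetnyak_cont}), applied to the admissible integrands $\eta(x)|\xi|$ and $(1-\eta(x))|\xi|$, the first two summands yield $\int_U\eta\,\d|\D u_k|\to\int_U\eta\,\d|\D u|$ and $\int_U(1-\eta)\,\d|\D v_k|\to\int_U(1-\eta)\,\d|\D u|$, whose sum is $|\D u|(U)$. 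Together with the lower semicontinuity of the total variation, this gives $|\D w_k^\eps|(U)\to|\D u|(U)$.

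For the measure integrals, Lemma \ref{lem:mon_area_strict_appr} extracts a common subsequence (not relabeled) along which $\int_U u_k^\ast\,\d\mu_-\to\int_U u^+\,\d\mu_-$ and $\int_U v_k^\ast\,\d\mu_+\to\int_U u^-\,\d\mu_+$. Since $w_k^\eps\in\W^{1,1}(U)$, one has $(w_k^\eps)^\ast=\eta\,u_k^\ast+(1-\eta)v_k^\ast$ $\H^{N-1}$-\ae, and using $u_k^\ast\ge u^+\ge u^-\ge v_k^\ast$ one may split
\[
  \int_U(w_k^\eps)^\ast\,\d\mu_- - \int_U u^+\,\d\mu_- = \int_U\eta\,(u_k^\ast - u^+)\,\d\mu_- + \int_U(1-\eta)(v_k^\ast - u^+)\,\d\mu_-\,.
\]
The first summand is non-negative and dominated by $\int_U(u_k^\ast-u^+)\,\d\mu_-\to 0$, while the second summand is supported in $\{\eta<1\}\subseteq U\setminus K_-$ and bounded in absolute value by $\int_{U\setminus K_-}g\,\d\mu_-<\eps$. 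The analogous splitting on the $\mu_+$-side, with the roles of $\eta$ and $1-\eta$ swapped and $K_+$ in place of $K_-$, yields $\limsup_{k\to\infty}\big|\int_U(w_k^\eps)^\ast\,\d\mu_+-\int_U u^-\,\d\mu_+\big|\le\eps$. A routine diagonal extraction with $\eps_j\to 0$ and $k_j$ sufficiently large then produces the desired sequence.

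The principal difficulty is the coordination of two scales: the cutoff $\eta_\eps$ must capture most of each $\mu_\pm$-mass (weighted by $g$) on its ``correct'' side while at the same time $u_k,v_k$ are pushed to infinity, and yet strict $\BV$-convergence together with the one-sided pointwise limits of the representatives against $\mu_\pm$ must be preserved under the interpolation. The hypothesis $\mu_+\perp\mu_-$ is essential here, for without it there is no geometric separation of the two measures by a cutoff, and $u^+$ cannot be separated from $u^-$ between the two integrals.
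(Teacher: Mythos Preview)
Your proof is correct and follows essentially the same strategy as the paper's: both interpolate via a cutoff $\eta$ between the one-sided strict approximations of Proposition~\ref{prop:mon_area_strict_appr}, using $\mu_+\perp\mu_-$ to separate compacts carrying most of the $\mu_\pm$-mass. The only minor differences are that the paper controls the tail through absolute continuity of $(u^+-u^-)\mu_\pm$ rather than your majorant $g$, obtains the weighted total-variation convergence via weak-$\ast$ convergence of $|\D u_k|,|\D v_k|$ (\cite[Proposition~1.80]{AFP00}) rather than Reshetnyak continuity with the integrands $\eta(x)|\xi|$, and builds $\eta_k$ and $\ell_k$ directly in one $k$-indexed construction instead of your two-scale setup with diagonal extraction.
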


    \begin{proof}
        Given $u \in \BV(U)$, we employ Proposition \ref{prop:mon_area_strict_appr} and Lemma \ref{lem:mon_area_strict_appr} with the measures $\mu_\pm$ and pass to suitable subsequences. In this way, we obtain on one hand a sequence $(u_\ell)_\ell$ in $\W^{1,1}(U)$ which converges to $u$ strictly in $\BV(U)$ with $u_\ell \geq u$ \ae{} in $U$ for all $\ell\in\N$ and with
        \begin{equation} \label{eq:conv_appr_above_mu_pm}
          \lim_{\ell \to \infty} \int_{U} u_\ell^\ast\,\d\mu_\pm 
          = \int_{U} u^+ \,\d\mu_\pm\,.
        \end{equation}
        On the other hand, we also get a sequence $(v_\ell)_\ell$ in $\W^{1,1}(U)$ which converges to $u$ strictly in $\BV(U)$ with $v_\ell \leq u$ \ae{} in $U$ for all $\ell\in\N$ and with
        \begin{equation}\label{eq:conv_appr_below_mu_pm}
          \lim_{\ell \to \infty} \int_{U} v_\ell^\ast\,\d\mu_\pm 
          = \int_{U} u^-\,\d\mu_\pm\,.
        \end{equation}
    
        In the sequel we construct $w_k$ by interpolation between $u_{\ell_k}$ and $v_{\ell_k}$ with the help of suitable cut-off functions $\eta_k$. To this end we first decisively use $\mu_+ \perp \mu_-$ to decompose $U=P\cupdot M$ into Borel sets $P$ and $M$ such that $\mu_+(M)=\mu_-(P)=0$. In view of Assumption \ref{assum:mu}, we have $\int_U(u^+-u^-)\,\d\mu_\pm\leq2\int_U|u|^+\,\d\mu_\pm<\infty$ for the non-negative function $u^+-u^-$. Thus, by absolute continuity of the integral, for every $k\in\N$, there exists some $\delta_k>0$ such that $\mu_+(E)<\delta_k$ for Borel $E\subseteq U$ implies $\int_E(u^+-u^-)\,\d\mu_+\leq\frac1k$ and likewise $\mu_-(E)<\delta_k$ implies $\int_E(u^+-u^-)\,\d\mu_-\leq\frac1k$. Since $\mu_+$ is finite and concentrated on $P$, for every $k\in\N$, there is a compact set $K_{k;P} \subseteq P$ such that $\mu_+(U \setminus K_{k;P})<\delta_k$, and analogously there is a compact set $K_{k;M}\subseteq M$ such that $\mu_-(U \setminus K_{k;M}) < \delta_k$. Altogether we may thus record
        \begin{equation}\label{eq:abs-con-est}
          \int_{U \setminus K_{k;P}}(u^+-u^-)\,\d\mu_+\le{\ts\frac1k}
          \qq\qq\text{and}\qq\qq
          \int_{U \setminus K_{k;M}}(u^+-u^-)\,\d\mu_-\le{\ts\frac1k}\,.
        \end{equation}
        Next we consider cut-off functions $\eta_k \in \C^{1}_\c(U)$ with $\eta_k \equiv 1$ on $K_{k;M}$ and $\eta_k \equiv 0$ on $K_{k;P}$ and $0 \leq \eta_k \leq 1$ in $U$. At this stage, we come back to the strict convergences of $(u_\ell)_\ell$ and $(v_\ell)_\ell$ to $u$ and record that these imply $\|u_\ell - v_\ell \|_{\L^1(U)}\to0$ for $\ell\to\infty$ plus $(\psi|\D u_\ell|)(U)\to(\psi|\D u|)(U)$ and $(\psi|\D v_\ell|)(U)\to(\psi|\D u|)(U)$ for $\ell\to\infty$ whenever $\psi\in\mathrm{C}^0(U)\cap\L^\infty(U)$; compare e.\@g.\@ \cite[Proposition 1.80]{AFP00}. Also keeping \eqref{eq:conv_appr_above_mu_pm} and \eqref{eq:conv_appr_below_mu_pm} in mind, we may then choose an increasing sequence $(\ell_k)_k$ of positive integers such that, for later convenience, we have the inequalities
        \begin{align}
          &\hspace{-12ex}\|u_{\ell_k}-v_{\ell_k}\|_{\L^1(U)} \, \|\nabla\eta_k\|_{\L^\infty(U,\R^N)} \leq {\ts\frac1k} \,,\label{eq:fast1}\\
          (\eta_k|\D u_{\ell_k}|)(U)
          \leq (\eta_k|\D u|)(U)+{\ts\frac1k}
          &\qq\text{and}\qq
          \big((1-\eta_k)|\D v_{\ell_k}|\big)(U)
          \leq \big((1-\eta_k)|\D u|\big)(U)+{\ts\frac1k}\label{eq:fast2}\,,\\
          \int_U(u_{\ell_k}^\ast-u^+)\,\d\mu_\pm
          \leq{\ts\frac1k}
          &\qq\text{and}\qq
          \int_U(u^--v_{\ell_k}^\ast)\,\d\mu_\pm
          \leq{\ts\frac1k}\label{eq:fast3}
    	\end{align}
    	for all $k\in\N$. We proceed by defining
    	\[
          w_k\coleq\eta_k u_{\ell_k} + (1-\eta_k) v_{\ell_k}\in\W^{1,1}(U)
    	\]
        and aim at establishing \eqref{eq:conv-wk}.
        To this end, for fixed $k \in \N$, we rewrite       \begin{equation}\label{estimate_u_k_v_k_prop1}\begin{aligned}
          \int_{U} w_k^\ast\ \d\mu_-
          & = \int_{U} u_{\ell_k}^\ast\ \d\mu_-
          - \int_{U} (1-\eta_k) (u_{\ell_k}^\ast-v_{\ell_k}^\ast) \,\d\mu_-\,,\\    
          \int_{U} w_k^\ast\ \d\mu_+
          & = \int_{U} v_{\ell_k}^\ast\ \d\mu_+
          + \int_{U} \eta_k (u_{\ell_k}^\ast-v_{\ell_k}^\ast) \,\d\mu_+\,.
        \end{aligned}\end{equation}
        Next we exploit that $v_\ell^\ast\leq u^\pm\leq u_\ell^\ast$ holds $\mu_\pm$-\ae{} in $U$ and that we have $\eta_k\equiv1$ on $K_{k;M}$ and
        $\eta_k\equiv0$ on $K_{k;P}$. Also bringing in \eqref{eq:abs-con-est} and \eqref{eq:fast3}, we then estimate
        \begin{align*}
          0\leq\int_U (1-\eta_k) (u_{\ell_k}^\ast-v_{\ell_k}^\ast) \,\d\mu_-
          &\leq\int_{U\setminus K_{k;M}}(u^+-u^-)\,\d\mu_-
          +\int_U(u_{\ell_k}^\ast-u^+)\,\d\mu_-
          +\int_U(u^--v_{\ell_k}^\ast)\,\d\mu_-
          \leq{\ts\frac3k}\,,\\
          0\leq\int_U \eta_k (u_{\ell_k}^\ast-v_{\ell_k}^\ast) \,\d\mu_+
          &\leq\int_{U\setminus K_{k;P}}(u^+-u^-)\,\d\mu_+
          +\int_U(u_{\ell_k}^\ast-u^+)\,\d\mu_+
          +\int_U(u^--v_{\ell_k}^\ast)\,\d\mu_+
          \leq{\ts\frac3k}\,.
        \end{align*}
        Employing \eqref{eq:conv_appr_above_mu_pm}, \eqref{eq:conv_appr_below_mu_pm} together with the preceding estimates we may pass to the limit in \eqref{estimate_u_k_v_k_prop1} and then indeed reach \eqref{eq:conv-wk}.
				
        In addition, we now check that $(w_k)_k$ converges to $u$ strictly in $\BV(U)$. To this end, we first read off from the strict convergence of $(u_\ell)_\ell$ and $(v_\ell)_\ell$ to $u$ that $w_k\to u$ in $\L^1(U)$ for $k\to\infty$, and by lower semicontinuity of the total variation we deduce 
        \[
          \liminf_{k \to \infty}|\D w_k|(U)
          \geq |\D u|(U)\,.
        \]
        Furthermore, from $\nabla w_k=(u_{\ell_k}-v_{\ell_k})\nabla\eta_k + \eta_k\nabla u_{\ell_k} + (1-\eta_k)\nabla v_{\ell_k}$ and the estimates \eqref{eq:fast1}, \eqref{eq:fast2} we get
        \[\begin{aligned}
          |\D w_k|(U)
          &\leq \|u_{\ell_k} - v_{\ell_k} \|_{\L^1(U)} \, \|\nabla \eta_k\|_{\L^\infty(U,\R^N)} + (\eta_k|\D u_{\ell_k}|)(U) + \big((1-\eta_k)|\D v_{\ell_k}|\big)(U)\\
          &\leq (\eta_k|\D u|)(U) + \big((1-\eta_k)|\D u|\big)(U) + {\ts\frac3k}
          = |\D u|(U) + {\ts\frac3k} \,.
        \end{aligned}\]
        This yields
        \[
          \limsup_{k \to \infty}|\D w_k|(U)
          \le |\D u|(U)\,,
        \]
        and all in all we have verified strict convergence of $(w_k)_k$ to $u$ in $\BV(U)$.

        Finally, we turn to the case that $u$ has compact support in $U$. In this case, we still use the construction described above and additionally exploit that the approximations $u_\ell$ and $v_\ell$ from Proposition \ref{prop:mon_area_strict_appr} can even be taken in $\W^{1,1}_0(U)$. As a consequence, we find $w_k=\eta_ku_{\ell_k}+(1-\eta_k)v_{\ell_k}\in\W^{1,1}_0(U)$, since the term $\eta_ku_{\ell_k}$ inherits zero boundary values from $u_\ell$ (and also from $\eta_k$), the term $(1-\eta_k)v_{\ell_k}$ from $v_\ell$.
    \end{proof}

    \begin{rem}
      In case $|U|<\infty$ the sequence $(w_k)_k$ of Proposition \ref{prop:conv-wk} can be taken such that it converges not only strictly, but even area-strictly. In fact, a variant of Proposition \ref{prop:mon_area_strict_appr} provides even area-strict approximations $u_\ell$ and $v_\ell$, and it is possible to preserve even this somewhat better type of convergence in the construction of the preceding proof.
    \end{rem}

    As announced earlier, we now improve on Theorem \ref{thm:anis_equivalence_measure_TOGETHER} in case $\mu_+\perp\mu_-$, and complete the picture by establishing the equivalence of all six assertions considered there. In addition, we will show that, in contrast to Theorem \ref{thm:anis_equivalence_measure_TOGETHER}, we need not to impose condition \eqref{eq:negligible} from Assumption \ref{assum:mu} (that is, the vanishing of $\mu_\pm$ on $\H^{N-1}$-negligible sets) as an explicit hypothesis, as now it follows automatically from each of the six assertions.

		\begin{thm}[characterizations of $\p$-anisotropic ICs, second refined version]\label{thm:anis_equivalence_gen_IC_singular}
			We impose Assumption \ref{assum:phi}\eqref{case:cont}, consider non-negative Radon measures $\mu_+$ and $\mu_-$ on $\Omega$ such that $\mu_+ \perp \mu_-$ and \eqref{eq:finite-integral} hold, and fix $C>0$. Then all the assertions \eqref{item:1a}, \eqref{item:1b}, \eqref{item:2a}, \eqref{item:2b}, \eqref{item:2c} of Theorem \ref{thm:anis_equivalence_measure_TOGETHER} are mutually equivalent with each other. If additionally Assumption \ref{assum:phi}\eqref{case:conv-lsc} is in force, also assertion \eqref{item:3} is equivalent. Moreover, any of the six assertions implies the vanishing condition \eqref{eq:negligible}.
			
			\begin{proof}
				We first reason that \eqref{item:1a} implies \eqref{eq:negligible}. To this end, we consider an $\H^{N-1}$-negligible Borel set $Z\subseteq\Omega$, and in view of $\mu_+\perp\mu_-$ we decompose $\Omega=P\cupdot M$ into Borel sets such that $\mu_+(M)=0=\mu_-(P)$. We will now show
				\begin{equation}\label{eq:mu_abs_con}
					\mu_+(K)=0
					\qq\text{for every compact }K\subseteq Z\cap P\,.
				\end{equation}
				Given $\eps>0$ we fix an open set $O$ with $K\subseteq O\Subset\Omega$ and $\mu_-(O)<\eps$. Then $\H^{N-1}(K)=0$ implies by a well-known covering argument (see e.\@g.\@ \cite[Lemma 2.7]{Schmidt25}) the existence of an open set $A$ with $K\subseteq A\Subset O$ and $\P(A)<\eps$. Exploiting the left-hand inequality in \eqref{item:1a} for this $A$ and involving \eqref{eq:comp-p0} we infer
				\[
				\mu_+(A^+)-\mu_-(A^1)
				\le C\P_{\widetilde\p}(A)
				\le C\beta\P(A)
				<C\beta\eps\,.
				\]
				Taking into account $\mu_-(A^1)\le\mu_-(O)<\eps$ this leads to $\mu_+(K)\le\mu_+(A^+)<C\beta\eps+\eps$, and then, since $\eps>0$ was arbitrary, we arrive at \eqref{eq:mu_abs_con}. An analogous argument based on the right-hand inequality in \eqref{item:1a} gives also $\mu_-(K)=0$ for every compact $K\subseteq Z\cap M$. Then, via inner regularity of $\mu_\pm$ we arrive at $\mu_\pm(Z)=0$ and have completed the proof of \eqref{eq:negligible}.
				
				\smallskip
				
				At this point, since \eqref{item:1b} is stronger than \eqref{item:1a} and \eqref{item:2a} induces \eqref{item:1a} by specializing to characteristic functions (compare the proof of Theorem \ref{thm:anis_equivalence_measure_TOGETHER}), we also know that \eqref{item:1b} and \eqref{item:2a} imply \eqref{eq:negligible}.
				
				\smallskip
				
				Next we argue that also \eqref{item:2b} implies \eqref{eq:negligible}. We begin the reasoning right the same way as in the first step of this proof, but then suitably mollify the characteristic function $\1_A$ of the open set $A$ with $K\subseteq A\Subset O$ and $\P(A)<\eps$. In this way, we obtain $v\in\C^\infty_\c(\Omega)$ with $\1_K\leq v\leq\1_O$ in $\Omega$ and $\int_\Omega|\nabla v|<\eps$. Using this $v$ in \eqref{item:2b}, we find
				\[
				\int_\Omega v\,\d\mu_+-\int_\Omega v\,\d\mu_-
				\le C\int_\Omega\widetilde\p(\,\cdot\,,\nabla v)
				\le C\beta\int_\Omega|\nabla v|\dx
				<C\beta\eps\,.
				\]
				As before, in view of $\int_\Omega v\,\d\mu_-\le\mu_-(O)<\eps$ we deduce $\mu_+(K)\le\int_\Omega v\,\d\mu_+<C\beta\eps+\eps$ and then arrive at \eqref{eq:mu_abs_con} once more. Together with the analog for $\mu_-$ this yields \eqref{eq:negligible} also under \eqref{item:2b} as an hypothesis.
				
				\smallskip
				
				Again, as \eqref{item:2c} is just stronger than \eqref{item:2b}, we infer that \eqref{item:2c} implies \eqref{eq:negligible} as well, and finally \eqref{item:3} implies \eqref{eq:negligible} by \cite[Proposition 3.1]{CheFri99}. At this point, having verified that each of the six assertions enforces the validity of \eqref{eq:negligible}, we can deduce from Theorem \ref{thm:anis_equivalence_measure_TOGETHER} that all equivalences and implications of that theorem are valid also in the present framework.
				
				\smallskip
				
				Therefore, it suffices to additionally prove that \eqref{item:2c} implies \eqref{item:1a}. So, we assume \eqref{item:2c} and consider a measurable $A\Subset\Omega$, for which we suppose without loss of generality $\P(A)<\infty$. By applying Proposition \ref{prop:conv-wk} to the compactly supported function $\1_A\in\BV(\Omega)$, we then find a sequence $(w_k)_k$ in $\W^{1,1}_0(\Omega)$ which converges to $\1_A$ strictly in $\BV(\Omega)$ and satisfies
                \[
                  \lim_{k \to \infty}\int_{\Omega} w_k^\ast\,\d\mu_- 
                  = \int_{\Omega} (\1_A)^+ \,\d\mu_-
                  = \mu_-(A^+)
                  \qq\qq\text{and}\qq\qq
                  \int_{\Omega} w_k^\ast\,\d\mu_+
                  = \int_{\Omega} u^- \,\d\mu_+
                  = \mu_+(A^1)\,.
                \]
                Moreover, by Theorem \ref{anis_thm:Reshetnyak_cont}, the convergence is also $\p$-strict and thus in particular
				\[
                  \lim_{k \to \infty}|\D w_k|_\p(\Omega)
                  = |\D\1_A|_\p(\Omega)\,.
				\]
				Employing now the right inequality of \eqref{2B''} in \eqref{item:2c} for $w_k\in\W^{1,1}_0(\Omega)$ and letting $k \to \infty$, by the convergences just recorded we obtain
				\[
                  \mu_-(A^+) - \mu_+(A^1)
                  \le C\,|\D\1_A|_\p(\Omega)
                  = C\,\P_\p(A)\,.
				\]
				Moreover, from Proposition \ref{prop:conv-wk} we can also obtain yet another sequence $(\widetilde w_k)_k$ in $\W^{1,1}_0(\Omega)$ which has the same properties as $(w_k)_k$ with roles of $\mu_+$ and $\mu_-$ switched, and clearly we may also use $\widetilde\p$-variations instead of $\p$-variations.
				Employing then the left inequality of \eqref{2B''} in \eqref{item:2c} for $\widetilde w_k\in\W^{1,1}_0(\Omega)$ and letting $k\to\infty$, we get analogously
				\[
                  \mu_-(A^1) - \mu_+(A^+)
                  \ge {-}C\,|\D\1_A|_{\widetilde\p}(\Omega)
                  = {-}C\,\P_{\widetilde\p}(A)\,.
				\]
				With the last two inequalities, we arrive at \eqref{item:1a}, and this completes the final step of the equivalence proof. 
			\end{proof}
		\end{thm}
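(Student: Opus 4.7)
The plan is to extend Theorem \ref{thm:anis_equivalence_measure_TOGETHER} along two axes: first, show that each of the six assertions enforces the vanishing condition \eqref{eq:negligible}, which was previously a standing hypothesis; second, close the chain of implications by establishing the new direction \eqref{item:2c} $\Rightarrow$ \eqref{item:1a}, which becomes available thanks to the mutual singularity $\mu_+ \perp \mu_-$.

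For the first step, I would begin with assertion \eqref{item:1a}. Given an $\H^{N-1}$-negligible Borel set $Z \subseteq \Omega$, the singularity $\mu_+ \perp \mu_-$ yields a Borel partition $\Omega = P \cupdot M$ with $\mu_+(M) = 0 = \mu_-(P)$. For compact $K \subseteq Z \cap P$ and $\eps > 0$, outer regularity of $\mu_-$ furnishes an open $O \supseteq K$ with $\mu_-(O) < \eps$, and the standard covering argument based on $\H^{N-1}(K) = 0$ (e.g., \cite[Lemma 2.7]{Schmidt23}) yields an open $A$ with $K \subseteq A \Subset O$ and $\P(A) < \eps$. The left inequality of \eqref{item:1a} together with \eqref{eq:comp-p0} then gives $\mu_+(K) \leq \mu_+(A^+) \leq \mu_-(A^1) + C\beta\eps \leq (C\beta+1)\eps$, and inner regularity of $\mu_+$ forces $\mu_+(Z \cap P) = 0$. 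Combined with $\mu_+(M) = 0$ this yields $\mu_+(Z) = 0$, and a symmetric argument based on the right inequality of \eqref{item:1a} handles $\mu_-(Z) = 0$. The assertions \eqref{item:1b} and \eqref{item:2a} imply \eqref{item:1a} (the latter by taking $v = \1_A$), and the same covering argument carries over to \eqref{item:2b} after mollifying $\1_A$ to obtain $v \in \C^\infty_\c(\Omega)$ with $\1_K \leq v \leq \1_O$ and $\int_\Omega |\nabla v|\dx < \eps$. Since \eqref{item:2c} is stronger than \eqref{item:2b}, and \eqref{item:3} implies \eqref{eq:negligible} by standard divergence-measure field theory (e.g., \cite{CheFri99}), all six assertions enforce \eqref{eq:negligible}.

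With \eqref{eq:negligible} now a consequence of any of the assertions, Theorem \ref{thm:anis_equivalence_measure_TOGETHER} supplies \eqref{item:1a} $\Leftrightarrow$ \eqref{item:1b} $\Leftrightarrow$ \eqref{item:2a} $\Rightarrow$ \eqref{item:2c} $\Rightarrow$ \eqref{item:2b}, together with \eqref{item:2b} $\Leftrightarrow$ \eqref{item:3} under convexity. It remains to establish \eqref{item:2c} $\Rightarrow$ \eqref{item:1a}. So assume \eqref{item:2c} and fix a measurable $A \Subset \Omega$, which we may assume to satisfy $\P(A) < \infty$ by \eqref{eq:comp-p0}. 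The decisive step is to apply Proposition \ref{prop:conv-wk} to the compactly supported $\1_A \in \BV(\Omega)$, obtaining a sequence $(w_k)_k$ in $\W^{1,1}_0(\Omega)$ converging to $\1_A$ strictly in $\BV(\Omega)$ with $\int_\Omega w_k^\ast\,\d\mu_- \to \mu_-(A^+)$ and $\int_\Omega w_k^\ast\,\d\mu_+ \to \mu_+(A^1)$. Theorem \ref{anis_thm:Reshetnyak_cont} upgrades this to $\p$-strict convergence, so $|\D w_k|_\p(\Omega) \to \P_\p(A)$. Inserting $w_k$ into the upper inequality of \eqref{item:2c} and passing to the limit yields $\mu_-(A^+) - \mu_+(A^1) \leq C\P_\p(A)$. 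A parallel argument with the roles of $\mu_+$ and $\mu_-$ swapped in Proposition \ref{prop:conv-wk}, using $\widetilde\p$-strict convergence and the lower inequality of \eqref{item:2c}, gives the complementary bound $\mu_-(A^1) - \mu_+(A^+) \geq -C\P_{\widetilde\p}(A)$. Together these yield \eqref{item:1a}.

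The principal obstacle is precisely this new implication \eqref{item:2c} $\Rightarrow$ \eqref{item:1a}, whose decisive ingredient is Proposition \ref{prop:conv-wk}. Mutual singularity $\mu_+ \perp \mu_-$ is essential there: it permits the construction of a single approximating sequence whose fine integrals simultaneously recover $u^+$ against $\mu_-$ and $u^-$ against $\mu_+$. Without this decoupling, the conditions \eqref{item:2b}, \eqref{item:2c}, \eqref{item:3} -- which each depend only on the signed measure $\mu_+ - \mu_-$ -- cannot recover assertions sensitive to the individual measures $\mu_\pm$, which explains the strictly one-directional nature of those implications in Theorem \ref{thm:anis_equivalence_measure_TOGETHER}.
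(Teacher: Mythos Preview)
Your proposal is correct and follows essentially the same approach as the paper: first establishing \eqref{eq:negligible} from each assertion via the singularity decomposition and covering/mollification arguments, then invoking Theorem \ref{thm:anis_equivalence_measure_TOGETHER}, and finally closing the loop by proving \eqref{item:2c} $\Rightarrow$ \eqref{item:1a} through Proposition \ref{prop:conv-wk} applied to $\1_A$. One minor remark: in summarizing the implications inherited from Theorem \ref{thm:anis_equivalence_measure_TOGETHER} you wrote only \eqref{item:2c} $\Rightarrow$ \eqref{item:2b}, but that theorem in fact gives the full equivalence \eqref{item:2b} $\Leftrightarrow$ \eqref{item:2c}, which you need so that \eqref{item:2b} also feeds back into the cycle.
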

		
		Finally, in the next remark we point out that in case of $\mu_+\perp\mu_-$ we may also switch the representatives in our ICs. Nevertheless, the particular choice of Definition \ref{defi:IC} and the preceding results is convenient for a few later statements, valid even without $\mu_+\perp\mu_-$, and thus we mostly stick to this convention.
		
		\begin{rem}[$\p$-anisotropic ICs with other choices of representatives]
			In case $\mu_+\perp\mu_-$, condition \eqref{item:1a} of Theorem \ref{thm:anis_equivalence_measure_TOGETHER} can be equivalently rewritten as
			\begin{equation}\label{eq:IC-with-boxes}
				-C\P_{\widetilde\p}(A)
				\leq  \mu_-(A^{\Box_1})-\mu_+(A^{\Box_2}) 
				\leq  C\P_{\p}(A)
			\end{equation}
			for all measurable $A\Subset\Omega$ with any fixed choice of\/ $\Box_1,\Box_2\in\{+,1\}$, that is, when using one of the four possible combinations of\/ $A^+$ and\/ $A^1$ in the formula. Still for $\mu_+\perp\mu_-$, conditions \eqref{item:1b} and \eqref{item:2a} can be equivalently rephrased in the same manner, in case of \eqref{item:2a} with any of the four combinations of\/ $u^+$ and\/ $u^-$. \ka For the further conditions \eqref{item:2b}, \eqref{item:2c}, \eqref{item:3} instead, there is no similar issue of choosing representatives.\kz
		\end{rem}
		
		\begin{proof}[Sketch of proof]
			In view of $A^1\subseteq A^+$, the original \eqref{item:1a} implies the variant \eqref{eq:IC-with-boxes} for any choice of $\Box_1,\Box_2$.
			
			\smallskip
			
			Moreover, if we start from \eqref{eq:IC-with-boxes}, we can mimic parts of the proof of Theorem \ref{thm:anis_equivalence_measure_TOGETHER} in order to deduce \eqref{item:2c}. The essential argument for this purpose is the one based on the coarea and layer-cake formulas, recorded previously in going from \eqref{item:1b} to \eqref{item:2a}. In particular, when considering only $v\in\W^{1,1}_0(\Omega)$ as in \eqref{item:2c}, Remark \ref{rem:equivalence_sets_wrt_mu_ext} ensures $\{\overline v>t\}^+=\{\overline v>t\}^1$ up to $\mu_\pm$-negligible sets for \ae{} $t\in(0,\infty)$, which underpins the irrelevance of the chosen representatives $A^+$ and $A^1$ in this reasoning. Anyway, since we assume $\mu_+\perp\mu_-$, once \eqref{item:2c} is at hand, Theorem \ref{thm:anis_equivalence_gen_IC_singular} gives the validity of the original condition \eqref{item:1a} as well.
			
			\smallskip
			
			The equivalence with the variants of \eqref{item:1b} and \eqref{item:2a} comes out along the same lines, since these imply \eqref{item:2c} as well (by the coarea and layer-cake argument just roughly touched upon).
		\end{proof}

	\section{Various examples}\label{sec:exp}
	
	In this section we are concerned with the details of the various examples presented earlier, where a recurring issue will be the verification of ($\p$"~)""ICs. While the validity of such ICs is often geometrically plausible from their very definition, for their precise verification it is typically much more convenient to uncover a divergence structure and then apply the very useful Theorem \ref{thm:anis_equivalence_gen_IC_singular}.
	
	\subsection[A non-trivial signed IC]
	{A non-trivial signed IC (Example \ref{exp:signed-IC})}
	\label{subsec:signed-IC}
	
	Here we briefly justify the claims made for the Radon measures
	\[
	\mu\coleq(1{+}\theta/2)\H^1\ecke\partial\B_2
	\qq\text{and}\qq
	\nu\coleq\theta\H^1\ecke\partial\B_1
	\qq\text{on }\R^2\,,
	\qq\text{with fixed }\theta\in{(0,1]}\,.
	\]
	
	\begin{proof}[Verification that $(\mu,\nu)$ \emph{does} satisfy the isotropic IC in $\R^2$ with constant $1$]
		By setting
		\[
		\sigma(x)\coleq\begin{cases}
			0&\text{for }|x|<1\\
			{-}\theta\frac x{|x|^2}&\text{for }1<|x|<2\\
			2\frac x{|x|^2}&\text{for }2<|x|
		\end{cases}
		\]
		we obtain a vector field $\sigma\in\L^\infty(\R^2,\R^2)$, and in view of $\theta\le1$ we have $\|\sigma\|_{\L^\infty;\R^2}\le1$ (i.\@e.\@ $\sigma$ is a sub-unit field). Moreover, a standard computation with the divergence theorem reveals
		$\mathrm{div}(\sigma)=\mu-\nu$ in the sense of distributions on $\R^2$. At this point, Theorem \ref{thm:anis_equivalence_gen_IC_singular}, applied with $(\mu_+,\mu_-)=(\mu,\nu)$ and $\sigma$ above (or alternatively with $(\mu_+,\mu_-)=(\nu,\mu)$ and ${-}\sigma$ in place of $\sigma$) and in any case with $\p(x,\xi)=|\xi|$, yields the isotropic IC for $(\mu,\nu)$ with constant $1$ first on $\Omega=\B_r$, $r\gg1$, and as a consequence also on $\R^2$.
	\end{proof}
	
	\begin{proof}[Verification that $\mu$ alone \emph{does not} satisfy the isotropic IC in $\R^2$ with constant $1$]
		This claim is immediately confirmed by observing $\mu(\overline{\B_2})=4\pi(1{+}\theta/2)>4\pi=\P(\B_2)$.
	\end{proof}
	
	We additionally record that in this exemplary case also $(\nu,\mu)$ and $\nu$ alone \emph{do} satisfy the isotropic IC in $\R^2$ with constant $1$. This can be checked analogous to the reasoning for $(\mu,\nu)$, where now one merely needs to consider $\nu$ alone and then can use an even slightly simpler $\sigma$.
	
	\subsection[\texorpdfstring{The signed IC does not enforce finiteness on $\W^{1,1}$}{}]
    {\boldmath The signed IC does not enforce finiteness on $\W^{1,1}$ (Example \ref{exp:non-finite})}
    \label{subsec:non-finite}
	
	With regard to Example \ref{exp:non-finite} it merely remains to check the ICs for the Radon measures
	\[
	\mu=\H^1\ecke\bigcup_{k=1}^\infty S_{2k-1}
	\qq\text{and}\qq
	\nu=\H^1\ecke\bigcup_{k=1}^\infty S_{2k}
	\qq\text{on }\R^2\,,
	\]
	where we recall the abbreviation $S_i=\partial\B_{1/i^2}$.
	
	\begin{proof}[Verification that $(\mu,\nu)$ and $(\nu,\mu)$ satisfy the isotropic IC in $\R^2$ with constant $1$]
		For the auxiliary numbers $\alpha_i\coleq\sum_{j=i}^\infty\frac{(-1)^{j-1}}{j^2}$ (the remainders of the Dirichlet $\eta(2)$ series), we observe $\mathrm{sgn}(\alpha_i)=(-1)^{i-1}$ and $|\alpha_i|\leq\frac1{i^2}$. Then, by setting
		\[
		\sigma(x)\coleq\begin{cases}
			\alpha_1\frac x{|x|^2}
			&\text{if }|x|>1\\
			\alpha_i\frac x{|x|^2}
			&\text{if }\frac1{(i-1)^2}>|x|>\frac1{i^2}\text{ for some }i\ge2
		\end{cases}
		\]
		we obtain a vector field $\sigma\in\L^\infty(\R^2,\R^2)$ with $\|\sigma\|_{\L^\infty;\R^2}\le1$. Once more, with the divergence theorem one checks
		$\div(\sigma)=\mu-\nu$ in the sense of distributions on $\R^2$. Then, as in the previous Section \ref{subsec:signed-IC}, Theorem \ref{thm:anis_equivalence_gen_IC_singular} yields the claimed IC for both $(\mu,\nu)$ and $(\nu,\mu)$.
	\end{proof}
	
	\subsection[Non-existence in the extreme case with unbounded boundary datum]
	{Non-existence in the extreme case with unbounded boundary datum (Example \ref{exp:non-exist})}
	\label{subsec:non-exist}
	
	Here we provide a detailed treatment of Example \ref{exp:non-exist}, which --- as we recall --- works on the Lipschitz domain $\Omega=\{x\in\B_2\,:\,x_2>{-}1\} \subseteq\R^2$ with any datum $u_0\in\W^{1,1}(\R^2)$ which extends $u_0(x)\coleq(|x|{-}1)^{-\alpha}$ for $x\in\B_3\setminus\overline\Omega$ with some fixed $\alpha\in{(0,1/2)}$.
	
	At first we turn to the auxiliary claim that such an extension $u_0$ exists, which follows from standard Sobolev extension results as soon as $u_0(x)=(|x|{-}1)^{-\alpha}$ and $|\nabla u_0(x)|=\alpha(|x|{-}1)^{-(\alpha+1)}$ give $\L^1$ functions on $\B_3\setminus\overline\Omega$. These integrabilities, in turn, depend only on the behavior near the singular point $(0,{-}1)\in\partial\Omega$ and are verified through the subsequent lemma (for $u_0$ with $\beta=\alpha<1/2$ and for $|\nabla u_0|$ with $\beta=\alpha{+}1<3/2$).
	
	\begin{lem}
		For $\beta\in{(0,\infty)}$, we have
		\[
		\int_{{(-1,1)}{\times}{(-2,-1)}}\big(|x|-1\big)^{-\beta}\dx<\infty
		\qq\iff\qq
		\beta<\frac32\,.
		\]
	\end{lem}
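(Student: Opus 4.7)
The plan is to translate the singular point $(0,{-}1)$ to the origin and then reduce to a computable integral via a polar-type substitution. Specifically, I would set $y\coleq x-(0,{-}1)$, so the domain becomes $Q\coleq{(-1,1)}\times{(-1,0)}$ and $|x|=\sqrt{y_1^2+(y_2{-}1)^2}$. Writing $s\coleq{-}y_2\in{(0,1)}$ and using the identity
\[
  |x|-1 \,=\, \frac{|x|^2-1}{|x|+1} \,=\, \frac{y_1^2+s^2+2s}{|x|+1}\,,
\]
together with the bound $|x|+1\in[2,1{+}\sqrt{5}\,]$ on $Q$, I would conclude that $(|x|-1)^{-\beta}$ is comparable up to multiplicative constants to $(y_1^2+s^2+2s)^{-\beta}$, and further (thanks to $s^2\le s$ for $s\in{(0,1)}$) to $(y_1^2+s)^{-\beta}$ as an integrand on $Q$.

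Since the integrand is locally bounded away from the single point $(y_1,s)=(0,0)$, the finiteness of the integral is determined solely by its behavior on any neighborhood $\{y_1^2+s<\eps^2\}\cap Q$. On such a neighborhood I would substitute $t\coleq\sqrt s$ with $\d s=2t\dt$, turning the integrand into $(y_1^2+t^2)^{-\beta}\,2t$, and then introduce polar coordinates $y_1=r\cos\theta$, $t=r\sin\theta$ with $r\in{(0,\eps)}$, $\theta\in{(0,\pi)}$. With the area element $r\dt\,\d r\,\d\theta$... more precisely $\d y_1\dt=r\,\d r\,\d\theta$, the transformed integral becomes
\[
  \text{const}\cdot\int_0^\eps\!\!\int_0^\pi r^{2-2\beta}\sin\theta\,\d\theta\,\d r\,,
\]
up to an additive finite remainder, and this is finite precisely when $2{-}2\beta>{-}1$, i.e., $\beta<3/2$, which yields both directions of the equivalence.

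The computation is essentially elementary and the only mild subtlety is the two-sided comparability $(|x|-1)\asymp(y_1^2+s)$ on all of $Q$, which I would verify by combining $|x|+1\in[2,1{+}\sqrt{5}\,]$ with $\tfrac23(y_1^2+2s)\le y_1^2+s^2+2s\le y_1^2+3s\le3(y_1^2+s)$; once this pointwise comparability is in place, Fubini together with the single polar substitution delivers the exponent condition $\beta<3/2$ with no further difficulty.
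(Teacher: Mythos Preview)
Your argument is correct and follows essentially the same route as the paper. Both proofs shift the singularity to the origin and establish the two-sided comparability $(|x|-1)\asymp(y_1^2+s)$ on the shifted domain; your identity $|x|-1=\frac{|x|^2-1}{|x|+1}$ together with the bound on $|x|+1$ is equivalent to the paper's use of $\sqrt{1+t}-1\asymp t$. The only difference is in the final evaluation of $\int(y_1^2+s)^{-\beta}\,\d y_1\,\d s$: the paper integrates in $s$ first to obtain $\int_0^1 y_1^{2-2\beta}\,\d y_1$ minus a bounded term, whereas you substitute $t=\sqrt s$ and pass to polar coordinates to get $\int_0^\eps r^{2-2\beta}\,\d r$. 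Both yield the threshold $\beta<\tfrac32$ immediately, so this is a cosmetic variation rather than a different idea.
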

	
	\begin{proof}
		A shift of domain transforms the integral of the lemma into $\int_{{(-1,1)}{\times}{(-1,0)}}\big(\sqrt{|x|^2{-}2x_2{+}1}-1\big)^{-\beta}\dx$. Then, in view of the routine estimates $\frac14t\leq\sqrt{1{+}t}-1\leq\frac12 t$ for $t\in{[0,8]}$ and
		$x_1^2-x_2\le|x|^2-2x_2\le3(x_1^2-x_2)$ for $x\in{(-1,1)}{\times}{(-1,0)}$, it suffices to investigate finiteness of
		$\int_{{(-1,1)}{\times}{(-1,0)}}(x_1^2{-}x_2)^{-\beta}\dx$. For the last-mentioned integral, however, we can use symmetry in $x_1$ and carry out $x_2$-integration to derive in case $\beta\neq1$ the formula
		\[
		\int_{{(-1,1)}{\times}{(-1,0)}}(x_1^2-x_2)^{-\beta}\dx
		=\frac2{\beta-1}\bigg(\int_0^1x_1^{2-2\beta}\dx_1-\int_0^1(x_1^2+1)^{1-\beta}\dx_1\bigg)\,,
		\]
		where $\int_0^1(x_1^2{+}1)^{1-\beta}\dx_1$ is always finite, while $\int_0^1x_1^{2-2\beta}\dx_1$ is finite if and only if $\beta<\frac32$. In case $\beta=1$, finiteness is checked analogously (then with $x_2$-integration giving logarithmic, but still integrable terms).
	\end{proof}
	
	Now we come to the main non-existence claims of Example \ref{exp:non-exist}. In this regard, we crucially exploit another lemma:
	
	\begin{lem}\label{lem:IC-1/|x|}
		For every measurable $A\subseteq\R^2$ with $|A|<\infty$, we have
		\begin{equation}\label{eq:IC-1/|x|}
			\int_A\frac1{|x|}\dx\leq\P(A)\,.
		\end{equation}
		Moreover, equality occurs in \eqref{eq:IC-1/|x|} if and only if\/ $|A\triangle\B_r|=0$ holds for some $r\in{[0,\infty)}$ \ka where in connection with this lemma we understand\/ $\B_0=\emptyset$\kz.
	\end{lem}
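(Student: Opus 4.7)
The strategy is to compare $A$ with its radial rearrangement $A^\ast\coleq\B_R$, the ball centered at the origin with $|A^\ast|=|A|$, i.e., with $\pi R^2=|A|$. The proof of the inequality will proceed via the chain
\[
\int_A\frac1{|x|}\dx
\leq\int_{A^\ast}\frac1{|x|}\dx
=2\pi R
=\P(A^\ast)
\leq\P(A)\,,
\]
in which the first step is a rearrangement-type estimate reflecting the radial symmetry and decreasing character of $1/|x|$, the middle equality comes from a polar-coordinate computation, and the final inequality is Theorem \ref{thm:isop_ineq} applied with $N=2$ and $\omega_2=\pi$. The case $\P(A)=\infty$ makes the overall claim trivial, so the chain is only invoked when $\P(A)<\infty$.

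The rearrangement step will be established through the layer-cake formula,
\[
\int_A\frac1{|x|}\dx
=\int_0^\infty\left|A\cap\{1/|x|>t\}\right|\dt
=\int_0^\infty\left|A\cap\B_{1/t}\right|\dt\,,
\]
combined with the elementary pointwise bound $|A\cap\B_{1/t}|\leq\min\{|A|,|\B_{1/t}|\}$. Since the saturating ball $A^\ast=\B_R$ attains this bound for every $t>0$ (as either $A^\ast\subseteq\B_{1/t}$ or $\B_{1/t}\subseteq A^\ast$ holds depending on whether $1/t\gtrless R$), term-wise comparison of the two layer-cake representations yields the first inequality, while a split integration at $t=1/R$ gives
$\int_0^{1/R}\pi R^2\dt+\int_{1/R}^\infty\pi/t^2\dt=\pi R+\pi R=2\pi R$, confirming the middle equality.

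For the equality case, sufficiency is immediate: if $|A\triangle\B_r|=0$ for some $r\in[0,\infty)$, direct polar-coordinate computation gives both sides equal to $2\pi r$ (both vanishing when $r=0$). For necessity, I first note that $\int_A1/|x|\dx\leq2\pi+|A|<\infty$ (by splitting at $|x|=1$), so equality in the lemma enforces $\P(A)<\infty$ and therefore equality in each step of the chain. The decisive piece is the rearrangement step: via the layer-cake representation, its equality case requires $|A\cap\B_{1/t}|=\min\{|A|,|\B_{1/t}|\}$ for $\mathcal{L}^1$-a.e.\ $t>0$. Analyzing this separately in the two regimes gives $\B_r\subseteq A$ up to a null set for every $r<R$ (from $1/t<R$) and $A\subseteq\B_r$ up to a null set for every $r>R$ (from $1/t>R$), whence $|A\triangle\B_R|=0$. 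The main point requiring care will be precisely this final bookkeeping of the layer-cake equalities, ensuring that the a.e.\ validity in $t$ upgrades to inclusions for \emph{every} $r$ on the respective sides of $R$ so that the two one-sided conclusions combine cleanly into $A=\B_R$ up to a null set.
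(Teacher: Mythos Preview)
Your proposal is correct and follows essentially the same approach as the paper: compare $A$ to the ball $\B_R$ of equal measure, use the isoperimetric inequality for $\P(\B_R)\leq\P(A)$, and establish $\int_A 1/|x|\dx\leq\int_{\B_R}1/|x|\dx$ by a rearrangement-type argument. The only difference is in the execution of this last step: the paper argues directly via $|A\setminus\B_R|=|\B_R\setminus A|$ together with the pointwise bounds $1/|x|\le1/R$ on $A\setminus\B_R$ and $1/|x|\ge1/R$ on $\B_R\setminus A$, whereas you route through the layer-cake formula; both lead to the same equality characterization, and your monotonicity bookkeeping (that $r\mapsto|\B_r\setminus A|$ is increasing and $r\mapsto|A\setminus\B_r|$ is decreasing) cleanly upgrades the a.e.\ statement in $t$ to all radii.
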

	
	\begin{proof}
		We fix $r\in{[0,\infty)}$ with $|\B_r|=|A|$. Then the isoperimetric inequality \eqref{eq:isop_ineq} and a standard computation give
		\begin{equation}\label{eq:IC-1/|x|-1}
			\P(A)\ge\P(\B_r)=\int_{\B_r}\frac1{|x|}\dx\,.
		\end{equation}
		Additionally, in view of $|A\setminus\B_r|=|\B_r\setminus A|$, we find
		\begin{equation}\label{eq:IC-1/|x|-2}  
			\int_A\frac1{|x|}\dx\leq\int_{A\cap\B_r}\frac1{|x|}\dx+\frac1r\,|A\setminus\B_r|
			=\int_{A\cap\B_r}\frac1{|x|}\dx+\frac1r\,|\B_r\setminus A|
			\leq\int_{\B_r}\frac1{|x|}\dx\,,
		\end{equation}  
		and both estimates combined yield \eqref{eq:IC-1/|x|}. Moreover, it is evident from the reasoning that $|A\setminus\B_r|=0=|\B_r\setminus A|$ is sufficient for equality in both \eqref{eq:IC-1/|x|-1} and \eqref{eq:IC-1/|x|-2}, and it is necessary (already) for equality in \eqref{eq:IC-1/|x|-2}.
	\end{proof}
	
	We record that Lemma \ref{lem:IC-1/|x|} (or alternatively the representation $1/|x|=\div_x(x/|x|)$ for $0\neq x\in\R^2$) implies the claimed limit-case IC for the case of Example \ref{exp:non-exist} with density $H(x)=1/|x|$ and then e.\@g.\@ by Proposition \ref{prop:admissible} ensures finiteness of the last term in the relevant functional
	\begin{equation}\label{eq:P-non-exist}
		\widehat\Phi[w]=|\D w|(\Omega)+\int_{\partial\Omega}|w{-}u_0|\,\d\H^1-\int_\Omega\frac{w(x)}{|x|}\dx
		\qq\text{among }w\in\BV(\Omega)
	\end{equation}
	(where in the $\H^1$-integral we use the traces of $w$ and $u_0$). We can now proceed with:
	
	\begin{proof}[Verification that there exists no minimizer for \eqref{eq:P-non-exist}]
		Since only the values of $u_0$ near $\partial\Omega$ matter and the formula for $u_0$ on $\B_3\setminus\overline\Omega$ depends only on $|x|$, we can assume that $u_0\in\W^{1,1}(\R^2)$ is rotationally symmetric and radially decreasing on all of $\R^2$ with bounded support. Moreover, by the usual extension procedure we can pass from \eqref{eq:P-non-exist} to the equivalent minimization problem for
		\[
		\widehat\Phi_\ast[w]\coleq|\D w|(\R^2)-\int_{\R^2}\frac{w(x)}{|x|}\dx
		\qq\text{among }w\in\BV_{u_0}\,,
		\]
		where we abbreviate
		\[
		\BV_{u_0}\coleq\big\{w\in\BV(\R^2)\,:\,
		w=u_0\text{ \ae{} on }\R^2\setminus\overline\Omega\big\}\,.
		\]
		We will now prove non-existence by showing on one hand
		\begin{equation}\label{eq:inf<=0}
			\inf_{\BV_{u_0}}\widehat\Phi_\ast\le0
		\end{equation}
		and on the other hand
		\begin{equation}\label{eq:P-bar>0}
			\widehat\Phi_\ast[w]>0\qq\text{for all }w\in\BV_{u_0}\,.
		\end{equation}
		
		In order to establish \eqref{eq:inf<=0}, for arbitrary $\eps>0$, we define $u_\eps\in\W^{1,\infty}(\Omega)$ by
		\[
		u_\eps(x)\coleq\frac1{\max\{\eps,|x|{-}1\}^\alpha}
		\qq\text{for }x\in\Omega\,,
		\]
		and extend to a non-negative function $u_\eps\in\BV_{u_0}$.
		Then, since $\{u_\eps>t\}$ is a ball centered at $0$ (up to negligible sets) for $t\in\big(0,\eps^{-\alpha}\big)$, the coarea and layer-cake formulas together with Lemma \ref{lem:IC-1/|x|} yield
		\[\begin{aligned}
			\widehat\Phi_\ast[u_\eps]
			&=\int_0^\infty\bigg(\P(\{u_\eps>t\})-\int_{\{u_\eps>t\}}\frac1{|x|}\dx\bigg)\,\d t
			=\int_{\eps^{-\alpha}}^\infty\bigg(\P(\{u_\eps>t\})-\int_{\{u_\eps>t\}}\frac1{|x|}\dx\bigg)\,\d t\\
			&\leq\int_{\eps^{-\alpha}}^\infty\P(\{u_\eps>t\})\,\d t
			=\int_{\B_{1+\eps}\setminus\overline\Omega}|\nabla u_0|\dx
			+\int_{\B_{1+\eps}\cap\partial\Omega}(u_0{-}u_\eps)\,\d\H^1\,.
		\end{aligned}\]  
		In view of the previously checked integrabilities and the resulting integrability of the trace of $u_0$ on $\partial\Omega$, the right-hand side of the last estimate vanishes in the limit $\eps\to0$. Thus, we have verified \eqref{eq:inf<=0}.
		
		For the proof of \eqref{eq:P-bar>0}, we use Lemma \ref{lem:IC-1/|x|} once more and observe
		\[
		\widehat\Phi_\ast[w]\ge|\D w|(\{w>0\})-\int_{\{w>0\}}\frac{w(x)}{|x|}\dx
		=\int_0^\infty\bigg(\P(\{w>t\})-\int_{\{w>t\}}\frac1{|x|}\dx\bigg)\,\d t
		\ge0
		\]
		for all $w\in\BV_{u_0}$, where equality occurs only if $\{w>t\}$ is a circular disc
		centered at $0$ (up to negligible sets) for \ae{} $t\in{(0,\infty)}$. However, since
		$\{w>t\}\setminus\overline\Omega=\{u_0>t\}\setminus\overline\Omega=\B_{r(t)}\setminus\overline\Omega$ with some $r(t)>1$ (in case $t\ge2^{-\alpha}$ explicitly
        $r(t)=1{+}t^{-1/\alpha}\le3$) is
        prescribed, in the equality case we even deduce $|\{w>t\}\,\triangle\,\B_{r(t)}|=0$ for \ae{} $t>0$. This, however, results in $w=\infty$ \ae{} on $\B_1$, which is impossible for $w\in\BV(\R^2)$. Therefore, the equality case is ruled out, and we have verified \eqref{eq:P-bar>0}.
	\end{proof}
	
	Finally, the case of Example \ref{exp:non-exist} with the modified density $H\in\L^\infty(\R^2)$ (which, by the way, satisfies $H(x)=\div_x(x/\max\{|x|,1\})$ for $x\in\R^2\setminus\partial\B_1$) is similar. Indeed, a reasoning mostly analogous to the proof of Lemma \ref{lem:IC-1/|x|} establishes
	\[
      \int_{A\setminus\B_1}\frac1{|x|}\dx+2|A\cap\B_1|\leq\P(A)
	\]
	for all measurable $A\subseteq\R^2$ with $|A|<\infty$, where equality occurs if and only if $|A\triangle\B_r|=0$ for some $r\in{[1,\infty)}\cup\{0\}$. On the basis of this observations, one can then establish the second non-existence claim in Example \ref{exp:non-exist} by treating the full-space-extended functional
	\[
	\widehat\Phi_\ast[w]\coleq|\D w|(\R^2)-\int_{\R^2\setminus\B_1}\frac{w(x)}{|x|}\dx-2\int_{\B_1}w(x)\dx
	\qq\text{among }w\in\BV_{u_0}
	\]
	right as before.

		\subsection[\texorpdfstring{Failure of consistency for $\mu_+$ and $\mu_-$ not singular to each other}{}]
		{\boldmath Failure of consistency for $\mu_+$ and $\mu_-$ not singular to each other (Example \ref{exp:non-consist})}\label{subsec:non-consist}
		
		We recall that Example \ref{exp:non-consist} works on the unit disc $\Omega=\B_1\subset\R^2$ with the isotropic integrand $\p(x,\xi)=|\xi|$, a boundary datum $u_0$ with trace $u_0(x)=\mathrm{sgn}(x_1)$ for $x\in\partial\B_1$, and the measures $\mu_+=\mu_-=\H^1\ecke(\{0\}{\times}{({-}1,1)})$.		
		
		\begin{proof}[Verification of the claims made in Example \ref{exp:non-consist}]
			For all $w\in\W^{1,1}_{u_0}(\B_1)$ with trace $u_0(x)=\mathrm{sgn}(x_1)$, by absolute continuity along lines (or alternatively by specializing formula \eqref{eq:int-by-parts} below), we have
			\[
              \TV[w]\ge\int_{\B_1}\partial_1w\dx
              =\int_{-1}^1\int_{-\ell(x_2)}^{\ell(x_2)}\partial_1w(x_1,x_2)\dx_1\dx_2
              =\int_{-1}^1\big[u_0(\ell(x_2),x_2)-u_0({-}\ell(x_2),x_2)\big]\dx_2
              =4\,,
			\]
			where we abbreviated $\ell(x_2)\coleq\sqrt{1{-}x_2^2}$. Moreover, for $u\in\BV(\B_1)$ defined by $u(x)\coleq\mathrm{sgn}(x_1)$ for $x\in\B_1$, we have $\TV^{u_0}[u]=|\D\ol{u}|\big(\ol{\B_1}\big)=2\H^1(\{0\}{\times}{[-1,1]})=4$. Thus, by standard consistency and relaxation results for the $\TV$ term alone (that is, the versions of Corollaries \ref{cor:consist}, \ref{cor:relax} with $\mu_+=\mu_-\equiv0$), we can identify
			\[
              \min_{\BV(\B_1)}\TV^{u_0}
              =\inf_{\W^{1,1}_{u_0}(\B_1)}\TV=4
              \qq\qq\text{and}\qq\qq
              \TV_\mathrm{rel}[w]=\TV^{u_0}[w]
              \quad\text{for }w\in\BV(\B_1)\,.
			\]
			Since with $\mu=\mu_+-\mu_-\equiv0$ the measure term of $\Phi$ entirely vanishes, this gives in particular
			\[
			   \inf_{\W^{1,1}_{u_0}(\B_1)}\Phi=4
              \qq\qq\text{and}\qq\qq
              \Phi_\mathrm{rel}[w]=\TV^{u_0}[w]
              \quad\text{for }w\in\BV(\B_1)\,.
			\]
			Moreover, now taking into account $\mu_+=\mu_-=2\H^1\ecke(\{0\}{\times}{(-1,1)})$, for all $w\in\BV(\B_1)$ we observe
			\[
			\widehat\Phi[w]
			=\TV^{u_0}[w]+\int_{\{0\}{\times}{(-1,1)}}\big(w^-{-}w^+\big)\,\d\H^1
			=\TV^{u_0}[w]-|\D w|\big(\{0\}{\times}{(-1,1)}\big)\ge0
			\]
			with equality in case $w=u$ for the specific $u$ considered before. Therefore, with
			\[
              \min_{\BV(\B_1)}\widehat\Phi=0
              \qq\text{and}\qq
              \TV^{u_0}[w]
              =\widehat\Phi[w]+|\D w|\big(\{0\}{\times}{(-1,1)}\big)
              \quad\text{for }w\in\BV(\B_1)
			\]
			the main claims of Example \ref{exp:non-consist} are verified.

            Finally, the claimed IC for $\H^1\ecke(\{0\}{\times}{({-}1,1)})$ follows from \cite[Proposition A.2]{Schmidt25}, where the isotropic IC with constant $1$ has been checked even for $2\H^1\ecke(\{0\}{\times}\R)$ in all of $\R^2$. Alternatively, one can get the same IC by writing $2\H^1\ecke(\{0\}{\times}\R)=\div(\sigma)$ with $\sigma(x)\coleq(\mathrm{sgn}(x_1),0)$ and then arguing as in Sections \ref{subsec:signed-IC} and \ref{subsec:non-finite}.
		\end{proof}

	\subsection[\texorpdfstring{The $\p$-IC does not imply the $\widetilde\p$-IC}{}]
	{\boldmath The $\p$-IC does not imply the $\widetilde\p$-IC (Example \ref{exp:til1})}
	\label{subsec:til1}
	
	With regard to Example \ref{exp:til1}, we comment further only on verifying the claimed $\p$-IC. As indicated earlier, we believe that this can be approached by a general line of argument based on $\P_\p$-outward-minimality. Alternatively, a quick verification can be based once more on Theorem \ref{thm:anis_equivalence_gen_IC_singular} with the vector field $\sigma$, which vanishes on $\Delta$ and coincides with the one of the subsequent Section \ref{subsec:til2} outside $\Delta$. However, here we prefer providing yet another comparably self-contained reasoning tailored out for the concrete situation at hand.
	
	Indeed, the sole information needed on the integrand $\p$ will be
	\begin{equation}\label{eq:til-p-estimate}
		\max\{|\xi_1|,|\xi_2|,-\xi_1-\xi_2\}\leq\p(x,\xi)
		\qq\text{for all }x,\xi\in\R^2\,,
	\end{equation}
	which is easily verified for the choice of $\p$ in the example. Moreover, for $u\in\BV(\R^N)$, a bounded open Lipschitz domain $\Omega$ in $\R^N$, a Borel set $X\subseteq\partial\Omega$, and $i\in\{1,2,\ldots,N\}$ we will rely on the integration-by-parts formula
	\begin{equation}\label{eq:int-by-parts}
		\partial_iu\,(\Omega\cup X)
		={-}\int_{(\partial\Omega)\setminus X}u_{\partial\Omega}^\mathrm{int}\,(\nu_\Omega)_i\,\d\H^{N-1}-\int_Xu_{\partial\Omega}^\mathrm{ext}\,(\nu_\Omega)_i\,\d\H^{N-1}\,,
	\end{equation}
	with the inward normal $\nu_\Omega$ to $\Omega$, as it can be derived initially with $X=\emptyset$ from \cite[Theorem 3.87]{AFP00} and then for general $X$ with the help of \cite[Theorem 3.77]{AFP00}. On this basis, we now turn to:
	
	\begin{proof}[Verification of the $\p$-IC for
		$\P_\p(\Delta,\,\cdot\,)=\H^1\ecke(C_1\cup C_2)+\sqrt2\,\H^1\ecke H$]
		We consider a measurable set $A\Subset\R^2$, for which we may assume $\P(A)<\infty$ and $A\Subset\B_r$, $r\in{(0,\infty)}$,
		and we fix $u=\1_A$ in \eqref{eq:int-by-parts}. Then, since up to $\H^{N-1}$-negligible sets $\max\{(\1_A)_{\partial\Omega}^\mathrm{int},(\1_A)_{\partial\Omega}^\mathrm{ext}\}$ equals $1$ on $A^+\cap\partial\Omega$ and equals $0$ elsewhere on $\partial\Omega$, for any choice of $X$ such that $\{(\1_A)_{\partial\Omega}^\mathrm{int}=0\,,\,(\1_A)_{\partial\Omega}^\mathrm{ext}=1\}\subseteq X$ and $\{(\1_A)_{\partial\Omega}^\mathrm{int}=1\,,\,(\1_A)_{\partial\Omega}^\mathrm{ext}=0\}\subseteq(\partial\Omega)\setminus X$, formula \eqref{eq:int-by-parts} reduces to
		\begin{equation}\label{eq:int-by-parts-1A}
			\partial_i\1_A(\Omega\cup X)
			={-}\int_{A^+\cap\partial\Omega}(\nu_\Omega)_i\,\d\H^{N-1}\,,
		\end{equation}
		and this simplified formula will now be used in implementing the basic idea illustrated in Figure \ref{fig:til1}.
		
		\begin{figure}[H]\centering
                        \includegraphics{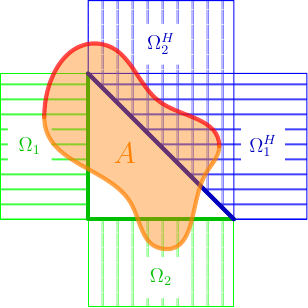}
			\vspace{-1ex}
			\parbox{13.7cm}{\caption{The portions of $C_i$ and $H$ (thick green and blue lines) covered by $A^+$ (orange area) are estimated by integrating along the green and blue stripes, respectively, till they leave $A^+$. This works out even in the blue intersection area, since $\H^{N-1}$-\ae{} point where blue stripes leave $A^+$ in both right and upward direction is in particular contained in the upward red-colored part of $\partial^\ast\!A$ which contributes to $\P_\p(A)$ with its larger $1$-norm length.\label{fig:til1}}}
		\end{figure}
		
		\noindent Specifically, for $C_1=\{0\}\times{(0,1)}$ and $\Omega_1\coleq{({-}R,0)}\times{(0,1)}$ with corresponding $X_1\subseteq\partial\Omega_1$, we obtain from \eqref{eq:int-by-parts-1A} and \eqref{eq:til-p-estimate}, applied in the first and last step, respectively,
		\[
		\H^1(A^+\cap C_1)
		=\partial_1\1_A\,(\Omega_1\cup X_1)
		=\int_{\Omega_1\cup X_1}(\nu_A)_1\,\d|\D\1_A|
		\leq\P_\p(A,\overline{\Omega_1})\,.
		\]
		Analogously, for $C_2={(0,1)}\times\{0\}$ and $\Omega_2\coleq{(0,1)}\times{({-}R,0)}$ with corresponding $X_2\subseteq\partial\Omega_2$, we get
		\[
		\H^1(A^+\cap C_2)
		=\partial_2\1_A\,(\Omega_2\cup X_2)
		=\int_{\Omega_2\cup X_2}(\nu_A)_2\,\d|\D\1_A|
		\leq\P_\p(A,\overline{\Omega_2})\,.
		\]
		For the hypotenuse $H$ of the unit triangle $\Delta$, $\Omega^H_1\coleq\big({(0,R)}\times{(0,1)}\big)\setminus\Delta$, and
		$\Omega^H_2\coleq\big({(0,1)}\times{(0,R)}\big)\setminus\Delta$ with corresponding $X^H_1\subseteq\partial\Omega^H_1$ and $X^H_2\subseteq\partial\Omega^H_2$, we similarly achieve
		\[
		\frac1{\sqrt2}\H^1(A^+\cap H)
		=-\partial_i\1_A\,(\Omega^H_i\cup X^H_i)
		=\int_{\Omega^H_i\cup X^H_i}\big({-}(\nu_A)_i\big)\,\d|\D\1_A|  
		\]
		for $i\in\{1,2\}$. We add up the last equation over $i\in\{1,2\}$ and use \eqref{eq:til-p-estimate} to estimate ${-}(\nu_A)_1-(\nu_A)_2\leq\p(\,\cdot\,,\nu_A)$ on the intersection of domains and $-(\nu_A)_i\leq\p(\,\cdot\,,\nu_A)$ elsewhere. Then we arrive at
		\[
		\sqrt2\,\H^1(A^+\cap H)\leq\P_\p\left(A,\overline{\Omega^H_1}\cup\overline{\Omega^H_2}\right)\,.
		\]
		Finally, we combine the estimates and take into account that $\overline{\Omega_1}$, $\overline{\Omega_2}$, and $\overline{\Omega^H_1}\cup\overline{\Omega^H_2}$ are essentially disjoint. Thus, we conclude with
		\[
		\big(\H^1\ecke(C_1\cup C_2)+\sqrt2\,\H^1\ecke H\big)(A)
		\leq\P_\p(A)\,,
		\]
		as required.
	\end{proof}
	
	\subsection[\texorpdfstring{The $\p$-IC does not even imply the small-volume $\widetilde\p$-IC}{}]
	{\boldmath The $\p$-IC does not even imply the small-volume $\widetilde\p$-IC (Example \ref{exp:til2})}
	\label{subsec:til2}
	
		Here we go into the details of Example \ref{exp:til2}, which involves the self-similar fractal $\Delta_\infty$. We start with:
		
		\begin{proof}[Brief justification of the claim $\H^1(\Delta_\infty)=\sqrt2$]
			The upper bound $\H^1(\Delta_\infty)\leq\sqrt2$ follows from the definition of the Hausdorff measure by covering $\Delta_\infty$ for $k\gg1$ with the $3^k$ triangles of diameter $3^{-k}\sqrt2$ which constitute $\Delta_k$. The lower bound $\H^1(\Delta_\infty)\ge\sqrt2$ results from the observation that the orthogonal projection of $\Delta_\infty$ on the second diagonal fully covers the line segment $\big\{(t,-t):t\in{\big[{-}\frac12,\frac12\big]}\big\}$ of length $\sqrt2$.
		\end{proof}
		
		In addition, we record that the polar $\p^\circ$ of the anisotropic integrand $\p$ defined already in Example \ref{exp:til1} takes the form
		\begin{equation}\label{eq:p^o}
			\p^\circ(x,\xi^\ast)=
			\begin{cases}
				|\xi^\ast|&\text{if }\xi^\ast_2\ge0\\
				\max\{|\xi^\ast_1|,|\xi^\ast_2|\}&\text{if }\xi^\ast_2\le0
			\end{cases}
		\end{equation}
		and then we are left merely with the verification of the claimed $\p$-IC for the equi-distributed measure $\mu$ on $\Delta_\infty$. This verification will now be achieved via a construction partially inspired by \cite{Hutchinson81} and once more with the help of Theorem \ref{thm:anis_equivalence_gen_IC_singular}.
	
	\begin{proof}[Verification that $\mu=2\sqrt2\,\H^1\ecke\Delta_\infty$ satisfies the $\p$-IC in $\R^2$ with constant $1$]
			We define $\sigma_k\in\L^\infty(\R^2,\R^2)$ as illustrated in Figure \ref{fig:sigma_k} for $k\in\{1,2\}$ and subsequently formally described for all $k\in\N$.

		\begin{figure}[ht]
			\centering
                        \includegraphics{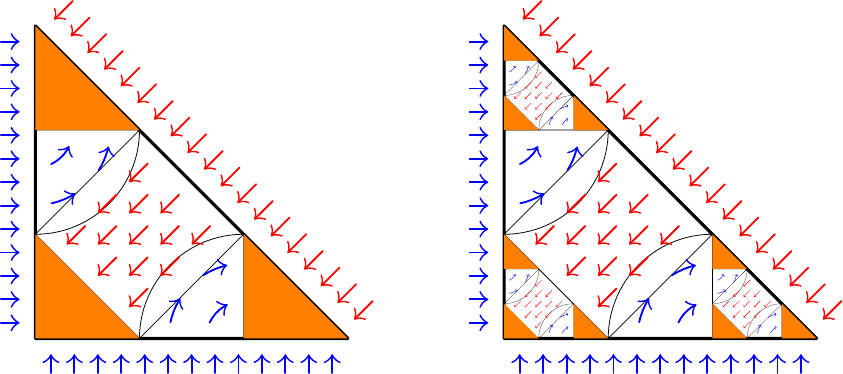}
			\caption{An illustration of $\sigma_k$ for $k\in\{1,2\}$, where
				$\sigma_k\equiv0$ on $\Delta_k$ (orange regions),
				$\sigma_k\in{[0,1]}^2\cap\partial\B_1$ on blue-arrow regions,
				$\sigma_k\equiv({-}1,{-}1)$ on red-arrow regions, and
				$\sigma_k\in{[{-}1,0]}^2$ in red-blue-superposition
				regions.\label{fig:sigma_k}}
		\end{figure}
		
			We fix the values of $\sigma_k$ outside $\Delta_0$ as $\sigma_k\colequiv(1,0)$ on ${(-\infty,0)}{\times}{(0,1)}$ and $\sigma_k\colequiv(0,1)$ on ${(0,1)}{\times}{(-\infty,0)}$ and $\sigma_k(x)\coleq({-}1,{-}1)$ if $|x_2{-}x_1|<1<x_1{+}x_2$, and we complement these choices with $\sigma_k\colequiv0$ elsewhere outside $\Delta_0$. In addition, the values on $\Delta_0$ are defined iteratively. First, we define $\sigma_1$ on $\Delta_0$ by setting (with slight abuse of notation for the characteristic functions)
			\[
			\sigma_1(x)\coleq\frac{(\frac23{-}x_2,x_1)}{|(\frac23{-}x_2,x_1)|}\1_{|(x_1,x_2{-}\frac23)|<\frac13}
			+({-}1,{-}1)\1_{|x_2{-}x_1|<\frac13}
			+\frac{(x_2,\frac23-x_1)}{|(x_2,\frac23-x_1)|}\1_{|(x_1{-}\frac23,x_2)|<\frac13}
			\qq\text{for }x\in\Delta_0\setminus\Delta_1
			\]
			and $\sigma_1\colequiv0$ on $\Delta_1$. Then, for $2\leq k\in\N$, we define $\sigma_k$ on $\Delta_0$ by setting $\sigma_k(T_i(x))\coleq\sigma_{k-1}(x)$ for $x\in\Delta_{k-1}$, $i\in\{1,2,3\}$ and $\sigma_k(x)\coleq\sigma_{k-1}(x)$ for $x\in\Delta_0\setminus\Delta_{k-1}$.
			
			Despite the somewhat lengthy formal description of $\sigma_k$, we can read off that all values of $\sigma_k$ remain in $({[0,1]}^2\cap\partial\B_1)\cup({[{-}1,0]}^2)$, and therefore \eqref{eq:p^o} implies $\p^\circ(\,\cdot\,,\sigma_k)\le1$ \ae{} on $\R^2$. Moreover, we decisively record that only the normal component of $\sigma_k$ jumps at the boundary of the superposition regions illustrated in Figure \ref{fig:sigma_k}. Since these normal jumps do not contribute to $\div(\sigma_k)$, it then turns out that $\div(\sigma_k)={-}\theta_k\H^1\ecke\partial\Delta_k$ in the sense of distributions on $\R^2$ with $\theta_k\equiv1$ on the horizontal and vertical portions of $\partial\Delta_k$, but $\theta_k\equiv\sqrt2$ on the diagonal ones.
			
			Finally, we pass to the limit $k\to\infty$. Since for \ae{} fixed $x\in\R^2$ the value $\sigma_k(x)$ is eventually constant for $k\gg1$ and moreover $\sigma_k$ are bounded in $\L^\infty(\R^2,\R^2)$, it turns out that $\sigma_k$ converge \ae{} on $\R^2$ and weak-$\ast$ in $\L^\infty(\R^2,\R^2)$ to some $\sigma\in\L^\infty(\R^2,\R^2)$ with $\p^\circ(\,\cdot\,,\sigma)\le1$ \ae{} on $\R^2$. In addition, since $(\theta_k\H^1\ecke\partial\Delta_k)(T)=(2\sqrt2\,\H^1\ecke\Delta_\infty)(T)$ holds for each of the $3^k$ triangles $T$ which $\Delta_k$ consists of, a standard argument shows that $\theta_k\H^1\ecke\partial\Delta_k$ weak-$\ast$ converge to $2\sqrt2\,\H^1\ecke\Delta_\infty$ as measures on $\R^2$. In view of the convergences we can then pass to the limit with the condition $\div(\sigma_k)={-}\theta_k\H^1\ecke\partial\Delta_k$ and obtain $\div(\sigma)={-}2\sqrt2\,\H^1\ecke\Delta_\infty$ on $\R^2$. At this stage, applying Theorem \ref{thm:anis_equivalence_gen_IC_singular} as in the earlier Sections \ref{subsec:signed-IC} and \ref{subsec:non-finite}, we conclude that $2\sqrt2\,\H^1\ecke\Delta_\infty$ indeed satisfies the $\p$-IC in $\R^2$ with constant $1$.
	\end{proof}
	
	\section{Refined semicontinuity of parametric functionals}\label{sec:par-lsc}

    In this section we work in a parametric setting and mainly aim at refining the finite-measure case of the semicontinuity results in \cite[Theorems 1.2, 4.1]{Schmidt25}. Indeed, while these previous results deal with the isotropic perimeter and with separate small-volume ICs for $\mu_+$ and $\mu_-$, we here cover general anisotropic perimeters and reach the ``truly signed situation'' with small-volume ICs for $(\mu_-,\mu_+)$ and $(\mu_+,\mu_-)$, where we can even allow for $\mu_+\not\perp\mu_-$. We remark that in this generality we cannot deduce suitable ICs for $\mu_+$ and $\mu_-$ alone (as one can see at hand of Example \ref{examp:unrectifiable}), and thus we cannot follow the approach of \cite[Section 4]{Schmidt25}, which first treats the functionals with one of the two measures only. Rather we will implement a related, but somewhat more delicate reasoning, which can cope with both measures and the two corresponding types of cancellation effects at the same time.

    However, before reaching the main statement in Theorem \ref{thm:anis_semicont_funct_per_A+A1_gen}
    we deal with a key lemma, which closely resembles and slightly refines \cite[Lemma 4.4]{Schmidt25}. In fact, in the specific case of a finite measure $\mu$ we can here dispense with a certain technical condition imposed in \cite[Lemma 4.4]{Schmidt25} (see \eqref{eq:M(eps)-IC} below) and can work with the vanishing on $\H^{N-1}$-negligible sets as our sole hypothesis on $\mu$. This refinement is helpful in coping in the sequel with semicontinuity under assumption of merely our ``signed'' ICs.
    
	\begin{lem}[good exterior approximation]
		\label{lem:good-ext-approx}
		We consider a \emph{finite} non-negative Radon measure $\mu$ on $\R^N$ such that $\mu(Z)=0$ for every $\H^{N-1}$-negligible $Z\subseteq\R^N$, a Borel function $\p\colon\R^N\times\R^N\to{[0,\infty)}$ such that $\xi\mapsto \p(x,\xi)$ is positively homogeneous of degree $1$ and \ka the upper bound in\kz{} \eqref{eq:comp-p0} is valid, and an arbitrary $\eps>0$. If\/ $\1_{A_k}\in\BV(\R^N)$ converge in $\L^1(\R^N)$ to $\1_A\in\BV(\R^N)$, then there exists a Borel set\/ $S\subseteq\R^N$ with $\1_S\in\BV(\R^N)$ such that we have
		\[
		A^+\subseteq\Int(S)\,,\qq\qq
		\mu\big(\overline S\big)<\mu(A^+)+3\eps\,,
		\qq\qq\text{and}\qq\qq
		\liminf_{k\to\infty}\P_\p(S,A_k^+)<\beta\eps\,.
		\]
	\end{lem}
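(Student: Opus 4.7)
The plan is to construct $S$ as the union of an open superlevel set of a carefully chosen Lipschitz cut-off function $u$ with a small open covering of a residual piece of $A^+$. The decisive design feature is that $\int_{A_k^+}|\nabla u|\,\d x$ vanishes in the limit $k\to\infty$, which will let me extract via the coarea formula a level $t$ with the required perimeter control.

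\medskip

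First I exploit the Radon regularity of $\mu$. Inner regularity gives a compact $K\subseteq A^+$ with $\mu(A^+\setminus K)<\eps$, while outer regularity combined with a Fubini-type argument on level sets of distance functions (where finiteness $\mu(\R^N)<\infty$ forces $\mu(\{\mathrm{dist}(\cdot,E)=r\})=0$ for all but countably many radii $r$) produces two open sets of finite perimeter: $U_1\supseteq K$ with $K\Subset U_1$ and $\mu(\overline{U_1})<\mu(K)+\eps\leq\mu(A^+)+\eps$, and $U_2\supseteq A^+\setminus K$ with $\mu(\overline{U_2})<2\eps$. Setting $\tau\coleq\mathrm{dist}(K,U_1^c)/2>0$, the Lipschitz cut-off
\[
u(x)\coleq\max\bigl\{0,\,1-\mathrm{dist}(x,K)/\tau\bigr\}
\]
equals $1$ on $K$, vanishes outside $U_1$, and satisfies $|\nabla u|=\tau^{-1}\1_{\{0<\mathrm{dist}(\cdot,K)<\tau\}}$ almost everywhere.

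\medskip

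Next I analyze $\int_{A_k^+}|\nabla u|\,\d x$. Since $|A_k\triangle A_k^+|=0$ (a general property of sets of finite perimeter), $|\nabla u|\in L^\infty$, and $\1_{A_k}\to\1_A$ in $L^1$, dominated convergence gives
\[
\int_{A_k^+}|\nabla u|\,\d x=\int_{A_k}|\nabla u|\,\d x\;\longrightarrow\;\int_A|\nabla u|\,\d x\qquad(k\to\infty).
\]
Here the refinement of \cite[Lemma~4.4]{Schmidt23} --- made possible by the combination $\mu(\R^N)<\infty$ plus the vanishing of $\mu$ on $\H^{N-1}$-null sets --- enters: by tuning $K$ (enlarging at negligible $\mu$-cost via inner regularity and choosing $\tau$ as a Fubini-regular radius) the right-hand side can be driven below $\eps$ without any small-volume IC on $\mu$. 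Invoking the anisotropic coarea formula of Theorem~\ref{anis_thm:coarea} together with the upper bound $\p(\cdot,\xi)\leq\beta|\xi|$ then yields
\[
\int_0^1\P_\p(\{u>t\},A_k^+)\,\d t\;\leq\;\beta\int_{A_k^+}|\nabla u|\,\d x\;<\;\beta\eps
\]
for all sufficiently large $k$.

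\medskip

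Fatou's lemma produces some $t\in(0,1)$ with $\liminf_{k\to\infty}\P_\p(\{u>t\},A_k^+)<\beta\eps$, which I may additionally take so that $\P(\{u>t\})<\infty$ (again by coarea, for almost every $t$). Defining $S\coleq\{u>t\}\cup U_2$, openness of both pieces gives $A^+=K\cup(A^+\setminus K)\subseteq\{u>t\}\cup U_2=\Int(S)$; the closure estimate $\overline S\subseteq\overline{U_1}\cup\overline{U_2}$ combined with the $\mu$-bounds from Step~1 yields $\mu(\overline S)\leq\mu(\overline{U_1})+\mu(\overline{U_2})<\mu(A^+)+3\eps$; and subadditivity of the $\p$-perimeter together with the chosen tubular-neighborhood structure of $U_2$ gives $\liminf_k\P_\p(S,A_k^+)\leq\liminf_k\P_\p(\{u>t\},A_k^+)+\limsup_k\P_\p(U_2,A_k^+)<\beta\eps$. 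The principal technical obstacle is the simultaneous arrangement of all conditions on $K$, $U_1$, $U_2$, and $\tau$ --- in particular, ensuring that the transitional layer $\{0<u<1\}$ contributes negligibly to $\int_A|\nabla u|\,\d x$ and that the reduced boundary of $U_2$ is essentially disjoint from $A_k^+$ for large $k$ --- which is precisely where the hypothesis $\mu(\R^N)<\infty$ combined with $\mu$ vanishing on $\H^{N-1}$-null sets makes the relevant Fubini selection of regular radii tractable.
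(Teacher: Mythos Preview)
Your construction has a genuine gap in the handling of $U_2$. You need $\limsup_k\P_\p(U_2,A_k^+)$ to be small, but the only control you have on $U_2$ is that $\mu(\overline{U_2})<2\eps$; nothing in your argument bounds the perimeter of $U_2$ or forces $\partial^\ast U_2$ away from $A_k^+$. In fact, since $K$ is a compact subset of $A^+$, the residual $A^+\setminus K$ will typically contain a portion of the reduced boundary $\partial^\ast\!A$, so any open $U_2\supseteq A^+\setminus K$ is a neighborhood of (part of) $\partial^\ast\!A$, and a tubular construction gives $\H^{N-1}(\partial^\ast U_2)$ of order $\P(A)$, not of order $\eps$. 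The assertion that ``the reduced boundary of $U_2$ is essentially disjoint from $A_k^+$'' is therefore unjustified. A related problem afflicts your claim that $\int_A|\nabla u|\,\d x$ can be driven below $\eps$: this integral equals $\tau^{-1}|A\cap\{0<\mathrm{dist}(\cdot,K)<\tau\}|$, and since $K\subseteq A^+$ with $|A\triangle A^+|=0$, the annulus around $K$ may contain a substantial portion of $A$. Your appeal to the hypothesis ``$\mu$ vanishes on $\H^{N-1}$-null sets'' at this point is a red herring, since the integral in question involves Lebesgue measure only.

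The paper's proof is entirely different and sidesteps these issues. It does \emph{not} construct $S$ directly; instead it shows by a contradiction argument that the hypotheses (finiteness of $\mu$ plus vanishing on $\H^{N-1}$-null sets) already force the auxiliary condition $\mu(B^+)\le M\,\P(B)+\eps$ for all measurable $B$, with some $M=M(\eps)<\infty$. The contradiction is obtained by assuming this fails for every $M$, extracting sets $A_\ell$ with $\mu(A_\ell^+)>\ell^2\P(A_\ell)+\eps$, and showing via $1$-capacity that the Borel--Cantelli-type set $E=\bigcap_k\bigcup_{\ell\ge k}A_\ell^+$ satisfies both $\mathrm{Cap}_1(E)=0$ (hence $\H^{N-1}(E)=0$, hence $\mu(E)=0$) and $\mu(E)\ge\eps$. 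With the $M(\eps)$-condition in hand, the paper simply invokes \cite[Lemma~4.4]{Schmidt23}, which already contains the full construction of $S$ under precisely that hypothesis. The key point is that this weak isoperimetric condition is what converts $\mu$-smallness into perimeter-smallness --- exactly the missing ingredient in your treatment of $U_2$.
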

	
	\begin{proof}
		An analogous claim for the case $\p(x,\xi)=|\xi|$ has been established in \cite[Lemma 4.4]{Schmidt25} under the additional assumption that there exist $M\in{[0,\infty)}$ and $\delta>0$ such that
		\begin{equation}\label{eq:M(eps)-IC}
			\mu(A^+)\le M\P(A)+\eps
			\qq\text{for all measurable }A\subseteq\R^N\text{ with }|A|<\delta\,.
		\end{equation}
		
		In the present case with finite $\mu$ we will now employ a contradiction argument to show 
		that \eqref{eq:M(eps)-IC} is automatically valid for some $M\in{[0,\infty)}$ even with $\delta=\infty$ (where the reader should carefully observe that this does \emph{not} mean having a small-volume IC for $\mu$, since our $M$ will actually depend on $\eps$). We base the contradiction argument on the assumption that \eqref{eq:M(eps)-IC} with $\delta=\infty$ fails for arbitrarily large $M$ and thus infer, for each $\ell\in\N$, the existence of a measurable set $A_\ell\subseteq\R^N$ with $|A_\ell|<\infty$ and
		$\mu(A_\ell^+)>\ell^2\P(A_\ell)+\eps$. In particular we record
		\[
		\P(A_\ell)<\mu(\R^N)\ell^{-2}
		\qq\qq\text{and}\qq\qq
		\mu(A_\ell^+)>\eps
		\]
		for all $\ell\in\N$, and we introduce
		\[
		E \coleq\bigcap_{k=1}^\infty\bigcup_{\ell=k}^\infty A_\ell^+\,.
		\]
		Now, from the subadditivity in \eqref{eq:subadd_cap} and the characterization in Proposition \ref{prop:per_cap}, on one hand we find
		\[
		\mathrm{Cap}_1(E)
		\le\mathrm{Cap}_1\bigg(\bigcup_{\ell=k}^\infty A_\ell^+\bigg)
		\le\sum_{\ell=k}^\infty\mathrm{Cap}_1(A_\ell^+)
		\le\sum_{\ell=k}^\infty\P(A_\ell)
		\le\mu(\R^N)\sum_{\ell=k}^\infty\ell^{-2}
		\]
		for all $k\in\N$. From this, taking into account the finiteness of $\mu$ we infer first $\mathrm{Cap}_1(E)=0$, then (recalling Proposition \ref{prop:negl_cap})
		$\H^{N-1}(E)=0$, and by assumption on $\mu$ finally $\mu(E)=0$. On the other hand, we have
		\[
		\mu(E)
		=\lim_{k\to\infty}\mu\bigg(\bigcup_{\ell=k}^\infty A_\ell^+\bigg)
		\ge\liminf_{k\to\infty}\mu(A_k^+)\ge\eps\,.
		\]
		This contradicts the previous finding $\mu(E)=0$ and verifies the existence of $M$ such that \eqref{eq:M(eps)-IC} is valid.
		
		On the basis of \eqref{eq:M(eps)-IC} we may then deduce the claim of Lemma \ref{lem:good-ext-approx} for the case $\p(x,\xi)=|\xi|$ with $\beta =1$ by applying \cite[Lemma 4.4]{Schmidt25}. The claim for general $\p$ directly follows by exploiting the upper bound in \eqref{eq:comp-p0}.
	\end{proof}

    With Lemma \ref{lem:good-ext-approx} at hand we now turn to the announced main statement.
	
	\begin{thm}[lower semicontinuity of anisotropic parametric functionals]\label{thm:anis_semicont_funct_per_A+A1_gen}
		We impose Assumption \ref{assum:phi}\eqref{case:conv-lsc} and, for finite non-negative Radon measures $\mu_\pm$ on $\R^N$, assume $\mu_\pm(Z)=0$ for every $\H^{N-1}$-negligible $Z\subseteq\R^N$. If $(\mu_-,\mu_+)$ satisfies the small-volume $\p$-IC in $\R^N$ with constant $1$ and $(\mu_+,\mu_-)$ satisfies the small-volume $\widetilde\p$-IC in $\R^N$ with constant $1$, then it holds 
		\begin{equation} \label{eq:lsc}
			\liminf_{k \to \infty} \left[ \P_{\p}(A_k) + \mu_+(A_k^1) - \mu_-(A_k^+)\right]
			\geq \P_{\p}(A) + \mu_+(A^1) - \mu_-(A^+)
		\end{equation}
		whenever $A_k$ and $A$ are measurable in $\R^N$ and $A_k$ converge locally in measure to $A$. 
	\end{thm}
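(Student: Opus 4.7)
My plan is to adapt the strategy from the proof of \cite[Theorem 4.1]{Schmidt23} to the anisotropic setting, but handling the two measures $\mu_+$ and $\mu_-$ simultaneously---since in this generality neither one individually needs to satisfy any IC. After passing to a subsequence realising the $\liminf$ in \eqref{eq:lsc} as a proper \ka and, without loss of generality, finite\kz{} limit, along which additionally $\1_{A_k}\to\1_A$ pointwise $\LN$-a.e., I fix $\eps>0$ and apply Lemma \ref{lem:good-ext-approx} twice to the finite non-negative Radon measure $\mu_++\mu_-$. The first application, with the sequence $\1_{A_k}\to\1_A$, produces a Borel set $R$ satisfying $A^+\subseteq\Int(R)$, $\mu_\pm(\overline R\setminus A^+)<3\eps$, and \ka along a subsequence\kz{} $\P_\varphi(R,A_k^+),\P_{\widetilde\varphi}(R,A_k^+)<\beta\eps$ \ka this works for both integrands since the proof of Lemma \ref{lem:good-ext-approx} in fact delivers a small standard perimeter $\P(R,A_k^+)$\kz. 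The second application, to the complement sequence $\1_{A_k^{\mathrm c}}\to\1_{A^{\mathrm c}}$, yields \ka through the complement of the set provided by the lemma\kz{} a Borel set $Q$ with $\overline Q\subseteq A^1$, $\mu_\pm(A^1\setminus\Int Q)<3\eps$, and $\P_\varphi(Q,A_k^0),\P_{\widetilde\varphi}(Q,A_k^0)<\beta\eps$. A diagonal extraction produces a single subsequence along which all of these estimates hold; I also record the key topological nesting $\overline Q\subseteq A^1\subseteq A^+\subseteq\Int(R)$.

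I then introduce the modified sets $B_k\coleq(A_k\cap R)\cup Q=(A_k\cup Q)\cap R$. The inclusions $B_k\subseteq R$ and $\Int(Q)\subseteq B_k$ directly give
\[
\mu_-(B_k^+)\le\mu_-(\overline R)<\mu_-(A^+)+3\eps\qq\text{and}\qq\mu_+(B_k^1)\ge\mu_+(\Int Q)>\mu_+(A^1)-3\eps\,,
\]
whereas iterating Lemma \ref{lem:P(AcapB),P(A-S)}, together with the consequence $Q^+\subseteq R^1$ of $\overline Q\subseteq\Int(R)$ \ka which forces $\P_\varphi(R,Q^+)=0$\kz, yields the perimeter comparison $\P_\varphi(B_k)\le\P_\varphi(A_k)+2\beta\eps$. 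Since $R\supseteq A$ and $Q\subseteq A$ hold up to $\LN$-nullsets, $B_k\to A$ locally in $\LN$, and Reshetnyak's lower semicontinuity theorem (Theorem \ref{anis_thm:LSC_TV_1}) provides $\liminf_k\P_\varphi(B_k)\ge\P_\varphi(A)$. Writing $F[X]\coleq\P_\varphi(X)+\mu_+(X^1)-\mu_-(X^+)$, these estimates combine into $\liminf_{k\to\infty}F[B_k]\ge F[A]-6\eps$.

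To transfer this lower bound from $B_k$ to $A_k$, I set $P_k\coleq Q\setminus A_k$ and $M_k\coleq A_k\setminus R$, so that $B_k\triangle A_k=P_k\cupdot M_k$ with $|P_k|,|M_k|\to0$. A standard density analysis shows $B_k^1\setminus A_k^1\subseteq P_k^+$ and $A_k^+\setminus B_k^+\subseteq M_k^+$, whence
\[
F[B_k]-F[A_k]\le 2\beta\eps+\mu_+(P_k^+)+\mu_-(M_k^+)\,.
\]
For $k$ large I then apply the small-volume $\widetilde\varphi$-IC for $(\mu_+,\mu_-)$ to $P_k$ and the small-volume $\varphi$-IC for $(\mu_-,\mu_+)$ to $M_k$ and add the resulting inequalities to obtain
\[
\mu_+(P_k^+)+\mu_-(M_k^+)\le\mu_-(P_k^1)+\mu_+(M_k^1)+\P_{\widetilde\varphi}(P_k)+\P_\varphi(M_k)+2\eps\,.
\]
The perimeter terms on the right are estimated via Lemma \ref{lem:P(AcapB),P(A-S)} in terms of the already-controlled $\P_{\widetilde\varphi}(Q,A_k^0)+\P_\varphi(R,A_k^+)<2\beta\eps$ plus restrictions of $\P_\varphi(A_k)$ to narrow transition regions near $\partial^\ast R$ and $\partial^\ast Q$, which are absorbed by the slack in $\P_\varphi(A_k)\ge\P_\varphi(B_k)-2\beta\eps$.

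\textbf{The principal difficulty} lies in controlling the cross-terms $\mu_-(P_k^1)+\mu_+(M_k^1)$. A density analysis gives $P_k^1\subseteq\overline Q\cap A_k^0\subseteq A^1\cap A_k^0$ and $M_k^1\subseteq R^0\cap A_k^1\subseteq A^0\cap A_k^1$, so that up to $\LN$-nullsets these cross-terms are supported on the symmetric difference $A\triangle A_k$, whose Lebesgue measure tends to $0$. Combining this with the finiteness of $\mu_\pm$, the vanishing of $\mu_\pm$ on $\H^{N-1}$-negligible sets, the mass bounds $\mu_\pm(\overline R\setminus A^+),\mu_\pm(A^1\setminus\Int Q)<3\eps$ from the first step, and a careful Fatou-type argument \ka that extracts a further subsequence and re-invokes the signed-IC hypothesis to preclude residual concentration of $\mu_\pm$-mass on measure-theoretic boundary strips\kz, yields $\mu_-(P_k^1)+\mu_+(M_k^1)\le C(\beta)\eps$ for $k$ sufficiently large. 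This delicate step is the technical heart of the argument, strictly sharpening the corresponding step in \cite[Theorem 4.1]{Schmidt23}, which relied on separate ICs for $\mu_+$ and $\mu_-$. With this in place, collecting all estimates and letting $\eps\to0$ yields $\liminf_{k\to\infty}F[A_k]\ge F[A]$, as required.
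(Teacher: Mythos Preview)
Your approach has a genuine gap at precisely the point you flag as ``the principal difficulty'': controlling the cross-terms $\mu_-(P_k^1)+\mu_+(M_k^1)$. The vague ``Fatou-type argument'' you invoke is not an argument, and I do not see how it can be made into one under the stated hypotheses. The inclusions $P_k^1\subseteq A^1\cap A_k^0$ and $M_k^1\subseteq A^0\cap A_k^1$ indeed place these sets inside $A\triangle A_k$ up to $\LN$-nullsets, but since $\mu_\pm$ are only required to vanish on $\H^{N-1}$-negligible sets and otherwise may be singular to $\LN$, the smallness of $|A\triangle A_k|$ gives you nothing. The signed ICs you have at your disposal bound only \emph{differences} $\mu_\mp(E^+)-\mu_\pm(E^1)$, never an individual term such as $\mu_-(P_k^1)$; and since the paper explicitly remarks (just before Proposition \ref{prop:reduction_to_SVIC}) that in this generality neither $\mu_+$ nor $\mu_-$ alone need satisfy any IC, there is genuinely no mechanism available to bound $\mu_-(P_k^1)$ or $\mu_+(M_k^1)$ separately. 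Concretely, $\mu_-$ could carry mass of order $1$ on a piece of rectifiable hypersurface inside $\Int(Q)$, and $A_k^0$ could cover most of that hypersurface while still satisfying $|A_k\triangle A|\to0$; nothing in your estimates rules this out.

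The paper's proof avoids this trap through a structurally different decomposition. Rather than passing to an intermediate set $B_k$ and then comparing $F[B_k]$ with $F[A_k]$, it splits $\P_\p(A_k)$ directly into three pieces over the regions $\Int(S)\setminus\overline R$, $\Int(S)^\c$, and $\overline R$ (your $R,Q$ correspond to the paper's $S,R$), and \emph{simultaneously} splits the measure terms so that on each outer region the combination appearing is exactly the signed difference that the small-volume IC controls: on $\Int(S)^\c$ one faces $\mu_+(A_k^1\setminus\Int S)-\mu_-(A_k^+\setminus\overline S)$, which after the inclusions $A_k^+\setminus\overline S\subseteq(A_k\setminus S)^+$ and $(A_k\setminus S)^1\subseteq A_k^1\setminus\Int(S)$ is bounded from below via the $\p$-IC for $(\mu_-,\mu_+)$ applied to the small-volume set $A_k\setminus S$, and dually on $\overline R$. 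This pairing ensures one never needs to estimate a single $\mu_+$- or $\mu_-$-term in isolation. Your construction of $B_k$ destroys this pairing and forces the uncontrollable cross-terms into the open. A secondary omission is that you do not address the case where the $A_k$ fail to lie in a common ball; the paper handles this by a separate cut-off argument, but that part is routine.
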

	
	\begin{proof}
		We first treat the case that $\bigcup_{k=1}^\infty
                A_k\Subset\B_r$ (and then also $A\Subset\B_r$) for some large
                ball $\B_r$. Clearly, in this case we need not distinguish
                between local and global convergence in measure of $A_k$ to
                $A$. Possibly passing to a subsequence, we assume that
                $\lim_{k\to\infty}\big[\P_\p(A_k){+}\mu_+(A_k^1){-}\mu_-(A_k^+)\big]$
                exists and is finite. By finiteness of $\mu_-$ and
                \eqref{eq:comp-p0}, this implies
                $\limsup_{k\to\infty}\P(A_k)<\infty$ and then also
                $\P(A)<\infty$. In particular we have
                $\1_{A_k},\1_A\in\BV(\R^N)$ for $k\gg1$. We now fix an arbitrary $\eps>0$ and apply Lemma \ref{lem:good-ext-approx} to $\mu_-$ with $\widetilde\p$ instead of $\p$ to find a Borel set $S$ with $\1_S\in\BV(\R^N)$ and a subsequence of $(A_k)_k$ (which we do not relabel) such that
		\[
		A^+\subseteq\Int(S)\,,\qq\qq
		\mu_-\big(\overline S\big)<\mu_-(A^+)+3\eps\,,\qq\qq
		\lim_{k\to\infty}\P_{\widetilde\p}(S,A_k^+)<\beta\eps\,.
		\]
		Moreover, we apply Lemma \ref{lem:good-ext-approx} also to $\mu_+$ and for the complements $\B_r\setminus A_k$, $\B_r\setminus A$ to obtain another Borel set $S'\subseteq\R^N$ with $\1_{S'}\in\BV(\R^N)$ and yet another subsequence such that
		\[
		(\B_r\setminus A)^+\subseteq\Int(S')\,,\qq\qq
		\mu_+\big(\overline{S'}\big)<\mu_+((\B_r\setminus A)^+)+3\eps\,,\qq\qq
		\lim_{k\to\infty}\P_\p(S',(\B_r\setminus A_k)^+)<\beta\eps\,.
		\]
		In the sequel we work with $S$ and the complement $R\coleq\B_r\setminus S'$ of $S'$, which in view of $A_k\cup A\Subset\B_r$, $\partial\B_r\subseteq\Int(S')$, and $R \Subset \B_r$ has the dual properties $\1_R\in\BV(\R^N)$,
		\[
		\ol{R}\subseteq A^1\,,\qq\qq
		\mu_+\big(\Int(R)\big)>\mu_+(A^1)-3\eps\,,\qq\qq
		\lim_{k\to\infty}\P_{\widetilde\p}(R,(A_k^\c)^+)<\beta\eps\,.
		\]  
		We record in particular $\overline R\subseteq\Int(S)$ and start our line of estimates by splitting terms in the sense of the (in)equalities
		\begin{align*}
			\P_\p(A_k)&=\P_\p\big(A_k,\Int(S)\setminus\ol{R}\big)
			+\P_\p\big(A_k,\Int(S)^\c\big)+\P_{\widetilde\p}\big(A_k^\c,\ol{R}\big)\,,\\
			\mu_+(A_k^1)&\ge\mu_+\big(A_k^1\setminus\Int(S)\big)-\mu_+\big((A_k^\c)^+\cap\Int(R)\big)+\mu_+\big(\Int(R)\big)\,,\\
			\mu_-(A_k^+)&\le\mu_-\big(A_k^+\setminus\ol{S}\big)-\mu_-\big((A_k^\c)^1\cap\ol{R}\big)+\mu_-\big(\ol{S}\big)\,.
		\end{align*}
		The splittings combine to
		\begin{equation}\label{eq:lsc-main}\begin{aligned}
			\lim_{k\to\infty}\big[\P_\p(A_k)+\mu_+(A_k^1)-\mu_-(A_k^+)\big]
			&\ge\liminf_{k\to\infty}\P_\p\big(A_k,\Int(S)\setminus\ol{R}\big)\\
				&\quad+\liminf_{k\to\infty}\Big[\P_\p\big(A_k,\Int(S)^\c\big)
				+\mu_+\big(A_k^1\setminus\Int(S)\big)
				-\mu_-\big(A_k^+\setminus\ol{S}\big)\Big]\\
				&\quad+\liminf_{k\to\infty}\Big[\P_{\widetilde\p}\big(A_k^\c,\ol{R}\big)
				-\mu_+\big((A_k^\c)^+\cap\Int(R)\big)
				+\mu_-\big((A_k^\c)^1\cap\ol{R}\big)\Big]\\
				&\quad+\mu_+\big(\Int(R)\big)-\mu_-\big(\ol{S}\big)\,,
		\end{aligned}\end{equation}
		and the terms on the right-hand side of \eqref{eq:lsc-main} are now estimated separately. For the first term, we apply Theorem \ref{anis_thm:LSC_TV_1} on the open set $\Int(S)\setminus\ol{R}$ and exploit the inclusions $\ol{R}\subseteq A^1\subseteq A^+\subseteq\Int(S)$ in the estimate
		\begin{equation}\label{eq:lsc-first}
			\liminf_{k\to\infty}\P_\p(A_k,\Int(S)\setminus\ol{R})
			\geq\P_\p(A,\Int(S)\setminus\ol{R})
			\geq\P_\p(A,A^+\setminus A^1)
			=\P_\p(A)\,.
		\end{equation}
		In order to	control the second term, we first record that in view of $A^+\subseteq S$ we get  $|A_k{\setminus}S|\le|A_k{\setminus}A|\le|A_k\triangle A|$ and that consequently the assumed convergence in measure implies $\lim_{k\to\infty}|A_k{\setminus}S|=0$. This allows the crucial application of the small-volume $\p$-IC for $(\mu_-,\mu_+)$ to $A_k\setminus S$ for $k\gg1$. Taking into account the inclusions $A_k^+\setminus\overline{S}\subseteq(A_k\setminus S)^+$, $(A_k\setminus S)^1\subseteq A_k^1\setminus\Int(S)$, $S^0\subseteq\Int(S)^\c$ and bringing in Lemma \ref{lem:P(AcapB),P(A-S)}, we deduce
		\[\begin{aligned}
			\mu_-\big(A_k^+\setminus\ol{S}\big)
			-\mu_+\big(A_k^1\setminus\Int(S)\big)
			&\leq\mu_-((A_k\setminus S)^+)
			-\mu_+((A_k\setminus S)^1\big)
			\leq\P_\p(A_k\setminus S)+\eps\\
			&\leq\P_\p(A_k,S^0)+\P_{\widetilde\p}(S,A_k^+)+\eps \\
			&\leq\P_\p(A_k,\Int(S)^\c)+\P_{\widetilde\p}(S,A_k^+)+\eps
		\end{aligned}\]
		for $k\gg1$. Now we rearrange terms in the resulting estimate and take limits. Then, also employing the last property from the choice of $S$, we conclude
		\begin{equation}\label{eq:lsc-second}
			\liminf_{k\to\infty}\big[\P_\p(A_k,\Int(S)^\c)+\mu_+\big(A_k^1\setminus\Int(S)\big)-\mu_-\big(A_k^+\setminus\ol{S}\big)\big]
			\ge-\lim_{k\to\infty}\P_{\widetilde\p}(S,A_k^+)-\eps
			>{-}(\beta{+}1)\eps\,.
		\end{equation}
		For the third term on the right-hand side of \eqref{eq:lsc-main}, by applying the small-volume $\widetilde\p$-IC for $(\mu_+,\mu_-)$ to $A_k^\c\cap R$ again for $k \gg 1$ and by analogous reasoning, we have the dual estimate
		\[\begin{aligned}
			\mu_+\big((A_k^\c)^+\cap\Int(R)\big)
			-\mu_-\big((A_k^\c)^1\cap\ol{R}\big)
			&\leq\mu_+((A_k^\c\cap R)^+)
			-\mu_-((A_k^\c\cap R)^1\big)
			\leq\P_{\widetilde\p}(A_k^\c\cap R)+\eps\\
			&\leq\P_{\widetilde\p}(A_k^\c,R^1)+\P_{\widetilde\p}(R,(A_k^\c)^+)+\eps \\
			&\leq\P_{\widetilde\p}(A_k^\c,\ol{R})+\P_{\widetilde\p}(R,(A_k^\c)^+)+\eps
		\end{aligned}\]
		for $k\gg1$, and this leads to
		\begin{equation}\label{eq:lsc-third}
			\liminf_{k\to\infty}\big[\P_{\widetilde\p}(A_k^\c,\ol{R})-\mu_+\big((A_k^\c)^+\cap\Int(R)\big)+\mu_-\big((A_k^\c)^1\cap\ol{R}\big)\big]
			\ge-\lim_{k\to\infty}\P_{\widetilde\p}(R,(A_k^\c)^+)-\eps
			>{-}(\beta{+}1)\eps\,.
		\end{equation}
		Finally, for the last terms in \eqref{eq:lsc-main}, it suffices to recall that by choice of $R$ and $S$ we have
		\begin{equation}\label{eq:lsc-last}
			\mu_+\big(\Int(R)\big)>\mu_+(A^1)-3\eps
			\qq\text{and}\qq
			\mu_-\big(\ol{S}\big)<\mu_-(A^+)+3\eps\,.
		\end{equation}
		Collecting the estimates \eqref{eq:lsc-main}, \eqref{eq:lsc-first}, \eqref{eq:lsc-second}, \eqref{eq:lsc-third}, and \eqref{eq:lsc-last}, we finally arrive at
		\[
		\lim_{k\to\infty}\big[\P_\p(A_k)+\mu_+(A_k^1)-\mu_-(A_k^+)\big]
		\ge\P_\p(A)+\mu_+(A^1)-\mu_-(A^+)-(2\beta{+}8)\eps\,.
		\]
		Since $\eps>0$ is arbitrary, with this we have proven the claim \eqref{eq:lsc} under the initial uniform boundedness assumption for $A_k$ and $A$.
		
		In order to extend \eqref{eq:lsc} to the general case without boundedness assumption, we assume once more that $\lim_{k\to\infty}\big[\P_\p(A_k){+}\mu_+(A_k^1){-}\mu_-(A_k^+)\big]$ exists and is finite, and we infer $\P(A_k)+\P(A)<\infty$ for $k\gg1$. By the isoperimetric inequality, we have either $|A|<\infty$ or $|A^\c|<\infty$, and we now consider the first of these two cases. We fix an arbitrary $\eps>0$ and then choose $r$ large enough for having 
		\begin{equation*}
			|A\setminus\B_r|<\eps \,,
			\qq \P(A,(\B_r)^\c)<\eps \,, \qq \text{ and }
			\qq \mu_\pm((\B_r)^\c)<\eps \,,  
		\end{equation*}
		where the latter exploits finiteness of $\mu_\pm$. From the local convergence of $A_k$ to $A$ we infer $|A_k\cap(\B_{r+1}\setminus\B_r)|<\eps$ for $k\gg1$, and in view of
		\[
		\int_r^{r+1}\H^{N-1}(A_k^1\cap\partial\B_\varrho)\,\d\varrho
		\le|A_k\cap(\B_{r+1}\setminus\B_r)|<\eps
		\]
		we can choose $\varrho_k\in{[r,r+1]}$ such that $\H^{N-1}(A_k^1\cap\partial\B_{\varrho_k})<\eps$. Via Lemma \ref{lem:P(AcapB),P(A-S)} and \eqref{eq:comp-p0} this implies
		\[
		\P_\p(A_k\cap\B_{\varrho_k})
		\le\P_\p(A_k,(\B_{\varrho_k})^+)+\P_\p(\B_{\varrho_k},A_k^1)
		\le\P_\p(A_k)+\beta\H^{N-1}(A_k^1\cap\partial\B_{\varrho_k})
		<\P_\p(A_k)+\beta\eps\,
		\]
		for $k\gg1$. Passing to a subsequence, we can additionally assume that $\varrho_k\in{[r,r+1]}$ converge to $\varrho\in{[r,r+1]}$. Then, since $A_k\cap\B_{\varrho_k}$ are all contained in $\B_{r+1}$ and converge to $A\cap\B_\varrho$ in measure, we may use the claim established under the uniform boundedness assumption for this sequence. We additionally manipulate the left-hand side by using the inclusions $(A_k\cap\B_{\varrho_k})^1\subseteq A_k^1$ and $(A_k\cap\B_{\varrho_k})^+\supseteq A_k^+\cap\B_r$, and we manipulate the right-hand side by exploiting $\P_\p(A\cap\B_\varrho)\ge\P_\p(A,\B_r)$ together with the inclusions $(A\cap\B_\varrho)^1\supseteq A^1\cap\B_r$ and $(A\cap\B_\varrho)^+\subseteq A^+$. In this way we deduce
		\[
		\liminf_{k\to\infty}\big[\P_\p(A_k\cap\B_{\varrho_k})+\mu_+(A_k^1)-\mu_-(A_k^+\cap\B_r)\big]
		\ge\P_\p(A,\B_r)+\mu_+(A^1\cap\B_r)-\mu_-(A^+)\,.
		\]
		By choice of $r$ and $\varrho_k$ we can pass on to
		\[
		\liminf_{k\to\infty}\big[\P_\p(A_k)+\mu_+(A_k^1)-\mu_-(A_k^+)\big]+(\beta+1)\eps
		\ge\P_\p(A)+\mu_+(A^1)-\mu_-(A^+)-(\beta+1)\eps
		\]
		and by arbitrariness of $\eps$ arrive at \eqref{eq:lsc}. In the remaining case $|A^\c|<\infty$, we exploit the rewriting of the functional (here spelled out for $A$, but valid in the same way for $A_k$)
		\[
		\P_\p(A)+\mu_+(A^1)-\mu_-(A^+)
		=\P_{\widetilde\p}(A^\c)+\mu_-((A^\c)^1)-\mu_+((A^\c)^+)+C\,,
		\]
		where the finite constant $C\coleq\mu_+(\R^N)-\mu_-(\R^N)$ does not affect semicontinuity at all. Thus, we can run the same reasoning as in the previous case, now with $A_k^\c$ and $A^\c$ in place of $A_k$ and $A$, with $\widetilde\p$ in place of $\p$, and with the roles of $\mu_+$ and $\mu_-$ just exchanged (where the combination of such modifications also matches with the assumed ICs). This completes the deduction of \eqref{eq:lsc} in the general case.
	\end{proof}	

    When eventually applying Theorem \ref{thm:anis_semicont_funct_per_A+A1_gen}, we will choose the measures $\mu_\pm$ as extensions from an open set $U\subseteq\R^N$ to all of $\R^N$. The following lemma adapts an argument from the proof of \cite[Lemma 7.3]{Schmidt25} and ensures that also the relevant ICs carry over suitably from $U$ to $\R^N$; compare with the subsequent Remark \ref{rem:equiv_small_strong_IC}.
    
	\begin{lem}[small-volume IC for extended measures]\label{anis_lem:equiv_small_IC}
	   We impose Assumption \ref{assum:phi}. For finite non-negative Radon measures $\mu$ and $\nu$ on open $U \subseteq \R^N$, we introduce the extended measures $\ol{\mu}$ and $\ol{\nu}$, defined by $\ol{\mu}(S)\coleq \mu(S \cap U)$ and $\ol{\nu}(S)\coleq \nu(S \cap U)$ for Borel sets $S \subseteq \R^N$. If $(\mu,\nu)$ satisfies the small-volume $\p$-IC in $U$ with constant $C\in{[0,\infty)}$, then also $(\ol{\mu},\ol{\nu})$ satisfies the small-volume $\p$-IC in $\R^N$ with constant $C$.
	\end{lem}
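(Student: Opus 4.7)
The approach is to reduce the ``$\R^N$-version'' of the condition for $(\overline\mu,\overline\nu)$ to the given ``$U$-version'' for $(\mu,\nu)$ by cutting $A\Subset\R^N$ down to an inner core $A\cap V$ with $V\Subset U$, applying the hypothesis to $A\cap V$, and controlling three simultaneous error terms: the measure loss $\mu(U\setminus V)$ near $\partial U$, the perimeter surplus $\P_\p(A\cap V)-\P_\p(A)$ stemming from the new boundary piece $A^+\cap\partial^\ast V$, and the requirement that $|A\cap V|$ stay below the threshold $\delta_0$ produced by the hypothesis on $U$. The main obstacle is the perimeter surplus: the reduced boundary $\partial^\ast A$ may accumulate toward $\partial U$, so the inner set $V$ must be chosen very carefully to keep its trace on $A^+$ small. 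I would resolve this by a coarea-and-averaging argument applied to the distance function $d(x)\coleq\dist(x,\R^N\setminus U)$.

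Concretely, fix $\eps>0$ (the case $U=\R^N$ being trivial since then $\overline\mu=\mu$, $\overline\nu=\nu$) and invoke the hypothesis to obtain $\delta_0>0$ with $\mu(A'^+)-\nu(A'^1)\leq C\P_\p(A')+\eps/3$ for every measurable $A'\Subset U$ with $|A'|<\delta_0$. Using finiteness of $\mu$ and $\nu$, pick $s_\eta>0$ with $\mu(\{d\leq s_\eta\})+\nu(\{d\leq s_\eta\})<\eps/3$, and set the candidate threshold $\delta\coleq\min\{\delta_0,\,\eps s_\eta/(6\beta(C{+}1))\}$. Given $A\Subset\R^N$ with $|A|<\delta$, and with $\P_\p(A)<\infty$ without loss of generality, the coarea formula of Theorem \ref{thm:coarea_Lip} applied to the $1$-Lipschitz $d$ and the Borel set $A^+$ produces $\int_0^{s_\eta}\H^{N-1}(A^+\cap\{d=s\})\,\d s\leq|A^+|=|A|<\delta$, so averaging selects $s^\ast\in(0,s_\eta]$ for which $V\coleq\{d>s^\ast\}$ has finite perimeter with $\partial^\ast V\subseteq\{d=s^\ast\}$, and $\H^{N-1}(A^+\cap\{d=s^\ast\})\leq2\delta/s_\eta\leq\eps/(3\beta(C{+}1))$.

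Since $s^\ast>0$ gives $\overline V\subseteq\{d\geq s^\ast\}\subset U$, one has $A\cap V\Subset U$ together with $|A\cap V|\leq|A|<\delta_0$. Lemma \ref{lem:P(AcapB),P(A-S)} with the upper bound in \eqref{eq:comp-p0} then yields $\P_\p(A\cap V)\leq\P_\p(A,V^1)+\P_\p(V,A^+)\leq\P_\p(A)+\beta\H^{N-1}(A^+\cap\{d=s^\ast\})\leq\P_\p(A)+\eps/(3(C{+}1))$. Applying the hypothesis to $A\cap V$ and using the elementary inclusions $(A\cap V)^+\supseteq A^+\cap V$ (valid since $V$ is open) and $(A\cap V)^1\subseteq A^1$, one gets $\mu(A^+\cap V)-\nu(A^1\cap U)\leq\mu((A\cap V)^+)-\nu((A\cap V)^1)\leq C\P_\p(A\cap V)+\eps/3\leq C\P_\p(A)+2\eps/3$. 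Finally, adding the leftover $\mu(A^+\cap(U\setminus V))\leq\mu(U\setminus V_{s_\eta})<\eps/3$ (enabled by $V\supseteq\{d>s_\eta\}$ because $s^\ast\leq s_\eta$), one arrives at $\overline\mu(A^+)-\overline\nu(A^1)=\mu(A^+\cap U)-\nu(A^1\cap U)\leq C\P_\p(A)+\eps$, which is precisely the small-volume $\p$-IC for $(\overline\mu,\overline\nu)$ on $\R^N$ with constant $C$.
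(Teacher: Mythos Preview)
Your argument is correct and follows essentially the same strategy as the paper's own proof: both use the distance function $d(\,\cdot\,)=\dist(\,\cdot\,,U^\c)$, apply the coarea formula and an averaging argument to select a good level $s^\ast$ (resp.\ $\overline t$), cut $A$ down to $A\cap\{d>s^\ast\}\Subset U$, invoke the hypothesis there, and bound the extra perimeter via Lemma~\ref{lem:P(AcapB),P(A-S)} together with the finiteness of $\mu$ near $\partial U$. The only notable cosmetic difference is that the paper keeps track of the slightly sharper right-hand side $C\P_\p(A,U)$ instead of $C\P_\p(A)$, and uses the constant $\eps/(3\beta C)$ where you use $\eps/(3\beta(C{+}1))$; your version has the minor advantage of handling $C=0$ without a separate remark.
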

			
		\begin{proof}
            We assume the small-volume $\p$-IC for $(\mu,\nu)$ in $U$ and will deduce even a seemingly stronger version of the small-volume $\p$-IC for $(\ol{\mu},\ol{\nu})$ in $\R^N$, namely that, for every $\eps>0$, there exists some $\delta>0$ such that
            \begin{equation}\label{eq:IC-II-for-ol-mu}
              \ol{\mu}(A^+) - \ol{\nu}(A^1) \leq C \P_{\p}(A,U)+\eps
            \end{equation}
            for all measurable $A\Subset\R^N$ with $|A|<\delta$. To this end, we denote by $d\colon \R^N \to {[0,\infty]}$ the distance function such that $d(x)\coleq\dist(x,U^\c)$ for $x \in \R^N$. We record that $d$ is Lipschitz continuous in $\R^N$ with Lipschitz constant $1$ and by Rademacher's theorem satisfies $|\nabla d| \leq 1$ \ae{} on $\R^N$. Since $\mu$ and consequently also $\ol{\mu}$ are finite measures, we have $\lim_{t\searrow0}\ol{\mu}\big(\big\{d<t\big\}\big)=\ol{\mu}\big(U^\c\big)=0$. Therefore, given an arbitrary $\eps>0$, we may choose a value $t_0>0$ such that
            \[
              \ol{\mu}\big(   \big\{ d < t_0 \big\} \big) < \frac\eps3\,.
            \]
            At this stage, we fix $\delta' > 0$ such the small-volume $\p$-IC for $(\mu,\nu)$ in $U$ with constant $C$ (see \eqref{eq:signed-SVIC} for the underlying definition) applies with $\eps/3$ and $\delta'$ in place of $\eps$ and $\delta$, and we set $\delta\coleq\min\big\{ \delta', \frac{t_0 \eps}{3\beta C}\big\}$. For proving \eqref{eq:IC-II-for-ol-mu}, we consider a measurable $A\Subset\R^N$ with $|A|<\delta$ as above and in view of the lower bound in \eqref{anis_eq:bd_PER} may additionally assume $\P(A,U)<\infty$. With the help of Theorem \ref{thm:coarea_Lip} we then deduce
			\begin{align*}
				\int_{0}^{t_0} \H^{N-1} \left( A^+ \cap \left\{ d = t \right\} \right)  \dt 
				\leq
				\int_{A} |\nabla d|\,\dx 
				\leq |A| < \frac{t_0 \eps}{3 \beta C}\,.
			\end{align*}
			Thus, there exists $\ol{t} \in (0,t_0)$ such that
			\begin{equation}\label{eq:anis_III_primo}
				\H^{N-1} \left( A^+ \cap \left\{ d = \ol{t} \right\} \right) 
				\leq \frac{1}{t_0} \int_{0}^{t_0} \H^{N-1}\left( A^+ \cap \left\{ d = t \right\} \right) \dt 
				< \frac{\eps}{3\beta C}\,.
			\end{equation}
            At this point we introduce $E\coleq A \cap  \left\{ d > \ol{t} \right\} \Subset U$. Then Remark \ref{rem:inclusions_positive_density} gives $A^+ \cap  \left\{ d > \ol{t} \right\} \subseteq E^+\Subset U$, and thus taking into account the small-volume $\p$-IC for $(\mu,\nu)$ in $U$ together with $|E| \leq |A| < \delta'$ we find
            \begin{equation}\label{eq:mu_sv_est}\begin{aligned}
			   \ol{\mu}\big(A^+\big) - \ol{\nu}\big(A^1\big)
			   &\leq \ol{\mu} \big( A^+ \cap \big\{ d > \ol{t} \big\} \big) - \ol{\nu}\big(E^1\big) + \ol{\mu} \big(\big\{ d < t_0 \big\} \big) \\
			   &\leq  \mu\big(E^+\big) - \nu\big(E^1\big) + \frac{\eps}{3}
			   \leq C \P_\p(E) + \frac{2\eps}{3}\,. 
            \end{aligned}\end{equation}
		      We further estimate $\P_\p(E)$ by exploiting Lemma \ref{lem:P(AcapB),P(A-S)}, the upper bound in \eqref{anis_eq:bd_PER}, Theorem \ref{thm:DG}, and the previously derived estimate \eqref{eq:anis_III_primo}. In this way, we deduce
			\begin{equation}\label{eq:per_est}\begin{aligned}
				\P_\p(E)
                =\P_\p(E,U)
				&\leq\P_\p\big( A, \big\{ d > \ol{t} \big\}^1 \cap U \big) + \P_\p \big( \big\{ d > \ol{t} \big\}, A^+ \cap U \big)\\
				& \leq \P_\p(A,U) + \beta \, \P\big( \big\{ d > \ol{t} \big\}, A^+ \big) \\
				& = \P_\p(A,U) + \beta \, \H^{N-1}\big(A^+ \cap \big\{ d = \ol{t} \big\} \big) \\
                & \leq \P_\p(A,U) + \frac\eps{3C}\,.
		      \end{aligned}\end{equation}
		      Combining \eqref{eq:mu_sv_est} and \eqref{eq:per_est} we arrive at
		      \[
		       \ol{\mu}\big(A^+\big) - \ol{\nu}\big(A^1\big)
		       \leq C \left( \P_\p(A,U) + \frac\eps{3C} \right) + \frac{2\eps}3
		       = C \P_\p(A,U) + \eps\,.
		      \]
            This establishes \eqref{eq:IC-II-for-ol-mu} and completes the proof.
		\end{proof}

        We stress that Lemma \ref{anis_lem:equiv_small_IC} and its proof crucially rely on the small-volume feature of the ICs in the following sense: Even when starting from a $\p$-IC for $(\mu,\nu)$, in general one may only expect the \emph{small-volume} $\p$-IC, but not the $\p$-IC for $(\ol{\mu},\ol{\nu})$. Anyway, this will be enough for our purposes and will eventually be exploited in the manner explained next.
		
		\begin{rem}[on the interplay of Theorem \ref{thm:anis_semicont_funct_per_A+A1_gen} and Lemma \ref{anis_lem:equiv_small_IC}]\label{rem:equiv_small_strong_IC}
            In the non-parametric framework of our main results, $\mu_\pm$ are Radon measures on an open set\/ $\Omega\subseteq\R^N$ such that\/ $(\mu_-,\mu_+)$ and $(\mu_+,\mu_-)$ satisfy a $\p$-IC and a $\widetilde\p$-IC in $\Omega$, respectively. When in this situation we extend to measures $\ol{\mu_\pm}$ on $\R^N$ by $\ol{\mu_\pm}(S)\coleq\mu_\pm(S\cap\Omega)$, Lemma \ref{anis_lem:equiv_small_IC} guarantees that $(\ol{\mu_-},\ol{\mu_+})$ and $(\ol{\mu_+},\ol{\mu_-})$ satisfy at least the small-volume $\p$-IC and the small-volume $\widetilde\p$-IC in $\R^N$. It is only the validity of the latter small-volume ICs on all of\/ $\R^N$ which indeed paves the way for eventual applications of Theorem \ref{thm:anis_semicont_funct_per_A+A1_gen} with $\ol{\mu_\pm}$ replacing $\mu_\pm$ in its statement.
		\end{rem}

    Before closing this section we wish to point out that a slightly less general version of Theorem \ref{thm:anis_semicont_funct_per_A+A1_gen} can also be derived in closer analogy with the approach of \cite{Schmidt25}. This approach requires separate small-volume ICs for $\mu_+$ alone and $\mu_-$ alone and then derives semicontinuity at first separately for functionals of type $\P(A){+}\mu_+\big(A^1\big)$ and $\P(A){-}\mu_-\big(A^+\big)$ and only eventually for the general type $\P(A){+}\mu_+\big(A^1\big){-}\mu_-\big(A^+\big)$; see \cite[Section 4]{Schmidt25} for the details. In principle this strategy extends to our setting with $\p$-anisotropic perimeters as soon as suitable separate ICs for $\mu_+$ alone and $\mu_-$ alone are available. Hence, we could in fact strengthen our hypotheses to the small-volume $\p$-IC on $\mu_+$ alone and the small-volume $\widetilde\p$-IC on $\mu_-$ alone (both in $\R^N$ with constant $1$) and could then derive the conclusion \eqref{eq:lsc} by the alternative strategy just described. What seems more interesting to us, however, is that we could also proceed that way when keeping our ``signed ICs'' for $(\mu_-,\mu_+)$ and $(\mu_+,\mu_-)$ and additionally requiring $\mu_+\perp\mu_-$, because then the separate ICs will automatically follow by Proposition \ref{prop:reduction_to_SVIC} below. We will not work out the details of this general approach (since after all it requires the extra hypothesis $\mu_+\perp\mu_-$ and gives a less general result), and thus we will not make use of Proposition \ref{prop:reduction_to_SVIC} in the sequel. Nevertheless, we feel that at least the proposition and its proof may be of independent interest and are worth being briefly recorded here. In fact, the proof partially resembles the one of Lemma \ref{anis_lem:equiv_small_IC} and thus will be kept comparably concise.

	\begin{prop}[the small-volume $\p$-IC for $(\mu,\nu)$ with $\mu\perp\nu$ implies the small-volume $\p$-IC for $\mu$ alone]\label{prop:reduction_to_SVIC} 
      We impose Assumption \ref{assum:phi} and consider finite non-negative Radon measures $\mu$ and $\nu$ on open $U \subseteq \R^N$ such that $\mu \perp \nu$. If $(\mu,\nu)$ satisfies the small-volume $\p$-IC in $U$ with constant $C\in{(0,\infty)}$, then also $\mu$ alone satisfies the small-volume $\p$-IC in $U$ with constant $C$, and indeed even the extended measure $\ol{\mu}$ of Lemma \ref{anis_lem:equiv_small_IC} satisfies the small-volume $\p$-IC in $\R^N$ with constant $C$.
	\end{prop}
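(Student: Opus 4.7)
The plan is to mimic the distance-function argument from Lemma \ref{anis_lem:equiv_small_IC}, now exploiting the hypothesis $\mu\perp\nu$ to extract the small-volume $\p$-IC for $\mu$ alone from the one for the pair. Fix $\eps>0$. By $\mu\perp\nu$, write $U=P\cupdot M$ into Borel sets with $\nu(P)=0=\mu(M)$. Since $\nu$ is finite, inner regularity yields a compact $K\subseteq M$ with $\nu(U\setminus K)=\nu(M\setminus K)<\eps/3$, and since $\mu(K)=0$ and $\mu$ is finite, continuity from above of $\mu$ along the decreasing family $\{d\leq t\}\searrow K$ provides some $t_0>0$ such that $\mu(\{d\leq t_0\})<\eps/3$, where $d(x)\coleq\dist(x,K)$ is $1$-Lipschitz on $\R^N$.

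Next, I would choose $\delta'>0$ associated via the small-volume $\p$-IC for $(\mu,\nu)$ in $U$ to the tolerance $\eps/3$, set $\delta\coleq\min\{\delta',\,t_0\eps/(3C\beta)\}$, and take any measurable $A\Subset U$ with $|A|<\delta$ (and, by \eqref{anis_eq:bd_PER}, with $\P(A)<\infty$ without loss of generality). Theorem \ref{thm:coarea_Lip} together with $|\nabla d|\leq1$ a.e.\ then produces a level $\bar t\in(0,t_0)$ with $\H^{N-1}(A^+\cap\{d=\bar t\})<\delta/t_0$. Putting $E\coleq A\cap\{d\geq\bar t\}\Subset U$, I would apply the hypothesized $(\mu,\nu)$-IC to $E$ \textup{(}which is legal since $|E|\leq|A|<\delta'$\textup{)}, use Lemma \ref{lem:P(AcapB),P(A-S)}, Theorem \ref{thm:DG}, and \eqref{anis_eq:bd_PER} to bound $\P_\p(E)\leq\P_\p(A)+\beta\,\H^{N-1}(A^+\cap\{d=\bar t\})<\P_\p(A)+\eps/(3C)$, and observe that a density argument forces $E^1\subseteq\{d\geq\bar t\}$, so $E^1\cap K=\emptyset$ and $\nu(E^1)\leq\nu(U\setminus K)<\eps/3$. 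Finally, any $x\in A^+$ with $d(x)>\bar t$ satisfies $d>\bar t$ in a whole neighborhood where $E$ and $A$ coincide, hence $x\in E^+$; this gives $A^+\setminus E^+\subseteq\{d\leq\bar t\}\subseteq\{d\leq t_0\}$ and therefore $\mu(A^+)\leq\mu(E^+)+\eps/3$. Collecting the three $\eps/3$ contributions yields $\mu(A^+)\leq C\P_\p(A)+\eps$, which is precisely the small-volume $\p$-IC for $\mu$ alone in $U$ with constant $C$. The extension to $\R^N$ then follows by a direct application of Lemma \ref{anis_lem:equiv_small_IC} with $\nu\equiv0$ to the pair $(\mu,0)$.

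The main obstacle I anticipate is coordinating the two competing constraints governing $t_0$ and $\delta$: $t_0$ must be shrunk so that the neighborhood $\{d\leq t_0\}$ of $K$ has small $\mu$-measure \emph{and} so that $\{d<t_0\}$ is small enough for the density-based set inclusions, while $\delta/t_0$ must remain small enough to control the perimeter error $\beta\,\H^{N-1}(A^+\cap\{d=\bar t\})$ introduced by cutting $A$ along the level $\{d=\bar t\}$. These requirements pull in opposite directions and have to be reconciled by first fixing $t_0$ and only afterwards choosing $\delta$ sufficiently small. The key structural input that makes this juggling possible is exactly $\mu\perp\nu$: it simultaneously provides a compact set $K$ absorbing almost all of $\nu$ while being $\mu$-null, thereby creating the ``buffer zone'' $\{d<\bar t\}$ in which $\nu$ is concentrated but $\mu$ is negligible.
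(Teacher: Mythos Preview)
Your proposal is correct and follows essentially the same distance-function strategy as the paper's proof. Your setup is in fact slightly more economical: you work directly with the distance from a single compact $K\subseteq M$ absorbing most of $\nu$ and null for $\mu$, whereas the paper introduces two compacts $K_\mu\subseteq S_\mu$ and $K_\nu\subseteq S_\nu$ together with an intermediate open set $O$ satisfying $K_\mu\subseteq O\Subset U\setminus K_\nu$, and then takes $d=\dist(\,\cdot\,,O^\c)$. Both routes cut $A$ along a good level of $d$, apply the pair IC to the resulting $E$, and absorb the three error sources you correctly identify. One bookkeeping slip: you actually accumulate \emph{four} contributions of $\eps/3$ --- the IC tolerance, the bound $\mu(\{d\leq t_0\})<\eps/3$, the bound $\nu(E^1)<\eps/3$, and the perimeter error $C\cdot\eps/(3C)$ --- so the final inequality reads $\mu(A^+)\leq C\P_\p(A)+4\eps/3$ rather than $+\eps$. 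This is harmless since $\eps>0$ is arbitrary; the paper avoids it by using $\eps/6$ for two of the errors.
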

 
	\begin{proof}
        We assume the small-volume $\p$-IC for $(\mu,\nu)$ in $U$ and establish the following claim which contains the first part of the conclusion: For every $\eps>0$, there exist an open set $O\Subset U$ and some $\delta>0$ such that
		\begin{equation}\label{eq:tilde_anis_O_mu}
			\mu(A^+)
			\leq C \P_\p(A,O)+\eps
			\qq\text{for all measurable } A \Subset U\text{ such that } |A| < \delta\,.
		\end{equation}
        For verifying this claim, we first deduce from the assumption $\mu\perp\nu$ the existence of Borel sets $S_\mu,S_\nu\subseteq U$ such that
		$U=S_\mu \cupdot S_\nu$ and $\mu(S_\nu)=\nu(S_\mu)=0$ hold. Now we fix an arbitrary $\eps>0$. Since $\mu$ and $\nu$ are concentrated on $S_\mu$ and $S_\nu$, respectively, there exist compact sets $K_\mu \subseteq S_\mu$ and $K_\nu \subseteq S_\nu$ such that $\mu(U \setminus K_\mu) \leq \eps/6$ and $\nu(U \setminus K_\nu) \leq \eps/6$. In view of $\dist(K_\mu,K_\nu)>0$, we may further choose
        the open set $O$ such that we have $K_\mu \subseteq O \Subset U\setminus K_\nu$. We now let $d(x)\coleq\dist(x,O^\c)$ for $x \in\R^N$. Observing $\lim_{t \searrow 0} \mu\left(U\cap\left\{ d < t \right\} \right) = \mu(U\setminus O) \leq \mu(U\setminus K_\mu) \leq \eps/6$, we may choose $t_0>0$ small enough for having
		\[
		   \mu\big(U\cap\big\{ d < t_0 \big\} \big) < \frac\eps3\,.
		\]
        We further proceed in close analogy with the proof of Lemma \ref{anis_lem:equiv_small_IC}. We choose $\delta'>0$ such that the assumed small-volume IC for $(\mu,\nu)$ holds with $\eps$ and $\delta$ replaced by $\eps/3$ and $\delta'$. Then we fix a measurable $A\Subset U$ with $|A|<\delta\coleq\min{\big\{ \delta',\frac{t_0 \eps}{6\beta C}\big\}}$, and w.\@l.\@o.\@g.\@ we assume $\P(A,O)<\infty$. Arguing in the same way as for \eqref{eq:anis_III_primo}, we find some $\ol{t} \in (0,t_0)$ such that
		\[
			\H^{N-1} \big( A^+ \cap \big\{ d = \ol{t} \big\} \big) 
			< \frac{\eps}{6\beta C}.
		\]
		Setting $E\coleq A \cap  \left\{ d > \ol{t} \right\} \Subset O$, we then follow the derivation of \eqref{eq:mu_sv_est} to obtain
		\[
			\mu(A^+)
			\leq C \P_\p(E) + \nu(E^1) + \frac{2\eps}{3}
		\]
        (where we have brought the $\nu$-term to the right-hand side and have now explicitly recorded that we need to evaluate $\nu$ only on $E^1$ rather than $A^1$). In analogy with \eqref{eq:per_est}, we also get
		\[
			\P_\p(E)
			\leq \P_\p(A,O) + \frac\eps{6C}\,.
		\]
        Finally, we combine the preceding estimates and exploit $E\Subset O\Subset U\setminus K_\nu$ in order to additionally control $\nu(E^1) \leq \nu(U \setminus K_\nu) \leq \eps/6$. This results in
		\[
		   \mu(A^+)
		   \leq C \left( \P_\p(A,O) + \frac\eps{6C} \right) + \frac\eps6 +\frac{2\eps}3
		   = C \P_\p(A,O) +\eps\,.
		\]
		Hence, the claim \eqref{eq:tilde_anis_O_mu} and the asserted small-volume $\p$-IC for $\mu$ in $U$ are proved.

        Finally, the asserted small-volume $\p$-IC for $\ol{\mu}$ in $\R^N$ can now be read off from Lemma \ref{anis_lem:equiv_small_IC} (or can alternatively be derived directly by adapting the previous reasoning for estimating $\ol{\mu}(A^+)\le C\P_\p(A,O)$ even for all measurable $A\Subset\R^N$ such that $|A|<\delta$).
	\end{proof}

	\section{Existence theory for non-parametric functionals}
      \label{sec:exist}

    We recall that we work, for arbitrary $N\in\N$, over a bounded open set $\Omega\subseteq\R^N$ with Lipschitz boundary.
    
	\subsection{Lower semicontinuity of non-parametric functionals}
      \label{subsec:npar-lsc}

    Here we aim at proving the lower semicontinuity result of Theorem \ref{thm:lsc} for the functional $\widehat\Phi$ from \eqref{eq:P-functional}. In this regard it will be convenient to introduce two closely related auxiliary functionals:
	
	\begin{defi}\label{defi:anis_G_tilde}
		We impose Assumptions \ref{assum:phi} and \ref{assum:mu}. Then we introduce a full-space extension $\WPhione$ of our non-parametric functional\/ $\widehat\Phi$ and a variant\/ $\WPhitwo$ of the full-space extension with mirrored integrand $\widetilde\p$ and roles of\/ $\mu_+$ and\/ $\mu_-$ switched by setting, for $w \in \BV(\R^N)$,
		\[
			\WPhione[w]
			\coleq|\D w|_{\p}(\R^N)
              + \int_{\Omega} w^- \, \d\mu_+
              -\int_{\Omega}w^+ \, \d\mu_-
            \quad\text{and}\quad
			\WPhitwo[w]
			\coleq|\D w|_{\widetilde\p}(\R^N)
              + \int_{\Omega} w^- \, \d\mu_-
              - \int_{\Omega}w^+ \ \d\mu_+ \,.
		\]
	\end{defi}

    In fact, we shall consider the functionals $\WPhione$ and $\WPhitwo$ only on those $w \in \BV(\R^N)$ which are non-negative and coincide outside $\ol{\Omega}$ with a fixed boundary datum $u_0 \in \W^{1,1}(\R^N)$. Before turning to semicontinuity itself, we put on record a basic observation on the boundary term and an auxiliary estimate for truncations.

	\begin{rem} \label{estimate_phi_variation}
		Whenever $\p$ satisfies Assumption \ref{assum:phi}, the reverse triangle inequality \eqref{anis_eq:subadd_phi} for $\xi$ and $\tau$ linearly dependent gives, for any $w \in \BV(\R^N)$, the estimate
		\begin{align*}		    
          |\D w|_{\p}(\partial \Omega)
		   &=\int_{\partial \Omega} \p\left( \,\cdot\,, \left( \inn w{-}\ext w \right) \nu_\Omega \right)\d\H^{N-1} \\
		   &\geq \int_{\partial \Omega}\p\left( \,\cdot\,,\inn w \, \nu_\Omega \right)\d\H^{N-1} 
		   - \int_{\partial \Omega} \p\left( \,\cdot\,,  \ext w \, \nu_\Omega   \right)\d\H^{N-1}\,.
		\end{align*}
	\end{rem}

    \begin{lem}\label{lem:WPhione-cut-off}
        We impose Assumptions \ref{assum:phi}\eqref{case:cont} and \ref{assum:mu}. Let $(\mu_-,\mu_+)$ satisfy the $\p$-IC in $\Omega$ with constant $1$ and $(\mu_+,\mu_-)$ the $\widetilde\p$-IC in $\Omega$ with constant $1$, and fix a non-negative $u_0\in\W^{1,1}(\R^N)$. Then, for all non-negative $w\in\BV(\R^N)$ such that $w = u_0$ \ae{} in $\R^N \setminus \ol{\Omega}$ and all\/ $M>0$, we have
		\begin{equation}\label{eq:cut-off-estimate-with-cM}
          \WPhione[w] \geq \WPhione[w^M] + c_M
          \qq\text{with }c_M\coleq \alpha\int_{\R^N\setminus\ol{\Omega}}|\nabla(u_0-(u_0)^M)|\dx - \beta\int_{\partial \Omega}|\mathrm{T}_{\partial\Omega}(u_0{-}(u_0)^M)|\,\d\H^{N-1}\,,
        \end{equation}
        where $u^M$ denote the truncations of Lemma \ref{lem:cut_M}.
        Moreover, we record\/ $\lim_{M\to\infty}c_M=0$.
    \end{lem}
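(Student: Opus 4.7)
\medskip

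\noindent\textit{Proof proposal.} The plan is to reduce the desired inequality to a single application of the IC on $\Omega$ (via Remark \ref{rem:IC_global} or the upper bound in Theorem \ref{thm:anis_equivalence_measure_TOGETHER}\eqref{item:2a}) by exploiting the additivity of the anisotropic total variation on truncations from Lemma \ref{lem:cut_M}\eqref{item:cut_M_ii} and the additivity of $(\,\cdot\,)^\pm$ on truncations from Remark \ref{rem:additivity_upper_lower_limit}. Since $w\ge0$ \ae{}, it is $v\coleq w-w^M=(w{-}M)_+\ge0$, and Lemma \ref{lem:cut_M}\eqref{item:cut_M_ii} yields $|\D w|_\p(\R^N)=|\D v|_\p(\R^N)+|\D w^M|_\p(\R^N)$, while Remark \ref{rem:additivity_upper_lower_limit} gives $w^\pm=v^\pm+(w^M)^\pm$ on $\Omega$. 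Substituting these into the definition of $\WPhione$ directly reveals
\[
  \WPhione[w]-\WPhione[w^M]
  =|\D v|_\p(\R^N)+\int_\Omega v^-\,\d\mu_+-\int_\Omega v^+\,\d\mu_-\,,
\]
so that it suffices to show $|\D v|_\p(\R^N)\ge\int_\Omega v^+\,\d\mu_--\int_\Omega v^-\,\d\mu_++c_M$.

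The next step is to split $|\D v|_\p(\R^N)$ into contributions on $\Omega$, $\partial\Omega$, and $\R^N\setminus\overline\Omega$. On the exterior set, $v$ coincides with $u_0-(u_0)^M\in\W^{1,1}$, and the lower bound in \eqref{eq:comp-p0} produces the first term of $c_M$. On $\partial\Omega$, the interior trace of $v$ is $\mathrm{T}^{\mathrm{int}}_{\partial\Omega}v$ while the exterior trace equals $\mathrm{T}_{\partial\Omega}(u_0{-}(u_0)^M)$ (since $w=u_0$ \ae{} outside $\overline\Omega$). The reverse triangle inequality from Remark \ref{estimate_phi_variation} combined with the upper bound in \eqref{eq:comp-p0} then yields
\[
  |\D v|_\p(\partial\Omega)
  \ge\int_{\partial\Omega}\p\big(\,\cdot\,,\mathrm{T}^{\mathrm{int}}_{\partial\Omega}v\,\nu_\Omega\big)\,\d\H^{N-1}
  -\beta\int_{\partial\Omega}|\mathrm{T}_{\partial\Omega}(u_0{-}(u_0)^M)|\,\d\H^{N-1}\,,
\]
contributing the second (negative) term of $c_M$ and retaining the positive interior-boundary term. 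Assembling these pieces gives
\[
  |\D v|_\p(\R^N)
  \ge|\D v|_\p(\Omega)+\int_{\partial\Omega}\p\big(\,\cdot\,,\mathrm{T}^{\mathrm{int}}_{\partial\Omega}v\,\nu_\Omega\big)\,\d\H^{N-1}+c_M\,.
\]

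At this point, the two assumed ICs combine through the upper bound in \eqref{2A} of Theorem \ref{thm:anis_equivalence_measure_TOGETHER}\eqref{item:2a} applied to the non-negative $v\restr_\Omega\in\BV(\Omega)$ with $C=1$, yielding exactly
\[
  |\D v|_\p(\Omega)+\int_{\partial\Omega}\p\big(\,\cdot\,,\mathrm{T}^{\mathrm{int}}_{\partial\Omega}v\,\nu_\Omega\big)\,\d\H^{N-1}\ge\int_\Omega v^+\,\d\mu_--\int_\Omega v^-\,\d\mu_+\,,
\]
which when chained with the preceding estimate closes the proof of \eqref{eq:cut-off-estimate-with-cM}. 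Finally, for $\lim_{M\to\infty}c_M=0$ one applies Lemma \ref{lem:cut_M}\eqref{item:cut_M_iii} to $u_0\in\W^{1,1}(\R^N)$ to get $(u_0)^M\to u_0$ strongly in $\W^{1,1}(\R^N)$, so that $\nabla(u_0-(u_0)^M)\to0$ in $\L^1(\R^N{\setminus}\overline\Omega,\R^N)$; strict convergence of the extended functions across $\partial\Omega$ together with Theorem \ref{thm:trace_cont} then forces $\mathrm{T}_{\partial\Omega}(u_0{-}(u_0)^M)\to0$ in $\L^1(\partial\Omega;\H^{N-1})$, so both summands defining $c_M$ vanish in the limit. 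The only point requiring minor care is the correct identification of interior and exterior traces of $w-w^M$ on $\partial\Omega$, but this is immediate from $w=u_0$ \ae{} outside $\overline\Omega$ and the linearity of the trace operator.
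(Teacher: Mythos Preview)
Your proof is correct and follows essentially the same approach as the paper: both set $v=w-w^M\ge0$, use Lemma~\ref{lem:cut_M}\eqref{item:cut_M_ii} and Remark~\ref{rem:additivity_upper_lower_limit} to write $\WPhione[w]-\WPhione[w^M]=|\D v|_\p(\R^N)+\int_\Omega v^-\,\d\mu_+-\int_\Omega v^+\,\d\mu_-$, apply the IC in the form \eqref{2A} to $v\restr_\Omega$, and handle the boundary via Remark~\ref{estimate_phi_variation} together with the bounds \eqref{eq:comp-p0}. The only cosmetic difference is the order of operations (you split $|\D v|_\p(\R^N)$ into the three regions first and then invoke the IC, whereas the paper applies the IC first and then peels off the boundary and exterior contributions), but the ingredients and estimates are identical; the argument for $c_M\to0$ via Lemma~\ref{lem:cut_M}\eqref{item:cut_M_iii} and Theorem~\ref{thm:trace_cont} also matches.
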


    \begin{proof}
		For $w$ as in the statement, the equivalent $\p$-IC \eqref{2A}, applied to $w - w^M \geq 0$ with $C = 1$, yields
		\[
			-  \int_{\Omega} (w - w^M)^+ \ \d\mu_-  + \int_{\Omega} (w - w^M)^- \ \d\mu_+ 
			\geq - |\D \left( w-w ^M\right)|_{\p}(\Omega) - \int_{\partial \Omega} \p\left( \,\cdot\,, \inn(w-w^M) \nu_\Omega \right)\,\d\H^{N-1}\,.
		\]
		Employing the results of Lemma \ref{lem:cut_M}\eqref{item:cut_M_ii}, Remark \ref{rem:additivity_upper_lower_limit}, and
		Remark \ref{estimate_phi_variation} for the difference $w-w^M$, we conclude
		\begin{align*}
			\WPhione[w] - \WPhione[w^M] 
			&=|\D(w-w ^M)|_\p(\R^N)
			+ \int_{\Omega} (w-w^M)^- \,\d\mu_+
			- \int_{\Omega} (w-w^M)^+ \,\d\mu_- \\
			& \geq   |\D (w-w^M) |_\p(\R^N \setminus \Omega) - \int_{\partial \Omega} \p\left( \,\cdot\,, \inn(w-w^M) \, \nu_\Omega \right)  \,\d\H^{N-1} \\
			& \geq |\D( w-w^M) |_\p(\R^N \setminus \ol{\Omega}) - \int_{\partial \Omega} \p\left( \,\cdot\,, \ext(w-w^M) \, \nu_\Omega \right)  \,\d\H^{N-1} \\ 
			&  = |\D (u_0-(u_0)^M) |_{\p}\left( \R^N \setminus \ol{\Omega}\right)   - \int_{\partial \Omega}  \p(\,\cdot\,, \mathrm{T}_{\partial\Omega}(u_0-(u_0)^M)\,\nu_\Omega )\,\d\H^{N-1}\\
			&\geq  \alpha\int_{\R^N\setminus\ol{\Omega}}|\nabla(u_0-(u_0)^M)|\dx - \beta\int_{\partial \Omega}|\mathrm{T}_{\partial\Omega}(u_0-(u_0)^M)|\,\d\H^{N-1}\,.
		\end{align*}
		This confirms \eqref{eq:cut-off-estimate-with-cM} with the constants $c_M$ defined there. Moreover, the convergence of $(u_0)^M$ to $u_0$ in $\W^{1,1}(\R^N)$ and Theorem \ref{thm:trace_cont} imply the claim $\lim_{M\to\infty}c_M=0$.
    \end{proof}

    The next lemma is the main technical achievement of this section and carries over semicontinuity from the parametric functionals of Section \ref{sec:par-lsc} at first to the non-parametric functional $\WPhione$.
	
	\begin{lem}[semicontinuity of $\WPhione$ on non-negative functions with prescribed values outside $\ol{\Omega}$]		\label{case3_lem:anis_G_tilde_LSC_nonneg}
		We impose Assumptions \ref{assum:phi}\eqref{case:conv-lsc},\eqref{case:cont} and \ref{assum:mu}. Let $(\mu_-,\mu_+)$ satisfy the $\p$-IC in $\Omega$ with constant $1$
		and $(\mu_+,\mu_-)$ the $\widetilde\p$-IC in $\Omega$ with constant $1$, and fix a non-negative $u_0 \in \W^{1,1}(\R^N)$. Then, for $(u_k)_k$ and $u$ in $\BV(\R^N)$ such that $u_k \to u$ in $\L^1(\R^N)$ with $u_k, u \geq 0$ \ae{} in $\Omega$ and $u_k = u = u_0$ \ae{} in $\R^N \setminus \ol{\Omega}$ for all $k$, we have
		\[
          \liminf_{k \to \infty} \WPhione[u_k] \geq \WPhione[u]\,.
        \]
    \end{lem}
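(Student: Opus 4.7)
The plan is to reduce the semicontinuity of $\WPhione$ to the parametric semicontinuity of Theorem~\ref{thm:anis_semicont_funct_per_A+A1_gen}, applied level by level to superlevel sets. For any non-negative $w \in \BV(\R^N)$ with $w = u_0$ a.e.\ outside $\ol{\Omega}$, I would first rewrite $\WPhione[w]$ as a one-dimensional integral in the level parameter $t$, using the anisotropic coarea formula (Theorem~\ref{anis_thm:coarea}) for the total variation term together with the layer-cake formula and Remark~\ref{rem:equivalence_sets_wrt_mu_ext} for the two measure terms. Denoting by $\ol{\mu_\pm}$ the extensions of $\mu_\pm$ by zero from $\Omega$ to $\R^N$ and setting $A_t \coleq \{w > t\}$, this yields
\[
  \WPhione[w] = \int_0^\infty F_w(t)\dt,
  \qq F_w(t) \coleq \P_\p(A_t) + \ol{\mu_+}(A_t^1) - \ol{\mu_-}(A_t^+).
\]

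For any sequence $w_k \to w$ in $\L^1(\R^N)$, for a.e.\ $t$ the superlevel sets $\{w_k > t\}$ converge locally in measure to $\{w > t\}$, and Theorem~\ref{thm:anis_semicont_funct_per_A+A1_gen} gives $\liminf_{k} F_{w_k}(t) \ge F_w(t)$ --- applicable here because Lemma~\ref{anis_lem:equiv_small_IC} promotes the assumed $\p$-IC for $(\mu_-,\mu_+)$ and $\widetilde\p$-IC for $(\mu_+,\mu_-)$ on $\Omega$ to the corresponding small-volume ICs for the extensions on $\R^N$. To pass from this pointwise estimate to the integral, one would like to invoke Fatou's lemma. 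The obstacle is the negative $\ol{\mu_-}$-term in $F$: the only trivial lower bound $F_w(t) \ge -\ol{\mu_-}(\R^N) = -\mu_-(\Omega)$ is integrable only over bounded $t$-intervals, so Fatou fails directly for unbounded $u_k$.

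I would resolve this by a truncation step. For $M > 0$, Lemma~\ref{lem:WPhione-cut-off} gives $\WPhione[u_k] \ge \WPhione[u_k^M] + c_M$ with $c_M \to 0$ as $M \to \infty$. Since $u_k^M \le M$ holds everywhere (recall that outside $\ol{\Omega}$ it equals $u_0^M \le M$), the superlevel sets $\{u_k^M > t\}$ are empty for $t \ge M$, so $\WPhione[u_k^M] = \int_0^M F_{u_k^M}(t)\dt$. On the compact interval $[0,M]$ the trivial bound $F_{u_k^M}(t) \ge -\mu_-(\Omega)$ is now integrable, and since $\L^1$-convergence is preserved by the $1$-Lipschitz truncation, Fatou combined with the pointwise parametric lsc above delivers $\liminf_k \WPhione[u_k^M] \ge \WPhione[u^M]$.

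Assembling these yields $\liminf_k \WPhione[u_k] \ge \WPhione[u^M] + c_M$, and the proof finishes by letting $M \to \infty$. The convergence $\WPhione[u^M] \to \WPhione[u]$ splits term by term: for the anisotropic total variation it comes from the $\p$-strict convergence of Lemma~\ref{lem:cut_M}\eqref{item:cut_M_iv}, and for the two measure terms from the fine continuity of Lemma~\ref{lem:int-uM-to-int-u}, whose integrability hypothesis is precisely built into Assumption~\ref{assum:mu}. The main delicacy is that the ICs on $\Omega$ only extend to \emph{small-volume} ICs on $\R^N$, which nevertheless matches exactly the hypothesis of Theorem~\ref{thm:anis_semicont_funct_per_A+A1_gen}, and the sign structure of the $\mu_-$-term forces this truncation detour rather than allowing a one-shot Fatou argument.
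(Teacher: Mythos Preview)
Your proposal is correct and follows essentially the same route as the paper's proof: truncate via Lemma~\ref{lem:WPhione-cut-off}, rewrite $\WPhione[u_k^M]$ by coarea/layer-cake, apply Fatou on $[0,M]$ with the integrable lower bound $-\mu_-(\Omega)\1_{(0,M)}$, use the parametric semicontinuity of Theorem~\ref{thm:anis_semicont_funct_per_A+A1_gen} (enabled on $\R^N$ through Lemma~\ref{anis_lem:equiv_small_IC}), and finally let $M\to\infty$ via Lemmas~\ref{lem:cut_M}\eqref{item:cut_M_iv} and \ref{lem:int-uM-to-int-u}. The only minor point the paper makes explicit is passing to a subsequence with $u_k\to u$ a.e.\ to guarantee the convergence in measure of the superlevel sets for a.e.\ $t$, which is harmless since one may always reduce to a subsequence realizing the liminf.
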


    \begin{proof}			
		Consider a sequence $(u_k)_k$ and $u$ as in the statement, where by a standard reasoning with subsequences we may additionally assume $u_k\to u$ \ae{} in $\R^N$. Once more, we denote by $\ol{\mu_\pm}$ the extended measures given by $\overline{\mu_\pm}(S)\coleq\mu_\pm(S\cap\Omega)$ for Borel sets $S\subseteq\R^N$, and we record that by Remark \ref{rem:equivalence_sets_wrt_mu_ext} we have
		\[
		   \left\{ (u_k^M) ^+ > t \right\} 
		   = \left\{ u_k^M > t \right\}^+ 
		   \ \text{ and } \  \left\{ (u_k^M) ^- > t \right\} 
		   = \left\{ u_k^M > t \right\}^1
		   \qq \ol{\mu_\pm} \text{-\ae{} in } \R^N \mbox{ for }\mathcal{L}^1 \mbox{-\ae{} } t > 0\,. 
		\]
        We now employ Lemma \ref{lem:WPhione-cut-off}, the anisotropic coarea formula of Theorem \ref{anis_thm:coarea}, and
		a layer-cake argument for $u_k^M \geq 0$ (which exploits the previous equalities) to deduce
		\begin{align*}
			\WPhione[u_k] 
			\geq \WPhione[u_k^M] + c_M
			&= c_M + |\D u_k^M|_{\p}(\R^N) + \int_{\R^N} \left( u_k^M\right) ^- \, \d\ol{\mu_+} - \int_{\R^N} \left( u_k^M\right) ^+ \, \d\ol{\mu_-} \\
			&=  c_M + \int_{0}^{\infty} \left[ \P_{\p} \left( \left\{ u_k^M > t \right\} \right) + \ol{\mu_+} \left( \left\{ u_k^M > t \right\}^1 \right) - \ol{\mu_-}\left( \left\{ u_k^M > t \right\}^+ \right) \right] \dt\,. 
		\end{align*}
		Since the bound $u_k^M\leq M$ implies $\ol{\mu_-}\big( \left\{ u_k^M > t \right\}^+ \big)\le\1_{(0,M)}(t)\mu_-(\Omega)$ with $\1_{(0,M)}\mu_-(\Omega)\in\L^1(\R^+)$, the integrands in the last integral are bounded from below by an $\L^1$ function independent of $k$. Therefore, we may apply Fatou's lemma to deduce
		\[ 
			\liminf_{k \to \infty} \WPhione[u_k] 
			\geq   c_M + \int_{0}^{\infty} \liminf_{k \to \infty}  
			\left[   \P_{\p} \left( \left\{ u_k^M > t \right\} \right) + \ol{\mu_+} \left( \left\{ u_k^M > t \right\}^1 \right) - \ol{\mu_-}\left( \left\{ u_k^M > t \right\}^+ \right) \right] \dt\,.
		\]
		Since $u_k^M \to u^M$ \ae{} in $\R^N$ for $k \to \infty$ with $u_k^M=u^M$ outside $\ol{\Omega}$, the sets $A_{k,M}^t\coleq\left\{ u_k^M > t \right\}$
		converge in measure to $A_M^t\coleq\left\{ u^M > t \right\}$ for $k \to \infty$ whenever $t\ge0$ satisfies $\left|\{u^M=t\}\right|=0$. In particular, this convergence of sets is valid for $\mathcal{L}^1$-\ae{} $t\geq0$, and thus by the parametric semicontinuity result of Theorem \ref{thm:anis_semicont_funct_per_A+A1_gen},
        applied in combination with Lemma \ref{anis_lem:equiv_small_IC} as described in Remark \ref{rem:equiv_small_strong_IC}, we infer
		\[
		   \liminf_{k \to \infty} \WPhione[u_k]
		   \geq  c_M + \int_{0}^{\infty} \left[  \P_{\p}\left(\left\{ u^M > t \right\}\right) + \ol{\mu_+} \left( \left\{ u^M > t \right\}^1 \right) - \ol{\mu_-} \left( \left\{ u^M > t \right\}^+\right) \right] \dt\,.
        \]
        We can identify the right-hand side of the last inequality as simply $\WPhione[u^M]+c_M$ by the same rewriting as before via Remark \ref{rem:equivalence_sets_wrt_mu_ext}, the coarea formula of Theorem \ref{anis_thm:coarea}, and the layer-cake formula. Therefore, we arrive at
		\[
		   \liminf_{k \to \infty} \WPhione[u_k]
		   \geq \WPhione\big[u^M\big] + c_M \,.
		\]  
		In order to finally send $M\to\infty$ we exploit on one hand that 
            $|\D u^M|_\p(\R^N)\to|\D u|_\p(\R^N)$ by Lemma \ref{lem:cut_M}\eqref{item:cut_M_iv}
        and on the other hand that $\int_\Omega \big(u^M\big)^\pm\,\d\mu_\mp\to\int_\Omega u^\pm\,\d\mu_\mp$ by Lemma \ref{lem:int-uM-to-int-u}. Thus, also recalling $\lim_{M \to \infty} c_M = 0$ from Lemma \ref{lem:WPhione-cut-off}, we may pass to the limit with all terms of $\WPhione$ to conclude
		\[
		   \liminf_{k \to \infty}\WPhione[u_k]
		   \geq \WPhione[u]\,.
		\]
        This completes the proof.  
	\end{proof}

    Likewise we have semicontinuity also for the functional $\WPhitwo$:

 	\begin{lem}[semicontinuity of $\WPhitwo$ on non-negative functions with prescribed values outside $\overline\Omega$]
    \label{case3_lem:anis_G_tilde_LSC_nonneg2}
        We impose the same assumptions as in Lemma \ref{case3_lem:anis_G_tilde_LSC_nonneg}. Then, for $(u_k)_k$ and $u$ in $\BV(\R^N)$ such that $u_k \to u$ in $\L^1(\R^N)$ with $u_k, u \geq 0$ \ae{} in $\Omega$ and $u_k = u = u_0$ \ae{} in $\R^N \setminus \ol{\Omega}$ for all $k$, we also have
		\[
          \liminf_{k \to \infty} \WPhitwo[u_k] \geq \WPhitwo[u]\,.
        \]
    \end{lem}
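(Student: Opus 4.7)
The plan is to deduce Lemma \ref{case3_lem:anis_G_tilde_LSC_nonneg2} from the already established Lemma \ref{case3_lem:anis_G_tilde_LSC_nonneg} by a symmetry argument, observing that $\WPhitwo$ is literally an instance of $\WPhione$ after a suitable relabelling of the data. Specifically, I would introduce the new integrand $\psi\coleq\widetilde\p$ and the new measures $\nu_+\coleq\mu_-$, $\nu_-\coleq\mu_+$, and then read off
\[
\WPhitwo[w]
=|\D w|_\psi(\R^N)+\int_\Omega w^-\,\d\nu_+-\int_\Omega w^+\,\d\nu_-\,,
\]
which is exactly the functional defined in Definition \ref{defi:anis_G_tilde} with $(\p,\mu_+,\mu_-)$ replaced by $(\psi,\nu_+,\nu_-)$.

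The next step is to verify that \emph{all} hypotheses of Lemma \ref{case3_lem:anis_G_tilde_LSC_nonneg} continue to hold for the relabelled data. Since $\xi\mapsto-\xi$ is a linear isomorphism, $\psi=\widetilde\p$ inherits positive $1$-homogeneity, the comparability $\alpha|\xi|\le\psi(x,\xi)\le\beta|\xi|$, convexity in $\xi$, and joint continuity from $\p$; so Assumption \ref{assum:phi}\eqref{case:conv-lsc},\eqref{case:cont} passes to $\psi$. Assumption \ref{assum:mu} is symmetric in $(\mu_+,\mu_-)$ and thus holds for $(\nu_+,\nu_-)$ unchanged, and likewise $u_0\in\W^{1,1}(\R^N)$ is untouched. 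For the IC assumptions, using $\widetilde\psi=\p$, the requirement that $(\nu_-,\nu_+)$ satisfies the $\psi$-IC with constant $1$ is exactly the hypothesis that $(\mu_+,\mu_-)$ satisfies the $\widetilde\p$-IC with constant $1$, and the requirement that $(\nu_+,\nu_-)$ satisfies the $\widetilde\psi$-IC with constant $1$ is exactly the hypothesis that $(\mu_-,\mu_+)$ satisfies the $\p$-IC with constant $1$ --- both of which are assumed.

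Finally, I would apply Lemma \ref{case3_lem:anis_G_tilde_LSC_nonneg} to the sequence $(u_k)_k$ and limit $u$ with the relabelled data $(\psi,\nu_+,\nu_-)$. Since the class of admissible sequences depends only on $u_0$ and the values of $u_k$, $u$ (non-negativity in $\Omega$, coincidence with $u_0$ outside $\ol\Omega$, $\L^1(\R^N)$-convergence), it is unaffected by the relabelling, and the conclusion $\liminf_{k\to\infty}\WPhione^{\text{new}}[u_k]\ge\WPhione^{\text{new}}[u]$ is precisely the desired $\liminf_{k\to\infty}\WPhitwo[u_k]\ge\WPhitwo[u]$.

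I do not expect any genuine obstacle here: the entire machinery (truncation with Lemma \ref{lem:WPhione-cut-off}, coarea and layer-cake decomposition, application of the parametric semicontinuity of Theorem \ref{thm:anis_semicont_funct_per_A+A1_gen}, passage to the limit via Lemmas \ref{lem:cut_M} and \ref{lem:int-uM-to-int-u}) has been set up in such a $\p$/$\widetilde\p$ and $\mu_+$/$\mu_-$ symmetric way that no computation needs to be redone. The only point requiring (minor) care is to double-check that each invariance under the swap $\p\leftrightarrow\widetilde\p$, $\mu_+\leftrightarrow\mu_-$ holds for the precise statements used in the proof of Lemma \ref{case3_lem:anis_G_tilde_LSC_nonneg}, which is routine.
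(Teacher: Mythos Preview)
Your proposal is correct and is essentially the same argument as the paper's: both deduce Lemma \ref{case3_lem:anis_G_tilde_LSC_nonneg2} from Lemma \ref{case3_lem:anis_G_tilde_LSC_nonneg} by the relabelling $(\p,\mu_+,\mu_-)\mapsto(\widetilde\p,\mu_-,\mu_+)$ and by checking that all hypotheses (Assumptions \ref{assum:phi}, \ref{assum:mu}, and the two ICs) are symmetric under this swap. Your write-up is in fact more explicit about verifying each hypothesis than the paper's brief paragraph.
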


    \begin{proof}
        We record that the assumptions of Lemma \ref{case3_lem:anis_G_tilde_LSC_nonneg} imply the analogous assumptions in which $\widetilde\p$ takes the role of $\p$ and at the same time the roles of $\mu_+$ and $\mu_-$ are switched. In particular, all relevant properties directly carry over from $\p$ to $\widetilde\p$, while the roles of $\mu_+$ and $\mu_-$ and the two assumed ICs are suitably symmetric and are just exchanged. With these findings in mind, we may apply Lemma \ref{case3_lem:anis_G_tilde_LSC_nonneg} with $(\widetilde\p,\mu_-,\mu_+)$ in place of $(\p,\mu_+,\mu_-)$ and directly arrive at the claim.
    \end{proof}

    In order to finally reach the main semicontinuity result of Theorem \ref{thm:exist} for the functional $\widehat\Phi$, we split $\widehat\Phi$ into $\WPhione$- and $\WPhitwo$-terms as follows.

    \begin{lem}\label{lem:decomp-Phi}
        We consider $u_0\in\W^{1,1}(\R^N)$ and impose Assumptions \ref{assum:phi} and \ref{assum:mu}. Then, for every $w\in\BV(\Omega)$, we have
        \[
            \widehat\Phi[w]
            =\WPhione\,[\ol{w}_+\big]
            +\WPhitwo\big[\ol{w}_-\big]
            -|\D u_0|_\p\big(\R^N\setminus\ol{\Omega}\big)\,,
        \]
        where the extension $\ol{w}=w\1_\Omega+u_0\1_{\R^N\setminus\ol{\Omega}}\in\BV(\R^N)$ incorporates the values of $u_0$ outside $\ol{\Omega}$.
    \end{lem}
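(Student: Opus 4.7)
The plan is to reduce the asserted identity to two already available decomposition results, namely Lemma \ref{anis_lem:add_parts} for the anisotropic total variation and Lemma \ref{lem:dec_upper_limit} for the representatives $w^\pm$, and then to handle the boundary term by separating the contribution of $|\D\ol w|_\p$ outside $\ol\Omega$.

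First I would treat the anisotropic total-variation parts. By Lemma \ref{anis_lem:add_parts}, applied to $\ol w\in\BV(\R^N)$, we have the identity of measures $|\D\ol w|_\p=|\D\ol w_+|_\p+|\D\ol w_-|_{\widetilde\p}$ on $\R^N$. Splitting these measures according to the partition $\R^N=\ol\Omega\cupdot(\R^N\setminus\ol\Omega)$ and observing that $\ol w$ coincides with $u_0\in\W^{1,1}(\R^N)$ on the open set $\R^N\setminus\ol\Omega$, we obtain
\[
  |\D\ol w_+|_\p(\R^N)+|\D\ol w_-|_{\widetilde\p}(\R^N)
  =|\D\ol w|_\p(\ol\Omega)+|\D u_0|_\p(\R^N\setminus\ol\Omega)
  =\TV_\p^{u_0}[w]+|\D u_0|_\p(\R^N\setminus\ol\Omega)\,,
\]
which produces both the $\TV_\p^{u_0}[w]$ appearing in $\widehat\Phi[w]$ and the subtracted $|\D u_0|_\p(\R^N\setminus\ol\Omega)$ on the right-hand side.

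Next I would deal with the measure terms. On $\Omega$ we have $\ol w_\pm=w_\pm$ and therefore $(\ol w_\pm)^\pm=(w_\pm)^\pm$ pointwise (and in particular $\H^{N-1}$-\ae). Applying Lemma \ref{lem:dec_upper_limit} to $w\in\BV(\Omega)$ gives the $\H^{N-1}$-\ae{} equalities $w^+=(w_+)^+-(w_-)^-$ and $w^-=(w_+)^--(w_-)^+$ on $\Omega$, which by \eqref{eq:negligible} of Assumption \ref{assum:mu} hold $\mu_\pm$-\ae{} as well. Regrouping the four measure integrals contained in $\WPhione[\ol w_+]+\WPhitwo[\ol w_-]$ and substituting these identities then yields exactly $\int_\Omega w^-\,\d\mu_+-\int_\Omega w^+\,\d\mu_-$, matching the measure part of $\widehat\Phi[w]$. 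Adding the total-variation identity from the previous step completes the computation.

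The only mild subtlety, and what I would check most carefully, is that the positive and negative parts of the extension $\ol w$ really agree on $\Omega$ with the extensions of $w_+$ and $w_-$, so that the representatives $(\ol w_\pm)^\pm$ restricted to $\Omega$ may be replaced by $(w_\pm)^\pm$ when integrating against $\mu_\pm$; once this and the $\H^{N-1}$-to-$\mu_\pm$ transfer of \eqref{eq:negligible} are in place, the rest is an algebraic rearrangement of four integrals and two measure decompositions. All integrals involved are finite by Assumption \ref{assum:mu} via \eqref{eq:finite-integral}, so no integrability issue arises in the rearrangement.
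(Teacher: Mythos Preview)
Your proposal is correct and follows essentially the same approach as the paper: both decompose the $\p$-total variation via Lemma \ref{anis_lem:add_parts}, handle the measure terms via the representative decomposition of Lemma \ref{lem:dec_upper_limit} transferred to $\mu_\pm$-\ae{} through \eqref{eq:negligible}, and use $w_\pm=\ol w_\pm$ on the open set $\Omega$ to identify the representatives. The paper's write-up is nearly identical in structure, only slightly more compressed.
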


    \begin{proof}
        From Lemma \ref{anis_lem:add_parts}, we have
		\[
          |\D\ol{w}|_{\p}\big(\ol{\Omega}\big)
          =|\D\ol{w}|_{\p}\big(\R^N\big)-|\D u_0|_{\p}\big(\R^N\setminus\ol{\Omega}\big)
		   =|\D\ol{w}_+|_{\p}\big(\R^N\big)+|\D\ol{w}_-|_{\widetilde{\p}}\big(\R^N\big)-|\D u_0|_{\p}\big(\R^N\setminus\ol{\Omega}\big)\,.
        \]
        Moreover, the $\H^{N-1}$-\ae{} decompositions $w^+=(w_+)^+-(w_-)^-$ and $w^-=(w_+)^--(w_-)^+$ of Lemma \ref{lem:dec_upper_limit} together with \eqref{eq:negligible} and $w_\pm=\ol{w}_\pm$ on the open set $\Omega$ directly yield
		\[
            \int_{\Omega}w^-\,\d\mu_+
		      -\int_{\Omega}w^+\,\d\mu_-
            =\int_{\Omega}(\ol{w}_+)^-\,\d\mu_+
			-\int_{\Omega}(\ol{w}_+)^+\,\d\mu_-
            +\int_{\Omega}(\ol{w}_-)^-\,\d\mu_-
            -\int_{\Omega}(\ol{w}_-)^+\,\d\mu_+\,.
		\]
        By definition of $\widehat\Phi$, $\WPhione$, $\WPhitwo$ the last two displayed equations combine to the claim of the lemma. 
    \end{proof}

    With the preceding lemmas at hand, the final semicontinuity conclusion is then quite quick:
    
	\begin{proof}[Proof of Theorem \ref{thm:lsc}]
        If we have $\L^1(\Omega)$-convergence of a sequence $(u_k)_k$ in $\BV(\Omega)$ to $u\in\BV(\Omega)$, then $(\ol{u_k})_\pm$ converge to $\ol{u}_\pm$ in the same sense. In view of $(\ol{u_k})_\pm,\ol{u}_\pm\ge0$ and $(\ol{u_k})_\pm=\ol{u}_\pm=(u_0)_\pm$ on $\R^N\setminus\ol{\Omega}$, Lemmas \ref{case3_lem:anis_G_tilde_LSC_nonneg} and \ref{case3_lem:anis_G_tilde_LSC_nonneg2} yield
        \[        
          \liminf_{k\to\infty}\WPhione\big[(\ol{u_k})_+\big]\ge\WPhione\big[\ol{u}_+\big]
          \qq\qq\text{and}\qq\qq
          \liminf_{k\to\infty}\WPhitwo\big[(\ol{u_k})_-\big]\ge\WPhitwo\big[\ol{u}_-\big]\,.
        \]
        Taking into account Lemma \ref{lem:decomp-Phi}, we can add up to arrive at
        \[
          \liminf_{k\to\infty}\widehat\Phi[u_k]\ge\widehat\Phi[u]\,.
        \]
        This is the claimed lower semicontinuity of $\widehat\Phi$.
	\end{proof}

	\subsection{ICs are necessary and sufficient for coercivity}
      \label{subsec:coercivity}

    Next we observe that essentially the same ICs exploited for semicontinuity are also necessary and sufficient for both coercivity and existence of minimizers. Our statements in this direction are rather straightforward extensions of those made for the function case $\mu_\pm=H_\pm\LN$ already in \cite{Giaquinta74a,Giaquinta74b}.

    We start by addressing the necessity of the ICs with constant $1$ even for boundedness from below of $\widehat\Phi$ on $\BV(\Omega)$. Clearly, this entails the necessity for both coercivity of $\widehat\Phi$ on $\BV(\Omega)$ and the existence of any minimizer of $\widehat\Phi$ in $\BV(\Omega)$.
    
	\begin{prop}[necessity of ICs with $C=1$] \label{prop:anis_CN_coercivity}
		We consider $u_0\in\W^{1,1}(\R^N)$ and impose Assumptions \ref{assum:phi} and \ref{assum:mu}. If\/ $\widehat\Phi$ is bounded from below on $\BV(\Omega)$, then $(\mu_-,\mu_+)$ satisfies the $\p$-IC in $\Omega$ with constant\/ $1$, and $(\mu_+,\mu_-)$ satisfies the $\widetilde\p$-IC in $\Omega$ with constant\/ $1$.
    \end{prop}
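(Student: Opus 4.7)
The natural strategy is to test $\widehat\Phi$ on scaled characteristic functions of the admissible sets $A\Subset\Omega$ and to read off the two ICs from the requirement that $\widehat\Phi$ remain bounded below as the scaling parameter tends to $\pm\infty$. For a given measurable $A\Subset\Omega$, if $\P(A)=\infty$ then both $\P_\p(A)$ and $\P_{\widetilde\p}(A)$ are infinite by the convention of the paper, and both ICs are trivially satisfied; so we may fix $A\Subset\Omega$ with $\P(A)<\infty$, which yields $\1_A\in\BV(\Omega)\cap\L^\infty(\Omega)$ and hence $t\1_A\in\BV(\Omega)$ for every $t\in\R$.

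First I would consider $w_t\coleq t\1_A$ with $t>0$. Since $A\Subset\Omega$, the trace of $\1_A$ on $\partial\Omega$ vanishes $\H^{N-1}$-\ae{}, so the boundary penalization in \eqref{eq:TV-u0} reduces to the constant $C_0\coleq\int_{\partial\Omega}\p(\,\cdot\,,-u_0\nu_\Omega)\,\d\H^{N-1}$, finite because of \eqref{eq:comp-p0} and $u_0\in\W^{1,1}(\R^N)$. A direct computation using homogeneity gives $|\D w_t|_\p(\Omega)=t\P_\p(A)$, and the identities $(\1_A)^+=\1_{A^+}$, $(\1_A)^-=\1_{A^1}$ yield $w_t^+=t\1_{A^+}$ and $w_t^-=t\1_{A^1}$. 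Therefore
\[
    \widehat\Phi[w_t]
    =t\big[\P_\p(A)+\mu_+(A^1)-\mu_-(A^+)\big]+C_0\,.
\]
If $\widehat\Phi$ is bounded from below on $\BV(\Omega)$, letting $t\to\infty$ forces the bracketed quantity to be non-negative, which is precisely the $\p$-IC for $(\mu_-,\mu_+)$ with constant $1$ applied to $A$.

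Next I would repeat the argument with $w_s\coleq{-}s\1_A$ for $s>0$. Now the direction of $\D w_s$ is reversed, so $|\D w_s|_\p(\Omega)=s\P_{\widetilde\p}(A)$ by the definition of the mirrored integrand; moreover Lemma \ref{lem:dec_upper_limit} (applied with $(w_s)_+\equiv0$ and $(w_s)_-=s\1_A$) gives $w_s^+={-}s\1_{A^1}$ and $w_s^-={-}s\1_{A^+}$. Hence
\[
    \widehat\Phi[w_s]
    =s\big[\P_{\widetilde\p}(A)+\mu_-(A^1)-\mu_+(A^+)\big]+C_0\,,
\]
and sending $s\to\infty$ yields the $\widetilde\p$-IC for $(\mu_+,\mu_-)$ with constant $1$. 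The argument is essentially computational once the correct representatives are identified for the $\mu_\pm$-terms in the negative-sign case; this latter identification via Lemma \ref{lem:dec_upper_limit} is the only mildly subtle point, but presents no real obstacle.
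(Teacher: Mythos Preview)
Your proof is correct and follows essentially the same approach as the paper: both test $\widehat\Phi$ on scaled characteristic functions $\pm k\1_A$ with $A\Subset\Omega$, compute the representatives $(\1_A)^\pm=\1_{A^{+/1}}$, and read off the two ICs from the requirement that the linear-in-$k$ term be non-negative (the paper phrases this as a contradiction, you do it directly, which is a purely stylistic difference). One tiny remark: the identity $({-}s\1_A)^\pm={-}s\1_{A^{1/+}}$ follows more simply from the general rule $({-}u)^\pm={-}u^\mp$ than from Lemma~\ref{lem:dec_upper_limit}, and in either case the equalities hold $\H^{N-1}$-\ae{}, which suffices for the $\mu_\pm$-integrals by \eqref{eq:negligible}.
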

  
	\begin{proof}
        It suffices to establish the ICs in form of
		\begin{equation}\label{eq:nec-claim}
          -\P_{\widetilde\p}(A)
		   \leq  \mu_-(A^1)-\mu_+(A^+) 
		   \leq \mu_-(A^+)-\mu_+(A^1) 
		   \leq \P_{\p}(A) \qq\text{for all measurable }A\Subset \Omega\,.
        \end{equation}
        Indeed, suppose that the right-hand inequality in \eqref{eq:nec-claim} fails for some measurable $A \Subset \Omega$, which then satisfies
		\[
			\mu_- (A^+) - \mu_+(A^1) > \P_\p(A)\,.
		\]
		The functions $u_k \coleq k \1_{A} \in \BV(\Omega)$, $k \in \N$, are compactly supported in $\Omega$, and we record
        $|\D u_k|_{\p}(\Omega)=k\P_{\p}(A)$ and 
        $\int_\Omega(u_k)^+\,\d\mu_--\int_\Omega(u_k)^-\,\d\mu_+=k\mu_-(A^+)-k\mu_+(A^1)$.
        Consequently, we find
		\[
			\widehat\Phi[u_k]
			= k \left(  \P_{\p}(A) - \mu_-(A^+)  + \mu_+(A^1)  \right) +  \int_{\partial \Omega}\p(\,\cdot\,, -u_0 \nu_\Omega)\, \d\H^{N-1} \xrightarrow[k \to \infty]{} {-}\infty\,.
		\]
		This means that $\widehat\Phi$ is unbounded from below and contradicts the opposing hypothesis of the proposition. Thus, the right-hand inequality in \eqref{eq:nec-claim} is verified.

        If the left-hand inequality in \eqref{eq:nec-claim} fails for some measurable $A\Subset\Omega$, an entirely analogous reasoning with the functions $u_k\coleq-k\1_A$, which satisfy $|\D u_k|_{\p}(\Omega)=k\P_{\widetilde\p}(A)$, leads to a contradiction. Thus, the left-hand inequality in \eqref{eq:nec-claim} holds as well (and the middle inequality in \eqref{eq:nec-claim} is trivially valid anyway).
	\end{proof}

    The following statement formalizes that our ICs with constant $C<1$ are also sufficient for coercivity of $\widehat\Phi$. Clearly, the coercivity will eventually be combined with the semicontinuity of Theorem \ref{thm:lsc} in order to establish existence of minimizers.

    \begin{prop}[sufficiency of ICs with $C<1$ for coercivity] \label{case3_prop:anis_CS_coercivity} 
        We impose Assumptions \ref{assum:phi}\eqref{case:conv-lsc},\eqref{case:cont} and \ref{assum:mu} and consider $u_0\in\W^{1,1}(\R^N)$. If $(\mu_-,\mu_+)$ satisfies the $\p$-IC in $\Omega$ with constant $C<1$ and $(\mu_+,\mu_-)$ satisfies the $\widetilde\p$-IC in $\Omega$ with the same constant $C<1$, then $\widehat\Phi$ is coercive on $\BV(\Omega)$ \textup{(}in the sense of\/ $\widehat\Phi[w]\ge\nu\|w\|_{\BV(\Omega)}-L$ for all $w\in\BV(\Omega)$ with constants $\nu>0$ and $L\in\R$\textup{)}.
    \end{prop}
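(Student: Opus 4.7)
The plan is to reduce the coercivity estimate to the equivalent form of the ICs recorded as \eqref{2A_global} in Remark \ref{rem:IC_global}, combined with Poincar\'e's inequality \eqref{eq:Poinc_BV3}. Indeed, the two ICs in the hypothesis are precisely condition \eqref{item:1a} of Theorem \ref{thm:anis_equivalence_measure_TOGETHER} with constant $C<1$, so by the equivalences of that theorem together with Remark \ref{rem:IC_global} the inequality \eqref{2A_global} is at our disposal for every $v\in\BV(\Omega)$ with the same constant $C$.

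First, I would apply \eqref{2A_global} to $v=w$ and multiply by ${-}1$ in order to bound the signed-measure part of $\widehat\Phi[w]$ from below by ${-}C\bigl(|\D w|_\p(\Omega)+\int_{\partial\Omega}\p(\,\cdot\,,w\nu_\Omega)\,\d\H^{N-1}\bigr)$. Substituting this into the definition \eqref{eq:P-functional} of $\widehat\Phi$ and using the triangle inequality \eqref{anis_eq:subadd_phi} (applied to the linearly dependent vectors $w\nu_\Omega=(w{-}u_0)\nu_\Omega+u_0\nu_\Omega$) to split the unwanted term $\p(\,\cdot\,,w\nu_\Omega)$ would yield
\[
  \widehat\Phi[w]\ge(1{-}C)\Bigl(|\D w|_\p(\Omega)+\int_{\partial\Omega}\p(\,\cdot\,,(w{-}u_0)\nu_\Omega)\,\d\H^{N-1}\Bigr)-C\int_{\partial\Omega}\p(\,\cdot\,,u_0\nu_\Omega)\,\d\H^{N-1}.
\]

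Second, I would dominate $\p$-terms by Euclidean ones via the lower and upper bounds in \eqref{eq:comp-p0}, producing $(1{-}C)\alpha$- and $C\beta$-prefactors, and then apply Poincar\'e's inequality \eqref{eq:Poinc_BV3} together with the trivial bound $\int_{\partial\Omega}|w|\,\d\H^{N-1}\le\int_{\partial\Omega}|w{-}u_0|\,\d\H^{N-1}+\int_{\partial\Omega}|u_0|\,\d\H^{N-1}$ in order to dominate $\|w\|_{\BV(\Omega)}$ by the remaining positive contribution. Since the prefactor $1{-}C$ is strictly positive thanks to $C<1$, this produces a lower bound of the desired form $\widehat\Phi[w]\ge\nu\|w\|_{\BV(\Omega)}-L$, with $\nu$ proportional to $(1{-}C)\alpha$ and $L$ a finite constant depending only on $C$, $\alpha$, $\beta$, the geometry of $\Omega$, and the boundary trace of $u_0$.

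The main obstacle is modest and essentially a matter of careful bookkeeping: one must match the signs so that \eqref{2A_global}, which controls $v^+\mu_-{-}v^-\mu_+$ from above, correctly provides a lower bound on the opposite combination $w^-\mu_+{-}w^+\mu_-$ present in $\widehat\Phi$, and must use the sub-additivity of $\p$ on the boundary so that the $C$-weighted term $C\int_{\partial\Omega}\p(\,\cdot\,,w\nu_\Omega)\,\d\H^{N-1}$ is absorbed against the full penalization $\int_{\partial\Omega}\p(\,\cdot\,,(w{-}u_0)\nu_\Omega)\,\d\H^{N-1}$, leaving precisely the decisive residual factor $1{-}C>0$. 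This absorption mechanism is the structural reason for the strict inequality $C<1$ in the hypothesis and mirrors the threshold $C=1$ already shown to be necessary in Proposition \ref{prop:anis_CN_coercivity}. No deeper machinery beyond the IC characterization and classical Poincar\'e's inequality is required.
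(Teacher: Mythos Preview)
Your proposal is correct and follows essentially the same approach as the paper: both use the IC in the form \eqref{2A_global} to bound the measure term, the triangle inequality \eqref{anis_eq:subadd_phi} on the boundary to extract the residual factor $1{-}C$, and then \eqref{eq:comp-p0} together with Poincar\'e's inequality \eqref{eq:Poinc_BV3}. The only cosmetic difference is the direction of the boundary splitting---the paper lowers $\p(\,\cdot\,,(w{-}u_0)\nu_\Omega)$ to $\p(\,\cdot\,,w\nu_\Omega)-\p(\,\cdot\,,u_0\nu_\Omega)$ before absorbing, whereas you upper-bound $\p(\,\cdot\,,w\nu_\Omega)$ after---but this yields equivalent estimates.
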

 
	\begin{proof} 
		We here exploit the ICs in the convenient form of \eqref{2A_global} and make use of \eqref{eq:comp-p0} and \eqref{anis_eq:subadd_phi}. In this way, for arbitrary $w \in \BV(\Omega)$,  we estimate
		\begin{align*} 
			\widehat\Phi[w]
			&=|\D w|_{\p}(\Omega)+\int_{\partial \Omega}\p(\,\cdot\,,(w{-}u_0)\nu_\Omega)\, \d\H^{N-1} - \int_{\Omega}w^+\,\d\mu_- + \int_{\Omega}w^-\,\d\mu_+ \\
			& \geq  |\D w|_{\p}(\Omega) + \int_{\partial \Omega} \big[ \p(\,\cdot\,, w \nu_\Omega) - \p(\,\cdot\,, u_0 \nu_\Omega) \big] \,\d\H^{N-1} 
			-C \left( |\D w|_{\p}(\Omega) + \int_{\partial \Omega}\p(\,\cdot\,, w \nu_\Omega)\,\d\H^{N-1}  \right) \\
			& = (1-C) \left( |\D w|_{\p}(\Omega) + \int_{\partial \Omega}\p(\,\cdot\,, w \nu_\Omega)\,\d\H^{N-1}  \right) 
			- \int_{\partial \Omega} \p(\,\cdot\,, u_0 \nu_\Omega)\,\d\H^{N-1}\\
			& \geq \alpha (1-C) \left( |\D w|(\Omega) + \int_{\partial \Omega}|w|\,\d\H^{N-1}  \right) 
			- \beta \int_{\partial \Omega} |u_0| \,\d\H^{N-1}\,.
		\end{align*}
        The claim follows by estimating the first term in the last line
        from below via the Poincar\'e inequality \eqref{eq:Poinc_BV3}.
	\end{proof}

	\subsection{Existence of minimizers} \label{subsec:exist}

    At this stage, we provide a proof of Theorem \ref{thm:exist}. In fact, with Theorem \ref{thm:lsc} and Proposition \ref{case3_prop:anis_CS_coercivity} at hand, existence in the non-extreme cases is a routine consequence of the direct method:
 
	\begin{proof}[Proof of Theorem \ref{thm:exist} in case $C<1$]
		By Proposition \ref{case3_prop:anis_CS_coercivity}, the functional $\widehat\Phi$ is coercive on $\BV(\Omega)$, and thus every minimizing sequence $(u_k)_k$ for $\widehat\Phi$ in $\BV(\Omega)$ has a subsequence which converges in $\L^1(\Omega)$ to some $u \in \BV(\Omega)$. By the lower semicontinuity of Theorem \ref{thm:lsc}, we then obtain
		\[
          \widehat{\Phi}[u]
          \le\lim_{k\to\infty}\widehat\Phi[u_k]
          =\inf_{w\in\BV(\Omega)}\widehat\Phi[w]\,.
        \]
		Thus, $u$ is a minimizer of $\widehat\Phi$, and the proof is complete.
    \end{proof}

    Finally, we turn to the more interesting extreme case of Theorem \ref{thm:exist}, which will be covered with the help of the following auxiliary lemma.

	\begin{lem} \label{lem:anis_cut_off_min_sign}
		We impose Assumptions \ref{assum:phi}\eqref{case:cont} and \ref{assum:mu} and consider $u_0\in\W^{1,1}(\R^N)\cap\L^\infty(\R^N)$. If $(\mu_-,\mu_+)$ satisfies the $\p$-IC in $\Omega$ and $(\mu_+,\mu_-)$ satisfies the $\widetilde\p$-IC in $\Omega$ with constant $1$, 
		then we have
		\[
			\widehat\Phi[w^M] \leq \widehat\Phi[w] \quad \text{ for all } w \in \BV(\Omega) \text{ and all } M \geq \|u_0\|_{\L^{\infty}(\R^N)}\,.
		\]
    \end{lem}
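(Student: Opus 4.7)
The plan is to estimate $\widehat\Phi[w]-\widehat\Phi[w^M]$ from below by cleverly splitting $w-w^M$ into its positive and negative parts and then applying the IC inequalities \eqref{2A} of Theorem \ref{thm:anis_equivalence_measure_TOGETHER} in their sharp form with $C=1$. The first crucial observation is that $M\ge\|u_0\|_{\L^\infty(\R^N)}$ forces $(u_0)^M=u_0$, so that truncation commutes with the boundary extension: $\overline{w^M}=\overline{w}^M$ in $\BV(\R^N)$ and consequently $\overline{w}-\overline{w^M}=(w-w^M)\1_\Omega$ is supported in $\overline\Omega$. This already means that the $\p$-anisotropic part $|\D\overline{w}|_\p(\overline\Omega)-|\D\overline{w^M}|_\p(\overline\Omega)$ equals $|\D(\overline{w}-\overline{w^M})|_\p(\overline\Omega)$ by additivity on truncations (Lemma \ref{lem:cut_M}\eqref{item:cut_M_ii} applied to $\overline{w}$).

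Next I would set $f\coleq w_+-(w_+)^M\ge0$ and $g\coleq w_--(w_-)^M\ge0$ and verify that $f=(w-w^M)_+$ and $g=(w-w^M)_-$, so that $f$ and $g$ have disjoint supports. Using Lemma \ref{anis_lem:add_parts} one then rewrites
\[
  |\D(w-w^M)|_\p(\Omega)=|\D f|_\p(\Omega)+|\D g|_{\widetilde\p}(\Omega)\,,
\]
and by disjointness of the traces of $f,g$ on $\partial\Omega$ one also obtains
\[
  \int_{\partial\Omega}\p(\,\cdot\,,(w{-}w^M)\nu_\Omega)\,\d\H^{N-1}
  =\int_{\partial\Omega}\p(\,\cdot\,,f\nu_\Omega)\,\d\H^{N-1}
  +\int_{\partial\Omega}\widetilde\p(\,\cdot\,,g\nu_\Omega)\,\d\H^{N-1}\,,
\]
since $\p(\,\cdot\,,{-}g\nu_\Omega)=\widetilde\p(\,\cdot\,,g\nu_\Omega)$. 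Combining these formulas yields an exact expression for $\TV_\p^{u_0}[w]-\TV_\p^{u_0}[w^M]$ in terms of $f$ and $g$.

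For the measure terms I would combine Lemma \ref{lem:dec_upper_limit} (decomposition into positive/negative parts) with Remark \ref{rem:additivity_upper_lower_limit} (applied to the non-negative functions $w_+$ and $w_-$) to get the $\H^{N-1}$-\ae{} identities
\[
  w^+-(w^M)^+=f^+-g^-
  \qq\text{and}\qq
  w^--(w^M)^-=f^--g^+\,,
\]
which after integration give
\[
  \int_\Omega\big(w^-{-}(w^M)^-\big)\,\d\mu_+-\int_\Omega\big(w^+{-}(w^M)^+\big)\,\d\mu_-
  =\bigg[\int_\Omega f^-\,\d\mu_+-\int_\Omega f^+\,\d\mu_-\bigg]+\bigg[\int_\Omega g^-\,\d\mu_--\int_\Omega g^+\,\d\mu_+\bigg]\,.
\]

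At this stage I would apply \eqref{2A} with $C=1$ separately to the non-negative functions $f$ and $g$: the right-hand inequality applied to $f$, together with the pair $(\mu_-,\mu_+)$ satisfying the $\p$-IC, yields a lower bound of ${-}|\D f|_\p(\Omega)-\int_{\partial\Omega}\p(\,\cdot\,,f\nu_\Omega)\,\d\H^{N-1}$ for the first bracket, while the left-hand inequality applied to $g$, together with the pair $(\mu_+,\mu_-)$ satisfying the $\widetilde\p$-IC, yields a lower bound of ${-}|\D g|_{\widetilde\p}(\Omega)-\int_{\partial\Omega}\widetilde\p(\,\cdot\,,g\nu_\Omega)\,\d\H^{N-1}$ for the second bracket. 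These two lower bounds add up to \emph{exactly} the negative of $\TV_\p^{u_0}[w]-\TV_\p^{u_0}[w^M]$ computed above, so everything cancels and $\widehat\Phi[w]-\widehat\Phi[w^M]\ge0$. The main subtleties to verify carefully will be the identities $(w-w^M)_\pm=w_\pm-(w_\pm)^M$ (which underpins the additive splitting) and the disjoint-support argument on $\partial\Omega$ that separates the boundary integrand into a $\p$-part and a $\widetilde\p$-part; both are elementary but deserve a careful case-distinction in the trace sense.
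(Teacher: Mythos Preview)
Your argument is correct and is essentially the same proof as the paper's, just unpacked: the paper applies Lemma \ref{lem:WPhione-cut-off} (with $c_M=0$ since $(u_0)^M=u_0$) to $\ol{w}_+$ and its mirrored analogue to $\ol{w}_-$, and then combines via Lemma \ref{lem:decomp-Phi}, while you carry out these two applications of the IC directly on $f=(w{-}w^M)_+$ and $g=(w{-}w^M)_-$. The ingredients --- additivity of $|\D\cdot|_\p$ on truncations, the $\H^{N-1}$-\ae{} identities from Lemma \ref{lem:dec_upper_limit} and Remark \ref{rem:additivity_upper_lower_limit}, and the IC in form \eqref{2A} with $C=1$ --- are identical.
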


    \begin{proof}
      We first record $(\ol{w}_+)^M=\big(\ol{w}^M\big)_+=\big(\,\ol{w^M}\,\big)_+$, where the second equality is based on the bound $M \geq \|u_0\|_{\L^{\infty}(\R^N)}$. Moreover, this bound also means $(u_0)^M=u_0$. Therefore, Lemma \ref{lem:WPhione-cut-off} applies with $c_M=0$ and yields
      \[
        \WPhione[\ol{w}_+]\ge\WPhione\big[(\ol{w}_+)^M\big]
        =\WPhione\big[\big(\,\ol{w^M}\,\big)_+\big]\,.
      \]
      Applying this outcome with $({-}w,\widetilde\p,\mu_+,\mu_-)$ in place of $(w,\p,\mu_-,\mu_+)$, we also get
      \[
        \WPhitwo[\ol{w}_-]\ge\WPhitwo\big[\big(\,\ol{w^M}\,\big)_-\big]\,.
      \]
      Relying on Lemma \ref{lem:decomp-Phi} we may then add up the two inequalities obtained to reach the claim of the lemma.
    \end{proof}

    \begin{proof}[Proof of Theorem \ref{thm:exist} in case $C=1$]
        We work with $u_0\in\W^{1,1}(\R^N)\cap\L^\infty(\R^N)$ and a minimizing sequence $(u_k)_k$ for $\widehat\Phi$ in $\BV(\Omega)$. If we fix $M \coleq \|u_0\|_{\L^{\infty}(\R^N)}$, the estimate $\widehat\Phi\big[u_k^M\big] \leq \widehat\Phi[u_k]$ of Lemma \ref{lem:anis_cut_off_min_sign} applies and guarantees that $\big(u_k^M\big)_k$ is still a minimizing sequence for $\widehat\Phi$. Possibly passing to a subsequence, we may suppose
        \[
          \sup_{k\in\N}\widehat\Phi\big[u_k^M\big]<\infty\,.
        \]
		Moreover, since we have $\big\|u_k^M\big\|_{\L^\infty(\Omega)}\leq M$ and $\mu_\pm$ are finite measures, the terms
        $\int_\Omega\big(u_k^M\big)^\pm\,\d\mu_\mp$ are $k$-uniformly bounded, and thus we can deduce even
        \[
          \sup_{k\in\N}\TV_\p^{u_0}\big[u_k^M\big]<\infty\,.
        \]
        Employing \eqref{eq:comp-p0}, we estimate
        \[
          \TV_\p^{u_0}\big[u_k^M\big]
          \ge\alpha\TV^{u_0}\big[u_k^M\big]
          \ge\alpha\big|\D u_k^M\big|(\Omega)\,,
        \]
        and then we use the bound $\big\|u_k^M\big\|_{\L^\infty(\Omega)}\leq M$
        once more to infer the boundedness of $\big(u_k^M\big)_k$ in
        $\BV(\Omega)$. At this stage, using Theorem \ref{thm:lsc} in the same
        way as in the case $C<1$, we obtain a minimizer $u$ as limit of an
        $\L^1(\Omega)$-convergent subsequence of $\big(u_k^M\big)_k$. From
        $\big\|u_k^M\big\|_{\L^\infty(\Omega)}\leq M$ we deduce the claimed
        bound $\|u\|_{\L^\infty(\Omega)}\leq M$.
    \end{proof}

\section{Existence of recovery sequences}\label{sec:recovery}

We recall once again that $N\in\N$ is arbitrary and $\Omega\subseteq\R^N$ denotes a bounded open set with Lipschitz boundary and that, with Assumptions \ref{assum:phi} and \ref{assum:mu} in force and a given $u_0\in\W^{1,1}(\R^N)$, the functionals 
\[
  \Phi[w]\coleq\int_{\Omega}\p(\,\cdot\,,\nabla w)\dx
  +\int_\Omega w^\ast\,\d(\mu_+{-}\mu_-)
  \qq\text{for }w\in\W^{1,1}_{u_0}(\Omega)
\]
and
\[
  \widehat\Phi[w]
  \coleq|\D w|_\p(\Omega)
  +\int_{\partial \Omega}\p(\,\cdot\,,(w{-}u_0)\nu_\Omega)\,\d\H^{N-1}
  +\int_\Omega w^-\,\d\mu_+
  -\int_\Omega w^+\,\d\mu_-
  \qq\text{for }w\in\BV(\Omega)
\]
are well-defined. Moreover, we record that $\widehat\Phi$ is an extension of $\Phi$ in the sense of $\widehat\Phi[w]=\Phi[w]$ for $w\in\W^{1,1}_{u_0}(\Omega)$.

The aim of this section is proving Theorem \ref{thm:recovery}, which crucially relies on the additional assumption $\mu_+\perp\mu_-$ and then recovers the value $\widehat\Phi[u]$ even at an arbitrary $u\in\BV(\Omega)$ from the values of $\Phi$ on $\W^{1,1}_{u_0}(\Omega)$. In fact, a slightly weaker statement with recovery from the values of $\widehat\Phi$ on all of $\W^{1,1}(\Omega)$ is already at hand by Proposition \ref{prop:conv-wk} and --- as said earlier --- to some extent resembles \cite[Lemma 4.1]{LeoCom24}. Thus, we start by recasting Proposition \ref{prop:conv-wk} in only slightly modified form and recall for this purpose that $\ol{w}=w\1_\Omega+u_0\1_{\R^N\setminus\ol{\Omega}}\in\BV(\R^N)$ denotes the extension of $w\in\BV(\Omega)$ which uses the values of $u_0$ outside $\ol{\Omega}$.

\begin{prop}[recovery sequences with unconstrained boundary values]\label{prop:recovery-BV-W}
  We impose Assumptions \ref{assum:phi}\eqref{case:cont} and \ref{assum:mu} with additionally $\mu_+\perp\mu_-$. Then, for every $u\in\BV(\Omega)$, there exists a sequence $(w_k)_k$ in $\W^{1,1}(\Omega)$ such that $(w_k)_k$ converges to $u$ strictly in $\BV(\Omega)$, in particular also $(\ol{w_k})_k$ converges to $\ol{u}$ strictly in $\BV(\R^N)$, and such that we have
  \begin{gather}
    \lim_{k\to\infty}|\D w_k|_\p(\Omega)
    =|\D u|_\p(\Omega)\,,
    \label{eq:recovery-BV-W-p-strict}\\
    \lim_{k\to\infty}\int_{\partial\Omega}\p(\,\cdot\,,(w_k{-}u_0)\nu_\Omega)\,\d\H^{N-1}
    =\int_{\partial\Omega}\p(\,\cdot\,,(u{-}u_0)\nu_\Omega)\,\d\H^{N-1}
    \qq\text{for each }u_0\in\W^{1,1}(\R^N)\,,
    \label{eq:recovery-BV-W-traces}\\
    \lim_{k\to\infty}\int_\Omega w_k^\ast\,\d\mu_- 
    = \int_\Omega u^+ \,\d\mu_-\,,
    \qq\qq
    \lim_{k\to\infty}\int_\Omega w_k^\ast\,\d\mu_+
    = \int_\Omega u^- \,\d\mu_+\,.
    \label{eq:recovery-BV-W-mu}
  \end{gather}
  In particular, it holds
  \[
    \lim_{k\to\infty}\widehat\Phi[w_k]=\widehat\Phi[u]\,.
  \]
\end{prop}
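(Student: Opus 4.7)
The strategy is to apply Proposition \ref{prop:conv-wk} directly on $U = \Omega$, and then derive the additional convergences \eqref{eq:recovery-BV-W-p-strict}, \eqref{eq:recovery-BV-W-traces}, and the $\BV(\R^N)$-strict convergence of the extensions as consequences of standard tools (Reshetnyak continuity, continuity of the boundary trace operator, decomposition of $\BV$ functions across $\partial\Omega$). Since the hypotheses of Proposition \ref{prop:conv-wk} are met for $U=\Omega$ --- Assumption \ref{assum:mu} holds on $\Omega$ and $\mu_+\perp\mu_-$ --- we obtain a sequence $(w_k)_k$ in $\W^{1,1}(\Omega)$ converging to $u$ strictly in $\BV(\Omega)$ and satisfying precisely \eqref{eq:recovery-BV-W-mu}.

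Once strict convergence $w_k\to u$ in $\BV(\Omega)$ is at hand, claim \eqref{eq:recovery-BV-W-p-strict} is immediate from the Reshetnyak continuity theorem (Theorem \ref{anis_thm:Reshetnyak_cont}), which applies thanks to Assumption \ref{assum:phi}\eqref{case:cont} and the upper bound in \eqref{eq:comp-p0}. For claim \eqref{eq:recovery-BV-W-traces}, the continuity of the boundary trace operator (Theorem \ref{thm:trace_cont}) yields $\mathrm{T}_{\partial\Omega}w_k\to\mathrm{T}_{\partial\Omega}u$ in $\L^1(\partial\Omega;\H^{N-1})$, and then the bound $\p(\,\cdot\,,(w_k{-}u_0)\nu_\Omega)\le\beta|w_k-u_0|$ together with continuity of $\p$ in $\xi$ permits passing to the limit under the integral by a standard dominated convergence argument along a subsequence (and, to upgrade to full convergence, by the usual subsubsequence principle).

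For the strict convergence of $(\ol{w_k})_k$ to $\ol{u}$ in $\BV(\R^N)$, note that $\L^1(\R^N)$-convergence of $\ol{w_k}$ to $\ol{u}$ is clear from $w_k\to u$ in $\L^1(\Omega)$ and the agreement of $\ol{w_k}$ with $u_0$ outside $\ol\Omega$. Since $w_k\in\W^{1,1}(\Omega)$ has exterior trace from $\R^N\setminus\ol\Omega$ equal to $u_0|_{\partial\Omega}$, Theorem \ref{thm:dec_BV} (applied with $E=\Omega$) gives the precise decomposition
\[
  |\D\ol{w_k}|(\R^N)
  =|\D w_k|(\Omega)+|\D u_0|(\R^N{\setminus}\ol\Omega)+\int_{\partial\Omega}|\mathrm{T}_{\partial\Omega}w_k-\mathrm{T}_{\partial\Omega}u_0|\,\d\H^{N-1}\,,
\]
and an analogous formula for $\ol u$. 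Strict convergence $|\D w_k|(\Omega)\to|\D u|(\Omega)$, combined with $\L^1(\partial\Omega;\H^{N-1})$-convergence of the traces from Theorem \ref{thm:trace_cont}, then yields $|\D\ol{w_k}|(\R^N)\to|\D\ol u|(\R^N)$, i.\@e.\@ strict convergence in $\BV(\R^N)$.

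The final conclusion $\widehat\Phi[w_k]\to\widehat\Phi[u]$ follows by summing the already established convergences of each of the four terms of $\widehat\Phi$, noting that for $w_k\in\W^{1,1}(\Omega)$ one has $w_k^-=w_k^+=w_k^\ast$ $\H^{N-1}$-\ae{} on $\Omega$ and thus $\mu_\pm$-\ae. The main conceptual step of the entire argument is the use of $\mu_+\perp\mu_-$ inside Proposition \ref{prop:conv-wk} to handle the two distinct representatives $u^+$ and $u^-$ appearing in the two measure terms simultaneously; the remaining passages are routine, with the only technical care needed in combining strict convergence on $\Omega$ with trace continuity to assemble strict convergence on the whole space $\R^N$.
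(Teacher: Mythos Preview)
Your proposal is correct and follows essentially the same approach as the paper: invoke Proposition~\ref{prop:conv-wk} on $U=\Omega$ to get strict convergence and \eqref{eq:recovery-BV-W-mu}, then use Reshetnyak continuity for \eqref{eq:recovery-BV-W-p-strict}, trace continuity for \eqref{eq:recovery-BV-W-traces} and for the strict convergence of the extensions, and finally assemble the terms of $\widehat\Phi$. Your write-up is slightly more explicit (spelling out the decomposition of $|\D\ol{w_k}|(\R^N)$ via Theorem~\ref{thm:dec_BV}), but the substance matches the paper's argument line for line.
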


\begin{proof}
  Proposition \ref{prop:conv-wk} provides a sequence $(w_k)_k$ in $\W^{1,1}(\Omega)$ which converges to $u$ strictly in $\BV(\Omega)$ and satisfies \eqref{eq:recovery-BV-W-mu}. Since the strict convergence implies $\p$-strict convergence by Theorem \ref{anis_thm:Reshetnyak_cont}, we also have \eqref{eq:recovery-BV-W-p-strict}. Moreover, since the strict convergence in $\BV(\Omega)$ induces convergence of the traces in $\L^1(\partial\Omega;\H^{N-1})$ by \cite[Theorem 3.88]{AFP00} and since the upper bound in \eqref{eq:comp-p0} is available, we deduce \eqref{eq:recovery-BV-W-traces}. In addition, with the convergence of traces at hand, also the strict convergence of $(\ol{w_k})_k$ to $\ol{u}$ comes out. The final convergence
  $\lim_{k\to\infty}\widehat\Phi[w_k]=\widehat\Phi[u]$ follows by writing out $\widehat\Phi$ and using
  \eqref{eq:recovery-BV-W-p-strict}, \eqref{eq:recovery-BV-W-traces}, \eqref{eq:recovery-BV-W-mu} for the single terms.
\end{proof}

In order to improve on the situation of Proposition \ref{prop:recovery-BV-W} and regain prescribed boundary values in the trace sense, we next aim at approximating an arbitrary $w\in\W^{1,1}(\Omega)$ by a sequence $(v_k)_k$ in $\W^{1,1}_{u_0}(\Omega)$ such that all relevant terms converge. In this regard, the technical handling of the $\mu_\pm$-terms seems a bit subtle, since even with $\mu_\pm$-\ae{} convergence $v_k^\ast\to w^\ast$ at hand in general cases with $|\D\ol{w}|(\partial\Omega)>0$ it does not seem straightforward to deduce the convergence of the $\mu_\pm$-integrals on $\Omega$ by e.\@g.\@ the application of a convergence theorem or an IC. However, if we arrange for a sequence $(v_k)_k$ bounded in $\L^\infty(\Omega)$, we easily get through with the dominated convergence theorem. The next lemma actually manages to reduce to this favorable situation at the cost of requiring an $\L^\infty$ bound for the boundary datum $u_0$ only.

\begin{lem}[from unconstrained boundary values to $\L^\infty$ boundary values]\label{lem:recovery-W-Wu0}
  We impose Assumptions \ref{assum:phi}\eqref{case:cont} and \ref{assum:mu}. Then, for every $u_0\in\W^{1,1}(\R^N)\cap\L^\infty(\R^N)$ and every $w\in\W^{1,1}(\Omega)$, there exists a sequence $(v_k)_k$ in $\W^{1,1}_{u_0}(\Omega)$ such that $(\ol{v_k})_k$ converges to $\ol{w}$ strictly in $\BV(\R^N)$ and such that we have
  \begin{gather}
    \lim_{k\to\infty}\int_\Omega\p(\,\cdot\,,\nabla v_k)\dx
    =\int_\Omega\p(\,\cdot\,,\nabla w)\dx
    +\int_{\partial\Omega}\p(\,\cdot\,,(w{-}u_0)\nu_\Omega)\,\d\H^{N-1}\,,
    \label{eq:recovery-W-Wu0-p-strict}\\
    \lim_{k\to\infty}\int_\Omega v_k^\ast\,\d\mu_\pm
    =\int_\Omega w^\ast \,\d\mu_\pm \,.
    \label{eq:recovery-W-Wu0-mu}
  \end{gather}
  In particular, there hold\/ $\widehat\Phi[v_k]=\Phi[v_k]$ and
  \[
    \lim_{k\to\infty}\Phi[v_k]=\widehat\Phi[w]\,.
  \]
\end{lem}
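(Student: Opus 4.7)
The plan is to realize $v_k$ as an explicit interpolation between $w$ and $u_0$ in a thin transition strip near $\partial\Omega$. Setting $d(x)\coleq\dist(x,\partial\Omega)$ and fixing the piecewise linear cut-off $\chi_\eps\colon[0,\infty)\to[0,1]$ with $\chi_\eps(0)=0$, $\chi_\eps(s)=s/\eps$ on $[0,\eps]$, and $\chi_\eps\equiv1$ on $[\eps,\infty)$, I set
\[
  v_\eps\coleq\chi_\eps(d)\,w+(1-\chi_\eps(d))\,u_0\in\W^{1,1}(\Omega)\,.
\]
Since $\chi_\eps$ vanishes on $\partial\Omega$, it holds $v_\eps-u_0\in\W^{1,1}_0(\Omega)$, so $v_\eps\in\W^{1,1}_{u_0}(\Omega)$; and $\ol{v_\eps}\to\ol w$ in $\L^1(\R^N)$ is immediate from $\|v_\eps-w\|_{\L^1(\Omega)}\le\int_{S_\eps}|u_0-w|\dx\to0$, where $S_\eps\coleq\{0<d<\eps\}$.

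The core step is to upgrade this $\L^1$ convergence to strict convergence $\ol{v_\eps}\to\ol w$ in $\BV(\R^N)$. The liminf bound is free by lower semicontinuity of the total variation. For the upper bound I rely on the pointwise estimate
\[
  |\nabla v_\eps|
  \le\chi_\eps(d)\,|\nabla w|+(1-\chi_\eps(d))\,|\nabla u_0|+{\ts\frac1\eps}\,|w-u_0|\,\1_{S_\eps}
  \qq\text{a.\@e.\@ in }\Omega\,.
\]
The first two contributions pass to $\int_\Omega|\nabla w|\dx$ and $0$, respectively, by monotone and dominated convergence. By Theorem \ref{thm:coarea_Lip} applied to $d$ (which has $|\nabla d|=1$ a.\@e.\@ in a collar of $\partial\Omega$ since $\Omega$ has Lipschitz boundary) the last contribution equals the average ${\ts\frac1\eps}\int_0^\eps g(t)\dt$ of $g(t)\coleq\int_{\{d=t\}}|w-u_0|\,\d\H^{N-1}$, and the standard trace-continuity for $\W^{1,1}$ functions on Lipschitz domains gives $g(t)\to\int_{\partial\Omega}|w-u_0|\,\d\H^{N-1}$ as $t\to0^+$. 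Combining with the decomposition $|\D\ol w|(\R^N)=\int_\Omega|\nabla w|\dx+\int_{\partial\Omega}|w-u_0|\,\d\H^{N-1}+\int_{\R^N\setminus\ol\Omega}|\nabla u_0|\dx$ and the analogous formula for $|\D\ol{v_\eps}|(\R^N)$ (whose $\partial\Omega$-term vanishes because $v_\eps$ has trace $u_0$) then produces the matching upper bound and hence strict $\BV(\R^N)$-convergence.

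At this stage I would avoid any direct analysis of the anisotropic boundary term and instead invoke Reshetnyak's continuity Theorem \ref{anis_thm:Reshetnyak_cont} to upgrade the strict convergence to $|\D\ol{v_\eps}|_\p(\R^N)\to|\D\ol w|_\p(\R^N)$. Decomposing both sides into their contributions over $\Omega$, $\partial\Omega$, and $\R^N\setminus\ol\Omega$ reads off \eqref{eq:recovery-W-Wu0-p-strict}. For \eqref{eq:recovery-W-Wu0-mu} I use that $v_\eps\in\W^{1,1}(\Omega)$ forces $v_\eps^\ast=v_\eps$ $\H^{N-1}$-\ae, combined with the $\H^{N-1}$-\ae{} additivity $(f{+}g)^\ast=f^\ast{+}g^\ast$ and the continuity of $\chi_\eps(d)$, to obtain $v_\eps^\ast-w^\ast=(1-\chi_\eps(d))(u_0^\ast-w^\ast)$ $\H^{N-1}$-\ae, hence $\mu_\pm$-\ae{} on $\Omega$ by Assumption \ref{assum:mu}. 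The right-hand side is supported in $S_\eps$, vanishes pointwise $\mu_\pm$-\ae, and is dominated by $|u_0^\ast|+|w^\ast|\in\L^1(\Omega;\mu_\pm)$ — the first summand lying in $\L^\infty$ thanks to $u_0\in\L^\infty(\Omega)$ and $\mu_\pm$ being finite, the second being integrable by \eqref{eq:finite-integral} applied to $w_+$ and $w_-$. Dominated convergence yields \eqref{eq:recovery-W-Wu0-mu}, and the trailing equalities $\widehat\Phi[v_k]=\Phi[v_k]$ and $\Phi[v_k]\to\widehat\Phi[w]$ drop out by plugging the preceding convergences into the defining formulas of $\Phi$ and $\widehat\Phi$.

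The main subtlety I foresee is the trace-continuity claim $g(t)\to\int_{\partial\Omega}|w-u_0|\,\d\H^{N-1}$ as $t\to0^+$, which on a Lipschitz domain is most cleanly verified by flattening $\partial\Omega$ via local bi-Lipschitz charts and invoking the $\L^1$-convergence of parallel-plane restrictions that underlies the very definition of the trace of a $\W^{1,1}$ function.
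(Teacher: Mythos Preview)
Your construction is genuinely different from the paper's proof, which never builds an explicit transition layer. Instead, the paper first truncates $w$ to $w^M$ with $M\ge\|u_0\|_{\L^\infty}$, then applies the black-box strict approximation result Theorem~\ref{strict_int_appr_BV} to produce $u_\ell\in\W^{1,1}_{u_0}(\Omega)$ with $\ol{u_\ell}\to\ol{w^M}$ strictly, and truncates once more at level $M$ so that the approximations are equibounded. The $\p$-convergence then comes from Reshetnyak, while the $\mu_\pm$-convergence comes from Theorem~\ref{strict_Hausdorff_repr} (Lahti's $\H^{N-1}$-a.e.\ pointwise convergence along strict sequences) combined with dominated convergence. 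A second diagonal step sends $M\to\infty$ via Lemma~\ref{lem:int-uM-to-int-u}. Your route bypasses both Theorem~\ref{strict_int_appr_BV} and Theorem~\ref{strict_Hausdorff_repr}: the interpolation formula gives you the explicit identity $v_\eps^\ast-w^\ast=(1-\chi_\eps(d))(u_0^\ast-w^\ast)$, so dominated convergence follows immediately from $|u_0^\ast|+|w^\ast|\in\L^1(\Omega;\mu_\pm)$, and no fine pointwise convergence theorem is needed. This is a real conceptual gain.

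However, the step you flag as ``the main subtlety'' is in fact a genuine gap as written. Bi-Lipschitz flattening of $\partial\Omega$ does \emph{not} take the Euclidean distance $d$ to the vertical coordinate $y_N$: under a graph chart $(y',y_N)\mapsto(y',y_N-\phi(y'))$ one only gets $c_1 y_N\le d\le c_2 y_N$ with constants depending on the Lipschitz constant of $\phi$, so the image of $S_\eps$ is sandwiched between $\{0<y_N<\eps/c_2\}$ and $\{0<y_N<\eps/c_1\}$. Running the parallel-plane trace argument through this produces the limit $\int|T\tilde f(y',0)|\,\d y'$ times a wrong constant, rather than $\int_{\partial\Omega}|Tf|\,\d\H^{N-1}=\int|T\tilde f(y',0)|\sqrt{1+|\nabla\phi|^2}\,\d y'$; in particular the required $\limsup$ inequality does not follow. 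The claim $\frac1\eps\int_{S_\eps}|w-u_0|\,\dx\to\int_{\partial\Omega}|w-u_0|\,\d\H^{N-1}$ \emph{is} true for bounded Lipschitz domains, but it is the one-sided Minkowski-content identity with $\W^{1,1}$ weight, and it needs either an independent reference or a more careful argument --- for instance, replacing $d$ in the cut-off by a globally defined Lipschitz boundary-defining function $\rho$ built from local vertical distances via a partition of unity (so that the coarea slices are genuine Lipschitz graphs over $\partial\Omega$), or approximating $|w-u_0|$ by $\C(\ol\Omega)$ functions and invoking the Minkowski-content identity for continuous integrands. Once this is patched, your proof goes through.
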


\begin{proof}
  We work with the truncations $u^M \in \W^{1,1}(\Omega) \cap \L^{\infty}(\Omega)$ of Lemma \ref{lem:cut_M}.

  \textit{Step 1.} In this step we keep $M\geq\|u_0\|_{\L^\infty(\R^N)}$ fixed. Since this ensures $\ol{u^M}=\ol{u}^M$ for all $u\in\BV(\Omega)$, we stick to writing $\ol{u}^M$ for such terms in the sequel. We apply Theorem \ref{strict_int_appr_BV} to $w^M$ to obtain a sequence $(u_\ell)_\ell$ of approximations in $\W^{1,1}_{u_0}(\Omega)$ such that $(\overline{u_\ell})_\ell$ converges to $\ol{w}^M$ strictly in $\BV(\R^N)$. If we pass to $(u_\ell)^M$ with additional truncation at level $M$, in view of $\|u_0\|_{\L^\infty(\R^N)}\leq M$ we still have $(u_\ell)^M\in\W^{1,1}_{u_0}(\Omega)$. Moreover, since the limit satisfies $\|\ol{w}^M\|_{\L^\infty(\R^N)}\leq M$, it is a standard matter to check that also the convergence is preserved and thus $\big(\overline{u_\ell}^M\big)_\ell$ still converges to $\ol{w}^M$ strictly in $\BV(\R^N)$. By Theorem \ref{anis_thm:Reshetnyak_cont} the latter convergence is also $\p$-strict, and in view of $(u_\ell)^M\in\W^{1,1}_{u_0}(\Omega)$ and $\overline{u_\ell}^M=u_0=\ol{w}^M$ on $\R^N\setminus\ol{\Omega}$ it may be spelled out as
  \begin{equation}\label{eq:recovery-W-Wu0-p-strict-1}
    \lim_{\ell\to\infty}\int_\Omega\p\big(\,\cdot\,,\nabla(u_\ell)^M\big)\dx
    =\int_\Omega\p\big(\,\cdot\,,\nabla w^M\big)\dx
    +\int_{\partial\Omega}\p\big(\,\cdot\,,\big(w^M{-}u_0\big)\nu_\Omega\big)\,\d\H^{N-1}\,.
  \end{equation}
  Moreover, Theorem \ref{strict_Hausdorff_repr} enforces $\H^{N-1}$-\ae{} convergence $\big((u_\ell)^M\big)^\ast\to\big(w^M\big)^\ast$ in $\Omega$ for $\ell\to\infty$, and by \eqref{eq:negligible} this convergence remains true $\mu_\pm$-\ae{} as well. Since $(u_\ell)^M$ are equi-bounded by $M$, the dominated convergence theorem yields
  \begin{equation}\label{eq:recovery-W-Wu0-mu-1}
    \lim_{\ell\to\infty}\int_\Omega\big((u_\ell)^M\big)^\ast\,\d\mu_\pm
    =\int_\Omega\big(w^M\big)^\ast\,\d\mu_\pm\,.
  \end{equation}

  \textit{Step 2.} In this step we pass to the limit $M\to\infty$. By Lemma \ref{lem:cut_M}\eqref{item:cut_M_iii} and \eqref{item:cut_M_iv}, we have that $\ol{w}^M$ converge to $\ol{w}$ strictly in $\BV(\R^N)$ for $M\to\infty$ and also that
  \begin{equation}\label{eq:recovery-W-Wu0-p-strict-2}
    \lim_{M\to\infty}\int_\Omega\p\big(\,\cdot\,,\nabla w^M\big)\dx
    =\int_\Omega\p(\,\cdot\,,\nabla w)\dx
  \end{equation}
  holds. Via convergence of traces $w^M\to w$ in $\L^1(\partial\Omega;\H^{N-1})$ (which can be deduced either from strict continuity of the trace operator or elementarily) and via \eqref{eq:comp-p0}, we additionally arrive at
  \begin{equation}\label{eq:recovery-W-Wu0-traces-2}
    \lim_{M\to\infty}\int_{\partial\Omega}\p\big(\,\cdot\,,\big(w^M{-}u_0\big)\nu_\Omega\big)\,\d\H^{N-1}
    =\int_{\partial\Omega}\p(\,\cdot\,,(w{-}u_0)\nu_\Omega)\,\d\H^{N-1}\,.
  \end{equation}
  Moreover, from Lemma \ref{lem:int-uM-to-int-u} we have
  \begin{equation}\label{eq:recovery-W-Wu0-mu-2}
    \lim_{M\to\infty}\int_\Omega\big(w^M\big)^\ast\,\d\mu_\pm
    =\int_\Omega w^\ast\,\d\mu_\pm\,.
  \end{equation}

  Finally, we combine the convergences of Step 1 and Step 2: We fix any sequence $(M_k)_k$ such that $M_k\ge\|u_0\|_{\L^\infty(\R^N)}$ for all $k$ and $\lim_{k\to\infty}M_k=\infty$. Then, we set
  \[
    v_k\coleq(u_{\ell_k})^{M_k}\in\W^{1,1}_{u_0}(\Omega)\,,
  \]
  where for each $k$ we have chosen $\ell_k$ large enough for ensuring that $\ol{v_k}$ converge to $\ol{w}$ strictly in $\BV(\R^N)$, that on one hand \eqref{eq:recovery-W-Wu0-p-strict-1} and \eqref{eq:recovery-W-Wu0-p-strict-2}, \eqref{eq:recovery-W-Wu0-traces-2} induce \eqref{eq:recovery-W-Wu0-p-strict}, and that on the other hand \eqref{eq:recovery-W-Wu0-mu-1} and \eqref{eq:recovery-W-Wu0-mu-2} induce \eqref{eq:recovery-W-Wu0-mu}. The convergence $\lim_{k\to\infty}\Phi[v_k]=\widehat\Phi[w]$ follows by spelling out $\Phi$ and $\widehat\Phi$ and using \eqref{eq:recovery-W-Wu0-p-strict}, \eqref{eq:recovery-W-Wu0-mu} for the single terms.
\end{proof}

With Proposition \ref{prop:recovery-BV-W} and Lemma \ref{lem:recovery-W-Wu0} at hand, the proof of Theorem \ref{thm:recovery} is in reach. However, since Lemma \ref{lem:recovery-W-Wu0} requires boundedness of $u_0$, we still need to approximate a general boundary datum $u_0$ with bounded ones, and the next lemma will help in preserving all relevant properties of a recovery sequence in this process. Here, since we vary the boundary values, we switch to the more precise notation $\ol{w}^{u_0}$ instead of simply $\ol{w}$ for the extension $w\1_\Omega+u_0\1_{\R^N\setminus\ol{\Omega}}$ of $w\in\BV(\Omega)$ which uses the datum $u_0\in\W^{1,1}(\R^N)$.

\begin{lem}[varying the boundary values]\label{lem:approx-u0}
    We impose Assumption \ref{assum:phi}\eqref{case:conv-lsc} and suppose that $(u_{0,k})_k$ converges  to $u_0$ in $\W^{1,1}(\R^N)$. Then, for every sequence $(z_k)_k$ such that $z_k\in\W^{1,1}_{u_{0,k}}(\Omega)$ for all $k\in\N$, there exists a sequence $(u_k)_k$ in $\W^{1,1}_{u_0}(\Omega)$ such that $(u_k{-}z_k)_k$ converges to $0$ in $\W^{1,1}(\Omega)$, in particular also $(\ol{u_k}^{u_0}{-}\ol{z_k}^{u_{0,k}})_k$ converges to $0$ in $\W^{1,1}(\R^N)$, and such that we have
    \begin{equation}\label{eq:approx-u0}
      \lim_{k\to\infty}\bigg(\int_\Omega\p(\,\cdot\,,\nabla u_k)\dx-\int_\Omega\p(\,\cdot\,,\nabla z_k)\dx\bigg)=0
      \qq\text{and}\qq
      \lim_{k\to\infty}\bigg(\int_\Omega u_k^\ast\,\d\mu-\int_\Omega z_k^\ast\,\d\mu\bigg)=0
    \end{equation}
    whenever a non-negative Radon measure $\mu$ on $\Omega$ satisfies the isotropic IC in $\Omega$ with some constant $C\in{[0,\infty)}$. Specifically, if Assumption \ref{assum:mu} applies for non-negative Radon measures $\mu_+$ and $\mu_-$ on $\Omega$, then we have
    \[
      \lim_{k\to\infty}\big(\Phi[u_k]-\Phi[z_k]\big)=0\,.
    \]
\end{lem}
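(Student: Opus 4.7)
\textbf{Proposed proof plan for Lemma \ref{lem:approx-u0}.}
The natural ansatz is to set $u_k\coleq z_k+(u_0{-}u_{0,k})\restr_\Omega$, which automatically satisfies $u_k-u_0=z_k-u_{0,k}\in\W^{1,1}_0(\Omega)$ and thus $u_k\in\W^{1,1}_{u_0}(\Omega)$. With this choice, $u_k-z_k=u_0-u_{0,k}$ holds on $\Omega$ and converges to $0$ in $\W^{1,1}(\Omega)$ by hypothesis. Computing on $\Omega$, on $\partial\Omega$, and on $\R^N\setminus\ol{\Omega}$ separately, one sees that the extensions satisfy $\ol{u_k}^{u_0}-\ol{z_k}^{u_{0,k}}=u_0-u_{0,k}$ \ae{} on $\R^N$, which also converges to $0$ in $\W^{1,1}(\R^N)\hookrightarrow\BV(\R^N)$.

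For the first limit in \eqref{eq:approx-u0}, positive homogeneity together with the convexity hypothesis \eqref{case:conv-lsc} of Assumption \ref{assum:phi} yields the triangle inequality \eqref{anis_eq:subadd_phi}, which combined with the upper bound in \eqref{eq:comp-p0} gives the Lipschitz-type estimate $|\p(x,\xi)-\p(x,\eta)|\leq\beta|\xi-\eta|$ for all $x,\xi,\eta\in\R^N$. Integrating over $\Omega$ I then obtain
\[
  \bigg|\int_\Omega\p(\,\cdot\,,\nabla u_k)\dx-\int_\Omega\p(\,\cdot\,,\nabla z_k)\dx\bigg|
  \leq\beta\int_\Omega|\nabla(u_0{-}u_{0,k})|\dx\,,
\]
which vanishes as $k\to\infty$.

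The delicate point is the second limit in \eqref{eq:approx-u0}. Since $\mu$ satisfies the isotropic IC in $\Omega$ with constant $C$, by Proposition \ref{prop:admissible} it fulfills Assumption \ref{assum:mu}, and in the isotropic setting the characterization of type \eqref{2A} together with Remark \ref{rem:IC_global} (applied to $\mu_-\coleq\mu$, $\mu_+\coleq0$, $\p(x,\xi)=|\xi|$) furnishes, for the non-negative function $|u_0-u_{0,k}|\in\W^{1,1}(\Omega)\subseteq\BV(\Omega)$, the bound
\[
  \int_\Omega|u_0-u_{0,k}|^\ast\,\d\mu
  \leq C\bigg(\int_\Omega|\nabla(u_0{-}u_{0,k})|\dx+\int_{\partial\Omega}|u_0-u_{0,k}|\,\d\H^{N-1}\bigg)\,.
\]
The right-hand side vanishes because $u_{0,k}\to u_0$ in $\W^{1,1}(\R^N)$ and the trace operator is continuous from $\W^{1,1}(\R^N)$ into $\L^1(\partial\Omega;\H^{N-1})$. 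Since $u_k-z_k=u_0-u_{0,k}$ belongs to $\W^{1,1}(\Omega)$, additivity of the $\ast$-representative yields $u_k^\ast-z_k^\ast=(u_0-u_{0,k})^\ast$ $\H^{N-1}$-\ae{} on $\Omega$ and thus $\mu$-\ae{} by the vanishing property \eqref{eq:negligible} (which is also ensured by Proposition \ref{prop:admissible}), so the preceding estimate gives the second convergence in \eqref{eq:approx-u0}. For the concluding assertion, Proposition \ref{prop:admissible} guarantees that both $\mu_+$ and $\mu_-$ under Assumption \ref{assum:mu} satisfy the isotropic IC with some finite constant; applying \eqref{eq:approx-u0} to each and combining with the $\p$-integral convergence above yields $\Phi[u_k]-\Phi[z_k]\to0$, so the proof is complete. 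The principal technical step is the $\mu$-integral control, where the role of the IC is precisely to convert $\W^{1,1}$-convergence (plus trace convergence) into a quantitative estimate for $\mu$-integrals of $\BV$ functions.
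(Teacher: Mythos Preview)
Your proof is correct and follows essentially the same route as the paper: both define $u_k\coleq z_k+u_0-u_{0,k}$, control the $\p$-integral difference via the triangle inequality \eqref{anis_eq:subadd_phi} and the bound \eqref{eq:comp-p0}, and control the $\mu$-integral difference via the isotropic IC in the functional form of Theorem~\ref{thm:anis_equivalence_measure_TOGETHER}\eqref{item:2a}/Remark~\ref{rem:IC_global}. The only cosmetic difference is that you apply the IC estimate once to the non-negative function $|u_0-u_{0,k}|$ (implicitly using $|v^\ast|=|v|^\ast$ $\H^{N-1}$-\ae{} for $v\in\W^{1,1}$), whereas the paper applies it twice to $\pm(u_k-z_k)$ to obtain matching upper and lower bounds.
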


\begin{proof}
    We choose
    \[
      u_k\coleq z_k-u_{0,k}+u_0\in\W^{1,1}_{u_0}(\Omega)
    \]
    and observe
    $\|u_k{-}z_k\|_{\W^{1,1}(\Omega)}=\|u_{0,k}{-}u_0\|_{\W^{1,1}(\Omega)}$ and $\|\ol{u_k}^{u_0}{-}\ol{z_k}^{u_{0,k}}\|_{\W^{1,1}(\R^N)}=\|u_{0,k}{-}u_0\|_{\W^{1,1}(\R^N)}$. Thus, it is immediate that $(u_k{-}z_k)_k$ and $(\ol{u_k}^{u_0}{-}\ol{z_k}^{u_{0,k}})_k$ converge to $0$ in $\W^{1,1}(\Omega)$ and $\W^{1,1}(\R^N)$, respectively. The first convergence in \eqref{eq:approx-u0} can then be deduced with the help of \eqref{eq:comp-p0} and \eqref{anis_eq:subadd_phi}, since in fact these inequalities yield $\p(\,\cdot\,,\nabla u_k)-\p(\,\cdot\,,\nabla z_k)\le\p(\,\cdot\,,\nabla u_k{-}\nabla z_k)
    \le\beta|\nabla u_k{-}\nabla z_k|$ and an analogous lower estimate. For proving the second convergence in \eqref{eq:approx-u0}, we exploit the assumed isotropic IC for $\mu$ in the form of Remark \ref{rem:IC_global}. Indeed, by using \eqref{2A_global} from that remark with the isotropic integrand, with $(\mu,0)$ in place of $(\mu_-,\mu_+)$, and with $u_k{-}z_k\in\W^{1,1}(\Omega)$ in place of $v\in\BV(\Omega)$, we find
    \[
      \int_\Omega u_k^\ast\,\d\mu-\int_\Omega z_k^\ast\,\d\mu
      =\int_\Omega\big(u_k-z_k\big)^\ast\,\d\mu
      \le C\bigg(\int_\Omega|\nabla u_k-\nabla z_k|\dx
      +\int_{\partial\Omega}|u_{0,k}-u_0|\,\d\H^{N-1}\bigg)\,,
    \]
    where by the previous observations and the continuity of the trace operator the right-hand side vanishes in the limit $k\to\infty$. Since an application of \eqref{2A_global} with $z_k{-}u_k\in\W^{1,1}(\Omega)$ instead of $v\in\BV(\Omega)$ gives an analogous lower estimate, also the second convergence in \eqref{eq:approx-u0} follows.

    Finally, taking into account Proposition \ref{prop:admissible} we deduce from Assumption \ref{assum:mu} that an isotropic IC with a large constant $C$ holds for $\mu_+$ and $\mu_-$. Thus, under Assumption \ref{assum:mu} we have the convergences in \eqref{eq:approx-u0} and may combine them to infer $\lim_{k\to\infty}\big(\Phi[u_k]-\Phi[z_k]\big)=0$.
\end{proof}

With Proposition \ref{prop:recovery-BV-W} and Lemmas \ref{lem:recovery-W-Wu0}, \ref{lem:approx-u0} at hand, we are now ready to restate Theorem \ref{thm:recovery} in a slightly extended version and finally approach its proof. We emphasize that the decisive assumption $\mu_+\perp\mu_-$ enters only through the application of Proposition \ref{prop:recovery-BV-W} and can then be traced back to the proof of the earlier Proposition \ref{prop:conv-wk}.

\begin{thm}[existence of recovery sequences]\label{thm:rec_seq}
  We impose Assumptions \ref{assum:phi}\eqref{case:conv-lsc},\eqref{case:cont} and \ref{assum:mu} and additionally require $\mu_+\perp \mu_-$. Then, for every $u_0\in\W^{1,1}(\R^N)$ and every $u\in\BV(\Omega)$, there exists a recovery sequence $(u_k)_k$ in $\W^{1,1}_{u_0}(\Omega)$ such that $\ol{u_k}$ converges to $\ol{u}$ strictly in $\BV(\R^N)$ and such that we have
  \begin{gather}
    \lim_{k\to\infty}\int_\Omega\p(\,\cdot\,,\nabla u_k)\dx
    =|\D u|_\p(\Omega)+\int_{\partial\Omega}\p(\,\cdot\,,(u{-}u_0)\nu_\Omega)\,\d\H^{N-1}\,,
    \label{eq:recovery-BV-Wu0-p-strict}\\
    \lim_{k\to\infty}\int_\Omega u_k^\ast\,\d\mu_- 
    = \int_\Omega u^+\,\d\mu_-\,,
    \qq\qq
    \lim_{k\to\infty}\int_\Omega u_k^\ast\,\d\mu_+
    = \int_\Omega u^-\,\d\mu_+\,.
    \label{eq:recovery-BV-Wu0-mu}
  \end{gather}
  In particular, there hold\/ $\widehat\Phi[u_k]=\Phi[u_k]$ and
  \begin{equation} 
    \lim_{k\to\infty}\Phi[u_k]
    =\widehat\Phi[u]\,.
  \end{equation}
\end{thm}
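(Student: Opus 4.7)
The plan is a three-stage approximation combining the auxiliary results \ref{prop:recovery-BV-W}, \ref{lem:recovery-W-Wu0}, and \ref{lem:approx-u0} already at hand. First, lift $u\in\BV(\Omega)$ to $\W^{1,1}(\Omega)$ with free boundary values via Proposition \ref{prop:recovery-BV-W}; this delivers the essential asymmetric $\mu_\pm$-convergences and is the sole place where the hypothesis $\mu_+\perp\mu_-$ enters the argument. Next, prescribe bounded boundary values $u_{0,m}\coleq(u_0)^m\in\L^\infty(\R^N)$ via Lemma \ref{lem:recovery-W-Wu0}, whose applicability indeed requires $\L^\infty$ data. Finally, remove the truncation via Lemma \ref{lem:approx-u0} to return to the true boundary datum $u_0$. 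The three stages are assembled by a standard diagonal procedure.

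\textbf{Stages 1 and 2.} Applying Proposition \ref{prop:recovery-BV-W} yields a sequence $(w_j)_j$ in $\W^{1,1}(\Omega)$ such that $\ol{w_j}$ converges to $\ol u$ strictly in $\BV(\R^N)$, with the associated convergence of anisotropic energies through Reshetnyak continuity, and with
\[
  \lim_{j\to\infty}\int_\Omega w_j^\ast\,\d\mu_-=\int_\Omega u^+\,\d\mu_-
  \qq\text{and}\qq
  \lim_{j\to\infty}\int_\Omega w_j^\ast\,\d\mu_+=\int_\Omega u^-\,\d\mu_+\,.
\]
For each $m\in\N$, set $u_{0,m}\coleq(u_0)^m\in\W^{1,1}(\R^N)\cap\L^\infty(\R^N)$ and apply Lemma \ref{lem:recovery-W-Wu0} to $w_j\in\W^{1,1}(\Omega)$ with boundary datum $u_{0,m}$, obtaining $(v_{j,m,\ell})_\ell$ in $\W^{1,1}_{u_{0,m}}(\Omega)$ with $\ol{v_{j,m,\ell}}^{u_{0,m}}\to\ol{w_j}^{u_{0,m}}$ strictly in $\BV(\R^N)$, together with the convergences of $\p$-energy and of $\mu_\pm$-integrals claimed there (the latter approximating $w_j^\ast$ rather than an asymmetric representative, which is enough since $w_j\in\W^{1,1}$ has a single trace).

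\textbf{Stage 3, diagonalization, and main obstacle.} By Lemma \ref{lem:cut_M}\eqref{item:cut_M_iii} the datum $u_{0,m}$ converges to $u_0$ in $\W^{1,1}(\R^N)$ as $m\to\infty$. Hence Lemma \ref{lem:approx-u0} --- whose isotropic-IC hypothesis on $\mu_\pm$ is furnished by Proposition \ref{prop:admissible} via Assumption \ref{assum:mu} --- converts $v_{j,m,\ell}$ into $u_{j,m,\ell}\in\W^{1,1}_{u_0}(\Omega)$ differing from $v_{j,m,\ell}$ by a $\W^{1,1}$-perturbation vanishing as $m\to\infty$, uniformly in $\ell$, and the corresponding $\p$-energy and $\mu_\pm$-integral differences vanish likewise. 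A standard diagonal choice --- first $\ell=\ell(j,m)$ large for Stage 2, then $m=m(j)$ large for Stage 3, and finally $u_j\coleq u_{j,m(j),\ell(j)}$ --- yields the desired recovery sequence; the concluding identity $\Phi[u_j]\to\widehat\Phi[u]$ then follows by expanding both functionals term by term. The main technical obstacle is the \emph{simultaneous} control of the three asymmetric targets (the full $\p$-energy on $\ol\Omega$, the $\mu_-$-integral against $u^+$, and the $\mu_+$-integral against $u^-$) through the intermediate data $u_{0,m}$: in particular, the boundary integral $\int_{\partial\Omega}\p(\,\cdot\,,(w_j{-}u_{0,m})\nu_\Omega)\,\d\H^{N-1}$ generated in Stage 2 must converge, as $m\to\infty$, to the analogous quantity with $u_0$ in place of $u_{0,m}$, which is secured by trace convergence $u_{0,m}\to u_0$ in $\L^1(\partial\Omega;\H^{N-1})$ combined with the upper bound in \eqref{eq:comp-p0}.
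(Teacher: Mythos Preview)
Your proposal is correct and follows essentially the same approach as the paper: both assemble the three building blocks Proposition~\ref{prop:recovery-BV-W}, Lemma~\ref{lem:recovery-W-Wu0}, and Lemma~\ref{lem:approx-u0} by diagonalization, with the only organizational difference that the paper first packages Stages~1 and~2 into a single step for bounded $u_0$ and then applies Stage~3 separately, whereas you run all three stages through a single three-index diagonal.
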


\begin{proof}    
    \textit{Step 1.} We assume even $u_0\in\W^{1,1}(\R^N)\cap\L^\infty(\R^N)$. Then, we approximate $u\in\BV(\Omega)$ by a sequence $(w_k)_k$ in $\W^{1,1}(\Omega)$ as in Proposition \ref{prop:recovery-BV-W} and afterwards each $w_k\in\W^{1,1}(\Omega)$ by a sequence $(v_{k,\ell})_\ell$ in $\W^{1,1}_{u_0}(\Omega)$ as in Lemma \ref{lem:recovery-W-Wu0}. In summary, for $u_k\coleq v_{k,\ell_k}\in\W^{1,1}_{u_0}(\Omega)$ with $\ell_k$ chosen suitably large, we obtain that $(\ol{u_k})_k$ converges to $\ol{u}$ strictly in $\BV(\R^N)$ with exactly \eqref{eq:recovery-BV-Wu0-p-strict} and \eqref{eq:recovery-BV-Wu0-mu}.

    \textit{Step 2.} We consider a general $u_0\in\W^{1,1}(\R^N)$. Then we choose an arbitrary sequence $(u_{0,k})_k$ in $\W^{1,1}(\R^N)\cap\L^\infty(\R^N)$ such that $(u_{0,k})_k$ converges to $u_0$ in $\W^{1,1}(\R^N)$. For instance, we might again use truncations $u_{0,k}\coleq(u_0)^k$. In any case, for the sole relevant terms which involve the boundary datum, we exploit once more Theorem \ref{thm:trace_cont} and record
    \begin{gather*}
      \lim_{k\to\infty}\int_{\partial\Omega}|u{-}u_{0,k}|\,\d\H^{N-1}
      =\int_{\partial\Omega}|u{-}u_0|\,\d\H^{N-1}\,,\\
      \lim_{k\to\infty}\int_{\partial\Omega}\p(\,\cdot\,,(u{-}u_{0,k})\nu_\Omega)\,\d\H^{N-1}
      =\int_{\partial\Omega}\p(\,\cdot\,,(u{-}u_0)\nu_\Omega)\,\d\H^{N-1}\,.
    \end{gather*}
    Then, by applying the outcome of Step 1 for each fixed $u_{0,k}$ and once more by suitable choices of a second index, we obtain a sequence $(z_k)_k$ such that we have $z_k\in\W^{1,1}_{u_{0,k}}(\Omega)$ for all $k$, such that $(\ol{z_k}^{u_{0,k}})_k$ converges to $\ol{u}^{u_0}$ strictly in $\BV(\R^N)$ and such that \eqref{eq:recovery-BV-Wu0-p-strict} and \eqref{eq:recovery-BV-Wu0-mu} hold with $z_k$ in place of $u_k$. With this we have reached the point for applying Lemma \ref{lem:approx-u0}. The lemma in fact yields a sequence $(u_k)_k$ in $\W^{1,1}_{u_0}(\Omega)$ (now with the correct fixed boundary datum) such that $(\ol{u_k}^{u_0}{-}\ol{z_k}^{u_{0,k}})_k$ converges to $0$ in $\BV(\R^N)$ and \eqref{eq:approx-u0} applies with $\mu=\mu_\pm$. In conclusion, we get that $(\ol{u_k}^{u_0})_k$ converges to $\ol{u}^{u_0}$ strictly in $\BV(\R^N)$, and moreover \eqref{eq:approx-u0} perfectly fits for passing from \eqref{eq:recovery-BV-Wu0-p-strict}, \eqref{eq:recovery-BV-Wu0-mu} with $z_k$ to the claimed form of \eqref{eq:recovery-BV-Wu0-p-strict}, \eqref{eq:recovery-BV-Wu0-mu} with $u_k$. The final claim $\lim_{k\to\infty}\Phi[u_k]=\widehat\Phi[u]$ follows once more by writing out the functionals and employing the convergences of the single terms. The proof of Theorem \ref{thm:rec_seq} is complete.
\end{proof}
	
\bibliographystyle{amsplain}
\bibliography{TV_phi_mu}
	
\end{document}